\theoremstyle{plain}
\newtheorem{theorem}{Theorem}[section]
\newtheorem{lemma}[theorem]{Lemma}
\newtheorem{corollary}[theorem]{Corollary}
\newtheorem{proposition}[theorem]{Proposition}
\newtheorem{fact}[theorem]{Fact}
\crefname{fact}{Fact}{Facts}
\newtheorem{claim}[theorem]{Claim}
\crefname{claim}{Claim}{Claims}
\newtheorem*{claim*}{Claim}
\newtheorem*{main:qi-into-CS}{\Cref{th:qi_into}}
\newtheorem*{main:cs_boundary}{\Cref{th:boundary_of_cs}}
\newtheorem*{main:finite_index_lift}{\Cref{prop:fi_lift}}
\newtheorem*{main:scott_swarup}{\Cref{th:scott_swarup}}
\newtheorem*{main:convex_cocompact}{\Cref{th:cc}}
\theoremstyle{definition}
\newtheorem{definition}[theorem]{Definition}
\newtheorem{remark}[theorem]{Remark}
\newtheorem{question}[theorem]{Question}
\crefname{question}{Question}{Questions}
\newcommand{\define}[1]{\textbf{#1}}
\newcommand{\R}{\mathbb{R}}
\newcommand{\Z}{\mathbb{Z}}
\newcommand{\N}{\mathbb{N}}
\DeclareMathOperator{\diam}{diam}
\newcommand{\floor}[1]{\left\lfloor#1\right\rfloor}
\newcommand{\ceil}[1]{\left\lceil#1\right\rceil}
\newcommand{\norm}[1]{\left\Vert#1\right\Vert}
\newcommand{\abs}[1]{\left\vert#1\right\vert}
\newcommand{\inv}{^{-1}}
\DeclareMathOperator{\Out}{Out}
\DeclareMathOperator{\Aut}{Aut}
\DeclareMathOperator{\Inn}{Inn}
\DeclareMathOperator{\rank}{rk}
\newcommand{\free}{\mathbb{F}} 
\newcommand{\factor}{{\EuScript F}} 
\newcommand{\F}{\factor} 
\newcommand{\fc}{\factor} 
\renewcommand{\int}{\mathcal{I}} 
\newcommand{\pl}{{\EuScript{PL}}} 
\newcommand{\cv}{cv} 
\newcommand{\cvbar}{\overline{\cv}} 
\newcommand{\os}{{\EuScript X}} 
\newcommand{\osbar}{\overline{\os}} 
\newcommand{\X}{\os} 
\newcommand{\dsym}{d^{\mathrm{sym}}_\os} 
\newcommand{\arational}{{\EuScript A}{\EuScript T}} 
\newcommand{\fproj}{\pi_\fc} 
\newcommand{\plproj}{\pi_\pl} 
\newcommand{\csproj}{\pi_\cs} 
\newcommand{\rose}{{\EuScript R}} 
\DeclareMathOperator{\Lip}{Lip} 
\newcommand{\sym}{{\sf M}} 
\newcommand{\CS}{{\EuScript{CS}}} 
\newcommand{\cs}{\CS} 
\DeclareMathOperator{\supp}{supp} 
\newcommand{\curr}{\Currents(\free)} 
\DeclareMathOperator{\Currents}{Curr}
\newcommand{\pcurr}{\mathbb{P}\curr} 
\newcommand{\db}{\partial^2} 
\newcommand{\len}{\ell} 
\newcommand{\bund}{\mathcal{E}} 
\newcommand{\base}{\mathcal{B}} 
\renewcommand{\sym}{{\sf M}} 
\newcommand{\legal}{\mathrm{leg}} 
\newcommand{\leg}{\legal} 
\newcommand{\illegality}{{\EuScript I}} 
\newcommand{\e}{\mathbf{el}}
\newcommand{\epl}{\e}
\newcommand{\efc}{\e'}
\newcommand{\fcbound}{{\sf B}} 
\newcommand{\bblip}{\mathfrak{L}} 
\newcommand{\qisec}{\mathfrak{K}} 
\newcommand{\fiberlip}{\mathfrak{C}} 
\newcommand{\separation}{\mathfrak{D}} 
\newcommand{\nbhd}[2]{{\EuScript N}_{#1}(#2)} 
\newcommand{\dhaus}{d_{\mathrm{Haus}}} 
\newcommand{\I}{\mathbf{I}}
\newcommand{\Ipl}{\I_+}
\newcommand{\Imin}{\I_-}
\newcommand{\J}{\mathbf{J}}
\newcommand{\cay}[2]{\mathrm{Cay}({#2}, {#1})} 
\newcommand{\minlen}[2]{{\sf m}_{#1}^{#2}} 
\newcommand{\minset}[2]{\rho_{#1}^{#2}} 
\newcommand{\axis}{\mathcal{A}} 
\DeclareMathOperator{\width}{width}
\newcommand{\fiberproj}{\mathfrak{p}} 
\newcommand{\fiberwise}{\fiberproj}
\begin{document}

\renewcommand{\thefootnote}{\fnsymbol{footnote}} 
\footnotetext{\emph{Key words and phrases:} Hyperbolic extensions, convex cocompact, co-surface graph, alignment-preserving maps, fiber distortion} 
\renewcommand{\thefootnote}{\arabic{footnote}}

\title{\textbf{\Large  The co-surface graph and the geometry of hyperbolic free group extensions
}}
\author{Spencer Dowdall and Samuel J. Taylor
\thanks{
The authors were each partially supported by NSF postdoctoral fellowships; the first by grant DMS-1204814 and the second by grant DMS-1400498.}}
\date{\today}

\maketitle

\begin{abstract}
We introduce the co-surface graph $\cs$ of a finitely generated free group $\free$ and use it to study the geometry of hyperbolic group extensions of $\free$. Among other things, we show that the Gromov boundary of the co-surface graph is equivariantly homeomorphic to the space of free arational $\free$--trees and use this to prove that a finitely generated subgroup of $\Out(\free)$ quasi-isometrically embeds into the co-surface graph if and only if it is purely atoroidal and quasi-isometrically embeds into the free factor complex. This answers a question of I. Kapovich. Our earlier work \cite{DT1} shows that every such group gives rise to a hyperbolic extension of $\free$, and here we prove a converse to this result that characterizes the hyperbolic extensions of $\free$ arising in this manner. As an application of our techniques, we additionally obtain a Scott--Swarup type theorem for this class of extensions.
\end{abstract}
\smallskip

\section{Introduction}
\label{sec:intro}
Let $\free$ be the free group of rank $r \ge 3$ and let $\Out(\free)$ be its outer automorphism group. Every subgroup $\Gamma \le \Out(\free)$ 
gives rise to an exact sequence
\begin{align}\label{ses}
1 \longrightarrow \free \overset{i}{\longrightarrow} E_\Gamma \overset{p}{\longrightarrow} \Gamma \longrightarrow 1,
\end{align}
in which $E_\Gamma$ is the preimage of $\Gamma$ under the homomorphism $\Aut(\free)\to\Out(\free)$ and $\free\lhd E_\Gamma$ is identified with the inner automorphisms $\Inn(\free)$. In fact every group extension of $\free$ surjects onto an extension $E_\Gamma$ of this form. In \cite{DT1}, we gave conditions on $\Gamma \le \Out(\free)$ that guarantee the associated extension $E_\Gamma$ is Gromov hyperbolic. To state these conditions, first recall that $\Gamma$ is purely atoroidal if each infinite order element is atoroidal (no power fixes a nontrivial conjugacy class of $\free$) and that the free factor complex $\fc$ is the simplicial complex whose $k$--simplices are nested chains $A_0 <\dotsb< A_k$ of proper free factors of $\free$, up to conjugacy. Outer automorphisms act isometrically on $\fc$, and we say that a finitely generated subgroup $\Gamma\le\Out(\free)$ \define{qi-embeds} into $\fc$ if some (equivalently any) orbit map $\Gamma\to \fc$ is a quasi-isometric embedding.

\begin{theorem}[\cite{DT1}] \label{th:DT1}
Suppose that a finitely generated subgroup $\Gamma \le \Out(\free)$ is purely atoroidal and qi-embeds into $\fc$. Then the free group extension $E_\Gamma$ in \Cref{ses} is hyperbolic.
\end{theorem}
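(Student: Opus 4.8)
The plan is to realize $E_\Gamma$ as the total space of a bundle over a hyperbolic base with uniformly hyperbolic fibers and then apply the Bestvina--Feighn combination theorem, in the metric-bundle formulation of Mj--Sardar and following the template that Farb--Mosher used for convex cocompact subgroups of mapping class groups, with the curve complex replaced by $\fc$. Concretely, the quotient map $p$ of \Cref{ses} exhibits (a Cayley graph of) $E_\Gamma$ as a bundle over $\cay{S}{\Gamma}$, for a finite generating set $S$ of $\Gamma$, whose fibers are cosets of $\free$; choosing for each $s \in S$ a (relative) train track representative $f_s$ of $s$ on a fixed marked graph $G$ for $\free$ furnishes a convenient model $\mathcal{E}$ of this bundle, with fiber $G$ and transition maps the $f_s^{\pm1}$ and their compositions along geodesic words of $\Gamma$. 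Since $\mathcal{E}$ is quasi-isometric to $E_\Gamma$, it suffices to prove $\mathcal{E}$ is Gromov hyperbolic.

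Two of the three hypotheses of the combination theorem hold for easy reasons. The fibers are uniformly hyperbolic: each is a copy of the free group $\free$, which is hyperbolic, and --- $S$ being finite --- the comparison maps between adjacent fibers are uniform quasi-isometries. The base $\cay{S}{\Gamma}$ is hyperbolic: the free factor complex $\fc$ is Gromov hyperbolic (Bestvina--Feighn), and as $\Gamma$ qi-embeds into $\fc$ by hypothesis, $\Gamma$ is itself hyperbolic, with geodesics mapping to uniform-quality quasigeodesics of $\fc$ under an orbit map. The remaining, decisive hypothesis is \emph{flaring}: there must exist $\lambda > 1$ and integers $M, A \ge 1$ so that for every geodesic word $w = s_1 \cdots s_n$ of $\Gamma$ and every conjugacy class $\gamma$ of $\free$, the train track lengths $L_i := \norm{(s_1 \cdots s_i)\cdot \gamma}$ satisfy $\max\{L_{i - M},\, L_{i + M}\} \ge \lambda L_i$ whenever $L_i \ge A$ and $M \le i \le n - M$.

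Proving flaring is where I expect essentially all of the work to be, and it is the main obstacle. The strategy is to convert \emph{progress in the free factor complex} into \emph{exponential growth of train track lengths}. Since the $\Gamma$-orbit in $\fc$ is a quasigeodesic, a subword of definite combinatorial length moves a definite distance in $\fc$; one then argues --- using the bounded geodesic image property for $\fc$ (Bestvina--Feighn) together with the interplay among $\fc$-distance, the attracting and repelling laminations of the relevant automorphisms, and the action of train track maps on the legal/illegal turn structure of a path --- that such a move either sharply decreases the illegal turn count of a train track representative of $\gamma$ or multiplies its already-legal part by a definite factor. Iterating along $w$ yields exponential growth at one end of the ladder, and pure atoroidality is precisely what promotes this to two-sided flaring: it forbids periodic conjugacy classes, so no fiber direction can fail to grow in both directions --- the prototypical failure without this hypothesis being a geometric fully irreducible automorphism, which qi-embeds into $\fc$ yet fixes a boundary conjugacy class and hence produces a $\Z^2$ inside $E_\Gamma$. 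The genuine technical labor lies in making these estimates uniform over all conjugacy classes $\gamma$ and all geodesics $w$ of $\Gamma$, and in stitching the local estimates --- including across stretches of $w$ where little happens locally in $\fc$ --- into one global flaring inequality. With flaring established, the combination theorem gives that $\mathcal{E}$, hence $E_\Gamma$, is hyperbolic.
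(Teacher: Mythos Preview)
This theorem is not proved in the present paper; it is simply quoted from \cite{DT1}. So there is no ``paper's own proof'' to compare against here. That said, your outline is essentially the strategy of \cite{DT1}: realize $E_\Gamma$ as a metric graph bundle over the hyperbolic base $\Gamma$, check the easy hypotheses of the Mj--Sardar/Bestvina--Feighn combination theorem, and reduce everything to verifying the flaring condition, which in turn is established by converting progress in $\fc$ into exponential growth of conjugacy-class lengths.

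A few points where your sketch diverges from the actual argument in \cite{DT1}. First, they do not build the bundle from per-generator train track representatives; they work directly with the Cayley graph bundle (as in \S\ref{sec:metric_bundle} of the present paper) and prove flaring by analyzing lengths along \emph{folding paths} in Outer space that shadow the $\Gamma$--orbit. The stability result recorded here as \Cref{th:DT_2} is the engine: geodesics in $\Gamma$ map to quasigeodesics in $\os$ whose $\fc$--projections are quasigeodesics, and one can replace them by genuine folding paths at bounded Hausdorff distance. Second, the mechanism you describe (``either the illegal turn count drops or the legal part grows'') is correct in spirit and is exactly what \Cref{lem:legal_flare}, \Cref{lem:illegal_flare}, and \Cref{prop:BF_strong_contract} encode; this is the Bestvina--Feighn machinery rather than a bounded geodesic image theorem per se. Third, your account of pure atoroidality is right: it rules out conjugacy classes of bounded length along the orbit, which is what would otherwise kill flaring. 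So your proposal is a faithful high-level summary, but be aware that the implementation in \cite{DT1} runs through folding paths in $\os$ rather than train track maps indexed by generators.
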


The goal of the present paper is twofold: to refine these conditions for hyperbolicity, and to make a more in depth study of the geometry of these hyperbolic extensions. This study culminates in a converse to the above result that characterizes those hyperbolic extensions arising from \Cref{th:DT1}. We note that the exact converse of \Cref{th:DT1} is well-known to be false. 

Following Hamenst\"adt and Hensel \cite{HaHe}, a subgroup $\Gamma\le\Out(\free)$ is said to be \define{convex cocompact} if it qi-embeds into $\fc$. However,  unlike the analogous situation for mapping class groups \cite{KentLein,H}, convex cocompactness itself does not ensure hyperbolicity of $E_\Gamma$. Indeed, pure atoroidality of $\Gamma$ is essential for $E_\Gamma$ to be hyperbolic, since a periodic conjugacy class for $\phi\in \Gamma$ gives rise to a $\Z\oplus \Z$ in $E_\Gamma$. Further, there are automorphisms of $\free$ that act loxodromically on $\fc$ but are not atoroidal. In fact, by combining work of Bestvina--Handel \cite{BH92} and Bestvina--Feighn \cite{BFhyp}, such automorphisms precisely correspond to pseudo-Anosov homeomorphisms of once-punctured, possibly nonorientable, surfaces.

This suggests that $\fc$ is not the correct complex for studying hyperbolic extensions of $\free$. It is natural to build a better-suited complex by starting with $\fc$ and coning off the curve graphs for all once-punctured surfaces $S$ with $\pi_1(S)\cong \free$. Versions of this construction appear several places in the literature---first in the work of Kapovich--Lustig \cite{kapovich2007geometric} and later Mann--Reynolds \cite{MR2} and Mann \cite{Mann-thesis} (see \S\ref{sec:co-surface graph})---in each case producing a hyperbolic $\Out(\free)$--graph $Y$ with the property that any subgroup $\Gamma$ that qi-embeds into $Y$ is both convex cocompact and purely atoroidal. The converse was posed as a question by I. Kapovich:

\begin{question}[I. Kapovich] \label{q:Kap}
Suppose that $\Gamma \le \Out(\free)$ is purely atoroidal and convex compact. Is the orbit map $\Gamma \to Y$ a quasi-isomeric embedding?
\end{question}

To answer \Cref{q:Kap}, we introduce (\S\ref{sec:co-surface graph}) a new model for the graph $Y$ that is both simple to define and natural for our purposes. 
This \define{co-surface graph} is defined to be the simplicial graph $\cs$ whose vertices are conjugacy classes of primitive elements of $\free$ and where two conjugacy classes are adjacent if there is a once-punctured surface $S$ with $\pi_1(S) \cong \free$ in which they are both represented by simple closed curves on $S$.

\begin{main:qi-into-CS}[Qi-embedding into $\cs$]
Let $\Gamma$ be finitely generated subgroup of $\Out(\free)$. Then $\Gamma$ qi-embeds into the co-surface graph $\cs$ if and only if $\Gamma$ is purely atoroidal and convex cocompact.
\end{main:qi-into-CS}

After formulating \Cref{q:Kap}, Kapovich showed it cannot be answered from formal properties of the action $\Gamma \curvearrowright Y$. That is, Kapovich constructs an action of the free group of rank $2$ on a hyperbolic graph $X$ which has all the properties of the action $\Gamma \curvearrowright Y$ but whose orbit map $\Gamma\to X$ is not a qi-embedding \cite{kapovich2015purely}. 
Thus the proof of \Cref{th:qi} necessarily requires a deeper understanding of the co-surface graph itself. Indeed, our argument uses the fine geometric structure of Culler and Vogtmann's Outer space $\os$ and the following calculation of the Gromov boundary of $\cs$:

\begin{main:cs_boundary}[Boundary of $\cs$]
The Gromov boundary $\partial \cs$ of the co-surface graph is $\Out(\free)$--equivariantly homeomorphic to the subspace of $\partial\fc$ consisting of classes of free arational trees. 
\end{main:cs_boundary}

We obtain \Cref{th:boundary_of_cs} as a corollary of the general theory of \define{alignment-preserving maps} that we develop in \S\ref{sec:alignmentpres} and which may be of independent interest. Briefly, three (ordered) points are coarsely aligned if the triangle inequality for them is nearly an equality, and a map that respects this condition is said to be alignment preserving. We show (\Cref{th:boundaries}) that any coarsely surjective alignment preserving map $X\to Y$ between hyperbolic metric spaces extends to a homeomorphism between $\partial Y$ and a specific subset of $\partial X$.

The co-surface graph $\CS$ has other advantages over the factor complex $\fc$, and indeed this is a major theme of the present paper. For example, it is well known (see \cite{BFHlam}) that full irreducibility is not stable under passage to finite index subgroups. This causes complications when attempting to study the subgroup structure of $E_\Gamma$. However, the following result shows that this is not an issue for $\cs$:

\begin{main:finite_index_lift} 
Let $H$ be a finite index subgroup of $\free$ and let $\Gamma^H$ denote the subgroup of $\Out(H)$ induced by elements of $\Gamma\le\Out(\free)$ that stabilize the conjugacy class of $H$. If $\Gamma$ is finitely generated and qi-embeds into $\cs$, then $\Gamma^H$ also qi-embeds into $\cs(H)$.
\end{main:finite_index_lift}

\Cref{prop:fi_lift} is one of the key ingredients allowing us to establish a Scott--Swarup \cite{scott1990geometric} type theorem for these extensions of $\free$.
 Recall that the fiber subgroup $\free$, being infinite and normal, is exponentially distorted in the hyperbolic group $E_\Gamma$. The following theorem, however, shows that such distortion is confined to finite index subgroups of $\free$; this mirrors a result of Dowdall--Kent--Leininger \cite{dowdall2014pseudo} for hyperbolic surface group extensions.
Bear in mind that the statement is false without the hypothesis that $\Gamma$ qi-embed into $\cs$ (see \S\ref{sec:ScSw}).

\begin{main:scott_swarup}[Nondistortion in fibers]
Suppose that $\Gamma\le\Out(\free)$ quasi-isometrically embeds into $\cs$, and let $L$ be a finitely generated subgroup of the fiber $\free\lhd E_\Gamma$. Then $L$ is quasiconvex, and hence undistorted, in the hyperbolic extension $E_\Gamma$ if and only if $L$ has infinite index in $\free$.
\end{main:scott_swarup}

\noindent We note that Mj and Rafi \cite{MjRafi} have recently, and independently, proven \Cref{th:main_ss} by very different methods. Their approach uses structural results on convex cocompact subgroups proven in \cite{DT1} as well as a characterization of the Cannon--Thurston map for \eqref{ses} that we obtained with Kapovich in \cite{DKT} and which builds on earlier work of Mj \cite{MitraEndingLams}. 
Our proof is more direct and proceeds as follows.

The second key ingredient needed to prove \Cref{th:main_ss} 
is a careful study of the geometry of hyperbolic extensions $E_\Gamma$ that focuses on the relationship between the ``local'' axis of an element $a\in \free$ acting on any given fiber of $p\colon E_\Gamma\to\Gamma$ and the ``global'' axis for $a$ acting on $E_\Gamma$. Specifically, if $a^*$ denotes the geodesic in $E_\Gamma$ whose endpoints are the fixed points $a^{\pm\infty}$ in $\partial E_\Gamma$, we define the \define{width} of $a$ to be the quantity
\[
\width(a) = \diam_\Gamma p(a^*).
\]
This concept was first studied in the context of surface group extensions by Kent and Leininger \cite{KentLein-geometric}. We prove (\Cref{width}) that when $\Gamma$ is convex cocompact, the quantity $\width(a)$ is uniformly bounded over all simple elements $a\in \free$, where an element is \define{simple} if it is contained in some proper free factor of $\free$. As a consequence, we show that the global axis $a^*$ fellow travels the local axis for $a\in \free$ acting on the fiber of $E_\Gamma\to \Gamma$ in which the translation length of $a$ is minimized. Combining this with \Cref{prop:fi_lift} leads to \Cref{th:main_ss}. This analysis also allows us to prove the following theorem, which gives a converse to \Cref{th:DT1} and characterizes hyperbolic extensions arising from convex cocompact subgroups as those for which the simple elements have uniformly bounded width:

\begin{main:convex_cocompact}[Convex cocompactness]
Suppose that $1 \to \free \to E \to Q \to 1$ is a hyperbolic extension of $\free$. Then $Q$ has convex cocompact image in $\Out(\free)$ (and hence admits a quasi-isometric embedding orbit map into $\cs$) if and only there exists $D\ge 0$ so that $\width_Q(a)\le D$ for each simple element $a\in \free$.
\end{main:convex_cocompact}

\paragraph{Acknowledgments:} The authors thank Ilya Kapovich and Patrick Reynolds for helpful conversations. We are also grateful for the hospitality of the University of Illinois at Urbana-Champaign for hosting the second author at the start of this project. Finally, we thank the referee for a very carefull reading of the paper and several useful suggestions.

\section{Background} 
\label{sec:background}

Throughout, $\free$ will denote a finitely generated free group of rank $r =\rank(\free)$ at least $3$. 
 In this section we review several structures associated to $\free$ that will be relevant to our work.

\subsection{Coarse geometry}
\label{sec:coarse_geom}

A map $f\colon X\to Y$ of metric spaces is a \define{$K$--quasi-isometric embedding} if
\[d_X(a,b)/K - K \le d_Y(f(a),f(b))\le Kd_X(a,b)+K\]
for all $a,b\in X$. The map is moreover a \define{$K$--quasi-isometry} if its image is $K$--dense in $Y$. A \define{$K$--quasigeodesic} is then a $K$--quasi-isometric embedding of an interval $\I\subset\R$ into a metric space. The \define{Hausdorff distance} between two subsets $A,B$ of a metric space $X$ is the infimum of all $\epsilon > 0$ for which $A$ and $B$ are both contained within the $\epsilon$--neighborhoods of each other. 

A geodesic metric space $X$ is \define{$\delta$--hyperbolic}, where $\delta\ge0$, if every geodesic triangle in $X$ is $\delta$--thin, meaning that each side is contained within the $\delta$--neighborhood of the other two. Every such space has a well defined \define{Gromov boundary} $\partial X$ consisting of equivalence classes of admissible sequences in $X$, where a sequence $\{a_n\}$ is admissible if $\lim_{n,m} (a_n\vert a_m)_x=\infty$ and two sequence $\{a_n\}$ and $\{b_n\}$ are equivalent if $\lim_{n,m} (a_n\vert b_m)_x = \infty$ for some $x\in X$. Here, $(a\vert b)_x$ denotes the \define{Gromov product} $(d(a,x) + d(b,x) - d(a,b))/2$. One says the admissible sequence $a_1,a_2,\dotsc\in X$ \define{converges} to the point $\{a_n\}\in \partial X$. 
The Gromov product, with respect to $x\in X$, may be extended to $\partial X$ by declaring
\[(a\vert b)_x \colonequals \sup \liminf_{m,n\to \infty} \;(a_m\vert b_n)_x,\]
where the supremum here is over all sequences $\{a_m\},\{b_n\}$ converging respectively to $a,b\in \partial X$. The boundary $\partial X$ is then equipped with the topology in which the sets $N_a^x(r)\colonequals \{b\in \partial X : (a\vert b)_x \ge r\}$ give a basis of open neighborhoods about the point $a\in \partial X$. Moreover, 
the topologies on $X$ and $\partial X$ may be extended to a topology on the disjoint union $X\cup \partial X$ for which a sequence $x_n\in X$ converges to $\zeta\in \partial X$ if and only if $\{x_n\}$ is admissible and equivalent to $\zeta$. When $X$ is \define{proper} (meaning that closed metric balls are compact), $X\cup \partial X$ is a compactification of $X$.
Finally, hyperbolicity itself and the Gromov boundary $\partial X$ are both quasi-isometry invariants of $X$.
See \cite[Section III.H.3]{BH}, \cite{GhysdelaHarpe}, or \cite[Section 2]{KapovichBenakli-boundaries} for more details.

If $X$ is $\delta$--hyperbolic, then every quasigeodesic ray $r\colon [0,\infty)\to X$ converges to its \define{endpoint at infinity} $r(\infty) \colonequals \{r(n)\}_{n=1}^\infty\in \partial X$, and any two rays whose images have finite Hausdorff distance determine the same endpoint. Conversely, as explained in \cite[Remark 2.16]{KapovichBenakli-boundaries}, for any $x_0\in X$ and $\zeta\in \partial X$ one may build a $10\delta$ quasigeodesic $r\colon \R_+\to X$ with the properties that $r(0)=x_0$ and $r(\infty) = \zeta$.

Throughout, we will use $\I$ (and sometimes $\J$) to denote a closed subinterval of $\R$. We write $\Imin$ and $\Ipl$ for the infimum and supremum of $\I$, respectively.
With this notation, every quasigeodesic $\gamma\colon \I\to X$ naturally has two well-defined \define{endpoints} $\gamma(\I_\pm)\in X\cup \partial X$, where $\gamma(\I_\pm)\in \partial X$ if $\I_\pm = \pm\infty$ and $\gamma(\I_\pm)\in X$ otherwise.  The following is a fundamental feature of hyperbolic metric spaces; see \cite[Theorem III.H.1.7]{BH} for a proof. 

\begin{proposition}[Stability of quasigeodesics]
\label{prop:stability_of_quasis}
For every $K\ge 1$ and $\delta\ge 0$ there exists a \define{stability constant} $R_0(K,\delta)>0$ such that if $\gamma\colon \I\to X$ and $\rho\colon \J\to X$ are $K$--quasigeodesics with the same endpoints in $X \cup \partial X$ for a $\delta$--hyperbolic space $X$, then $\gamma(\I)$ and $\rho(\J)$ have Hausdorff distance at most $R_0(K,\delta)$.
\end{proposition}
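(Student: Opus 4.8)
The plan is to run the classical argument for the Morse lemma, reducing the general statement to the case of an honest geodesic versus a \emph{continuous} quasigeodesic sharing its two endpoints in $X$, and then bootstrapping. For the first reduction (``taming''), given a $K$--quasigeodesic $\gamma\colon\I\to X$ I would replace it by the path $\hat\gamma$ obtained by concatenating geodesic segments between the images of the integer points of $\I$ (together with the fractional ends). A routine estimate shows $\hat\gamma$ is a $K'$--quasigeodesic for some $K'=K'(K)$, is continuous and rectifiable with $\mathrm{length}(\hat\gamma|_{[x,y]})\le K''d(x,y)+K''$ for all $x,y$ on it, and lies within Hausdorff distance $K$ of $\gamma$; so it suffices to bound $\dhaus(\rho(\J),\sigma)$ when $\rho$ is a continuous $K'$--quasigeodesic and $\sigma$ a geodesic with the same endpoints. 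The case of two quasigeodesics follows from the triangle inequality for Hausdorff distance via a connecting geodesic, and the case of endpoints in $\partial X$ follows by exhausting the rays with compact subsegments and using the coarse uniqueness of geodesics with prescribed endpoints in $X\cup\partial X$ (a standard consequence of $\delta$--thinness).

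The technical heart is the estimate that in a $\delta$--hyperbolic space any rectifiable path $c$ from $x$ to $y$ satisfies $d(z,\mathrm{im}(c))\le \delta\log_2(\mathrm{length}(c))+O(\delta)$ for every $z$ on a geodesic $[x,y]$; this is proved by induction, bisecting $c$ at its arc-length midpoint and applying $\delta$--thinness of the triangle formed by $x$, $y$, and that midpoint. Granting this, let $D=\max_{z\in\sigma}d(z,\mathrm{im}(\rho))$, attained at $z_0\in\sigma$, and pick points $y,y'\in\sigma$ at distance $2D$ from $z_0$ on either side (truncating at the endpoints of $\sigma$, where $\rho$ and $\sigma$ agree). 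Replacing $y,y'$ by nearby points $\rho(s),\rho(t)$ on $\mathrm{im}(\rho)$ and concatenating $[y,\rho(s)]$, $\rho|_{[s,t]}$, $[\rho(t),y']$ yields a path $\beta$ from $y$ to $y'$ of length $O(KD)$, every point of which lies within $D$ of $\mathrm{im}(\rho)$, while $z_0$ lies at distance $\ge D$ from every point of $\beta$. The bisection estimate then gives $D\le \delta\log_2(O(KD))+O(\delta)$, which bounds $D$ by a constant $D_0=D_0(K,\delta)$; hence $\sigma\subseteq\nbhd{D_0}{\mathrm{im}(\rho)}$.

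For the reverse inclusion, consider a maximal subpath $\rho|_{[a',b']}$ lying outside $\nbhd{D_0}{\sigma}$ (if there is none we are done); its endpoints $\rho(a'),\rho(b')$ lie within $D_0$ of points $x,y\in\sigma$. Every interior point $w$ of $[x,y]\subseteq\sigma$ is within $D_0$ of $\mathrm{im}(\rho)$ by the previous step but not within $D_0$ of $\rho|_{(a',b')}$, so it is within $D_0$ of some $\rho(\tau)$ with $\tau\le a'$ or $\tau\ge b'$, and this $\rho(\tau)$ is itself within $D_0$ of $\sigma$. Since a quasigeodesic cannot backtrack along a geodesic (a standard consequence of thinness, proved independently of this proposition), the projections to $\sigma$ of $\rho(\tau),\rho(a'),\rho(b')$ occur coarsely in that order; comparing with the order $x,w,y$ along $\sigma$ forces $w$ to lie within $O(K+\delta)$ of $x$ or of $y$. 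Thus $d(x,y)=O(K+\delta)$, so $d(\rho(a'),\rho(b'))=O(K+\delta+D_0)$, so $\mathrm{length}(\rho|_{[a',b']})=O(K+\delta+D_0)$ by the quasigeodesic inequality and the length control from the taming step, whence $d(\rho(u),\sigma)\le D_1(K,\delta)$ for all $u$. Setting $R_0(K,\delta)=\max\{D_0,D_1\}$ (suitably inflated to absorb the taming) finishes the proof.

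The main obstacle is the bisection/logarithmic-divergence estimate together with the extremal argument: this is the only place where the merely \emph{linear} control from the quasigeodesic inequality is upgraded to an \emph{absolute} bound, and making it rigorous requires care with the truncation at the endpoints of $\sigma$ (when $z_0$ is within $2D$ of an endpoint) and with tracking how the taming step inflates the quasigeodesic constants, so that $R_0$ depends only on $K$ and $\delta$. The ``no backtracking'' fact used for the reverse inclusion must also be pinned down without circularity relative to the statement being proved.
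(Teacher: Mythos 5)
The paper does not prove this statement itself; it imports it directly from Bridson--Haefliger, \cite[Theorem III.H.1.7]{BH}. Your proposal is a correct reconstruction of precisely that argument: taming the quasigeodesic to a continuous, rectifiable, length-controlled path; the bisection lemma giving the logarithmic divergence bound $d(z,\mathrm{im}(c))\le\delta\log_2(\mathrm{length}(c))+O(\delta)$; the extremal argument pinning $D$ by the inequality $D\le\delta\log_2(O(KD))+O(\delta)$; and the maximal-excursion argument for the reverse inclusion. Your extension to endpoints in $\partial X$ by exhausting with compact subsegments is the standard patch for the fact that the BH statement is phrased for finite endpoints, and the uniformity of $R_0$ across the exhaustion is exactly the point of having the constant depend only on $K$ and $\delta$. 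The only spot that would need to be written out carefully is the ``no backtracking'' step in the reverse inclusion --- to avoid circularity you should use the weak form (coarse connectedness of the set of times whose $\rho$-image is $D_0$-close to a given subsegment of $\sigma$), which follows directly from the inclusion $\sigma\subseteq\nbhd{D_0}{\mathrm{im}(\rho)}$ already established, rather than invoking monotonicity of nearest-point projection (which is usually derived \emph{from} stability). With that caveat, the argument is sound and matches the cited source.
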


\subsection{Currents and laminations}
\label{sec:currents}
All finite-valent Cayley graphs of $\free$ are quasi-isometric hyperbolic spaces, and we write $\partial\free$ to denote their common Gromov boundary. The free group $\free$ acts on each of its Cayley graphs by left multiplication, and this extends to a left action of $\free$ on $\partial\free$ by homeomorphisms. Let $\db\free = \{(\eta,\xi) \mid \eta,\xi\in \partial \free, \eta\neq\xi\}$ denote the double boundary of $\free$, equipped with the subspace topology from $\partial\free\times\partial\free$. A \define{lamination} on $\free$ is a nonempty closed subset of $\db\free$ that is invariant under both the flip map $(\eta,\xi)\mapsto(\xi,\eta)$ and the (diagonal) action of $\free$. A lamination is \define{minimal} if it does not contain a proper sublamination. If $L$ is a lamination on $\free$, we write $L'$ to denote the set of accumulation points of $L$ in $\db\free$. Note that $L'\subset L$, since $L$ is closed, and that $L'$ is itself a lamination.

Following \cite{KapCurrents}, a \define{geodesic current} on $\free$ is a positive Radon measure on $\db\free$ that is both flip and $\free$--invariant. Notice that the support $\supp(\mu)$ of every nonzero current $\mu$ is necessarily a lamination. We write $\curr$ for the space of all geodesic currents on $\free$ equipped with the weak topology.  Quotienting by the action of $\R_+$ by scalar multiplication yields the compact space $\pcurr$ of \define{projective geodesic currents}. 

Let us discuss some basic examples of these concepts. Every nontrivial $a\in \free$ acts on $\partial \free$ with a unique attracting fixed point $a^+$ and repelling fixed point $a^-$. The \define{lamination of a (nontrivial) conjugacy class} $\alpha$ of $\free$ is then defined to be
\[L(\alpha)\colonequals \bigcup_{a\in \alpha}\{(a^+,a^-),(a^-,a^+)\};\]
notice that $L(\alpha)$ is indeed a closed and $\free$--invariant subset of $\db\free$. Correspondingly, the \define{counting current} of a (nontrivial) conjugacy class $\alpha = \beta^m$, where $\beta$ is not a proper power, is defined as
\[\eta_\alpha \colonequals m\eta_\beta \colonequals m\sum_{b\in\beta} \delta_{(b^-,b^+)}+\delta_{(b^+,b^-)}.\]
One may check that $\eta_\alpha$ is indeed a current and moreover that $\supp(\eta_\alpha) = L(\alpha)$.

\subsection{Trees}
\label{sec:trees}

An \define{$\R$--tree} is a $0$--hyperbolic geodesic metric space. Alternately, an $\R$--tree is a metric space in which there is a unique embedded path between any two points and this path is a geodesic. 
Throughout this paper, we will use the term \define{tree} to mean an $\R$--tree equipped with an isometric and minimal action of $\free$; a tree is \define{minimal} if it does not contain a proper $\free$--invariant subtree.

We write $\len_T(a)$ for the \define{translation length} of an element $a\in \free$ acting on a tree $T$, that is, $\len_T(a) = \inf_{t\in T}d(t,a\cdot t)$. Notice that $\len_T(a)$ depends only on the conjugacy class $\alpha$ of $a$. The element $a$ acts \define{hyperbolically} (with an invariant axis) on $T$ if $\len_T(\alpha) >0$ and \define{elliptically} (with a nonempty fixed subtree) if $\len_T(\alpha) = 0$. The tree $T$ is said to be \define{free} if $\len_T(\alpha) > 0$ for all nontrivial conjugacy classes $\alpha$.

Coulbois, Hilion and Lustig \cite{CHL2} have associated to every tree $T$ a \define{dual lamination}
\[L(T)\colonequals \bigcap_{\epsilon > 0}\left(\overline{\bigcup_{\alpha\in \Omega_\epsilon(T)} L(\alpha)}\right)\subset \db\free,\]
where $\Omega_\epsilon(T) = \{\alpha : \len_T(\alpha)<\epsilon\}$ is the set of conjugacy classes with short translation length in $T$ and the closure is taken in $\db\free$. Observe that $L(\alpha)\subset L(T)$ if and only if $\len_T(\alpha) = 0$; thus $T$ is free if and only if $L(T)$ does not contain the lamination $L(\alpha)$ of any nontrivial conjugacy class $\alpha$. The set $L(T)$ is nonempty, and thus a bona fide lamination, unless $T$ is free and simplicial.  We refer the reader to \cite{CHL2} for a more detailed discussion of $L(T)$.

We say that a tree is \define{very small} if the stabilizer of any segment of $T$ is maximal cyclic and the stabilizer of every tripod is trivial \cite{CohenLustigVS}.
A tree $T$ is said to have \define{dense orbits} if every $\free$ orbit is dense in $T$. At the other extreme, if every orbit is discrete then the tree is said to be \define{simplicial}. 
A tree is \define{arational} if there does not exist a proper free factor $A$ of $\free$ and an $A$--invariant subtree on which $A$ acts with dense orbits. Following Guirardel \cite{Guiradel-trees}, we say that a tree $T$  is \define{indecomposable} if for every pair of nondegenerate arcs $\tau,\tau'\subset T$ there exist $a_1,\dotsc,a_n\in \free$ so that $\tau'\subset a_1\tau\cup\dotsb\cup a_n\tau$ with $a_i\tau\cap a_{i+1}\tau$ nondegenerate for each $1 \le i < n$; indecomposablity is thus a strong mixing property for the action of $\free$ on $T$. 
The following theorem of Reynolds clarifies the relationship between these notions: 

\begin{theorem}[Reynolds \cite{Rey12}]\label{th:char_arational}
A minimal, very small tree is arational if and only if it is indecomposable and either $(1)$ free or $(2)$ dual to a filling measured lamination on a once-punctured surface.
\end{theorem}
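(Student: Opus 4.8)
The statement is Reynolds' structure theorem for arational trees. The plan is to prove it by combining the Rips-machine decomposition of very small $\free$--trees with Guirardel's theory of indecomposable trees, following \cite{Rey12}; I would argue the two implications separately, and throughout I would take ``arational'' to entail that $T$ is non-simplicial (a simplicial tree being neither arational nor indecomposable, so it can be set aside at once).

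\emph{The reverse implication.} Suppose $T$ is indecomposable and either free or dual to a filling measured lamination on a once-punctured surface $S$ with $\pi_1(S)\cong\free$; I want to show $T$ is arational, so suppose for contradiction that a proper free factor $A$ admits an $A$--invariant subtree $Y\subseteq T$ on which $A$ acts with dense orbits. The first step is to reduce to the case $Y\subsetneq T$: if $Y=T$ then $A$ acts minimally with dense orbits on the indecomposable tree $T$, which is impossible for a proper (hence infinite-index) free factor --- in the surface case because $A$ would then have to correspond to a sublamination filling $S$, forcing $[\free:A]<\infty$, and in the free case by the mixing properties of indecomposable trees. With $Y$ proper, $\mathrm{Stab}(Y)\supseteq A$ is a proper subgroup of $\free$, so $\mathrm{Stab}(Y)\ne\free$; I would then take a nondegenerate arc $\tau$ in the interior of $Y$ and a nondegenerate arc $\tau'$ disjoint from $Y$ (possible since $Y$ is closed and $T$ has dense orbits), invoke indecomposability to get $a_1,\dots,a_n\in\free$ with $\tau'\subseteq a_1\tau\cup\dots\cup a_n\tau$ and consecutive overlaps nondegenerate, and --- since each $a_i\tau\subseteq a_iY$ --- chain outward from the first translate lying off $Y$, using the very-small hypothesis to control the overlap arcs $gY\cap Y$, to reach a contradiction. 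In the surface case this is cleaner: by Guirardel's criterion an indecomposable surface-dual tree has minimal (arational) dual lamination, which has no proper sublamination, so there is no dense-orbit subtree $Y$ for a proper free factor to begin with.

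\emph{The forward implication.} Now suppose $T$ is arational. The first step is to show $T$ has dense orbits. Applying the Levitt decomposition, I would write $T$ as a graph of actions over a graph of groups whose vertex actions are either points or have dense orbits and whose positive-length edges form the simplicial part of $T$. A trivial-stabilizer edge would exhibit a vertex group as a proper free factor whose vertex action is either a point (making $T$ simplicial) or dense-orbit --- contradicting arationality; a $\Z$--stabilizer edge would, via a dense-orbit vertex piece, similarly produce a proper-free-factor invariant subtree with dense orbits, again contradicting arationality; and if every vertex action were a point then $T$ would be simplicial. So $T$ has dense orbits, hence is stable (being very small), and the Rips machine applies --- after approximating by geometric $\free$--trees if necessary --- decomposing $T$ into surface, thin (Levitt), and axial pieces, with no simplicial pieces. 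Axial pieces do not occur, since $\free$ has no non-cyclic abelian subgroup. The key step is then Reynolds' reducing-system analysis: if the decomposition involves a thin piece, or more than one piece glued simplicially, or a surface piece carrying a non-filling lamination, then $T$ admits a reducing factor --- a proper free factor acting with dense orbits on an invariant subtree --- contradicting arationality. This forces $T$ to be a single piece, either free or a surface piece dual to a filling measured lamination, and Guirardel's criterion then identifies indecomposability of such a surface-dual tree with arationality of the lamination, while the reducing-system analysis gives indecomposability in the free case.

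\emph{Main obstacle.} The technical heart is the reducing-system analysis in the forward implication: extracting, from any exotic feature of the Rips decomposition of an arational $T$ --- a thin piece, simplicially glued pieces, or a non-filling surface piece --- an explicit proper free factor acting with dense orbits on an invariant subtree, and conversely ruling out reducing factors in the free and surface-filling cases. This is where the Rips machine must be run carefully and where the very-small hypothesis and the geometric approximation of non-geometric trees are used essentially; by comparison, the reverse implication, the reduction to dense orbits via the Levitt decomposition, and the elimination of axial pieces are formal given Guirardel's results on indecomposable trees and the standard structure theory of stable $\R$--trees.
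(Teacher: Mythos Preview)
The paper does not prove this theorem. \Cref{th:char_arational} is stated in the background section (\S\ref{sec:trees}) as a result quoted from Reynolds \cite{Rey12}, with no proof given; the paper simply invokes it later (e.g., in \Cref{cor:boundary_cs} and \Cref{cor:arationals_lift}). So there is no ``paper's own proof'' to compare your proposal against.

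What you have written is an attempt to reconstruct Reynolds' original argument. In broad strokes your outline is faithful to \cite{Rey12}: the forward direction does go through the Levitt/Rips decomposition, ruling out simplicial parts and axial pieces and then running a reducing-system analysis to force a single indecomposable component; and the reverse direction does hinge on showing that indecomposability prevents any proper free factor from acting with dense orbits on an invariant subtree. Your sketch of the reverse implication is the weakest part---the ``chain outward'' argument from indecomposability is gestured at rather than carried out, and the cleaner route in \cite{Rey12} is to show directly that for an indecomposable tree every proper free factor acts simplicially on its minimal subtree---but as a high-level plan it is reasonable. Since the paper itself offers nothing here beyond the citation, there is nothing further to compare.
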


\subsection{The free factor complex}

A nontrivial subgroup $A\le \free$ of $\free$ is a \define{free factor} of $\free$ if there exists a complementary nontrivial subgroup $B\le \free$ such that $\free = A\ast B$. As is common we often blur the distinction between free factors and their conjugacy classes. The \define{(free) factor complex} of $\free$ is the simplicial complex $\fc$
whose $k$--simplices consist of chains $A_0 < \dotsb < A_k$ of properly nested (conjugacy classes of) free factors of $\free$
and whose face inclusions correspond to subchains. 
Note that $\fc$ is not locally compact, and that the group $\Out(\free)$ acts on $\fc$ by simplicial automorphisms.
 We equip $\fc$ with the path metric in which simplices are all isometric to standard Euclidean simplices with side lengths equal to $1$; the induced path metric on the $1$--skeleton makes $\fc^1$ into a simplicial metric graph with all edges having length $1$. 
With this setup we have the following foundational result of Bestvina and Feighn:

\begin{theorem}[Bestvina--Feighn \cite{BFhyp}]
\label{thm:fc_hyperbolic}
The factor complex $\fc$ is Gromov hyperbolic.
\end{theorem}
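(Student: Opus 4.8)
The statement is a theorem of Bestvina--Feighn, and the plan is to follow their argument. The strategy combines two ingredients. The first is Bowditch's ``guessing geodesics'' criterion: a connected graph $K$ is Gromov hyperbolic---with the preferred paths automatically uniform quasigeodesics---provided one can assign to every ordered pair of vertices $x,y$ a connected subgraph $\gamma(x,y)$ containing $x$ and $y$, together with a uniform constant $B$, so that (i) $\diam_K\gamma(x,y)\le B$ whenever $d_K(x,y)\le 1$, and (ii) $\gamma(x,y)\subset\nbhd{B}{\gamma(x,z)\cup\gamma(z,y)}$ for all $x,y,z$. The second ingredient is the coarse projection $\fproj\colon\os\to\fc$ from Culler--Vogtmann Outer space, sending a marked metric graph $G$ to the (uniformly bounded-diameter) set of conjugacy classes of free factors $\pi_1(G')$, where $G'$ runs over proper connected subgraphs of $G$; one checks that $\fproj$ is $\Out(\free)$--equivariant, coarsely surjective (every proper free factor is $\pi_1$ of a sub-rose of some rose), and coarsely Lipschitz for the Lipschitz metric on $\os$.

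The preferred paths are then defined as follows. Given proper free factors $A,B$, realize them as $\pi_1$ of subgraphs of marked graphs $G_A,G_B\in\os$, join $G_A$ to $G_B$ by a folding path $t\mapsto G_t$ (in the sense of \cite{BFhyp}, built from optimal train-track maps), and set $\gamma(A,B)\colonequals\fproj(\{G_t\})$. Condition (i) is then comparatively soft: when $A$ and $B$ span an edge of $\fc$ they can be realized simultaneously on subgraphs of a single marked graph, so the folding path between suitable representatives stays in a bounded region of $\os$, and coarse Lipschitzness of $\fproj$ gives a uniform bound on $\diam_\fc\gamma(A,B)$.

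The heart of the matter---and the step I expect to be the main obstacle---is condition (ii), which demands a Morse/contraction-type understanding of folding paths after projection to $\fc$. The key estimates, all proved via the fine combinatorics of folding paths, are: for a conjugacy class $\alpha$, the set of times $t$ at which $\len_{G_t}(\alpha)$ is within a fixed multiplicative factor of $\min_s\len_{G_s}(\alpha)$ is coarsely an interval; $\fproj(G_t)$ is coarsely determined by $\alpha$ alone for $t$ outside it; and $\fproj$ moves only a bounded amount over any interval of times on which some fixed $\alpha$ stays bounded in length. Establishing these requires the decomposition of loops into legal, illegal, and neutral pieces (the quantities $\leg,\ilg,\ntr$ of \cite{BFhyp}), the fact that a folding path is a reparametrized geodesic for the (one-sided) Lipschitz metric, and length estimates showing that illegal length decays while legal length grows definitely along the path. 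Granting these, condition (ii) follows by taking, for a vertex $\fproj(G_t)$ of $\gamma(A,B)$, a conjugacy class $\alpha$ realizing (up to a factor) the shortest length in $G_t$, and arguing that $\alpha$ is likewise short at some point of the folding path underlying $\gamma(A,C)$ or of the one underlying $\gamma(C,B)$; the displayed estimates then place $\fproj(G_t)$ within a bounded $\fc$--neighborhood of $\gamma(A,C)\cup\gamma(C,B)$. Applying Bowditch's criterion to $\fc^1$ and recalling that $\fc$ is quasi-isometric to its $1$--skeleton yields \Cref{thm:fc_hyperbolic}. (An alternative route deduces hyperbolicity of $\fc$ from that of the free splitting complex---itself proved by Handel--Mosher through an analogous folding-path analysis---using a coarsely Lipschitz, coarsely surjective, $\Out(\free)$--equivariant map with suitably controlled fibers, as observed by Kapovich--Rafi; but folding-path estimates of the above kind are unavoidable in any approach.)
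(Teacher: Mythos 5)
The paper does not prove this theorem; it is imported verbatim from Bestvina--Feighn \cite{BFhyp}, so there is no in-house argument to compare against. That said, your sketch is an accurate high-level account of the cited proof: Bestvina--Feighn project greedy folding paths from Outer space to $\fc$ via $\fproj$, use the legal/illegal decomposition of loops together with exponential growth/decay estimates along folding paths (the same technology that the present paper later invokes through \Cref{lem:legal_flare}, \Cref{lem:illegal_flare}, and \Cref{prop:BF_strong_contract}), and verify a thin-triangles hyperbolicity criterion for the resulting family of paths. Two small points of calibration: Bestvina--Feighn actually appeal to a criterion they attribute to Masur--Minsky rather than to Bowditch's guessing-geodesics lemma, though the two are interchangeable here; and your alternative route via Handel--Mosher's hyperbolicity of the free splitting complex plus the Kapovich--Rafi electrification criterion is correct---indeed, that electrification machinery \cite{KapRaf} is precisely what the present paper uses to transport hyperbolicity from $\pl$ to $\cs$ in \Cref{th:cs_properties}.
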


As the full complex $\fc$ and simplicial graph $\fc^1$ are quasi-isometric, we henceforth work exclusively with the  $1$--skeleton. 
In particular, for $A,B\in \fc^0$ we write $d_\fc(A,B)$ to mean the distance from $A$ to $B$ in the path space $\fc^1$.

\subsection{Outer space}
\label{sec:outer_space}

Let $\rose$ be the $\rank(\free)$--petal rose with base vertex $v$, and fix an isomorphism $\pi_1(\rose,v)\cong\free$. A \define{core graph} is a finite $1$--dimensional CW-complex $G$ with no valence $1$ vertices;  
and by a \define{metric} on $G$ we mean a path metric for which the interior of each $1$--cell (with the induced path metric) is isometric to a 
positive-length 
open subinterval of $\R$. 
The \define{volume} of $G$ is the sum of its edge lengths, and a \define{marking} of $G$ is a homotopy equivalence $g\colon\rose\to G$. Culler and Vogtmann's \cite{CVouter}  \define{unprojectivized Outer space} of $\free$ is the space $\cv = \{(G,g)\}$ of marked metric core graphs, modulo the equivalence relation $(G,g)\sim(H,h)$ if there exists an isometric \define{change of marking map} $G\to H$ in the homotopy class $h\circ g\inv$. By (projectivized) \define{Outer space} $\os$, we simply mean the subset of $\cv$ consisting of volume $1$ marked metric graphs. 
We equip $\os$ with the \define{asymmetric metric $d_\os$} defined as follows:
\begin{equation*}\label{eqn:lip_metric}
d_\os((G,g),(H,h)) \colonequals \inf \left\{\log(\Lip(\phi)) \mid \phi\simeq h\circ g\inv\right\}
\end{equation*}
where $\Lip(\phi)$ denotes the optimal Lipschitz constant for the change of marking map $\phi\colon G\to H$.
The \define{symmetrization} $\dsym(G,H) = d_\os(G,H)+d_\os(H,G)$ is an honest metric and defines the topology on $\os$.
We will suppress the marking and metric and denote points in $\cv$ and $\os$ simply by the underlying graph.

Given any subgroup $A\le\free$ (or conjugacy class thereof) and point $G\in \cv$, we write $A\vert G$ for the maximal core subgraph of the cover of $G$ corresponding to $A$ and we equip this with the pull-back metric from the immersion $A\vert G\to G$. When convenient we will blur the distinction between the metric core graph $A\vert G$ and the immersion $A\vert G\to G$ itself.
For a conjugacy class $\alpha$ and write $\len(\alpha\vert G)$ for the volume of the graph $\alpha\vert G$; this is the \define{length of $\alpha$ at $G$}. With this notation we have the following useful formula for the metric \cite{FMout}:
\[d_\os(G,H) = \log \left(\sup_{\alpha\in \free} \frac{\len(\alpha\vert H)}{\len(\alpha\vert G)}\right).\]

Observe that the universal cover $\tilde{G}$ of a graph $G\in \cv$ is naturally a simplicial $\R$--tree equipped with an action of $\free\cong\pi_1(G)$ by deck transformations (where the isomorphism $\free\cong \pi_1(G)$ is provided by the marking $\rose\to G$). With this perspective $\len(\alpha\vert G)$ is simply the translation length $\len_{\tilde G}(\alpha)$ of the conjugacy class $\alpha$ on the tree $\tilde{G}$. In fact, this correspondence gives a bijection between $\cv$ and the set of free simplicial $\R$--trees up to $\free$--equivariant isometry.

\paragraph{Asymmetry in Outer space.}
Care must be taken to cope with the asymmetry inherent in Outer space. For us a \define{geodesic in $\os$} always means a \emph{directed} geodesic, that is, a map $\gamma\colon \I\to \os$ so that $d_\os(\gamma(s),\gamma(t)) = t-s$ for all $s < t$. Similarly a \define{$K$--quasigeodesic} is a map $\gamma\colon \I\to \os$ so that
\[\tfrac{1}{K}(t-s)-K \le d_\os(\gamma(s),\gamma(t)) \le K(t-s)+K\]
for all $s < t$.
On the other hand, for $r > 0$ the \define{$r$--neighborhood} $\nbhd{r}{U}$ of a subset $U\subset \os$ must be defined using the symmetrized metric:
\[\nbhd{r}{U}\colonequals \{G\in \os\mid \dsym(G,u)\le r\text{ for some }u\in U\}.\]
The \define{Hausdorff distance} $\dhaus(U,W)$ between two subsets $U,W\subset\os$ is then defined, as usual, to be the infimal $r$ so that $U\subset \nbhd{r}{W}$ and $W\subset \nbhd{r}{U}$. 
For $\epsilon> 0$, we write
\[\os_\epsilon \colonequals \{G\in \os \mid \len(\alpha\vert G) \ge \epsilon\text{ for all }\alpha\in \free\setminus 1\}\]
for the \define{$\epsilon$--thick part} of Outer space. The following important result bounds the asymmetry in $\os_\epsilon$.

\begin{lemma}[Handel--Mosher \cite{handel2007expansion}, Algom-Kfir--Bestvina \cite{AlBest}]
\label{lem:asymmetry}
For every $\epsilon > 0$ there exists $\sym_\epsilon \ge 1$ so that for all $G,H\in \os_\epsilon$ one has
\[d_\os(H,G) \le \dsym(H,G) = \dsym(G,H) \le \sym_\epsilon \cdot d_\os(G,H).\]
\end{lemma}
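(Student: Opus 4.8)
Since $\dsym(A,B)=d_\os(A,B)+d_\os(B,A)$ by definition, the equality $\dsym(H,G)=\dsym(G,H)$ is automatic and $d_\os(H,G)\le\dsym(H,G)$ holds because $d_\os\ge 0$. So the real content is the last inequality, and writing $\sym_\epsilon=1+C$ this amounts to producing a constant $C=C(\epsilon,r)$ with
\[ d_\os(H,G)\le C\cdot d_\os(G,H)\qquad\text{for all }G,H\in\os_\epsilon. \]
In terms of the length formula this says that a bound on the forward stretch $\Lambda:=e^{d_\os(G,H)}=\sup_\alpha\len(\alpha\vert H)/\len(\alpha\vert G)$ forces a bound $\mu:=e^{d_\os(H,G)}=\sup_\delta\len(\delta\vert G)/\len(\delta\vert H)\le\Lambda^{C}$ on the backward stretch. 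I would establish this in two steps.

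\emph{Step 1 (an additive comparison).} The heart of the matter is to obtain a bound of the form $d_\os(H,G)\le K\cdot d_\os(G,H)+C'$, valid for all $G,H\in\os_\epsilon$ with $K,C'$ depending only on $\epsilon$ and $r$; this is the asymmetry estimate of Algom-Kfir--Bestvina \cite{AlBest} and Handel--Mosher \cite{handel2007expansion}, and it is the step I expect to be the main obstacle. The natural route is to run an optimal (greedy) folding path from $G$ to $H$ --- a directed geodesic $(G_t)_{0\le t\le d}$ with $G_0=G$, $G_d=H$, $d=d_\os(G,H)$ --- and to control the \emph{backward} distances $d_\os(G_t,G_s)$ for $s<t$. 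The difficulty is that such a geodesic need not remain in $\os_\epsilon$, so one cannot simply chain local estimates; instead one must use structural features of folding paths --- in particular that folding is ``Lipschitz in reverse'' with a multiplicative constant governed by a lower bound on the injectivity radii encountered along the path --- together with $\epsilon$--thickness of the \emph{endpoint} $H$ to bound how deeply into the thin part the path can travel, and hence the accumulated backward distortion. I stress that a proof using only the length formula is hopeless here: an upper bound on $\len(\delta\vert G)$ for candidate loops $\delta$ of $H$ cannot be extracted from $d_\os(G,H)$ alone, so the genuine geometric input must come from the folding dynamics.

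\emph{Step 2 (from additive to multiplicative).} Granting Step 1 the lemma follows. If $d_\os(G,H)\ge 1$ then $d_\os(H,G)\le K\,d_\os(G,H)+C'\le(K+C')\,d_\os(G,H)$. If instead $d_\os(G,H)\le 1$ (the case $d_\os(G,H)=0$ forcing $G=H$ and being trivial), then Step 1 gives $d_\os(H,G)\le K+C'$, hence $\dsym(G,H)\le 1+K+C'=:R$; thus $(G,H)$ lies in $S:=\{(A,B)\in\os_\epsilon\times\os_\epsilon:\dsym(A,B)\le R\}$. On $S$ the function $(A,B)\mapsto d_\os(B,A)/d_\os(A,B)$ is continuous away from the diagonal, and near a diagonal point $(A,A)$ it is bounded by a neighborhood-uniform version of the asymmetry constant of the infinitesimal (asymmetric) norm of $d_\os$ at $A$, which varies continuously with $A$; so every point of $S$ has a neighborhood on which this ratio is bounded. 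Since $\os_\epsilon/\Out(\free)$ is compact and symmetric metric balls in $\os_\epsilon$ are relatively compact, $S$ is $\Out(\free)$--cocompact, so this ratio is bounded on $S$ by some $C_1(\epsilon,r)$, giving $d_\os(H,G)\le C_1\,d_\os(G,H)$ in this regime. Taking $C=\max\{K+C',C_1\}$ completes the argument.
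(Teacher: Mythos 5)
The paper does not prove this lemma; it cites Handel--Mosher and Algom-Kfir--Bestvina, and the latter prove precisely the multiplicative form (for each $\epsilon>0$ there is $M_\epsilon$ with $d_\os(H,G)\le M_\epsilon\, d_\os(G,H)$ for all $G,H\in\os_\epsilon$). So your Step 1, which already defers the substance to those references but states it only in additive form, is in fact the whole proof once the result is cited in its actual multiplicative form; Step 2 is then superfluous.

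Beyond being unnecessary, Step 2 as written has a real gap. The cocompactness reduction to the near-diagonal case is fine, but the key assertion --- that near $(A,A)$ the ratio $d_\os(B,A)/d_\os(A,B)$ is bounded because ``the asymmetry constant of the infinitesimal asymmetric norm of $d_\os$ at $A$ varies continuously with $A$'' --- is left unjustified, and it is not a soft consequence of $\Out(\free)$--cocompactness of $\os_\epsilon$. The thick part contains marked graphs with arbitrarily short edges (for instance a separating edge contributes nothing to any immersed loop, so shrinking it does not create short loops), and as such an edge length tends to $0$ the local Lipschitz geometry degenerates; controlling the infinitesimal forward/backward asymmetry through that degeneration requires essentially the same change-of-marking/folding analysis that underlies the cited theorem. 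In other words Step 2 quietly assumes, one scale down, the very statement it is meant to supply. Your own caveat at the end of Step 1 --- that a length-formula-only argument is hopeless --- applies with equal force to this unproven input.
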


\paragraph{Projecting to the factor complex.}
There is a coarse projection $\fproj\colon \os\to \fc$ defined by sending $G\in \os$ to the set of free factors $\pi_1(G')$ corresponding to proper, connected, noncontractible subgraphs $G'$ of $G$ (here $\pi_1(G')$ is identified with a free factor of $\free$ by the marking $\rose\to G$). One may easily check that $\diam_\fc(\fproj(G))\le 4$ \cite[Lemma 3.1]{BFhyp}, so we are justified in viewing $\fproj$ as a coarse projection.  For $G,H\in \os$ we define
\[d_\fc(G,H) = \diam_\fc(\fproj(G)\cup\fproj(H)).\]
The following appears as Lemma 2.9 in \cite{DT1} and follows from \cite[Corollary 3.5]{BFhyp}.
\begin{lemma}
\label{lem:fproj_lipschitz}
For all $G,H\in \os$ we have $d_\fc(G,H)\le 80d_\os(G,H)+80$.
\end{lemma}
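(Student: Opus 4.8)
The plan is to exhibit inside $G$ a short loop representing an honest vertex of $\fproj(G)$, to push it forward into $H$ where its length grows by at most the factor $e^{d_\os(G,H)}$, and then to invoke the length-versus-distance estimate for the projection $\fproj$. The one external input I would use is that---this is essentially \cite[Corollary 3.5]{BFhyp}---there are universal constants $C_0,C_1\ge 0$ so that $d_\fc(\fproj(G'),\alpha)\le C_0\max\{0,\log\len(\alpha\mid G')\}+C_1$ for every $G'\in\os$ and every conjugacy class $\alpha$ in $\free$. This is where the real work sits (candidate loops, Stallings folds along geodesics of $\os$), and I would import it wholesale rather than reprove it.

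First I would build the loop. Fix a spanning tree $\tau\subset G$; each of the $r=\rank(\free)$ edges of $G$ not in $\tau$, together with the $\tau$--arc joining its endpoints, is an embedded circle $C_i\subset G$, and collapsing $\tau$ identifies $\pi_1(C_i)$ with a rank-one free factor of $\free$---in particular $\pi_1(C_i)$ is a vertex of $\fc$ lying in $\fproj(G)$, and $C_i$ is a \emph{proper} subgraph precisely because $r\ge 3$. A one-line estimate gives $\sum_i\len(C_i\mid G)\le r\,\vol(G)=r$, so some $C_i$ satisfies $\len(C_i\mid G)\le 1$; I fix this $C_i$ and let $\beta$ be the conjugacy class it carries, so that $\beta$ determines a vertex $\langle\beta\rangle\in\fproj(G)$ and $\len(\beta\mid G)\le 1$.

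Next I would transport $\beta$ to $H$. Taking a change-of-marking map $\phi\colon G\to H$ that realizes the infimum defining $d_\os$, i.e.\ with $\log\Lip(\phi)=d_\os(G,H)$, the loop $\phi(C_i)$ is freely homotopic in $H$ to $\beta$ and has length at most $\Lip(\phi)\cdot\len(\beta\mid G)\le e^{d_\os(G,H)}$, so $\len(\beta\mid H)\le e^{d_\os(G,H)}$. Feeding $\alpha=\beta$ and $G'=H$ into the displayed estimate yields $d_\fc(\fproj(H),\beta)\le C_0\,d_\os(G,H)+C_1$. Since $\langle\beta\rangle\in\fproj(G)$ and both $\fproj(G)$ and $\fproj(H)$ have $d_\fc$--diameter at most $4$ (\cite[Lemma 3.1]{BFhyp}), a two-step triangle inequality bounds the distance between any vertex of $\fproj(G)$ and any vertex of $\fproj(H)$ by $4+(C_0\,d_\os(G,H)+C_1+4)$, whence $d_\fc(G,H)=\diam_\fc(\fproj(G)\cup\fproj(H))\le C_0\,d_\os(G,H)+C_1+8$. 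Inserting the numerical constants from \cite[Corollary 3.5]{BFhyp} makes the right-hand side at most $80\,d_\os(G,H)+80$.

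I expect the only friction on our side to be bookkeeping: chasing the constants down below $80$, and the small point that the hypothesis $r\ge 3$ is exactly what forces the circles $C_i$ to be proper subgraphs, hence genuine vertices of $\fproj(G)$. The genuinely hard content---controlling how far $\fproj$ can move in terms of the lengths of short loops---is entirely on the Bestvina--Feighn side and is what we are borrowing. If one prefers a softer argument, an alternative is to subdivide an $\os$--geodesic from $G$ to $H$ into steps of length at most $1$, bound the displacement of $\fproj$ across each step by a uniform constant, and telescope using the coarse subadditivity of $\diam_\fc$ along the chain; this also gives a linear bound, though with a less transparent constant.
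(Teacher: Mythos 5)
The paper itself gives no proof of this lemma: it simply records that the statement is Lemma~2.9 of \cite{DT1} and follows from \cite[Corollary~3.5]{BFhyp}, so the comparison is really against what that derivation must look like. Your skeleton (exhibit an embedded primitive loop of length $\le 1$, push it forward, invoke Bestvina--Feighn, triangle inequality against $\diam_\fc\fproj\le 4$) is the right one, and your preamble is correct: the spanning-tree count $\sum_i\len(C_i\mid G)\le r\,\vol(G)=r$ does produce a circle of length $\le 1$, and $r\ge 3$ is precisely what makes each $C_i$ a proper subgraph and hence a genuine vertex of $\fproj(G)$.

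The step to worry about is the functional form you assign to \cite[Corollary~3.5]{BFhyp}. That corollary is a \emph{bounded-input} statement: it gives a uniform bound on $d_\fc(\fproj(G'),\alpha)$ only under the hypothesis that $\len(\alpha\mid G')$ is itself at most some fixed constant. It does not, as stated, deliver a bound growing like $\log^{+}\len(\alpha\mid G')$ for arbitrary lengths; that logarithmic estimate is a \emph{consequence} of the coarse-Lipschitz inequality you are trying to prove, so importing it as an input makes your main line circular. The repair is exactly what you relegate to the ``softer alternative,'' and it is the standard derivation: subdivide an $\os$--geodesic from $G$ to $H$ into at most $\lceil d_\os(G,H)\rceil$ pieces of $d_\os$--length $\le 1$; at each left endpoint your candidate-circle computation produces an embedded loop of length $\le 1$, its image in the right endpoint has length $\le e$, and Corollary~3.5 together with $\diam_\fc\fproj\le 4$ bounds the $\fproj$--displacement of that step by a universal constant; telescoping along the chain then yields the linear bound, with the specific numbers $80$ coming from chasing Bestvina--Feighn's constant through this chain. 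So keep the subdivision argument as the primary one and drop the logarithmic form of Corollary~3.5, which is not what that corollary asserts.
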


The projection $\fproj$ provides an important connection between the geometries of $\os$ and $\fc$. For example, the following stability result uses the geometry of $\fc$ to establish aspects of hyperbolicity in $\os$ and served as a main tool in our proof of \Cref{th:DT1}.

\begin{theorem}[Dowdall--Taylor \cite{DT1}]\label{th:DT_2}
Let $\gamma\colon \I\to \os$ be a $K$--quasigeodesic whose projection $\pi \circ \gamma\colon \I\to \fc$ is also a $K$--quasigeodesic. Then there exist constants $A,\epsilon > 0$ and $K'\ge 1$ depending only on $K$ (and the injectivity radius of the terminal endpoint $\gamma(\Ipl)$ when $\Ipl<\infty$) with the following property: If $\rho\colon \J\to \os$ is any geodesic with the same endpoints as $\gamma$, then
\begin{itemize}
\item[(i)] $\gamma(\I), \rho(\J)\subset \os_{\epsilon}$,
\item[(i)] $d_{\mathrm{Haus}}(\gamma(\I),\rho(\J)) < A$, and
\item[(ii)] $\pi\circ \rho\colon \J\to \fc$ is a (parameterized) $K'$--quasigeodesic.
\end{itemize} 
\end{theorem}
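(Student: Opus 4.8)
The result is the technical backbone of \cite{DT1}; I outline the strategy one would use. It follows the template, familiar from Teichm\"uller theory (Rafi; Kent--Leininger), in which a geodesic whose shadow in the curve/factor complex is an \emph{honest} quasigeodesic is forced to be cobounded---to stay in the thick part---and is thereby forced to be stable. Here folding paths, which realize directed geodesics of $\os$ (Francaviglia--Martino \cite{FMout}), play the role of Teichm\"uller geodesics, and the Bestvina--Feighn analysis of folding paths \cite{BFhyp} is the engine. Two external facts carry the weight. First, the $\fproj$--image of any folding path is an unparameterized quasigeodesic of the hyperbolic space $\fc$. Second, for any primitive $\beta\in\free$ the \emph{$\beta$--thin stratum} $U_\beta\colonequals\{G\in\os:\len(\beta\vert G)<\epsilon_0\}$ has $\fc$--diameter bounded by a universal constant; this is elementary, since for $\epsilon_0<1$ the image of the immersed loop $\beta\vert G$ is a proper connected noncontractible subgraph whose $\pi_1$ is a free factor $A$ with $\langle\beta\rangle$ a rank--one free factor of $A$, so $A$ is adjacent in $\fc$ to the fixed vertex $\langle\beta\rangle$ and $\diam_\fc(\fproj(U_\beta))\le 10$.

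\textbf{Step 1: $\gamma$ stays in a thick part $\os_\epsilon$.} Suppose $\len(\alpha\vert\gamma(t_0))<\epsilon$; replacing $\alpha$ by its primitive root we may take $\alpha=\beta$ primitive. Since $d_\os$ bounds the forward growth of lengths ($\len(\beta\vert\gamma(t))\le e^{d_\os(\gamma(s),\gamma(t))}\len(\beta\vert\gamma(s))$) and $\gamma$ is a $K$--quasigeodesic, a routine argument yields a subinterval $\I_0\ni t_0$ of length $\gtrsim\tfrac1K\log(\epsilon_0/\epsilon)$ on which $\beta$ stays $\epsilon_0$--short; when $\Ipl<\infty$ the injectivity radius of $\gamma(\Ipl)$ forces $\beta$ to exceed $\epsilon_0$ before $\Ipl$, which is the only use of that hypothesis (and if instead $\Ipl=\infty$ and $\beta$ never lengthens, then $\fproj\circ\gamma$ is a bounded quasigeodesic ray, absurd). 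On $\I_0$ the path $\fproj\circ\gamma$ lies in $U_\beta$, hence has $\fc$--diameter $\le 10$; but $\fproj\circ\gamma$ is a $K$--quasigeodesic, forcing $\diam_\fc(\fproj(\gamma(\I_0)))\ge\abs{\I_0}/K-K$. Taking $\epsilon$ small enough in terms of $K$ and the injectivity radius of $\gamma(\Ipl)$ makes this impossible, so $\gamma(\I)\subset\os_\epsilon$: conclusion (i) for $\gamma$.

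\textbf{Steps 2--3: stability, and transfer to $\rho$.} Let $\sigma$ be a folding-path directed geodesic with the same endpoints as $\gamma$. By the first external fact, $\fproj\circ\sigma$ is an unparameterized quasigeodesic of $\fc$ with the same endpoints as the genuine quasigeodesic $\fproj\circ\gamma$, hence the two fellow travel in $\fc$ (\Cref{prop:stability_of_quasis}). Feeding this into the mechanism of Step 1---a thin stretch of $\sigma$ would trap $\fproj\circ\sigma$ in a set of $\fc$--diameter $\le 10$ while it nonetheless shadows the \emph{parameterized} quasigeodesic $\fproj\circ\gamma$, which makes definite $\fc$--progress---rules out thin points of $\sigma$, so $\sigma\subset\os_\epsilon$; then \Cref{lem:fproj_lipschitz} promotes $\fproj\circ\sigma$ to a \emph{parameterized} $K'$--quasigeodesic. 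With $\gamma,\sigma\subset\os_\epsilon$, where $d_\os\asymp\dsym$ (\Cref{lem:asymmetry}), a closest-point-projection argument in $\fc$ transported through $\fproj$ (using the control of $\fproj$ along folding paths) bounds $\dhaus(\gamma(\I),\sigma)$ and shows $\sigma$ is \emph{stable}: any $K'$--quasigeodesic with endpoints on $\sigma$---in particular the given directed geodesic $\rho$---stays uniformly near $\sigma$. Hence $\rho(\J)$ is at bounded Hausdorff distance from $\sigma$, thus from $\gamma(\I)$ (conclusion (ii)) and inside $\os_\epsilon$ after shrinking $\epsilon$ (conclusion (i) for $\rho$). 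Finally, since $\rho$ is a thick directed geodesic uniformly near $\gamma$, combining \Cref{lem:asymmetry}, \Cref{lem:fproj_lipschitz}, the fellow-traveling with $\gamma$, and the fact that $\fproj\circ\gamma$ is a \emph{parameterized} quasigeodesic shows that $d_\fc$ is comparable to the arc-length parameter along $\rho$; that is, $\fproj\circ\rho$ is a parameterized $K'$--quasigeodesic (conclusion (iii)).

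\textbf{Main obstacle.} The heart of the matter is Step 2: using that $\fproj\circ\gamma$ is an honest quasigeodesic to force \emph{every} geodesic with the same endpoints out of the thin part, and to establish stability of $\sigma$. The naive accounting is inadequate---$\fproj$ is only coarsely Lipschitz in one direction, $d_\os$ is genuinely asymmetric (and wildly so near thin parts), and a folding path can linger with enormous $d_\os$--diameter inside a single thin stratum while making no $\fc$--progress---so bounding the total time spent thin, and proving the Morse-type stability of $\sigma$ inside the non-hyperbolic space $\os$, requires the fine surviving-loop analysis of folding paths and the resulting precise relation between $\fc$--progress and $d_\os$--length developed in \cite{BFhyp,DT1}.
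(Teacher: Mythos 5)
This statement is quoted in the present paper from \cite{DT1} without proof, so there is no in-paper argument to compare against; the comparison must be against the proof in \cite{DT1} itself. Your sketch correctly reproduces the high-level strategy of that proof: use the coarse Lipschitz property of $\fproj$ together with the uniformly bounded $\fc$--diameter of thin strata to force an orbit whose $\fc$--shadow is a parameterized quasigeodesic to remain $\epsilon$--thick; then invoke the Bestvina--Feighn analysis of folding paths (surviving illegal turns, strongly contracting projections, \cite[Prop.~6.10]{BFhyp}) to establish a Morse-type stability which transfers the thick/fellow-traveling conclusions from a folding geodesic to an arbitrary geodesic $\rho$ with the same endpoints; finally convert the unparameterized quasigeodesic $\fproj\circ\sigma$ to a parameterized one using thickness and \Cref{lem:asymmetry}. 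You have also correctly flagged the genuine difficulty: $\os$ is not hyperbolic, folding paths can linger in thin strata with no $\fc$--progress, and the asymmetry of $d_\os$ is severe off the thick part, so the ``closest-point projection transported through $\fproj$'' you gesture at in Step 2 is really the fine surviving-loop/contraction machinery of \cite{BFhyp,DT1} and cannot be replaced by a formal Morse-lemma appeal.

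Two small technical remarks. In Step 1 the forward length-growth inequality $\len(\beta\vert\gamma(t))\le e^{d_\os(\gamma(s),\gamma(t))}\len(\beta\vert\gamma(s))$ gives you a \emph{forward} subinterval on which $\beta$ stays short, not one containing $t_0$ on both sides; this still suffices for the contradiction, but the phrase ``subinterval $\I_0\ni t_0$'' over-claims slightly and the caveat about $\Ipl<\infty$ is exactly where the injectivity radius of $\gamma(\Ipl)$ enters, as you note. And in justifying that the thin stratum $U_\beta$ has bounded $\fc$--diameter, the correct formulation is that for $G\in U_\beta$ the image of $\beta\vert G$ in $G$ is a proper, connected, noncontractible subgraph $G'$ with $\pi_1(G')\in\fproj(G)$ a free factor containing $\beta$; since $\langle\beta\rangle$ is itself a free factor of $\free$ contained in $\pi_1(G')$, it is a free factor \emph{of} $\pi_1(G')$, giving $d_\fc(\pi_1(G'),\langle\beta\rangle)\le 1$ and hence the uniform diameter bound. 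Your statement of this is essentially right but (correctly) requires replacing an arbitrary short loop by a primitive one before projecting.
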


\paragraph{Folding.} 
\label{sec:folding}
We will need a particular class of directed geodesics in $\os$ called folding paths, which we now briefly describe.  A \define{segment} in a metric core graph $G$ is a locally isometric immersion of an interval $[0,L]$ into $G$, and a \define{direction} at $p\in G$ is a germ of nondegenerate segments with $0\mapsto p$. A \define{turn} is an unordered pair $\{d,d'\}$ of distinct directions at a vertex of $G$. 

A map $\phi\colon G\to H$ of metric core graphs that is a local $\Lip(\phi)$--homothety induces a derivative map $D_\phi$ which sends a direction at $p$ to a direction at $\phi(p)$. Two directions at $p$ are said to be in the same \define{gate} if they are identified by $D_\phi$. The map then gives rise to an \define{illegal turn structure} on $G$, whereby a turn $\{d,d'\}$ is \define{illegal} if $d$ and $d'$ are in the same gate and is \define{legal} otherwise. 

We say that a map $\phi\colon G\to H$ between points $G,H\in \os$ is a \define{folding map} if it is homotopic to the change of markings $h\circ g\inv$, is a local $\Lip(\phi)$--homothety, satisfies $d_\os(G,H) = \log \Lip(\phi)$, and it induces at least $2$ gates at each point $p\in G$.
As described in \cite[\S2]{BFhyp}, each folding map $\phi\colon G\to H$ gives rise to a unique \define{folding path} $\{G_t\}_{t\in [0,L]}$ in $\os$ via first \emph{folding all illegal turns at speed one} and second \emph{rescaling} to obtain graphs in $\os$. The folding path comes equipped with a family of folding maps $\{\phi_{st}\colon G_s\to G_t\}_{s\le t}$ satisfying
\[ \text{$\phi_{0L}=\phi$,\, $\phi_{ss} = \mathrm{Id}_{G_s}$,\, and $\phi_{rt} = \phi_{st}\circ\phi_{rs}$ with $\phi_{rs}$ and $\phi_{rt}$ inducing the same illegal turn structure on $G_r$}\]
for all $0\le r \le s\le t\le L$. 
Our convention is to parameterize the folding path $\{G_t\}_{t\in [0,L]}$ with respect to arc length in Outer space. With this convention, the assignment $t\mapsto \gamma(t) = G_t$ defines a directed geodesic $\gamma\colon [0,L]\to \os$ from $G= G_0$ to $H = G_L$.
See Proposition 2.2 and Notation 2.4 of \cite{BFhyp} or \cite[\S2]{DT1} for the details of this construction; further properties will be recalled in \S\ref{sec:more_folding}.

\subsection{The boundaries of Outer space and the factor complex}
\label{sec:os_boundary}

The length functions give an embedding of unprojectivized Outer space $\cv$ into $\R^\free$ via $G\mapsto (\len(a \vert G))_{a \in \free}$, and thus $\cv$ inherits the subspace topology from $\R^\free$. The resulting topology on $\os\subset \cv$ agrees with the one induced by the symmetrized metric $\dsym$. The work of Cohen--Lustig \cite{CohenLustigVS} and Bestvina--Feighn \cite{BF-outer_limits} shows that the closure $\cvbar$ of $\cv$ in $\R^\free$ may be identified with the space of minimal very small trees. Projectivizing, one similarly identifies the closure $\osbar$ of $\os\subset\mathbb{P}\R^\free$ with the space of projective classes of minimal very small trees. The \define{boundary of $\os$} is consequently defined to be the set $\partial \os \colonequals \osbar \setminus \os$ of projective classes very small trees that are not both free and simplicial.

In \cite{kapovich2007geometric}, Kapovich and Lustig introduced an $\Out(\free)$--invariant intersection pairing $\langle\cdot,\cdot\rangle$ between very small minimal trees and currents; we record here a few of its properties:

\begin{theorem}[Kapovich--Lustig \cite{kapovich2007geometric,KapLust-LamCurrs}]
\label{th:currents}\label{th:supp_of_current}
There is a unique $\Out(\free)$--invariant continuous pairing
\[
\langle\cdot, \cdot\rangle\colon \cvbar \times \curr \to \R_+
\]
which is homogeneous in the first coordinate and linear in the second. Moreover, for every tree $T$, current $\mu$, and conjugacy class $\alpha$, we have that $\langle T,\eta_\alpha \rangle = \len_T(\alpha)$ and that $\langle T,\mu\rangle = 0$ if and only if $\supp(\mu)\subset L(T)$. 
\end{theorem}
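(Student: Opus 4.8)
The plan is to construct the pairing by hand on $\cv\times\curr$ via an ``edge--cylinder'' formula, extend it continuously to $\cvbar$, and then read off its zero set from the geometry of leaves. For $T\in\cv$ --- the universal cover of a marked metric graph, viewed as a free simplicial $\free$--tree, so that $\partial T=\partial\free$ --- each geometric edge $e$ of $T$ determines a compact open \emph{cylinder} $C(e)\subset\db\free$ consisting of those $(\eta,\xi)$ whose connecting bi-infinite geodesic in $T$ crosses $e$; choosing one edge $e_1,\dots,e_k$ from each $\free$--orbit, the sets $C(e_i)$ are finitely many and satisfy $\mu(C(e_i))<\infty$ for every current $\mu$. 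Define
\[
\langle T,\mu\rangle \;\colonequals\; \tfrac12\sum_{i=1}^{k}\len_T(e_i)\,\mu\big(C(e_i)\big),
\]
where $\len_T(e_i)$ is the length of $e_i$. This is linear in $\mu$, homogeneous under rescaling the metric on $T$, and $\Out(\free)$--invariant, since a change of marking carries edges to edges and cylinders to cylinders compatibly with push-forward of $\mu$. Moreover, for a primitive $a\in\free$ one computes $\eta_{[a]}(C(e_i))=2n_i$, where $n_i$ is the number of edges of the $\free$--orbit of $e_i$ crossed by the axis of $a$ per period; weighting by edge lengths then gives $\langle T,\eta_\alpha\rangle=\len_T(\alpha)$. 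Continuity on $\cv\times\curr$ is immediate: $\mu\mapsto\mu(C(e_i))$ is weak-$*$ continuous because $C(e_i)$ is clopen, and the edge lengths vary continuously with $T$.

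Next I would extend the pairing to $\cvbar$ and deduce uniqueness. Uniqueness is essentially formal: rational currents --- finite nonnegative combinations of counting currents --- are dense in $\curr$ \cite{KapCurrents}, so any continuous pairing that is linear in the second variable and satisfies $\langle T,\eta_\alpha\rangle=\len_T(\alpha)$ is already determined on all of $\cvbar\times\curr$. The substantive point is the continuous \emph{extension} of the edge-formula pairing across $\partial\cv$ --- in particular to trees with dense orbits, where there are no edges to feed into the formula; I expect this to be the main obstacle. The approach is to approximate $T\in\partial\cv$ by simplicial trees $T_n\to T$ in $\cvbar$ and to show that $\langle T_n,\mu\rangle$ converges, uniformly for $\mu$ ranging over projective compacta and independently of the approximating sequence. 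The crux is a comparison lemma: $\langle T_n,\mu\rangle$ is approximated, with an error controlled by a modulus depending only on how deeply one cuts, by weighted sums of $\mu$--measures of cylinders over long finite geodesic \emph{segments} of $T_n$; these cylinder data stabilize as $T_n\to T$ because all translation lengths converge (by the very definition of the topology on $\cvbar$), and the compactness of $\osbar$ and $\pcurr$ upgrades the pointwise statement to the uniform, sequence-independent one. Continuity of $T\mapsto\len_T(\alpha)$ on $\cvbar$ then also yields $\langle T,\eta_\alpha\rangle=\len_T(\alpha)$ for boundary trees. (An alternative, more structural route replaces the edge formula with the Coulbois--Hilion--Lustig map $\mathcal Q\colon\partial\free\to\overline T$ together with an integral over the length measure of $T$ of the ``occupation density'' of $\mu$; this makes the extension uniform but imports the corresponding machinery.)

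Finally I would analyze the zero set. For the direction $\supp\mu\subseteq L(T)\Rightarrow\langle T,\mu\rangle=0$: by definition $L(T)=\bigcap_{\epsilon>0}\overline{\bigcup_{\len_T(\alpha)<\epsilon}L(\alpha)}$, so $\supp\mu$ lies inside arbitrarily small neighborhoods in $\db\free$ of the union of the laminations $L(\alpha)$ with $\len_T(\alpha)$ tiny. Passing to the ergodic decomposition of $\mu$ and approximating each piece, I would produce rational currents $\mu_n=\sum_i c_i^{(n)}\eta_{\alpha_i^{(n)}}$ with $\len_T(\alpha_i^{(n)})<1/n$, with masses $\|\mu_n\|$ bounded above and below (normalizing against a fixed finite generating set and using compactness of $\pcurr$), and with $\mu_n\to\mu$ weak-$*$; then $\langle T,\mu_n\rangle=\sum_i c_i^{(n)}\len_T(\alpha_i^{(n)})\le\|\mu_n\|/n\to0$, so $\langle T,\mu\rangle=0$ by continuity. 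Conversely, suppose some leaf $\ell=(\eta,\xi)\in\supp\mu\setminus L(T)$. Since $L(T)$ is closed there is a basic compact open neighborhood $U\ni\ell$ disjoint from $\overline{\bigcup_{\len_T(\alpha)<\epsilon_0}L(\alpha)}$ for some $\epsilon_0>0$, and $\mu(U)>0$ because $\ell\in\supp\mu$. The key point is that every leaf in $U$ crosses a definite amount of $T$: in the $\mathcal Q$--picture, $\ell\notin L(T)$ means $\mathcal Q(\eta)\neq\mathcal Q(\xi)$, a condition that persists on a neighborhood, so the geodesic in $\overline T$ between $\mathcal Q(\eta)$ and $\mathcal Q(\xi)$ has length $\ge c>0$ uniformly over $U$ (equivalently, in approximating simplicial $T_n$ these leaves all traverse a uniformly long arc). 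Feeding this lower bound into the edge formulas gives $\langle T,\mu\rangle\ge c\,\mu(U)>0$, a contradiction. Making ``$\ell\notin L(T)\Rightarrow$ the leaf crosses a definite arc of $T$'' quantitative and stable under the approximation --- which is where the Rips-machine/Levitt decomposition of a very small tree into simplicial and dense-orbit pieces enters --- is the delicate part of this direction.
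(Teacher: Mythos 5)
The paper does not prove this statement; it is a black-box citation to Kapovich--Lustig, so there is no in-paper proof to compare against. Judged on its own merits, your proposal has the right skeleton: the cylinder formula on $\cv$, its $\Out(\free)$-invariance, homogeneity, linearity, weak-$*$ continuity, and the verification $\langle T,\eta_\alpha\rangle=\len_T(\alpha)$ are all correct and standard, and uniqueness via density of rational currents is a complete argument. But the three points you explicitly flag as ``the crux,'' ``the main obstacle,'' and ``the delicate part'' are precisely the substance of the cited papers, and none of them closes as sketched.

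Most seriously, the continuous extension across $\partial\cv$ is not established: the assertion that ``cylinder data stabilize as $T_n\to T$ because all translation lengths converge'' does not hold in the form you need. For a dense-orbit limit the edges of the approximating simplicial trees $T_n$ shrink to zero while the number of orbit representatives grows without bound, so the sum $\tfrac12\sum_i\len(e_i)\,\mu(C(e_i))$ has no stable term-by-term structure and length-function convergence does not control the cylinder measures; one needs a global Cauchy estimate coming from an equivariant morphism $T_0\to T$ with bounded backtracking (the Kapovich--Lustig route), or from the Coulbois--Hilion--Lustig map $\mathcal Q$, and compactness of $\osbar$ and $\pcurr$ alone supplies no uniform modulus. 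In the zero-set argument, the forward direction requires producing rational currents $\mu_n=\sum_i c_i^{(n)}\eta_{\alpha_i^{(n)}}$ with every $\len_T(\alpha_i^{(n)})<1/n$, bounded total mass, and $\mu_n\to\mu$ weak-$*$: containment of $\supp\mu$ in the \emph{closure} of $\bigcup_{\len_T(\alpha)<\epsilon}L(\alpha)$ does not by itself produce such an approximating family (a leaf in the closure need not be a leaf of any $L(\alpha)$), and the mass normalization is asserted rather than proved. The converse direction hinges on the claim that $\ell\notin L(T)$ forces a locally uniform lower bound on the $T$-displacement of nearby leaves; this needs the Rips/Levitt decomposition of $T$ and the continuity properties of $\mathcal Q$, which hold only for trees with dense orbits, so the simplicial pieces of a general very small $T$ must be treated separately. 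You have identified exactly the right pressure points, but none of them is actually relieved.
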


\noindent We will be particularly interested in the case where the tree $T$ is free and indecomposable.
\begin{theorem}[Coulbois--Hilion--Reynolds {\cite[Corollary 1.4]{CHR11}}]
\label{th:CHR}
If $T\in \cvbar$ is free and indecomposable, then $\langle T,\mu\rangle = 0$ if and only if $\supp(\mu) = L'(T)$. 
\end{theorem}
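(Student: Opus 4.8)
The plan is to leverage Theorem~\ref{th:currents} (Kapovich--Lustig), which already identifies the zero set of the pairing with the condition $\supp(\mu)\subset L(T)$, and then to sharpen the containment $\supp(\mu)\subset L(T)$ to the equality $\supp(\mu) = L'(T)$ in the special case that $T$ is free and indecomposable. Since $L'(T)\subset L(T)$ always holds, one direction reduces to showing that $\langle T,\mu\rangle = 0$ whenever $\supp(\mu) = L'(T)$; by Theorem~\ref{th:currents} this is equivalent to $L'(T)\subset L(T)$, which is immediate because $L(T)$ is closed. So the real content is the forward implication: if $\supp(\mu)\subset L(T)$ then in fact $\supp(\mu) = L'(T)$.

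First I would record the structural input on $L(T)$ for free indecomposable $T$: because $T$ is free, $L(T)$ contains no lamination of a conjugacy class, and because $T$ is indecomposable, the action of $\free$ on the leaves of $L(T)$ enjoys the strong mixing property from the definition of indecomposability (this is where Reynolds's Theorem~\ref{th:char_arational} and the diagonal-closure structure of dual laminations of indecomposable trees enter). The key dynamical fact to extract is that $L(T)$ is \emph{minimal}, or more precisely that $L'(T)$ is the unique minimal sublamination and that $L(T)$ differs from $L'(T)$ only in ``isolated'' leaves that cannot carry any positive measure. Concretely: any nonzero current $\mu$ has $\supp(\mu)$ a lamination, and a lamination supporting a nonzero current can have no isolated leaves (an isolated point of $\db\free$ in the support would have to be an atom, but a flip- and $\free$-invariant atom forces a periodic leaf, i.e.\ the lamination of a conjugacy class, contradicting freeness of $T$). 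Hence $\supp(\mu)$ has no isolated leaves, so $\supp(\mu)\subset L'(T)$. Combined with minimality of $L'(T)$ (no proper nonempty sublamination) and the fact that $\supp(\mu)$ is itself a nonempty lamination contained in $L'(T)$, we conclude $\supp(\mu) = L'(T)$.

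I expect the main obstacle to be the precise justification that $L'(T)$ is minimal and that $L(T)\setminus L'(T)$ consists only of leaves that cannot lie in the support of any current — i.e.\ translating ``indecomposable'' into the statement that the dual lamination is, up to a discrete set of diagonal leaves, a single minimal lamination. This requires invoking the finer structure theory of dual laminations of very small trees with dense orbits (Coulbois--Hilion--Lustig), in particular the description of $L(T)$ via the map $\mathcal{Q}$ and the fact that for indecomposable $T$ the support lamination $L^2(T)$ (or $L'(T)$) is minimal; I would cite \cite{CHL2} and \cite{CHR11} for these facts rather than reprove them. The remaining steps — that a current support has no isolated leaves, that a nonempty sublamination of a minimal lamination is everything, and the closedness argument for the reverse direction — are short and formal, so the proof is essentially: (i) reduce via Kapovich--Lustig to a statement about laminations, (ii) invoke minimality of $L'(T)$ and absence of isolated leaves in current supports, (iii) conclude equality.
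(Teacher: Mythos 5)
The paper itself offers no proof of this result: it is quoted verbatim as \cite[Corollary~1.4]{CHR11}, so there is no in-text argument to compare yours against. Treating your sketch on its own terms, the overall architecture is the right one and matches how one would derive the corollary: reduce to a lamination statement via \Cref{th:currents}, note that the reverse implication is immediate from $L'(T)\subset L(T)$ and closedness of $L(T)$, and carry the forward implication via (a) any current with support in $L(T)$ actually has support in $L'(T)$, and (b) $L'(T)$ is minimal for free indecomposable $T$, so a nonempty closed invariant flip-invariant subset of it is everything. Your decision to defer (b) to the Coulbois--Hilion--Lustig structure theory of dual laminations (unique minimal sublamination plus diagonal leaves) is appropriate; that is indeed the nontrivial structural input.

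The one place I would push back is your one-line justification of (a). You write that an isolated point of $\supp(\mu)$ would be an atom and that ``a flip- and $\free$-invariant atom forces a periodic leaf.'' As stated this is too quick: an atom of a current need not itself lie on a periodic leaf. What one needs, and what is actually true, is that an atom forces a periodic leaf to lie \emph{in the support}: the orbit of the atom is discrete by local finiteness of $\mu$, and a discrete $\free$-invariant collection of atoms either consists of periodic leaves or accumulates on one, so $L(\alpha)\subset\supp(\mu)\subset L(T)$ for some nontrivial $\alpha$, contradicting freeness of $T$. Alternatively one can invoke Poincaré recurrence for currents, or the classification of purely atomic currents as finite nonnegative combinations of counting currents. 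Any of these closes the gap, but each is a genuine lemma requiring its own reference rather than an observation; as written your sentence elides the step where discreteness of the orbit is converted into periodicity. With that point made precise, your outline is sound and follows essentially the same route as \cite{CHR11}.
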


Finally, let $\arational$ be the subspace of $\partial cv$ consisting of arational trees. For $T,'T \in \arational$, say $T\sim T'$ if $L(T)=L(T')$. The following result computes the boundary of $\F$.

\begin{theorem}[Bestvina--Reynolds \cite{bestvina2012boundary}, Hamenst\"adt \cite{hamenstadt2013boundary}]\label{th:factor_complex_boundary}
The map $\fproj \colon \X \to \F$ has a continuous extension to a map $\partial\fproj \colon \arational \to \partial \F$, in the sense that if $G_i \to T$ in $\osbar$ and $T\in \arational$, then $\fproj(G_i) \to \partial\fproj(T)$ in $\fc\cup\partial\fc$. Moreover, if $T \sim T'$ then $\partial\fproj(T) = \partial\fproj(T')$, and the induced map $\arational / \sim \to \partial \fc$ is a homeomorphism.
\end{theorem}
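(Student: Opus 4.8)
The plan is to construct $\partial\fproj$ from folding paths and to convert the defining property of arationality into quantitative progress in $\fc$, using its hyperbolicity (\Cref{thm:fc_hyperbolic}) together with the intersection pairing of \Cref{th:currents,th:CHR}. Fix a basepoint $G_0\in\os$. Given $T\in\arational$ and a sequence $G_i\to T$ in $\osbar$, I would choose folding paths $\gamma_i\colon[0,L_i]\to\os$ from $G_0$ to (approximations of) the $G_i$; by the Bestvina--Feighn hyperbolicity machinery the projections $\fproj\circ\gamma_i$ are uniform unparameterized quasigeodesics in $\fc$, and the first task is to show that they fellow-travel a single quasigeodesic ray $[0,\infty)\to\fc$ whose endpoint one declares to be $\partial\fproj(T)$. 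Equivalently, one shows that $\{\fproj(G_i)\}$ is an admissible sequence in the sense of \S\ref{sec:coarse_geom} and that its limit in $\partial\fc$ does not depend on the chosen approximation.

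The heart of the matter is the claim that a folding path directed at an arational tree makes \emph{definite} progress in $\fc$: it cannot linger near any fixed proper free factor $A$, i.e. $\fproj(G_i)$ eventually leaves every bounded neighborhood of $A$. This is where arationality is used, via a compactness argument in $\osbar$ (or $\pcurr$): if $\fproj(G_i)$ stayed boundedly close to some $A$, then after passing to a subsequence one could arrange the core subgraph of $G_i$ carrying $A$ to converge, producing in the limit an $A$--invariant subtree of $T$ with dense orbits (or, degenerately, a periodic line forcing some nontrivial conjugacy class to be elliptic in $T$) --- in either case contradicting arationality, with Reynolds' dichotomy (\Cref{th:char_arational}) used to organize the free and surface-type cases. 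Together with the quasigeodesic property this forces $\diam_\fc(\fproj\circ\gamma_i)\to\infty$ with the $\gamma_i$ pairwise asymptotic in $\fc$, which gives both well-definedness of $\partial\fproj(T)$ and, by a diagonal argument over approximating graphs, continuity of $\partial\fproj\colon\arational\to\partial\fc$ in the stated sense.

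Next I would show $\partial\fproj$ factors through $\sim$ and is injective on $\arational/\sim$. For the first point: if $L(T)=L(T')$, the free factors that remain uniformly close to $\fproj(G_t)$ along folding paths toward $T$ and toward $T'$ agree, both being controlled by the common lamination (a factor can stay close only if its short conjugacy classes lie in $L(T)$), so the two rays in $\fc$ stay a bounded Hausdorff distance apart and share an endpoint. For injectivity, given $\xi=\partial\fproj(T)=\partial\fproj(T')$ I would recover $L(T)$ intrinsically from $\xi$: a current $\mu$ should satisfy $\langle T,\mu\rangle=0$ --- equivalently $\supp\mu\subset L(T)$ by \Cref{th:currents}, and $\supp\mu=L'(T)$ in the free case by \Cref{th:CHR} --- exactly when realizations of $\supp\mu$ remain bounded in $\os$ along graphs projecting toward $\xi$, a condition depending on $\xi$ alone. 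Carrying this out separately in the free case (using \Cref{th:CHR}) and the surface case (using the filling-lamination description of \Cref{th:char_arational}) yields $L(T)=L(T')$, i.e. $T\sim T'$.

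For surjectivity, take $\xi\in\partial\fc$, a quasigeodesic ray $A_0,A_1,\dotsc\to\xi$ in $\fc$, realize each $A_j$ by a point of $\os$, interpolate by folding paths to obtain a sequence $G_i\in\os$, and extract a limit $T\in\osbar$ by compactness; running the escape estimate of the second paragraph in reverse shows $T$ is arational and $\partial\fproj(T)=\xi$. Since this limit-tree construction is a continuous inverse to $\partial\fproj$ up to $\sim$, the induced continuous bijection $\arational/\sim\to\partial\fc$ has continuous inverse and is therefore a homeomorphism. I expect the main obstacle to be the injectivity step: recovering the entire dual lamination $L(T)$ from the single boundary point $\partial\fproj(T)$, since $\fc$ discards almost all of the metric structure of $\os$, so the passage from ``bounded $\fc$--projection'' to ``vanishing of the intersection pairing with $T$'' is delicate and has genuinely different flavors in the free and surface-type cases, relying essentially on \Cref{th:CHR} and the structure theorem \Cref{th:char_arational}.
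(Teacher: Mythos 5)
This statement is not proved in the paper at all: it appears in \S2.6 purely as a cited background result, with the proof deferred to Bestvina--Reynolds \cite{bestvina2012boundary} and Hamenst\"adt \cite{hamenstadt2013boundary}. So there is no ``paper's own proof'' against which to compare; you have effectively been asked to sketch a proof of an imported theorem.

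Taken on its own terms, your outline tracks the Bestvina--Reynolds strategy reasonably closely in spirit: well-definedness via folding paths and the Bestvina--Feighn projection machinery, injectivity by recovering the dual lamination from the boundary point, surjectivity by extracting a limit tree from a quasigeodesic ray, and honest flagging of the injectivity step as the hard part. Two places, though, are glossed in a way that hides genuine work. First, your key ``definite progress'' claim --- that if $\fproj(G_i)$ stays boundedly close to a fixed factor $A$ one can make ``the core subgraph of $G_i$ carrying $A$ converge'' to an $A$--invariant dense-orbit subtree of $T$ --- is not a compactness statement one can just invoke; the $G_i$ need not carry $A$ by a core subgraph of controlled size, and the actual argument in \cite{bestvina2012boundary} goes through a careful analysis of how illegal turns for the folding structure interact with the factor rather than through a limit of subgraphs. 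Second, the claim in your injectivity step that $\langle T,\mu\rangle = 0$ ``exactly when realizations of $\supp\mu$ remain bounded in $\os$ along graphs projecting toward $\xi$'' is stated as if it were a consequence of \Cref{th:currents,th:CHR}, but it is really the crux of the theorem: translating a purely $\fc$-level asymptotic datum into a statement about lengths in $\os$ requires recovering quantitative control on a folding ray from its coarse shadow in $\fc$, which is precisely what \Cref{th:DT_2}-type stability statements are built to provide and is not automatic. As an expository sketch of the cited theorem your outline is serviceable, but it should not be mistaken for a proof, and it is in any case orthogonal to what the present paper does, which is to quote the result and move on.
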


\section{Alignment preserving maps and boundaries}
\label{sec:alignmentpres}

In \S\ref{sec:co-surface graph} we will introduce and analyze a new $\Out(\free)$--graph termed the co-surface graph. For this analysis, we develop a general framework for computing the boundary of any space whose hyperbolicity may be obtained by the Kapovich--Rafi method \cite{KapRaf}. As this result is applicable in other contexts, we state it in general terms.

Let $X$ and $Y$ be geodesic metric spaces. We say that three (ordered) points $a,b,c\in X$ are \define{$K$--aligned} if $d_X(a,b) + d_X(b,c) \le d_X(a,c) +K$; the points are simply said to be \define{aligned} if they are $0$--aligned.  
We say that a Lipschitz map $p\colon X\to Y$ is \define{alignment preserving} if there exists $K\ge 0$ such that $p(a),p(b),p(c)$ are $K$--aligned whenever $a,b,c$ are aligned. 

\begin{lemma}
\label{lem:generalized_alignment}
Suppose that $q\colon W\to X$ and $p\colon X\to Y$ are alignment preserving maps between geodesic metric spaces and that $X$ is $\delta$--hyperbolic. Then for all $L\ge 0$ there is an $L'\ge 0$ such that $p(a),p(b),p(c)$ are $L'$--aligned whenever $a,b,c$ are $L$--aligned. Moreover, the composition $p\circ q\colon W\to Y$ is alignment preserving.
\end{lemma}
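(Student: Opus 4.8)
The plan is to prove the two assertions of \Cref{lem:generalized_alignment} in order, using hyperbolicity of $X$ only for the first.

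\textbf{Step 1: Upgrading from aligned to $L$-aligned inputs.} Suppose $a,b,c \in X$ are $L$-aligned, so that $d_X(a,b) + d_X(b,c) \le d_X(a,c) + L$. I would fix a geodesic $[a,c]$ in $X$ and let $b'$ be a nearest-point projection of $b$ onto $[a,c]$. The $L$-alignment hypothesis forces $b$ to lie uniformly close to $[a,c]$; more precisely, a standard thin-triangles argument in the $\delta$-hyperbolic space $X$ shows $d_X(b, b') \le L/2 + C(\delta)$ for some constant depending only on $\delta$ (comparing the Gromov product $(a \mid c)_b$ with $L$). Since $b'$ lies on the geodesic $[a,c]$, the triples $a, b', c$ are \emph{aligned} (it is genuinely $0$-aligned: $d_X(a,b') + d_X(b',c) = d_X(a,c)$). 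Therefore $p(a), p(b'), p(c)$ are $K$-aligned by hypothesis. Now I transfer this back to $b$: since $p$ is $\lambda$-Lipschitz (say), $d_Y(p(b), p(b')) \le \lambda(L/2 + C(\delta))$, and feeding this into the triangle inequalities relating $d_Y(p(a),p(b))$, $d_Y(p(b),p(c))$ to $d_Y(p(a),p(b'))$, $d_Y(p(b'),p(c))$ shows $p(a), p(b), p(c)$ are $L'$-aligned with $L' = K + 2\lambda(L/2 + C(\delta))$, which depends only on $L$, $\delta$, the Lipschitz constant, and the alignment constant of $p$. This is the step I expect to be the main obstacle, since it is where hyperbolicity genuinely enters and one must be careful that the comparison point $b'$ both lies on a geodesic (to invoke the hypothesis as stated, with constant exactly $0$) and is close to $b$ in a way controlled purely by $L$ and $\delta$.

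\textbf{Step 2: Composition is alignment preserving.} Let $K_q$ be the alignment constant for $q \colon W \to X$ and $K_p$ that for $p \colon X \to Y$. Suppose $a, b, c \in W$ are aligned. Then $q(a), q(b), q(c)$ are $K_q$-aligned in $X$. Now apply Step 1 with $L = K_q$: there is an $L' = L'(K_q, \delta, \ldots)$ so that $p(q(a)), p(q(b)), p(q(c))$ are $L'$-aligned in $Y$. Since $p \circ q$ is Lipschitz (a composition of Lipschitz maps) and $L'$ is a fixed constant, this exhibits $p \circ q$ as an alignment-preserving map with constant $L'$. This completes the proof.

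\textbf{Remark on constants.} Throughout I would keep track of the fact that the Lipschitz constant of $p$ is part of the data of ``$p$ is alignment preserving'' (it is a Lipschitz map by definition), so all the constants produced are legitimate. The only subtlety worth flagging in the writeup is the elementary hyperbolic-geometry estimate bounding $d_X(b,b')$ in terms of $L$ and $\delta$; this is the kind of computation that follows from the definition of the Gromov product together with $\delta$-thinness of the triangle on $a,b,c$, and I would either cite it or dispatch it in one or two lines.
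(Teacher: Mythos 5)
Your proposal is correct and takes essentially the same approach as the paper: both replace $b$ with a nearby point on the geodesic $[a,c]$ at distance controlled by $L$ and $\delta$ (you use nearest-point projection, the paper uses a $2\delta$--barycenter of the triangle, but these differ only cosmetically), apply the exact-alignment hypothesis to that triple, and transfer back via the Lipschitz constant of $p$. The second step of your argument, deducing the composition statement from the first statement with $L=K_q$, is exactly how the paper concludes.
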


Let us formalize some observations that will aid in the proof. Firstly, every $\delta$--thin triangle admits a \define{$2\delta$--barycenter}, meaning a point $\omega$ that lies within $2\delta$ of each side of the triangle (and is consequently $4\delta$--aligned between any two vertices). Secondly, whenever the three triples $(a,\omega,b)$, $(b,\omega,c)$, and $(c,\omega, a)$ are each $K$--aligned, the triangle inequality immediately gives
\begin{equation}\label{eqn:Gproducts_and_centers}
\abs{d(a,\omega) - (b\vert c)_a} \le K.
\end{equation}
Combining these, we see that in a $\delta$--hyperbolic space $X$, the Gromov product $(b\vert c)_a$ lies within $4\delta$ of $d_X(a,\omega)$ for any $2\delta$--barycenter $\omega$ of the geodesic triangle $\triangle(a,b,c)$. 

\begin{proof}
Let $K\ge 0$ be the alignment constant of the alignment preserving map $p$. Take $L$--aligned points $a,b,c$ in $X$ and let $\omega$ be a $2\delta$--barycenter for the geodesic triangle $\triangle(a,b,c)$. Then  $d_X(b,\omega)$ is within $4\delta$ of $(a\vert c)_b$, which is in turn bounded by $L/2$ since $a,b,c$ are $L$--aligned. Thus there is a point $x\in [a,c]$ with $d_X(b,x)\le 6\delta + L$. Then $p(a),p(x),p(c)$ are $K$--aligned, and so the points $p(a),p(b),p(c)$ are $L'$--aligned, where $L'$ is $K$ plus $2(6\delta +L)$ times the Lipschitz constant for $p$. This proves the first claim. The second claim now follows immediately from the first.
\end{proof}

If $X$ and $Y$ are hyperbolic and $p\colon X\to Y$ is alignment preserving, the \define{$Y$--subboundary of $X$} (relative to $p$) is defined to be
\[
\partial_Y X = \{ \gamma(\infty)\in \partial X \mid \gamma\colon\R_+\to X\text{ is a quasigeodesic ray with }\diam_Y p(\gamma(\R_+)) = \infty \}.
\]
Informally, $\partial_Y X$ consists of those points in $\partial X$ that ``project to infinity'' in $Y$. This is made precise by the following theorem.

\begin{theorem}[Boundaries]
\label{th:boundaries}
Suppose that $p\colon X \to Y$ is a coarsely surjective, alignment preserving map between hyperbolic spaces. Then $p$ admits an extension to a homeomorphism $\partial p \colon \partial_Y X \to \partial Y$. Moreover, the extension $p\cup\partial p\colon X \cup \partial_Y X \to Y \cup \partial Y$ is continuous in the sense that if $x_n \to \lambda \in \partial_YX$ as $n \to \infty$, then $p(x_n) \to \partial p(\lambda) \in \partial Y$.
\end{theorem}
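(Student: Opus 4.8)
The plan is to build the extension $\partial p$ directly on the level of quasigeodesic rays, using alignment preservation to transport the "progress toward infinity" from $X$ to $Y$ and back. First I would fix basepoints $x_0\in X$ and $y_0 = p(x_0)\in Y$. Given $\lambda\in\partial_YX$, choose (by definition) a quasigeodesic ray $\gamma\colon\R_+\to X$ with $\gamma(0)=x_0$, $\gamma(\infty)=\lambda$, and $\diam_Y p(\gamma(\R_+))=\infty$. The key observation is that the points along $\gamma$ are coarsely aligned in $X$ — a quasigeodesic ray is $K$-aligned for a uniform $K$ depending only on the quasigeodesic constant and $\delta_X$, by stability of quasigeodesics (\Cref{prop:stability_of_quasis}) — and hence by \Cref{lem:generalized_alignment} their images $p(\gamma(t))$ are $L'$-aligned in $Y$ for a uniform $L'$. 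A sequence of $L'$-aligned points in a hyperbolic space with $d_Y(y_0, p(\gamma(t_n)))\to\infty$ is admissible and converges to a well-defined boundary point; combined with coarse surjectivity of $p$ (which is what lets us reach the relevant basepoint), this produces a candidate $\partial p(\lambda)\in\partial Y$. I would then show this is independent of the choice of ray: two rays with the same endpoint $\lambda$ fellow-travel in $X$ (stability of quasigeodesics again), so their $p$-images remain at bounded Hausdorff distance in $Y$, forcing the limit points to agree.

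Next I would establish that $\partial p$ is a bijection. For surjectivity: given $\eta\in\partial Y$, build a quasigeodesic ray $\sigma$ in $Y$ from $y_0$ to $\eta$ (as in \cite[Remark 2.16]{KapovichBenakli-boundaries}); using coarse surjectivity of $p$, lift each $\sigma(n)$ to a point $x_n\in X$ with $d_Y(p(x_n),\sigma(n))$ uniformly bounded. The guiding geodesic $[x_0,x_n]$ in $X$, suitably concatenated/diagonalized, should converge to a point $\lambda\in\partial X$; the heart of the matter is to check $\lambda\in\partial_YX$, i.e. that some quasigeodesic ray to $\lambda$ has $p$-image of infinite diameter — this follows because $d_Y(p(x_0),p(x_n))$ is comparable to $d_Y(y_0,\sigma(n))\to\infty$, and alignment preservation propagates this growth to the whole ray. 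For injectivity: if $\partial p(\lambda)=\partial p(\lambda')=\eta$, pick rays $\gamma,\gamma'$ to $\lambda,\lambda'$; their $p$-images both converge to $\eta$ in $Y$, so their Gromov products in $Y$ diverge; I would then pull this back — using \eqref{eqn:Gproducts_and_centers} and the fact that the barycenter of a triangle in $X$ projects coarsely to a barycenter in $Y$ (this is precisely the content of alignment preservation, cf. the observations preceding the proof of \Cref{lem:generalized_alignment}) — to conclude the Gromov products of $\gamma,\gamma'$ in $X$ also diverge, so $\lambda=\lambda'$.

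For continuity and homeomorphy, I would phrase everything in terms of the Gromov-product neighborhood basis $N_\cdot^\cdot(r)$ on the boundaries. The core estimate is a two-sided comparison: there is a monotone function so that $(\lambda\mid\mu)_{x_0}$ in $X$ and $(\partial p(\lambda)\mid\partial p(\mu))_{y_0}$ in $Y$ each go to infinity together, for $\lambda,\mu\in\partial_YX$. One direction is the Lipschitz bound on $p$ (applied via barycenters, as above), giving that large Gromov product in $X$ forces large Gromov product in $Y$ — this yields continuity of $\partial p$. The reverse direction — large Gromov product in $Y$ forces large Gromov product in $X$ — is exactly the alignment-pullback argument from the injectivity step, upgraded to a quantitative statement, and it gives continuity of $(\partial p)^{-1}$; together with bijectivity this makes $\partial p$ a homeomorphism. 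The final "moreover" clause (if $x_n\to\lambda$ in $X\cup\partial_YX$ then $p(x_n)\to\partial p(\lambda)$) is then a direct consequence of the same Gromov-product comparison applied with one argument in the space and one on the boundary.

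I expect the main obstacle to be the reverse comparison — showing that a large Gromov product in $Y$ forces a large Gromov product in $X$ — since a priori $p$ could collapse a lot of $X$-geometry and alignment preservation is only a coarse, one-directional-looking hypothesis. The resolution is to work with barycenters: a $2\delta_X$-barycenter $\omega$ of $\triangle(x_0,\gamma(s),\gamma'(t))$ in $X$ has the property that $p(\omega)$ is coarsely aligned between the corresponding $p$-images (by $L'$-alignment of the relevant triples, \Cref{lem:generalized_alignment}), hence $d_Y(y_0,p(\omega))$ is within a uniform constant of $(\,p\gamma(s)\mid p\gamma'(t)\,)_{y_0}$ by \eqref{eqn:Gproducts_and_centers}; if the latter is large then $p(\omega)$ is far from $y_0$, so by the Lipschitz bound $\omega$ is far from $x_0$, which is (again by \eqref{eqn:Gproducts_and_centers}, now in $X$) exactly a lower bound on $(\gamma(s)\mid\gamma'(t))_{x_0}$. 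Making the "uniform constants" genuinely uniform over all basepoints and all the quasigeodesics involved, and handling the passage from finite stages $\gamma(s),\gamma'(t)$ to the boundary limits via the definition $(\,\cdot\mid\cdot\,)_{x_0}=\sup\liminf$, is where the care lies, but there are no conceptual surprises beyond repeated use of \Cref{lem:generalized_alignment}, \eqref{eqn:Gproducts_and_centers}, and \Cref{prop:stability_of_quasis}.
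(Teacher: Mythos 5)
Your overall architecture closely mirrors the paper's: define $\partial p$ along quasigeodesic rays using alignment, establish well-definedness via stability of quasigeodesics, handle injectivity and openness of $\partial p$ via the barycenter inequality \eqref{eqn:lip_gromov_prod}, and prove surjectivity by lifting a ray from $Y$. But there is a genuine gap, and it is in the one place you explicitly flag as subtle --- except you have the subtlety backwards.

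You write that ``large Gromov product in $Y$ forces large Gromov product in $X$'' is the hard direction (worrying that $p$ could collapse $X$-geometry), and you give a correct barycenter argument for it: the barycenter $\omega$ of $\triangle(x_0,\gamma(s),\gamma'(t))$ maps to a coarse barycenter in $Y$, so $d_Y(y_0,p(\omega))$ coarsely equals the $Y$-Gromov product; Lipschitzness gives $d_X(x_0,\omega)\ge \tfrac{1}{L}d_Y(y_0,p(\omega))$; apply \eqref{eqn:Gproducts_and_centers} in $X$. Fine. This is exactly inequality \eqref{eqn:lip_gromov_prod}, and it is in fact the \emph{easy} direction: it needs only alignment preservation and the Lipschitz bound, and it holds for \emph{all} $\lambda,\mu\in\partial X$, not just those in $\partial_Y X$. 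It yields continuity of $(\partial p)^{-1}$, i.e.\ openness of $\partial p$.

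The genuinely hard direction is the one you dismiss as ``the Lipschitz bound applied via barycenters'': that a large Gromov product in $X$ forces a large Gromov product in $Y$, which is what you need for continuity of $\partial p$ and for the ``moreover'' clause. Here your proposed argument does not work. Lipschitzness gives $d_Y(y_0,p(\omega))\le L\,d_X(x_0,\omega)$, an \emph{upper} bound on how far $p(\omega)$ moves from $y_0$; it does not prevent $p$ from collapsing a barycenter far out in $X$ back toward $y_0$. The collapse worry you articulate is precisely the obstruction to \emph{this} direction, not the one you assigned it to, and resolving it is where the hypothesis $\lambda\in\partial_Y X$ does real work. The paper's argument is: build a $10\delta$-quasigeodesic $\gamma$ to $\lambda$ whose $p$-image has infinite diameter; use the fact that triples along $\gamma$ are uniformly aligned (\Cref{prop:stability_of_quasis}) to upgrade infinite diameter to $d_Y(p(\gamma(0)),p(\gamma(t)))\to\infty$ monotonically; hence there is $R$ so that any $z$ within $2\delta$ of $\gamma$ with $d_X(\gamma(0),z)\ge R$ satisfies $d_Y(p(\gamma(0)),p(z))\ge D+K'$; then observe that a $2\delta$-barycenter for $\triangle(\gamma(0),z_n,\gamma(m))$ is such a $z$ when the $X$-Gromov product is $\ge R+4\delta$, and conclude via alignment preservation. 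Without this step --- which you neither identify nor sketch --- there is no argument for continuity of $\partial p$, nor for the statement that $x_n\to\lambda$ in $X\cup\partial_Y X$ implies $p(x_n)\to\partial p(\lambda)$, since both require exactly this direction of the Gromov product comparison.

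A smaller point, worth flagging: in your construction of $\partial p(\lambda)$ you pass directly from ``$\diam_Y p(\gamma(\R_+))=\infty$'' to ``$d_Y(y_0,p(\gamma(t_n)))\to\infty$.'' These are not a priori equivalent; the upgrade again uses that triples along $\gamma$ are uniformly aligned and hence their $p$-images are uniformly aligned (\Cref{lem:generalized_alignment}), which forbids $p\circ\gamma$ from returning close to $y_0$ after going far away. This is stated explicitly in the paper and should be in your argument as well.
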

\begin{proof}
Let $\delta$ be the hyperbolicity constant of $X$, let $L$ be the Lipschitz constant for $p$, and let $K'\ge 0$ be the constant, provided by \Cref{lem:generalized_alignment}, such that $p(a),p(b),p(c)$ are $K'$--aligned whenever $a,b,c\in X$ are $4\delta$--aligned. We then have the following useful observation: If $\omega$ is any $2\delta$--barycenter for an arbitrary triangle $\triangle(a_1,a_2,a_3)$ in $X$, then the triples $(p(a_i),p(\omega),p(a_j))$ for distinct $i,j\in \{1,2,3\}$ are each $K'$--aligned and so we may apply \eqref{eqn:Gproducts_and_centers} in both $X$ and $Y$ to conclude that
\begin{equation}
\label{eqn:lip_gromov_prod}
(p(a_2)\vert p(a_3))_{p(a_1)} \le K'+d_Y(p(a_1),p(\omega)) \le K' + Ld_X(a_1,\omega) \le L(a_2\vert a_3)_{a_1} + 4\delta+K'.
\end{equation}

To define the map $\partial p$, choose a quasigeodesic ray $\gamma\colon \R_+\to X$ with $\diam_Y p(\gamma(\R_+)) = \infty$ and consider the admissible sequence $\{a_n\}_{n=0}^\infty$, where $a_n=\gamma(n)$. Set $b_n = p(a_n)$. Since ordered triples of points along $\gamma(\R_+)$ are uniformly aligned by \Cref{prop:stability_of_quasis}, the assumption $\diam_Y p(\gamma(\R+)) = \infty$ in fact implies that $\lim_t d_Y(b_0,p(\gamma(t))) = \infty$. For each pair $n,m\ge 0$, choose a $2\delta$--barycenter $c_{n,m}$ for the triangle $\triangle(a_0,a_n,a_m)$. Then $(a_n\vert a_m)_{a_0}$ is within $4\delta$ of $d_X(a_0,c_{n,m})$ by \eqref{eqn:Gproducts_and_centers}. By \Cref{prop:stability_of_quasis}, $c_{n,m}$ also lies within uniformly bounded distance of $\gamma(t_{n,m})$ for some $t_{n,m}\in \R_+$. By admissibility and the fact that $\gamma$ is a quasigeodesic, the quantities $d_X(a_0,c_{n,m})$ and $t_{n,m}$ both tend to infinity as $n,m\to\infty$. Therefore $\lim_{n,m} d_Y(b_0,p(c_{n,m}))=\infty$ since $p$ is Lipschitz. However, $(b_n\vert b_m)_{b_0}$ is within $K'$ of $d_Y(b_0,p(c_{n,m}))$ because $p(c_{n,m})$ is $K'$--aligned between the three points $b_0$, $b_n$ and $b_m$. Consequently $\{b_n\}$ is admissible, and we may define $\partial p(\gamma(\infty))$ to be $\{b_n\}\in \partial Y$.

We now prove that $\partial p$ is well-defined and that $p\cup \partial p$ is continuous. Let $\lambda = \gamma(\infty)\in \partial_Y X$ with $\gamma$, $\{a_n\}$, and $\{b_n\}$ as above. Suppose that $\{x_n\}$ is a sequence in $X$ converging to $\lambda$. This simply means that $\{x_n\}$ is admissible and equivalent to $\{a_n\}$. Letting $e_{n,m}$ denote a $2\delta$--barycenter for $\triangle(a_0,a_n,x_m)$, we have that $d_X(a_0,e_{n,m})$ is within $4\delta$ of $(a_n\vert x_m)_{a_0}$ and thus tends to infinity. This barycenter $e_{n,m}$ is also uniformly close, again by \Cref{prop:stability_of_quasis}, to some point $\gamma(s_{n,m})$ with $s_{n,m}$ necessarily tending to infinity since $\gamma$ is a quasigeodesic. As before, it follows that $d_Y(b_0,p(e_{n,m}))\to \infty$ and, since $p$ is alignment preserving, that this quantity coarsely agrees with $(p(a_n)\vert p(x_m))_{b_0}$. Thus $\{p(x_n)\}$ is equivalent to $\{p(a_n)\}$, proving that $\{p(x_n)\}$ is admissible and converges to $\partial p(\lambda) = \{b_n\}$. In particular, for any quasigeodesic $\gamma'\colon \R_+\to X$ with $\gamma'(\infty)=\lambda$, it follows that $\{p(\gamma'(n))\}$ is equivalent to $\{b_n\} = \{p(\gamma(n))\}$. Thus $\partial p$ is well-defined and the extension $p\cup \partial p$ is continuous in the manner claimed.

We next show $\partial p$ is injective. Suppose $\lambda,\mu\in \partial_Y X$ satisfy $\partial p(\lambda) = \partial p(\mu)$. 
If $x_n,z_n\in X$ are any sequences with $x_n\to \lambda$ and $z_n\to \mu$, then $\{p(x_n)\}$ and $\{p(z_n)\}$ are equivalent by the continuity of $p\cup \partial p$. Therefore $(p(x_n)\vert p(z_m))_{p(x_0)}\to \infty$ which, by (\ref{eqn:lip_gromov_prod}), forces $(x_n\vert z_m)_{x_0}\to \infty$ as well. Thus $\lambda=\mu$ and $\partial p$ is injective.

\begin{figure}[htbp]
\labellist
\small\hair 3pt
\pinlabel $\lambda=\gamma(\infty)$ [l] at 445 105
\pinlabel $\gamma$ [b] at 118 100
\pinlabel $\gamma(0)=x_0$ [r] <1pt,0pt> at 6 75
\pinlabel $\gamma(m)$ [b] at 389 106
\pinlabel $\gamma(t)$ [b] at 184 91
\pinlabel $x_m$ [l] at 385 30
\pinlabel $x_k$ [l] at 196 4
\pinlabel $e_{k,m}$ [t] <1.5pt,1pt> at 176 50
\pinlabel $c_m$ [r] <1pt,0pt> at 342 77
\pinlabel $a$ [br] <1pt,-1pt> at 342 90
\pinlabel $e'$ [tl] <0pt,1pt> at 178 67
\pinlabel $c'$ [bl] <1pt,-1.5pt> at 336 59
\endlabellist
\begin{center}
\includegraphics[width =.7 \textwidth]{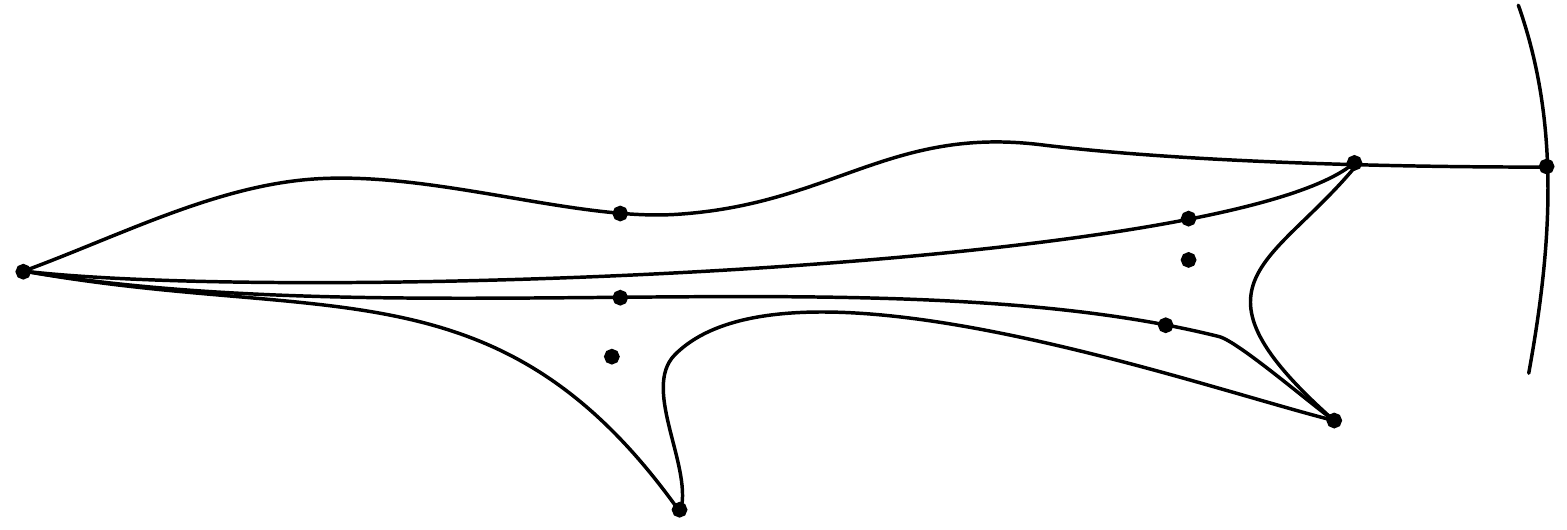}
\caption{Proving surjectivity of $\partial p$.}
\label{Fig:surjective_p}
\end{center}
\end{figure}

To see surjectivity, let $\eta\in \partial Y$. For a sequence $y_n\in Y$ with $y_n\to \eta$, choose points $x_n\in X$ so that $d_Y(p(x_n),y_n)$ is uniformly bounded and thus $p(x_n)\to \eta$ as well. 
Since $\{p(x_n)\}$ is admissible \eqref{eqn:lip_gromov_prod} implies that $\{x_n\}$ is admissible and so converges to some point $\lambda\in \partial X$. Now build $10\delta$--quasigeodesic $\gamma\colon \R_+\to X$ with $\gamma(0)=x_0$ and $\gamma(\infty) = \lambda$. Refer to \Cref{Fig:surjective_p}.
Also let $e_{n,m}$ be a $2\delta$--barycenter of $\triangle(x_0,x_n,x_m)$.
We claim that the projection of $\gamma$ to $Y$ has infinite diameter. 
To see this, fix any $D > 0$ and use admissibility of $\{p(x_n)\}$ to find $N$ so that $d_Y(p(x_0),p(e_{n,m})) > D$ for all $n,m\ge N$. Fix some $k\ge N$. Using the equivalence of $\{x_n\}$ and $\{\gamma(n)\}$, we may then choose $m\ge N$ so that $d_X(x_0,c_m) \ge d_X(x_0,x_k)+8\delta$, where $c_m$ denotes a $2\delta$--barycenter for $\triangle(x_0,x_m,\gamma(m))$. Since the triple $(x_0,e_{k,m},x_k)$ is $4\delta$--aligned, this implies $d_X(x_0,c_m)\ge d_X(x_0,e_{k,m})+4\delta$. Choosing points $e'$ and $c'$ in $[x_0,x_m]$ within $2\delta$ of $e_{k,m}$ and $c_m$, respectively, it follows that $d_X(x_0,c')\ge d_X(x_0,e')$ so that $e'\in [x_0,c']$. Now, since $c_m$ is a barycenter for $\triangle(x_0,x_m,\gamma(m))$, there is a point $a\in [x_0,\gamma(m)]$ with $d_X(c',a)\le 4\delta$. By thinness of the triangle $\triangle(x_0,c',a)$, we see that $e'\in [x_0,c']$ lies within $5\delta$ of $[x_0,a]\subset [x_0,\gamma(m)]$. Therefore, since $d_X(e_{k,m},e')\le 2\delta$ and $\gamma$ is a quasigeodesic, \Cref{prop:stability_of_quasis} and the triangle inequality imply that $e_{k,m}$ is within $7\delta + R_0(10\delta,\delta)$ of some point $\gamma(t)$. Consequently, $d_Y(p(x_0),p(\gamma(t)))$ coarsely agrees with $d_Y(p(x_0),p(e_{k,m}))\ge D$ up to uniformly bounded additive error. Since $D$ here is arbitrary, it follows that $\diam_Y(p(\gamma)) = \infty$. Thus $\lambda\in \partial_YX$, which proves that $\partial p$ is surjective.

Finally we prove $\partial p$ is a homeomorphism. Firstly, for any $\lambda,\mu\in \partial_YX$ and $x\in X$, \Cref{eqn:lip_gromov_prod} and our proof of surjectivity show that $(\partial p(\lambda)\vert \partial p(\mu))_{p(x)} \le L (\lambda\vert \mu)_x + 4\delta + K'$. By the definition of the topology on the Gromov boundary, it immediately follows that $\partial p \colon \partial_Y X\to \partial Y$ is open. Conversely, for any $\eta\in \partial Y$ and $D > 0$, we may, as above, build a $10\delta$--quasigeodesic $\gamma\colon \R_+\to X$ converging to $\lambda = (\partial p)^{-1}(\eta)$ and whose projection to $Y$ has infinite diameter. Set $x=\gamma(0)$. Thus there is some $R$ so that $d_Y(p(x),p(z))\ge D+K'$ for all $z\in X$ that lie within $2\delta$ of $\gamma$ and satisfy $d_X(x,z)\ge R$. Now if $\mu\in \partial_Y X$ is such that $(\mu\vert \lambda)_{x}\ge R+4\delta$, then we may choose a sequence $\{z_n\}$ converging to $\mu$ so that
\[\liminf_{n,m\to\infty} (z_n\vert \gamma(m))_{x} \ge R+4\delta.\]
Thus if $e_{n,m}$ is a $2\delta$--barycenter for $\triangle(x,z_n,\gamma(m))$, then $\liminf_{n,m} d_X(x,e_{n,m})\ge R$. But since $e_{n,m}$ lies within $2\delta$ of $\gamma$, we have that $d_Y(p(x),p(e_{n,m}))\ge D+K'$ and thus also $(p(z_n)\vert p(\gamma(m))_{p(x)}\ge D$ for all large $n,m$. This proves that $(\partial p(\mu)\vert \eta)_{p(x)} \ge D$ for all $\mu$ satisfying $(\mu\vert \lambda)_{x}\ge R+4\delta$. Therefore $\partial p$ is continuous. 
\end{proof}

\begin{remark}
Note that if the hypothesis of coarse surjectivity in \Cref{th:boundaries} is dropped, the proof shows that the map $\partial p \colon \partial_Y X \to \partial Y$ is a topological embedding. 
\end{remark}

We also record the following useful lemma, the idea of which is well-known to experts (see for example \cite[Lemma 2.6]{HamStability}). First say that $p \colon X \to Y$ is \define{metrically proper} if for any $D\ge0$ there is a $C\ge0$ so that $d_X(a,b) \ge C$ implies $d_Y(p(a),p(b)) \ge D$.

\begin{lemma}\label{lem:CAP} 
Suppose that $X$ and $Y$ are geodesic metric spaces.  
If $p \colon X \to Y$ is alignment preserving and metrically proper, then $p$ is a quasi-isometric embedding.
\end{lemma}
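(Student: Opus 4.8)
The plan is to verify the two inequalities defining a quasi-isometric embedding one at a time. The upper bound is free: by definition an alignment preserving map is $L$--Lipschitz for some $L\ge 1$, so $d_Y(p(a),p(b))\le L\,d_X(a,b)+L$ automatically. All the work is in the lower bound, which I obtain by a standard subdivision argument in the spirit of \cite[Lemma 2.6]{HamStability}. The idea is to cut a geodesic from $a$ to $b$ in $X$ into consecutive segments that are long enough that metric properness forces their $p$--images apart by a definite amount in $Y$, and then to add up these contributions along the image, using alignment preservation to keep the accumulated additive error linear in the number of segments.

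Here are the steps. Let $K$ be the alignment constant of $p$, and invoke metric properness to fix a constant $C\ge 1$ with the property that $d_X(u,v)\ge C$ implies $d_Y(p(u),p(v))\ge K+1$. Given distinct points $a,b\in X$, use that $X$ is geodesic to choose a geodesic $[a,b]$, and mark points $a=x_0,x_1,\dotsc,x_n=b$ occurring in this order along it, where $n=\ceil{d_X(a,b)/C}$, with $d_X(x_{i-1},x_i)=C$ for $1\le i\le n-1$ and $0<d_X(x_{n-1},x_n)\le C$; this is possible exactly because $(n-1)C<d_X(a,b)\le nC$. Since the $x_i$ occur monotonically along a geodesic, each ordered triple $(x_0,x_i,x_{i+1})$ is aligned in $X$, so $(p(x_0),p(x_i),p(x_{i+1}))$ is $K$--aligned in $Y$; rearranging this and summing over $i=0,\dotsc,n-1$ causes the terms $d_Y(p(x_0),p(x_i))$ to telescope, leaving
\[
d_Y(p(a),p(b))\;\ge\;\sum_{i=0}^{n-1}d_Y\bigl(p(x_i),p(x_{i+1})\bigr)\;-\;nK.
\]
Each of the first $n-1$ summands is at least $K+1$ by the choice of $C$, while the last is nonnegative, so the right-hand side is at least $(n-1)(K+1)-nK=n-1-K\ge\tfrac1C d_X(a,b)-1-K$. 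Combined with the Lipschitz bound, this exhibits $p$ as a $K'$--quasi-isometric embedding for $K'=\max\{L,C,K+1\}$; the case $a=b$ is trivial.

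I do not expect a genuine obstacle here: the argument is entirely elementary once one has the right setup. The only things requiring care are the arithmetic of the subdivision---the first $n-1$ segments must have length exactly $C$ so that metric properness applies to each of them, while the terminal segment has length at most $C$ so that $n$ bounds $d_X(a,b)$ from below---and the bookkeeping in the telescoping sum, which works out precisely because every alignment estimate is anchored at the single point $p(x_0)$, so the $K$--errors accumulate additively rather than compounding. It is worth noting that neither $X$ nor $Y$ need be hyperbolic; only the fact that $X$ is geodesic is used.
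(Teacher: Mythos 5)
Your proof is correct and takes essentially the same approach as the paper's: subdivide a geodesic in $X$ into length-$C$ segments, use alignment preservation to telescope the $Y$-distances with only a linear additive error, and invoke metric properness to give each segment's image a definite lower bound. The only differences from the paper's argument are cosmetic choices of constants (you use $K+1$ where the paper uses $2K$, and a ceiling rather than a floor in counting subdivision points).
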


\begin{proof}
Since $p \colon X \to Y$ is alignment preserving, there is a constant $K$ such that whenever $a,b,c\in X$ are $0$--aligned we have 
\[d_Y(p(a),p(c)) \ge d_Y(p(a),p(b)) + d_Y(p(b),p(c)) - K.\]
Further, since $p \colon X \to Y$ is metrically proper, there is a $C>0$ such that if $a,b\in X$ satisfy $d_X(a,b)\ge C$, then $d_Y(p(a),p(b)) \ge 2K$. Now let $x$ and $x'$ be points of $X$ with $d_X(x,x') = d$ and let $\gamma\colon [0,d] \to X$ be a geodesic with $\gamma(0) = x$ and $\gamma(d) = x'$. Let $N$ be the largest integer less than $\frac{d}{C}$, and set $a_i = \gamma(iC)$ for $0\le i \le N$. Then,
\begin{align*}
d_Y(p(x),p(x')) &\ge d_Y(p(a_0), p(a_N)) + d_Y(p(a_N),p(x')) - K\\
&\ge  d_Y(p(a_N),x')-K +\sum_{i=0}^{N-1} (d_Y(p(a_i),p(a_{i+1})) - K)\\
&\ge -K + K \cdot N \\
&\ge \frac{K}{C} \cdot d_X(x,x')-2K.
\end{align*}
Since $p \colon X \to Y$ is Lipschitz by assumption, this completes the proof.
\end{proof}

\section{The co-surface graph $\CS$}
\label{sec:co-surface graph}
This section introduces the co-surface graph $\CS$ of the free group $\free$ and develops its basic properties. First we define $\cs$ and discuss its relationship to other $\Out(\free)$--graphs appearing in the literature. Then in \S\ref{sec:cosurfaceboundary} we use the theory of alignment preserving maps to calculate the boundary of $\cs$. Finally, in \S\ref{sec:qi_into_cosurf} we show that a subgroup $\Gamma\le\Out(\free)$ qi-embeds into $\cs$ if and only if it is purely atoroidal and qi-embeds into $\fc$.

Recall that an element $a \in \free$ is \define{primitive} is $a$ belongs to a free basis for $\free$; this is an invariant of the conjugacy class $\alpha$ of $a$ and so we also call $\alpha$ primitive. The \define{primitive loop graph} of $\free$ is the simplicial graph $\pl$ whose vertices are the primitive conjugacy classes of $\free$, and where two conjugacy classes $\alpha,\beta$ are joined by an edge if and only if they have representatives that are jointly part of a free basis of $\free$. We equip $\pl$ with the path metric $d_\pl$ in which each edge has length $1$. As each primitive element generates a cyclic free factor of $\free$, there is natural inclusion map $\pl^0\to\fc$. It is straightforward to check that this inclusion is $2$--bilipschitz and $1$--dense and therefore admits a $4$--quasi-isometry coarse inverse which we denote $\mathcal{D}\colon \fc\to \pl$. We also have the coarse projection $\plproj\colon \os\to \pl$ defined by sending $G\in \os$ to the set of embedded closed loops on $G$; this projection coarsely agrees with the composition $\mathcal{D}\circ\fproj$.

The primitive loop graph measures, in a sense, how algebraically complicated primitive conjugacy classes are with respect to each other. The co-surface graph, on the other hand, is designed to measure
how \emph{topologically} complicated primitive conjugacy classes are with respect to each other:

\begin{definition}[Co-surface graph $\CS$] \label{def:cosurface}
The \define{co-surface graph} $\CS$ of the free group $\free$ is the simplicial graph whose vertices are conjugacy classes of primitive elements, and where two vertices $\alpha$ and $\beta$ are joined by an edge if there is a once-punctured surface $S$ and an isomorphism $\pi_1(S) \cong \free$ with respect to which $\alpha$ and $\beta$ may both be represented by \emph{simple} closed curves on $S$.
\end{definition}

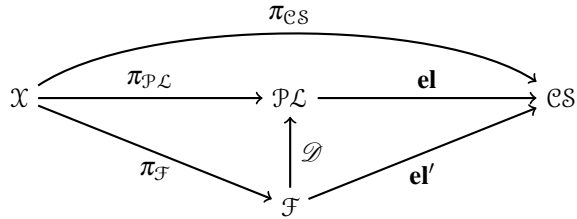
\begin{figure}[htbp]
\begin{center}
\begin{tikzpicture}[scale = 1.8]
\node (os) at (-2,0) {$\os$};
\node (cs) at (2,0) {$\cs$};
\node (pl) at (0,0) {$\pl$};
\node (fc) at (0,-.8) {$\fc$};
\draw[thick,->] (os) .. controls (-1.1,.6) and (1.1,.6) .. node[above] {$\pi_\cs$} (cs);
\draw[thick,->] (os) -- node[above] {$\pi_\pl$} (pl);
\draw[thick,->] (os) -- node[below] {$\pi_\fc$} (fc);
\draw[thick,->] (pl) -- node[above] {$\epl$} (cs);
\draw[thick,->] (fc) -- node[below] {$\efc$} (cs);
\draw[thick,->] (fc) -- node[right] {$\mathcal{D}$} (pl);
\end{tikzpicture}
\caption{The co-surface graph and its friends. All rightward pointing arrows are coarsely Lipschitz, and $\mathcal{D}$ is a quasi-isometry. Coarsely speaking, all maps are $\Out(\free)$--equivariant and the diagram commutes.}
\label{Fig:graphs}
\end{center}
\end{figure}


In other words, each once-punctured surface $S$ with $\pi_1(S)\cong \free$ determines a subset $C_S\subset\pl^0$ consisting of those primitive conjugacy classes that correspond to (nonseparating) simple closed curves on $S$. As our graph $\cs$ is obtained by collapsing each $C_S\subset\pl$ to a set of diameter $1$, it  
records the geometry of the primitive conjugacy classes that remains after all the ``surface sets'' $C_S$ have been crushed---hence the name ``co-surface'' graph.
 We equip $\cs$ with the path metric $d_\cs$ in which each edge has length $1$ and note that $\Out(\free)$ acts simplicially (and hence isometrically) on $\cs$.

From basic topology, we know that if primitive elements $a,b\in \free$ are jointly part of a free basis of $\free$, then one may build a once-punctured surface $S$ and an isomorphism $\pi_1(S)\cong \free$ under which  $a$ and $b$ correspond to disjoint simple closed curves on $S$. Therefore the ``identity'' map $\pl^0\to\cs^0$ extends to a simplicial and hence $1$--Lipschitz $\Out(\free)$--equivariant ``electrification'' map $\epl\colon\pl\to \cs$. Define $\efc\colon \fc\to \cs$ to be the composition $\efc = \epl\circ \mathcal{D}$. The purpose of this section is to establish the following essential properties of $\cs$. For the statement, recall that $\phi\in \Out(\free)$ is \define{fully irreducible} if no positive power of $\phi$ fixes any conjugacy class of proper free factors of $\free$.

\begin{theorem}[Properties of $\CS$] \label{th:cs_properties}
For the free group $\free$ of rank at least $3$, the co-surface graph $\CS$ is hyperbolic and the map $\epl\colon \pl \to \CS$ (and thus also $\efc\colon \fc\to \cs$) is Lipschitz and alignment preserving. Moreover, $\phi \in \Out(\free)$ acts as a loxodromic isometry of $\CS$ if and only if $\phi$ is atoroidal and fully irreducible.
\end{theorem}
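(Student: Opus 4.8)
The plan is to establish the three assertions of \Cref{th:cs_properties} in sequence, each building on the previous. For hyperbolicity and the alignment-preserving property of $\epl$, I would invoke the Kapovich--Rafi machinery as follows. The map $\efc = \epl\circ\mathcal D$ factors the coarse projection $\csproj\colon\os\to\cs$ through $\fc$, so it suffices to analyze the combinatorial structure of how the "surface sets" $C_S$ sit inside $\fc$ (equivalently $\pl$). The key geometric input is that for a once-punctured surface $S$ with $\pi_1(S)\cong\free$, the set $C_S$ of simple closed curves is the image of the curve graph $\mathcal C(S)$, and the inclusion $\mathcal C(S)\hookrightarrow\fc$ is a quasi-isometric embedding onto a quasiconvex subset, by the work of Mann--Reynolds and Hamenst\"adt on subsurface/curve-graph projections into $\fc$. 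One then checks the Kapovich--Rafi criterion: that any geodesic of $\fc$ entering the $1$--neighborhood of some $C_S$ near its endpoints can be "replaced" by one passing through $C_S$, so that coning the $C_S$'s yields a hyperbolic graph and, simultaneously, that the coning map $\fc\to\cs$ takes aligned triples to uniformly aligned triples. This is exactly the statement that $\efc$ (hence $\epl$, since $\mathcal D$ is a quasi-isometry) is Lipschitz and alignment preserving, and that $\cs$ is hyperbolic.

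For the loxodromic characterization, I would argue both directions. \emph{If $\phi$ is atoroidal and fully irreducible:} By \Cref{th:DT1} (or directly, results of Bestvina--Feighn and Kapovich--Lustig), such $\phi$ acts loxodromically on $\fc$ with an attracting/repelling pair of arational fixed trees $T_\phi^{\pm}\in\arational$, and because $\phi$ is atoroidal these trees are \emph{free} (by \Cref{th:char_arational}, a non-free arational tree is dual to a filling lamination on a once-punctured surface, which would force $\phi$ to preserve that surface and hence be toroidal). The axis of $\phi$ in $\fc$ projects into $\cs$ via the alignment-preserving $\efc$; since aligned triples go to uniformly aligned triples and the orbit $\{\phi^n\cdot A\}$ has $d_\fc(\phi^n A,\phi^m A)\to\infty$, the only way $\phi$ could fail to be loxodromic on $\cs$ is if the whole $\fc$--axis lies within bounded $\cs$--distance of a single point, i.e. is "swallowed" by finitely many surface sets $C_S$ up to bounded error. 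I would rule this out by a boundary argument: if $\phi$ were elliptic or parabolic on $\cs$, then by \Cref{th:boundaries} (applied to $\csproj\colon\os\to\cs$, once we know it is coarsely surjective and alignment preserving) the fixed tree $T_\phi^+\in\arational$ would \emph{not} lie in the $\cs$--subboundary of Outer space, contradicting that $T_\phi^+$ is free — and free arational trees do lie in $\partial_\cs\os$ precisely because the folding/Outer-space geodesic axis of $\phi$ makes unbounded progress in $\cs$ (no single $C_S$ absorbs it, as $C_S$ corresponds to a surface and $T_\phi^+$ is free). \emph{Conversely, if $\phi$ is loxodromic on $\cs$:} Then $\phi$ is loxodromic on $\fc$ as well (since $\efc$ is Lipschitz and $\Out(\free)$--equivariant, a $\phi$--orbit unbounded in $\cs$ is unbounded in $\fc$), hence fully irreducible by Bestvina--Feighn. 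If $\phi$ were not atoroidal, some power $\phi^k$ would fix a conjugacy class, and — being fully irreducible — would in fact be geometric, induced by a pseudo-Anosov on a once-punctured surface $S$; but then the whole $\phi^k$--orbit of curves on $S$ lies in the single vertex-neighborhood $\epl(C_S)$, so $\phi^k$ (hence $\phi$) has bounded orbits in $\cs$, contradicting loxodromicity.

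The main obstacle I anticipate is the first part: verifying the Kapovich--Rafi hypotheses precisely enough to get \emph{both} hyperbolicity of $\cs$ \emph{and} the alignment-preserving property of the coning map in one stroke, which requires knowing that the surface sets $C_S$ are uniformly quasiconvex in $\fc$ with uniformly well-behaved (coarsely Lipschitz, coarsely idempotent) nearest-point projections, and that distinct $C_S$, $C_{S'}$ have uniformly bounded coarse intersection unless they are equal. Equivalently, one needs a "bounded geodesic image"-type statement for the projections $\fc\to C_S$. Establishing this cleanly — presumably by combining Behrstock-type inequalities for curve-graph projections with the geometry of $\os$ and the folding-path technology of \Cref{th:DT_2} — is where the real work lies; the loxodromic characterization is then comparatively formal given \Cref{th:char_arational}, \Cref{th:boundaries}, and \Cref{th:DT1}.
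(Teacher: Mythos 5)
Your plan has the right overall shape but diverges from the paper in two places worth flagging, and it also soft-pedals the hardest technical step.

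For the hyperbolicity and alignment-preserving assertions, you propose verifying the Kapovich--Rafi collapsing criterion from scratch in $\fc$, which you correctly identify as the substantial part. The paper avoids redoing this analysis: it invokes Mann's already-published Theorem (\Cref{thm:int_graph}), which establishes hyperbolicity of the intersection graph $\int$ and alignment preservation of $\pl\to\int$ via Kapovich--Rafi, and then proves an explicit $\Out(\free)$--equivariant quasi-isometry $\int\simeq\cs$ (\Cref{prop:qi_graphs}, via the rose graph $\mathcal P$). Hyperbolicity and alignment preservation of $\epl$ then transfer immediately along the quasi-isometry. Your list of things to check is also somewhat heavier than the Kapovich--Rafi criterion actually requires: one does not need uniform quasiconvexity of the $C_S$ in $\fc$ or Behrstock-type inequalities, only a bounded-image condition for geodesics between points whose images in the electrification are close.

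For the loxodromic characterization, your reverse direction (loxodromic on $\cs\Rightarrow$ atoroidal and fully irreducible, using Bestvina--Handel and boundedness of geometric orbits) matches the paper's argument exactly. Your forward direction is where there is a genuine gap. You assert that free arational trees lie in $\partial_\cs\os$ ``precisely because the folding geodesic axis makes unbounded progress in $\cs$ --- no single $C_S$ absorbs it.'' This is the conclusion, not a reason: ruling out absorption must account for chains of \emph{many} surfaces $C_S$, not just one, and is exactly the content of the paper's \Cref{lem:unbounded}, whose proof uses the Kapovich--Lustig intersection pairing and a nontrivial inductive argument over edges in $\cs$ (pairing trees $R^k_i$, $L^k_i$ dual to curves and cusps, propagating freeness and arationality via \Cref{lem:free_arational} and \Cref{th:CHR}). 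The paper instead defers the loxodromic classification to \Cref{cor:lox_cs}, which applies the full strength of \Cref{th:qi_into} to a cyclic group; \Cref{th:qi_into} in turn rests on \Cref{prop:proper_orbit}, \Cref{lem:folding_rays}, and precisely \Cref{lem:unbounded}. So your proposal and the paper ultimately rely on the same hard lemma; the issue is that you treat it as an evident observation rather than as the crux requiring the current-pairing machinery.
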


To establish these properties, we show that $\CS$ is quasi-isometric to another $\Out(\free)$--graph that has appeared in several different forms in the literature under the name ``intersection graph'' (see \Cref{remark:history}). To define this, say that a conjugacy class of $\free$ is \define{geometric} if it is either primitive or it corresponds to the cusp of a once-punctured surface whose fundamental group is identified with $\free$. Define the \define{intersection graph} to be the bipartite graph $\int$ whose vertices are geometric conjugacy classes and very small, simplicial, nonfree trees, and where a conjugacy class $\alpha$ is joined by an edge to a tree $T$ if and only if $\ell_T(\alpha) = 0$. Note that there is an obvious action $\Out(\free) \curvearrowright \int$ and that the inclusion $\pl^0\to \int^0$ extends to an $\Out(\free)$--equivariant map $\pl\to \int$. Brian Mann and Patrick Reynolds have proven the following:

\begin{theorem}[Mann \cite{Mann-thesis}, Mann--Reynolds \cite{MR2}]
\label{thm:int_graph}
The intersection graph is hyperbolic and the natural map $\pl\to \int$ is Lipschitz and alignment preserving. 
\end{theorem}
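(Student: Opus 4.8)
The plan is to obtain both assertions from the Kapovich--Rafi hyperbolicity criterion \cite{KapRaf}, following the template of their proof that $\fc$ is hyperbolic. The model hyperbolic space will be $\fc$ itself, equivalently the primitive loop graph $\pl$ (quasi-isometric to $\fc$ via $\mathcal D$, hence hyperbolic by \Cref{thm:fc_hyperbolic}), and the operative point of view is that $\int$ is obtained from $\pl$ by coning: adjoining the tree vertices together with their edges to elliptic conjugacy classes has the effect of collapsing each surface set $C_S\subset\pl^0$ to bounded diameter, since two primitives that are jointly part of a basis bound an embedded once-punctured surface on which they are disjoint simple closed curves, and hence are simultaneously elliptic in the one-edge splitting dual to either one. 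The tree vertices arising from splittings with trivial edge groups only collapse sets that already had bounded diameter in $\fc$ (their elliptic classes lie within bounded $\fc$--distance of the vertex groups, which are pairwise close as they all occur in $\fproj(G)$ for a suitable $G\in\os$); so the genuine content of the coning is carried by the tree vertices coming from cyclic splittings dual to surfaces.

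With this in hand one first checks the soft hypotheses: the natural map $\pi\colon\pl\to\int$ is coarsely Lipschitz ($1$--Lipschitz on the $1$--skeleton, by the surface-set observation above) and coarsely surjective (every tree vertex is adjacent to a primitive). The substance of the Kapovich--Rafi criterion is then to exhibit, for pairs of vertices, candidate geodesics in $\int$ satisfying a local-to-global (``guessing geodesics'') condition; since every tree vertex lies within distance $1$ of a primitive, it suffices to do this for primitive pairs. Given primitive classes $\alpha,\beta$, choose $G,H\in\os$ carrying them as embedded loops, take a folding path $\{G_t\}$ from $G$ to $H$, and let the candidate be the image in $\int$ of $\fproj(G_t)$. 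Two properties must be verified. \emph{Thin triangles for candidates:} this is inherited from $\fc$, because by the Bestvina--Feighn analysis of folding paths the projections $\fproj(G_t)$ are uniform unparameterized quasigeodesics in $\fc$ (cf.\ \Cref{lem:fproj_lipschitz}), so the thin-triangle estimate for these candidates holds in $\fc$ by hyperbolicity and stability (\Cref{prop:stability_of_quasis}), and a Lipschitz map pushes such an estimate forward to $\int$. \emph{Bounded images for close endpoints:} the candidate has uniformly bounded $\int$--diameter whenever $d_\int(\pi\alpha,\pi\beta)$ is bounded. This last property is the main obstacle, and is where the structure theory of very small trees and of surfaces is essential.

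To attack the bounded-image property I would argue by contradiction and compactness. A failure produces primitive classes $\alpha_k,\beta_k$ and very small simplicial nonfree trees $T_k$ with $\ell_{T_k}(\alpha_k)=\ell_{T_k}(\beta_k)=0$, together with folding paths between points carrying $\alpha_k$ and $\beta_k$ whose $\int$--images leave every $k$--neighborhood of $T_k$. Rescaling and passing to subsequential limits in $\cvbar$ and $\pcurr$, one extracts a limiting boundary tree along the middle of the folding paths; since folding paths project to quasigeodesics in $\fc$, \Cref{th:factor_complex_boundary} shows this limit is arational, and \Cref{th:char_arational} then says it is either free and indecomposable or dual to a filling measured lamination on a once-punctured surface. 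In the free case one derives a contradiction with the persistent ellipticity of $\alpha_k,\beta_k$ in $T_k$ using the intersection-form characterizations of \Cref{th:currents} and \Cref{th:CHR}; in the surface case one shows that the folding path in fact never escapes the elliptic (co-surface) neighborhood of the relevant surface splitting, again a contradiction. Making this limiting argument precise --- controlling the interaction of folding paths with the elliptic subtrees of a boundary tree, and ruling out ``crossing a surface direction without making $\int$--progress'' --- is the real work, carried out in \cite{Mann-thesis,MR2}.

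Finally, the alignment-preserving conclusion is essentially a byproduct. The verification above shows in particular that $\pi\colon\pl\to\int$ carries $\pl$--geodesics to uniform unparameterized quasigeodesics in the (now hyperbolic) graph $\int$; hence by stability of quasigeodesics (\Cref{prop:stability_of_quasis}), if $a,b,c$ lie in this order on a $\pl$--geodesic then $\pi(b)$ lies uniformly close to a geodesic from $\pi(a)$ to $\pi(c)$ in $\int$, which is exactly the statement that $\pi(a),\pi(b),\pi(c)$ are uniformly aligned. Composing with the quasi-isometry $\mathcal D\colon\fc\to\pl$ then yields the corresponding statement for the induced map $\fc\to\int$ as well.
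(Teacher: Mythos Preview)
Your proposal and the paper take the same route: both invoke the Kapovich--Rafi criterion \cite[Proposition 2.5]{KapRaf} to deduce hyperbolicity of $\int$ and the alignment-preserving property of $\pl\to\int$ from the hyperbolicity of $\pl$. The paper does not give a proof at all---it simply cites Mann's thesis and records that the main point is to verify the Kapovich--Rafi hypotheses---so in that sense your sketch is a strict expansion of what the paper provides.

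A couple of small clarifications. First, you slightly conflate the Kapovich--Rafi criterion with the Bowditch/Masur--Schleimer ``guessing geodesics'' criterion. The KR statement does not ask you to independently verify a thin-triangles condition for candidate paths; it takes hyperbolicity of the source as a hypothesis and only requires the bounded-image condition (images of $\pl$--geodesics between points close in $\int$ have bounded $\int$--diameter). The thin-triangles part is handled internally by KR's proof. Second, the candidate paths should be $\plproj(G_t)$ (primitive loops on the folding path) rather than $\fproj(G_t)$, though of course these coarsely agree via $\mathcal D$. Finally, as you yourself acknowledge, the compactness/limiting argument you outline for the bounded-image condition is the substantive step, and your sketch of it is plausible but not a proof; since you explicitly defer that step to \cite{Mann-thesis,MR2}, your proposal is ultimately on the same footing as the paper's citation.
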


The proof of these results can be found in Mann's thesis \cite{Mann-thesis}, which is publicly available through ProQuest. 
The main point of his proof is to show that the map $\pl\to \int$ fits the parameters of a recent theorem of Kapovich and Rafi \cite[Proposition 2.5]{KapRaf}. 
Applying the Kapovich--Rafi result shows that hyperbolicity of the primitive loop graph implies hyperbolicity the intersection graph and, moreover, that the map $\pl\to\int$ is alignment preserving.
One could use the same argument from \cite{Mann-thesis} to directly show that the Kapovich--Rafi result applies to the map $\epl\colon \pl\to \cs$; rather than carry out this argument in detail, we simply invoke \Cref{thm:int_graph} and the following quasi-isometry between the co-surface graph and the intersection graph:

\begin{proposition} \label{prop:qi_graphs}
The graphs $\int$ and $\CS$ are $\Out(\free)$--equivariantly quasi-isometric.
\end{proposition}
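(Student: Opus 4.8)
The plan is to construct explicit coarse maps in both directions between $\CS$ and $\int$ and check they are coarse inverses, all $\Out(\free)$--equivariantly. The natural candidate for a map $\CS \to \int$ is the inclusion of vertex sets on primitive conjugacy classes; the natural candidate for $\int \to \CS$ sends a primitive class to itself and sends each very small simplicial nonfree tree $T$ to (a representative of) a primitive class elliptic in $T$. So first I would set up these maps carefully, verify they are well-defined up to bounded error, and observe that both are $\Out(\free)$--equivariant (coarsely), since the edge relations defining $\CS$, $\pl$, and $\int$ are all preserved by $\Out(\free)$.

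\medskip

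The key geometric input is a dictionary between the two kinds of ``simplices'' being collapsed. On the $\CS$ side, two primitive classes $\alpha,\beta$ are adjacent exactly when there is a once-punctured surface $S$ with $\pi_1(S)\cong\free$ carrying both as simple closed curves. On the $\int$ side, two geometric classes are at distance $2$ (through a common tree vertex) exactly when there is a very small simplicial nonfree tree in which both are elliptic. The bridge is that a once-punctured surface $S$ with $\pi_1(S)\cong\free$ gives rise, via splittings of $\free$ along the relevant simple closed curves (equivalently, via the tree dual to a disjoint union of such curves, or via trees in which the cusp class is elliptic), to exactly such nonfree simplicial trees; conversely, the very small simplicial nonfree trees that are relevant here are (by the structure theory recalled in \Cref{th:char_arational} and the standard Bass--Serre/Rips theory of very small actions, cf. \cite{CohenLustigVS}) essentially dual to simple closed curve systems on such surfaces, and their elliptic primitive classes are realized by simple closed curves on $S$. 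Hence I would show: if $\alpha,\beta$ are $\CS$--adjacent then they are $\int$--distance $\le 2$ (pass to the surface $S$, then to a tree in which both are elliptic), and conversely if $\alpha,\beta$ are both elliptic in some very small simplicial nonfree tree $T$ then $d_\CS(\alpha,\beta)$ is uniformly bounded (realize $T$ via a surface, or more directly: elliptic primitive classes in such a $T$ lie in proper free factors that are ``compatible'' with a surface decomposition, so a bounded number of $\CS$--edges connects them). In effect, for each surface $S$ the set $C_S\subset\pl^0$ and the set of $\pl^0$--vertices adjacent (in $\int$) to the trees coming from $S$ coincide up to bounded Hausdorff distance in $\pl$.

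\medskip

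With that dictionary in hand, the verification is routine coarse bookkeeping: the composite $\CS^0\hookrightarrow\int^0\to\CS^0$ is the identity on vertices, so one direction is exact; the composite $\int\to\CS\to\int$ is the identity on primitive vertices and moves each tree vertex $T$ to a primitive class elliptic in $T$, hence distance $\le 1$ from $T$ in $\int$, so it is at bounded distance from the identity on $\int$. That both maps are coarsely Lipschitz follows because $\CS$--edges map to $\int$--paths of length $\le 2$, and each $\int$--edge $\{\alpha,T\}$ (with $\alpha$ primitive elliptic in $T$, which one may always arrange the companion primitive classes of $T$ to be, since a nonfree very small simplicial tree has nontrivial elliptics and these include primitive ones in this setting) maps to a $\CS$--path of bounded length by the converse bound above; for general $\int$--edges $\{\text{geometric class}, T\}$ one first replaces a cusp class by a nearby primitive class and applies the same bound. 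Combining, the two maps are quasi-isometries and coarse inverses, so $\int$ and $\CS$ are $\Out(\free)$--equivariantly quasi-isometric.

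\medskip

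I expect the main obstacle to be the ``converse'' direction of the dictionary: showing that whenever two primitive conjugacy classes are simultaneously elliptic in a single very small simplicial nonfree tree $T$, they are a uniformly bounded $\CS$--distance apart. This requires understanding which nonfree simplicial very small trees arise — they are exactly the ones obtained from free splittings and surface cusps — and checking that in each such $T$ the primitive elliptic classes all ``see'' a common surface, or at least a bounded chain of surfaces, so that a bounded number of $\CS$--edges suffices. This is where one genuinely uses the classification of very small trees and the geometric interpretation of the intersection graph's tree vertices (the content of \cite{Mann-thesis,MR2}); everything else is formal.
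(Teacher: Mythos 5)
Your approach is structurally different from the paper's. The paper does not attempt to relate $\int$ to $\CS$ directly; it instead interposes Mann's graph $\mathcal{P}$ (vertices are marked roses; edges for a common petal or a common cusp). The hard analytic content --- relating very small simplicial nonfree trees to surfaces/roses --- is entirely packaged in Mann's theorem that $\mathcal{P}$ is quasi-isometric to $\int$, which the paper uses as a black box. What remains is a short combinatorial check that $\mathcal{P}$ and $\CS$ are quasi-isometric: a rose maps to one of its petals, a primitive class maps to any rose having it as a petal, and the two edge relations match up because a rose and a surface sharing a cusp give coincident simple-closed-curve sets. No appeal to trees is needed on the paper's side.

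Your blind route tries to build the quasi-isometry $\CS\leftrightarrow\int$ directly, and you correctly flag where the weight falls. I would push back on the characterization of that step as ultimately ``routine coarse bookkeeping.'' The claim you need --- that any two primitive conjugacy classes which are simultaneously elliptic in one very small simplicial nonfree tree $T$ lie within a \emph{uniformly} bounded $\CS$--distance of one another --- is essentially the substance of Mann's $\mathcal{P}\simeq\int$ theorem, not a corollary of it. Concretely: (i) it is not immediate that every such $T$ even has a primitive elliptic (this is clear for free splittings, where a vertex group is a proper free factor and hence contains primitives of $\free$, but for general cyclic splittings over maximal cyclic edge groups it requires an argument, e.g.\ via the Shenitzer--Stallings--Swarup structure of $\Z$--splittings of free groups); (ii) even when $T$ is a free splitting $\free = A*B$, two primitives of $\free$ lying in $A$ need not be primitive in $A$ (e.g.\ $x^2y$ is primitive in $\langle x,y,z\rangle$ but not in $\langle x,y\rangle$), so chaining them to a common primitive in $B$ and bounding that chain in $\pl$ or $\CS$ is a nontrivial assertion about the poset of free factors, not a formality; and (iii) quantifying ``compatible with a surface decomposition'' uniformly over all $T$ (including cyclic splittings not obviously dual to a curve on a single surface) is exactly the place where \Cref{th:char_arational}, the Rips machine for very small actions, and the geometric-vs-nongeometric dichotomy of \cite{Mann-thesis,MR2} get used. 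You would, in effect, be re-proving the $\int$--side of Mann's theorem from scratch.

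Two smaller technical quibbles: in the forward direction, two simple closed curves $\alpha,\beta$ on the same $S$ need not be disjoint, so there is no single dual tree in which both are elliptic; one must instead pass through the cusp class of $S$, giving $d_\int(\alpha,\beta)\le 4$ rather than $\le 2$ (bounded, so harmless, but the stated path is wrong). And the map $\int\to\CS$ you propose requires a choice of primitive elliptic for each tree vertex; well-definedness up to bounded error in $\CS$ is exactly the unresolved claim above, so the map is not yet known to make sense as a coarse Lipschitz map before the dictionary is established. The paper's detour through $\mathcal{P}$ avoids having to make such choices over trees at all.

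In short: your plan is aimed at the right target and the formal scaffolding is fine, but the step you defer is not a deferred detail --- it is the theorem. The paper's proof is shorter precisely because it factors through a graph $\mathcal{P}$ that already lives on the $\CS$ side of the tree/surface divide, so that the difficult tree-classification work is cited rather than redone.
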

\begin{proof}
In \cite[\S2.4]{Mann-thesis}, Mann defined a graph $\mathcal{P}$ which he shows is quasi-isometric to $\int$. The vertices of $\mathcal{P}$ are marked $\rank(\free)$--petal roses, and roses $R$ and $R'$ are joined by an edge if they have either a common petal or a common ``cusp'' (meaning that the given isomorphism $\pi_1(R)\cong \pi_1(R')$ may be realized by $\pi_1$--isomorphically embedding $R$ and $R'$ into the same once-punctured surface).
We show that $\mathcal{P}$ and $\CS$ are quasi--isometric.

Define an equivariant map $\Phi\colon \mathcal{P} \to \CS$ by sending a rose $R$ to the conjugacy class determined by one of its petals (the set of possible choices has diameter $1$ by definition). Whenever vertices $R$ and $R'$ of $\mathcal{P}$ have a common petal we then have $d_{\cs}(\Phi(R),\Phi(R'))\le 2$, and whenever $R$ and $R'$ have a common cusp we have $d_{\cs}(\Phi(R),\Phi(R'))\le 1$ since in this case the petals of $R$ and $R'$ all correspond to simple closed curves on the same once-punctured surface $S$. Thus $\Phi\colon \mathcal{P}\to \cs$ is $2$--Lipschitz. Similarly, let $\Psi\colon \CS \to \mathcal{P}$ be an equivariant map that sends each primitive conjugacy class $\alpha$ to any rose in which $\alpha$ appears as a petal. Since the set of such roses has diameter $1$, $\Psi$ is a coarse inverse for $\Phi$. Hence, it suffices to show that $\Psi$ is Lipschitz. For this, if $\alpha$ and $\beta$ are adjacent vertices of $\cs$ we may choose a once-punctured surface $S$ in which $\alpha$ and $\beta$ are represented simple closed curves. Extending these curves to $\pi_1$--injectively embedded roses $R$ and $R'$ in $S$, we see that $d_{\mathcal{P}}(\Psi(\alpha),\Psi(\beta))\le 3$ since $R$ and $R'$ have a common cusp and are respectively adjacent to $\Psi(\alpha)$ and $\Psi(\beta)$ by construction. This shows that $\Psi$ is $3$--Lipschitz and completes the proof.
\end{proof}

\begin{proof}[Proof of \Cref{th:cs_properties}]
By \Cref{thm:int_graph}, $\int$ is hyperbolic and the map $\pl \to \int$ is Lipschitz and coarsely alignment preserving. Since $\int$ and $\CS$ are $\Out(\free)$--equivariantly quasi-isometric by \Cref{prop:qi_graphs}, the same holds for the map $\epl\colon \pl \to \CS$.
The classification of the loxodromic elements of the action $\Out(\free) \curvearrowright \cs$ is proven in \Cref{cor:lox_cs} below.
\end{proof}

We also note that Bestvina and Feighn show the projection $\plproj\colon \os\to \pl$ is alignment preserving \cite[Theorem 9.3]{BFhyp}. By \Cref{lem:generalized_alignment} it follows that the composition $\csproj\colonequals \e\circ\plproj\colon \os\to \cs$ is alignment preserving. As for $\fc$, for $G,H\in \os$ we then define
\[d_\cs(G,H) = \diam_{\cs}(\csproj(G)\cup \csproj(H)).\]

\begin{remark}[Historical context] \label{remark:history}
In \cite{kapovich2007geometric}, Kapovich and Lustig use their intersection form (c.f. \Cref{th:currents}) to show that several free group analogs of the curve complex have infinite diameter. Among their proposed graphs were (up to quasi-isometry) versions of the free factor complex, free splitting complex, and what they call the intersection graph. Their definition of the intersection graph is the following: vertices are \emph{all} conjugacy classes of very small trees and geodesic currents and a tree $T$ is joined by and edge to a current $\mu$ if $\langle T,\mu\rangle = 0$. Although this graph is not connected (e.g. a free simplicial tree is isolated), the connected component containing the rational currents corresponding to primitive conjugacy classes is $\Out(\free)$ invariant.  This version of the intersection graph however is different from the graph $\int$ defined above, which is also referred to as the intersection graph in \cite{Mann-thesis}. The difference between these graphs lies in exactly \emph{which} geodesic currents are allowed; different restrictions determine which electrification of $\F$ one obtains. Using $\CS$ avoids this ambiguity as well as having the added benefit of a natural and transparent definition. 
\end{remark}

\subsection{Boundary of $\CS$}
\label{sec:cosurfaceboundary}
From \Cref{th:boundaries,th:cs_properties} we deduce that $\partial \CS \cong \partial_{\CS} \F$. Our next lemmas show that $\partial_\CS \fc$ is precisely the collection of classes of free arational trees in $\partial \F$. The first lemma follows easily from work of Coulbois--Hilion--Reynolds and Bestvina--Reynolds.

\begin{lemma}\label{lem:free_arational}
Let $T \in \partial \X$ be free and arational and let $\mu$ be any geodesic current. If $\langle T,\mu\rangle = 0 = \langle S,\mu\rangle$ for some tree $S \in \X \cup \partial \X$, then $S$ is also free and arational.
\end{lemma}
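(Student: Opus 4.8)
The plan is to translate the hypotheses into statements about dual laminations via the Kapovich--Lustig intersection pairing, and then invoke the maximality of the dual lamination of an arational tree. First, since $T\in\partial\X$ is free and arational, \Cref{th:char_arational} shows that $T$ is indecomposable, so \Cref{th:CHR} applies to the nonzero current $\mu$: the hypothesis $\langle T,\mu\rangle=0$ forces $\supp(\mu)=L'(T)$, which is therefore a (nonempty) lamination. Combining this with \Cref{th:supp_of_current} and the hypothesis $\langle S,\mu\rangle=0$ gives $L'(T)=\supp(\mu)\subseteq L(S)$. Since $L(S)$ is then nonempty, $S$ cannot be both free and simplicial, so in fact $S\in\partial\X$.

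The heart of the argument is to upgrade the inclusion $L'(T)\subseteq L(S)$ to the equality $L(S)=L(T)$ together with arationality of $S$. This is precisely the maximality phenomenon for arational dual laminations established by Bestvina--Reynolds \cite{bestvina2012boundary}, building on Coulbois--Hilion--Reynolds \cite{CHR11} and Reynolds \cite{Rey12}: if $T$ is arational and $S\in\cvbar$ has dual lamination containing $L'(T)$, then $S$ is itself arational and $L(S)=L(T)$. Granting this, $S$ is arational and $S\sim T$.

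Finally, freeness of $S$ follows from that of $T$: for every nontrivial conjugacy class $\beta$ one has $\len_S(\beta)>0$ exactly when $L(\beta)\not\subseteq L(S)=L(T)$, and the latter holds because $\len_T(\beta)>0$. Hence every nontrivial conjugacy class acts hyperbolically on $S$, so $S$ is free, which completes the proof. (Alternatively, once $S$ is known to be arational one can apply \Cref{th:char_arational} and rule out the surface case directly, since the peripheral curve of a once-punctured surface is elliptic on the dual tree and its lamination would then lie inside $L(S)=L(T)$, contradicting freeness of $T$.) The step requiring the most care is the middle one: identifying the precise form of the Bestvina--Reynolds maximality statement and applying it correctly, keeping the distinction between the dual lamination $L(T)$ and its minimal sublamination $L'(T)$ clearly in view; the surrounding steps are routine manipulations of the intersection pairing and of the dictionary between elliptic conjugacy classes and dual laminations.
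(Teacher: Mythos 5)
Your proof follows essentially the same route as the paper's: use \Cref{th:char_arational} to get indecomposability of $T$, apply \Cref{th:CHR} to obtain $\supp(\mu)=L'(T)$, use \Cref{th:supp_of_current} to deduce $L'(T)\subseteq L(S)$, invoke Bestvina--Reynolds to get $L(S)=L(T)$, and extract freeness from the fact that neither dual lamination contains $L(\alpha)$ for a nontrivial conjugacy class. The only real divergence is the arationality step: the paper derives $L(S)=L(T)$ from Proposition~4.2(i) and Corollary~4.3 of \cite{bestvina2012boundary} and then proves arationality of $S$ \emph{separately}, by contradiction --- a free factor $A$ with a dense-orbits $A$-invariant subtree of $S$ would give a leaf of $L(S)=L(T)$ in $\db A$, contradicting \cite[Lemma~2.1]{Rey12}. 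You instead claim a stronger black-box ``maximality'' statement from Bestvina--Reynolds that simultaneously yields equality and arationality; you flag this yourself as the step requiring care, and rightly so, since the paper does not lean on such a packaged result but spells the argument out. Your alternative freeness argument (rule out the surface case via the elliptic peripheral curve once arationality is known) is also sound.
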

\begin{proof}
By \Cref{th:supp_of_current}, the hypotheses imply $\supp(\mu)\subset L(T)$ and $\supp(\mu)\subset L(S)$.  Using that $T$ is free and arational, \Cref{th:CHR} moreover gives $\supp(\mu) = L'(T)$; thus $L'(T)\subset L(S)$. Now apply Proposition 4.2(i) and Corollary 4.3 of \cite{bestvina2012boundary} to conclude that $L(S) = L(T)$.

Recall from \S\ref{sec:trees} that a tree is free if and only if its dual lamination does not contain $L(\alpha)$ for any nontrivial conjugacy class $\alpha$. Since this necessarily holds for $L(T) = L(S)$, we conclude that $S$ is free as well. 
Finally, if $S$ were not arational then there would be a free factor $A$ of $\free$ and an $A$--invariant subtree $S^A\subset S$ on which $A$ acts with dense orbits. It would then follow that $L(T)=L(S)$ contains a leaf in $\partial^2 A \subset \partial^2 \free$. 
However, the fact that $T$ is free and arational implies that $L(T)$ cannot contain a leaf in $\partial^2 A$ \cite[Lemma 2.1]{Rey12}. This contradiction shows that $S$ must be arational and completes the proof of the lemma.
\end{proof}

The following lemma is an application of a standard argument for showing that graphs which are similar to the curve graph of a surface have infinite diameter. See \cite{Kob,MasurMinsky, bestvina2012boundary, kapovich2007geometric}. The details are provided for the reader's convenience. 

\begin{lemma} \label{lem:unbounded}
Let $(G_i)_{i\ge0}$ be a sequence of graphs in $\X$ converging
to a tree $T$ in $\partial \X$ which is free and arational. Then the projections $\pi_{\CS}(G_i)$ of $G_i$ to $\CS$ are unbounded.
\end{lemma}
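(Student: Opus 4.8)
The plan is to argue by contradiction: suppose the projections $\csproj(G_i)$ remain within a bounded subset of $\cs$. Since the diameter of $\csproj(G_i)$ itself is uniformly bounded, this means the whole collection $\bigcup_i \csproj(G_i)$ has finite diameter in $\cs$. Fix a basepoint and a vertex $\alpha_0 \in \pl^0$ so that $\e(\alpha_0)$ lies near $\csproj(G_0)$; then every $\alpha_i \in \plproj(G_i)$ satisfies $d_\cs(\e(\alpha_0),\e(\alpha_i)) \le M$ for a uniform constant $M$. By definition of $\cs$ and of the electrification map $\e\colon \pl \to \cs$, a bounded chain of edges in $\cs$ means: there is a bounded-length sequence of once-punctured surfaces $S_0, S_1, \ldots, S_M$ (with $\pi_1(S_j) \cong \free$) and primitive classes interpolating between $\alpha_0$ and $\alpha_i$, each consecutive pair lying as simple closed curves on a common $S_j$. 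The key point is that each such $S_j$ contributes, via Reynolds' \Cref{th:char_arational}, a very small tree $T_{S_j}$ (dual to a filling measured lamination on $S_j$, or rather the cusp-tree) that is \emph{not} free; more usefully, each surface $S_j$ gives a counting current $\mu_{S_j}$ supported on the boundary curve of $S_j$ (or a current carried by a lamination on $S_j$) with the property that $\langle T_{S_j}, \mu_{S_j}\rangle = 0$ and, crucially, $\langle \cdot, \mu_{S_j}\rangle$ vanishes on any tree in which all curves of $S_j$ are elliptic.

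The cleaner route, and the one I would actually carry out, mirrors the standard curve-graph argument (as the lemma's remark indicates, following \cite{Kob, MasurMinsky, bestvina2012boundary, kapovich2007geometric}): I will produce a single geodesic current $\mu$ that ``detects'' the bound. Passing to a subsequence, the counting currents $\eta_{\alpha_i}$, suitably rescaled, converge projectively to some current $\mu \in \pcurr$. On one hand, since $G_i \to T$ in $\osbar$ with $T$ free and arational, I want to show $\langle T, \mu \rangle = 0$: this follows because $\alpha_i$ is an embedded loop on $G_i$ so $\len(\alpha_i \vert G_i) = 1$, hence $\len_{G_i}(\alpha_i) \to 0$ after renormalizing against the growing lengths, and continuity of the Kapovich--Lustig pairing (\Cref{th:currents}) forces $\langle T, \mu\rangle = 0$. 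By \Cref{th:CHR}, since $T$ is free and indecomposable (arational trees are indecomposable by \Cref{th:char_arational}), this gives $\supp(\mu) = L'(T)$, a lamination that contains no periodic leaf and in particular is not carried by any once-punctured surface. On the other hand, the hypothesis that $\csproj(G_i)$ is bounded will be used to show $\mu$ \emph{is} carried by a surface, or more precisely that $\mu$ shares a common tree-annihilator with a surface-supported current, contradicting freeness/arationality of its support via \Cref{lem:free_arational}.

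Concretely, the bounded $\cs$-distance gives a uniformly bounded chain $\alpha_0 = \beta_0, \beta_1, \ldots, \beta_N = \alpha_i$ of primitive classes, consecutive ones lying on a common surface $S_k$. The surfaces $S_k$ range over a finite set independent of $i$ (this is the point where boundedness is essential — a bounded ball in $\cs$ meets only finitely many of the surface-sets $C_S$ up to the relevant equivalence, or at least the chain has bounded combinatorial length so after passing to a subsequence the sequence of surface-types stabilizes). Then $\alpha_i$ is a simple closed curve on one fixed surface $S$ for all $i$ in the subsequence, so $\eta_{\alpha_i}$ is carried by $S$, hence so is the limit $\mu$; but a current carried by a once-punctured surface has support contained in the lamination dual to that surface, which cannot equal $L'(T)$ for a \emph{free} arational $T$ — indeed $T$ being free means $L(T)$ contains no $L(\alpha)$, and applying \Cref{lem:free_arational} with the surface's dual tree $S$ and the current $\mu$ (noting $\langle S, \mu\rangle = 0$ since $\mu$ is $S$-carried) would force $S$ to be free and arational, contradicting $S$ being simplicial/non-free. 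This contradiction proves the projections are unbounded.

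The main obstacle will be the combinatorial step extracting, from boundedness of $\csproj(G_i)$ in $\cs$, the conclusion that all the $\alpha_i$ (along a subsequence) live on one common surface — or at least on surfaces whose dual currents share enough structure to invoke \Cref{lem:free_arational}. The subtlety is that a bounded chain in $\cs$ passes through \emph{many} intermediate primitive classes and surfaces, and one must track how the associated currents interact; the clean way is probably to argue that the intermediate classes $\beta_k$ also projectively subconverge and that the whole chain collapses to yield $\langle T, \mu_k\rangle = 0$ for each limiting current $\mu_k$, then use \Cref{lem:free_arational} repeatedly along the chain to propagate freeness/arationality of the support, arriving at a surface-carried current that is free arational — an impossibility. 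Making the finiteness-of-surface-types or the chain-propagation argument precise is the real content; everything else is continuity of the intersection pairing plus the structure theory already quoted.
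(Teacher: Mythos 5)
Your proposal identifies the correct high-level framework and pivots to the right strategy, but it leaves the central step as an acknowledged gap, and your first-choice plan for closing it does not work. The paper's proof \emph{is} the ``chain-propagation argument'' you describe as the ``clean way,'' so you are correct that this is where the real content lies — but you have not carried it out.

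Two specific issues. First, your primary combinatorial idea — that a bounded ball in $\cs$ meets only finitely many surface-sets $C_S$, or that ``after passing to a subsequence the sequence of surface-types stabilizes'' — is not available: $\cs$ is not locally compact, and there is no finiteness of once-punctured surfaces (up to marked isomorphism) realizing edges incident to a given vertex; infinitely many distinct markings yield the same co-surface edge. You flag this yourself as ``the real content,'' which is correct, but you then only gesture at the alternative.

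Second, the propagation argument you sketch omits the crucial mechanism. The missing ingredient is to package each surface $S_i^k$ appearing along the length-$M$ geodesic via its \emph{cusp class} $c_i^k$ together with the two simplicial dual trees $R_i^k$ (dual to the simple closed curve $x_i^k$ on $S_i^k$) and $L_i^k$ (dual to $x_i^k$ on $S_i^{k-1}$). These satisfy $\langle R_i^k, x_i^k\rangle = \langle R_i^k, c_i^k\rangle = 0$ and $\langle L_i^k, x_i^k\rangle = \langle L_i^k, c_i^{k-1}\rangle = 0$. After passing to a subsequence so that all of $x_i^k$, $c_i^k$, $R_i^k$, $L_i^k$ converge projectively (to $x^k$, $c^k$, $R^k$, $L^k$), continuity of the Kapovich--Lustig pairing and \Cref{lem:free_arational} let one propagate ``free and arational'' down the chain by alternating annihilators: $\langle T,\alpha\rangle=0$ and $\langle L^M,\alpha\rangle=0$ give $L^M$ free arational; then $\langle L^M, c^{M-1}\rangle = 0 = \langle R^{M-1},c^{M-1}\rangle$ gives $R^{M-1}$ free arational; then pairing against $x^{M-1}$ gives $L^{M-1}$ free arational; and so on down to $R^0$. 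The contradiction is that $\langle R^0, x^0\rangle = 0$ with $x^0 = x$ a fixed primitive conjugacy class, so $R^0$ cannot be free. Without identifying this alternating tree/cusp-current bookkeeping, the propagation you wave at does not go through — one cannot simply ``propagate along limiting currents'' because the currents $x^k$ are what annihilate the trees, and it is the \emph{trees}, not the currents, to which ``free and arational'' is attached via \Cref{lem:free_arational}.

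Minor points: $\langle G_i,\alpha_i\rangle \le 1$ (not equal to $1$) since the embedded loop has length at most the total volume; and the argument that $\langle T,\mu\rangle = 0$ requires the observation that the scaling factors $\lambda_i \to 0$, which in turn uses that $d_\os(G_0,G_i)$ is unbounded — you gesture at ``renormalizing against the growing lengths'' but should make this explicit.
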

\begin{proof}
Recall that $\partial \os$ is realized as the boundary of $\os$ inside the projective space $\mathbb{P}\R^\free$. Thus the statement $G_i\to T$ means that there is a sequence of scaling constants $\lambda_i>0$ such that $\langle \lambda_iG_i,c\rangle \to \langle T,c\rangle$ for every conjugacy class $c$ in $\free$. Observe that that $d_\os(G_0,G_i)$ is unbounded since $\fproj$ is Lipschitz and \Cref{th:factor_complex_boundary} implies that $\fproj(G_i)$ converges to the boundary of $\fc$. Therefore, there is a conjugacy class $c$ such that $\langle G_i, c\rangle$ is unbounded. If $\lambda_i \ge \epsilon > 0$ for all $i$, this would imply
\[\langle T,c\rangle = \lim_i \langle \lambda_i G_i ,c \rangle = \limsup_i \lambda_i \langle G_i, c\rangle \ge \epsilon \limsup_i \langle G_i, c\rangle = \infty,\]
contradicting that $\langle T, c \rangle =  \len_T(c) < \infty$. Therefore, after passing to a subsequence, we may assume $\lambda_i \to 0$.

Let $\alpha_i\in \cs^0$ be a primitive loop in the projection $\csproj(G_i)$; thus $\alpha_i$ corresponds to an embedded closed loop on $G_i$ and so $\langle G_i, \alpha_i\rangle \le 1$. Suppose that these curves do not go to infinity in $\CS$. Then, after passing to a subsequence and fixing some $x\in \cs^0$, we may assume that $d_{\cs}(x,\alpha_i) =M$ for all $i$. Build a geodesic $x=x_i^0,x_i^1,\ldots, x_i^M=\alpha_i$ for each $i\ge 0$. By definition of $\CS$, for each $0\le k \le M-1$ there is a once-punctured surface $S_i^k$ realizing the edge between $x_i^k$ and $x_i^{k+1}$; let $c_i^k$ be the conjugacy class corresponding to the cusp (i.e., peripheral curve) of $S_i^k$. Further, let $R_i^k$ be the simplicial tree dual to the simple closed curve representing $x_i^k$ on $S_i^k$, and for $1\le k \le M$, let $L_i^{k}$ be the simplicial tree dual to $x_i^k$ on $S_i^{k-1}$. By construction
\begin{align}\label{eq:zero}
\langle R_i^k, x_i^k\rangle = \langle R_i^k,c_i^k\rangle = 0 \qquad \text{and}\qquad \langle L_i^k, x_i^k\rangle= \langle L_i^k,c_i^{k-1}\rangle=0.
\end{align}
Now let $i \to \infty$ and, after passing to a subsequence, assume that everything converges projectively to either a tree or a geodesic current. Denote the limit by omitting the subscript. Since $\langle\cdot,\cdot\rangle$ is continuous and $G_i$ converges projectively to the free arational tree $T$, we necessarily have $\langle T,\alpha\rangle = 0$, where $\alpha$ is a projective limit of $\alpha_i=x_i^M$ in $\pcurr$. To see this, choose scaling constants $\mu_i> 0$ so that $\mu_i \alpha_i$ converges to the current $\alpha$. Letting $A>0$ denote the length of the shortest loop in $G_0$, we have 
\[\limsup_i \mu_i A \le \limsup_i \mu_i \langle G_0, \alpha_i\rangle = \lim_i \langle G_0, \mu_i\alpha_i\rangle \to \langle G_0,\alpha \rangle < \infty.\]
Using $\langle G_i,\alpha_i \rangle \le 1$, it follows that
\[\langle T,\alpha\rangle = \lim_i \langle \lambda_i G_i, \mu_i\alpha_i \rangle  = \lim_i \lambda_i\mu_i \langle G_i,\alpha_i\rangle \le \limsup_i \lambda_i \mu_i \le (\limsup_i \mu_i)(\lim_i \lambda_i) = 0,\]
as claimed. By continuity of $\langle \cdot,\cdot \rangle$, we additionally have $\langle L^M,\alpha \rangle = 0$. Hence, $L^M$ is free and arational by \Cref{lem:free_arational}. Similarly we have $\langle L^M, c^{M-1}\rangle = 0 = \langle R^{M-1},c^{M-1}\rangle$; thus $R^{M-1}$ is also free and arational by \Cref{lem:free_arational}. Using continuity and \eqref{eq:zero} again to pair $R^{M-1}$ and $L^{M-1}$ with $x^{M-1}$, we now see that $L^{M-1}$ is free and arational as well. 
Applying this augment inductively, we conclude that $R^0$ is free and arational. This, however, contradicts the observation that $\langle R^0,x^0\rangle =0$ for the primitive conjugacy class $x^0=x$ (recall that the sequence $x_i^0$ is constant). This shows that $d_\cs(x,\alpha_i) \to \infty$ as $i\to \infty$ and completes the proof.
\end{proof}

\begin{theorem}[Boundary of $\cs$]
\label{th:boundary_of_cs}\label{cor:boundary_cs}
The Gromov boundary $\partial \cs$ of the co-surface graph is $\Out(\free)$--equivariantly homeomorphic to the subspace of $\partial\fc$ consisting of classes of free arational trees. 
\end{theorem}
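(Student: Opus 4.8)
The plan is to derive the theorem from the alignment‑preserving machinery of \S\ref{sec:alignmentpres} together with \Cref{lem:unbounded} and the known computation of $\partial\fc$. \textbf{Step 1 (reduction to $\partial_\cs\fc$).} By \Cref{th:cs_properties} the map $\efc\colon\fc\to\cs$ is Lipschitz and alignment preserving, and it is coarsely surjective since $\efc=\epl\circ\mathcal D$ with $\mathcal D\colon\fc\to\pl$ a quasi‑isometry and $\epl\colon\pl\to\cs$ a bijection on vertex sets. As $\fc$ is hyperbolic (\Cref{thm:fc_hyperbolic}) and $\cs$ is hyperbolic (\Cref{th:cs_properties}), \Cref{th:boundaries} yields a homeomorphism $\partial\efc\colon\partial_\cs\fc\to\partial\cs$; since $\efc,\epl,\mathcal D,\fproj,\csproj$ are all coarsely $\Out(\free)$--equivariant, this homeomorphism is $\Out(\free)$--equivariant. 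Using \Cref{th:factor_complex_boundary} to identify $\partial\fc$ with $\arational/{\sim}$, it remains to show that $\partial_\cs\fc$ corresponds precisely to the $\sim$--classes of \emph{free} arational trees (these do form a well‑defined subset, since $L(T)=L(T')$ and $T$ free force $T'$ free).

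\textbf{Step 2 ($\supseteq$).} Let $T\in\arational$ be free and put $\zeta=\partial\fproj(T)$. By Bestvina--Reynolds \cite{bestvina2012boundary} there is a folding ray $\rho=\{G_t\}_{t\ge 0}$ in $\os$ converging to $T$ whose projection $\fproj\circ\rho$ is a quasigeodesic ray in $\fc$ converging to $\zeta$. Since $\csproj$ coarsely agrees with $\efc\circ\fproj$, \Cref{lem:unbounded} shows that $\efc(\fproj(G_t))$ is unbounded as $t\to\infty$, i.e.\ $\diam_\cs\efc(\fproj\circ\rho)=\infty$. Thus the quasigeodesic ray $\fproj\circ\rho$ witnesses $\zeta\in\partial_\cs\fc$ directly from the definition (and by \Cref{prop:stability_of_quasis} the conclusion is independent of the chosen quasigeodesic ray to $\zeta$).

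\textbf{Step 3 ($\subseteq$).} Suppose $\zeta=\partial\fproj(T)\in\partial_\cs\fc$ with $T\in\arational$, and assume for contradiction that $T$ is not free. By \Cref{th:char_arational}, $T=T_\lambda$ is dual to a filling measured lamination $\lambda$ on a once‑punctured surface $S$ with $\pi_1(S)\cong\free$. Pick $G_i\to T$ in $\osbar$; then $\fproj(G_i)\to\zeta$ by \Cref{th:factor_complex_boundary}. The key claim is that $\csproj(G_i)$ is bounded in $\cs$. Granting it, continuity of the extension $\efc\cup\partial\efc$ in \Cref{th:boundaries}, applied to the sequence $\fproj(G_i)\to\zeta\in\partial_\cs\fc$, forces $\efc(\fproj(G_i))$ — which coarsely equals $\csproj(G_i)$ — to converge to $\partial\efc(\zeta)\in\partial\cs$ and hence to be unbounded, a contradiction. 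Therefore $T$ is free, which completes the proof.

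\textbf{Proof of the claim and the main obstacle.} Choose $\alpha_i\in\csproj(G_i)$: these are primitive conjugacy classes carried by embedded loops on $G_i$, so $\langle G_i,\alpha_i\rangle\le 1$. Passing to a subsequence along which $\eta_{\alpha_i}$ converges projectively to a current $\mu$, the argument from the proof of \Cref{lem:unbounded} gives $\langle T_\lambda,\mu\rangle=0$, whence $\supp(\mu)\subseteq L(T_\lambda)$ by \Cref{th:supp_of_current}. Because $\lambda$ fills $S$, the dual lamination $L(T_\lambda)$ is carried by $S$: apart from the peripheral leaves $L(\partial S)$, its only minimal sublamination is the one dual to $\lambda$. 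One then deduces that for all large $i$ the class $\alpha_i$ lies, up to bounded error in $\cs$, within the set $C_S$ of simple closed curves on $S$ — for instance via continuity of the measured‑lamination‑to‑current correspondence on $S$, or by exploiting the action on $\cs$ of a pseudo‑Anosov of $S$, which preserves $C_S$ and hence has $\cs$--bounded orbits since $C_S$ has $\cs$--diameter $1$ by \Cref{def:cosurface}. As $C_S$ has diameter $1$ in $\cs$, this bounds $\bigcup_i\csproj(G_i)$, proving the claim. The genuine obstacle is exactly this last step: translating metric smallness in $\os$ near the surface‑dual tree $T_\lambda$ into $\cs$--proximity to the curves of $S$. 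This is where one must leave pure coarse geometry and use the surface topology of $S$ and the structure of dual laminations of filling surface laminations; once it is in place, the remainder is a formal consequence of \Cref{th:boundaries}, \Cref{lem:unbounded}, and the identification of $\partial\fc$.
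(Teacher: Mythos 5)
Your Step 1 (reduction to $\partial_\cs\fc$ via \Cref{th:boundaries}) and Step 2 (the inclusion of free arational classes into $\partial_\cs\fc$ via \Cref{th:factor_complex_boundary} and \Cref{lem:unbounded}) match the paper in substance, though Step 2 invokes more machinery than necessary. The trouble is Step 3, and you have in fact correctly located it: your argument requires the claim that $\csproj(G_i)$ is bounded in $\cs$ for a projectively convergent sequence $G_i\to T_\lambda$ in $\osbar$, and the sketch you give of translating ``$\supp(\mu)\subset L(T_\lambda)$'' into ``$\alpha_i$ is $\cs$--close to $C_S$ for large $i$'' is not an argument but a gesture at one. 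You have also set yourself an unnecessarily hard problem by starting with an \emph{arbitrary} sequence $G_i\to T_\lambda$ in $\osbar$; all you really need, to contradict $T_\lambda\in\partial_\cs\fc$ via the continuity statement in \Cref{th:boundaries}, is a \emph{single} sequence in $\fc$ converging to $\partial\fproj(T_\lambda)$ whose $\efc$--image is bounded.

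The paper makes exactly this simplification, and it dissolves your ``main obstacle'' entirely. Since $T_\lambda$ is dual to a filling measured lamination $\lambda$ on a once-punctured surface $S$ with $\pi_1(S)\cong\free$, one may choose a sequence $\alpha_i$ of nonseparating simple closed curves on $S$ converging to $\lambda$ in the measured lamination space of $S$. Each $\alpha_i$ is a rank $1$ free factor, hence a vertex of $\fc$, and the sequence converges to $\partial\fproj(T_\lambda)$ in $\fc\cup\partial\fc$. But since every $\alpha_i$ is a simple closed curve on the \emph{same} once-punctured surface $S$, the set $\{\efc(\alpha_i)\}$ lies in $C_S$ and therefore has diameter at most $1$ in $\cs$ directly from \Cref{def:cosurface}. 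With this sequence in hand, the continuity of $\efc\cup\partial\efc$ from \Cref{th:boundaries} — which you already invoke — immediately rules out $T_\lambda\in\partial_\cs\fc$. No structure theory of $L(T_\lambda)$, and no passage through currents or projective limits, is needed. Your overall logical frame for Step 3 is sound; what is missing is the realization that you get to \emph{choose} the witnessing sequence, and that choosing it inside $C_S$ makes the boundedness trivial instead of the ``genuine obstacle'' you describe.
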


\begin{proof}
We use the alignment preserving map $\efc\colon \fc\to \cs$ and \Cref{th:boundaries} to identify $\partial \cs \cong \partial_\cs \fc$. By \Cref{th:factor_complex_boundary} and \Cref{lem:unbounded}, the set of free arational trees is contained in $\partial_{\CS}\F $. Further, if $T \in \partial \F$ then $T$ is arational by \Cref{th:factor_complex_boundary}. If $T$ is not free, then by \Cref{th:char_arational}, $T$ is dual to a measured lamination $L$ on a once--punctured surface $S$. Let $\alpha_i$ be a sequence of nonseparating simple closed curves in $S$ converging to the lamination $L$. Then $\alpha_i$ is also a sequence of rank $1$ free factors in $\F$ converging to $T \in \partial \F$ with $\diam_{\CS}\e(\alpha_i) \le 1$. Hence, $T \notin \partial_{\CS}\F$ and we conclude that the set of free arational trees equals $\partial_{\CS}\F$.
\end{proof}

\subsection{Quasi--isometric embeddings into $\CS$}
\label{sec:qi_into_cosurf}
We say that a finitely generated subgroup $\Gamma\le\Out(\free)$ \define{qi-embeds} into an $\Out(\free)$ graph $Y$ if some (equivalently any) orbit map $\Gamma\to Y$ is a quasi-isometric embedding. 
In this section, we prove that $\Gamma \le  \Out(\free)$ qi-embeds into $\CS$ if and only if it is purely atoroidal and qi-embeds into $\F$. This answers \cref{q:Kap} of I. Kapovich and clarifies the connection between the factor complex, the co-surface graph, and hyperbolic extensions of free groups. 

Fix a rose $R \in \X$ and a primitive conjugacy class $\alpha$ represented by a petal of $R$. Fix a finitely generated subgroup $\Gamma \le \Out(\free)$ such that the orbit map $\Gamma \to \F$ given by $g \mapsto g \cdot \alpha$ is a quasi-isometric embedding. In \cite{DT1}, we show that this implies that the orbit $\Gamma \cdot R$ has strong quasiconvexity properties in $\X$ (e.g. \Cref{th:DT_2}). For the application needed here, the following proposition from \cite{DKT} is most convenient.

\begin{proposition}[Folding rays to infinity {\cite[Proposition 5.6]{DKT}}]\label{lem:folding_rays}
Suppose that $\Gamma \le \Out(\free)$ is purely atoroidal and qi-embeds into $\fc$.
For any $k \ge0$ there is a $K\ge0$ such that if $(g_i)_{i\ge0}$ is a $k$--quasigeodesic ray in $\Gamma$, then there is an infinite length folding ray $\gamma\colon \I \to \X$ parameterized at unit speed with the following properties:
\begin{enumerate}
\item The sets $\gamma(\I)$ and $\{g_iR : i\ge 0\}$ have symmetric Hausdorff distance at most $K$.
\item The rescaled folding path $G_t = e^{-t}\cdot \gamma(t)\in\cv$ converges to the arational tree $T \in \partial\cv$ with the property that $\lim_{i\to \infty}g_i \cdot  \alpha= \partial \fproj(T)$ in $\F \cup \partial \F$, where $\partial \fproj(T)$ is the projection of the projective class of $T$ to the boundary of $\F$ (c.f. \Cref{th:factor_complex_boundary}). Moreover, the action $\free \curvearrowright T$ is free.
\end{enumerate}
\end{proposition}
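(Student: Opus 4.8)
The plan is to carry the quasigeodesic ray $(g_i)$ from $\Gamma$ over to the factor complex, recognise its limit on $\partial\fc$ as $\partial\fproj$ of an arational tree, and then realise this data by an honest folding ray using the quasiconvexity of convex cocompact orbits established in \cite{DT1}. To begin, since the orbit map $\Gamma\to\fc$, $g\mapsto g\cdot\alpha$, is a quasi-isometric embedding and $(g_i)$ is a $k$--quasigeodesic, the sequence $(g_i\cdot\alpha)$ is a quasigeodesic ray in the hyperbolic space $\fc$, with constants depending only on $k$ and the embedding. Being an unbounded quasigeodesic in a hyperbolic space it converges to a point $\xi\in\partial\fc$, and \Cref{th:factor_complex_boundary} gives $\xi=\partial\fproj(T)$ for an arational tree $T$, uniquely determined up to the equivalence $\sim$ (equivalently, $L(T)$ is determined). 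For each $i$ choose a unit-speed folding path $\rho_i\colon[0,L_i]\to\X$ from $R$ to $g_iR$; since $\fproj(R)$ and $\fproj(g_iR)$ lie boundedly close to $\alpha$ and $g_i\cdot\alpha$ and $\fproj$ is coarsely Lipschitz (\Cref{lem:fproj_lipschitz}), we have $L_i=d_\X(R,g_iR)\to\infty$.

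The second step is to make these folding paths uniformly thick and to pin them near the orbit. Because $\Gamma$ qi-embeds into $\fc$, the quasiconvexity results of \cite{DT1} (whose proof invokes \Cref{th:DT_2}, the uniformity coming from the fact that the terminal endpoint $g_iR$ has the same injectivity radius as $R$) supply uniform constants $\epsilon>0$ and $A\ge0$, depending only on $k$ and the embedding, so that each $\rho_i$ lies in the $\epsilon$--thick part $\X_\epsilon$, each $\fproj\circ\rho_i$ is a parametrised quasigeodesic in $\fc$, and $\rho_i$ lies in the $A$--neighbourhood of $\Gamma\cdot R$. Since the $\fc$--quasigeodesic $\fproj\circ\rho_i$ has, up to bounded error, the same endpoints as the $\fc$--quasigeodesic segment $(g_j\cdot\alpha)_{0\le j\le i}$, hyperbolicity of $\fc$ forces the two to uniformly fellow-travel; feeding this back through the $A$--fellow-travelling of $\rho_i$ with $\Gamma\cdot R$ and the fact that $\fproj$ restricts to a quasi-isometry on the orbit, one gets $\dhaus\bigl(\rho_i([0,L_i]),\{g_jR:0\le j\le i\}\bigr)\le K_0$ for a uniform $K_0$.

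Now one extracts a limiting folding ray. All the $\rho_i$ are unit-speed folding paths issuing from the fixed point $R$ and confined to $\X_\epsilon$; since closed $\dsym$--balls in the thick part are compact and the folding data determining an initial segment of a folding path varies in a compact set, a diagonal argument yields a subsequence of $(\rho_i)$ converging, uniformly on compact subsets, to an infinite-length unit-speed folding ray $\gamma\colon\R_+\to\X$. The ray $\gamma$ inherits $\gamma(\R_+)\subset\X_\epsilon$, the property that $\fproj\circ\gamma$ is a quasigeodesic ray converging to $\xi$, and $\dhaus(\gamma(\R_+),\{g_iR:i\ge0\})\le K$ for a suitable uniform $K$; this is conclusion~(1). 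For conclusion~(2): being an infinite folding ray confined to the thick part whose $\fc$--projection converges to $\xi\in\partial\fc$, the rescalings $G_t=e^{-t}\gamma(t)$ converge in $\osbar$ to a tree, which by Bestvina--Reynolds's study of folding rays \cite{bestvina2012boundary} together with the continuity of $\partial\fproj$ (\Cref{th:factor_complex_boundary}) is arational with $\partial\fproj$ equal to $\xi=\lim_i g_i\cdot\alpha$; calling this tree $T$ gives (2) apart from freeness. Finally, \Cref{th:char_arational} says $T$ is either free or dual to a filling measured lamination on a once-punctured surface with fundamental group $\free$; the latter possibility is ruled out by the hypothesis that $\Gamma$ is purely atoroidal, exactly by the argument used in \cite{DT1} to show that limit trees of purely atoroidal convex cocompact subgroups are free. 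Hence $\free\curvearrowright T$ is free, completing (2).

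I expect the main obstacle to be the middle portion: simultaneously arranging that the $\rho_i$ stay uniformly thick, that each fellow-travels the \emph{particular} orbit $\{g_jR\}$ rather than merely the full orbit $\Gamma\cdot R$, and that the chosen subsequence limits to a genuine infinite folding ray (and not a degenerate or finite path). This is precisely the point where the full force of the quasiconvexity results of \cite{DT1}, together with the uniform control of injectivity radii along folding paths, is needed. The freeness of $T$ is a secondary but still nontrivial matter, and is the sole place the pure atoroidality hypothesis is used.
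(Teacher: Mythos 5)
First, note that the paper does not supply its own proof of this statement: it is quoted verbatim from \cite[Proposition~5.6]{DKT} (see the bracketed attribution in the proposition header) and is used in the present paper only as a black box, e.g.\ in \Cref{prop:proper_orbit}. There is therefore no internal proof to compare yours against. That said, your outline is the expected one for \cite[Prop.~5.6]{DKT}: push $(g_i)$ into $\fc$ via the orbit map, identify the boundary limit as $\partial\fproj(T)$ for an arational $T$ using \Cref{th:factor_complex_boundary}, build standard geodesics from $R$ to $g_iR$ and use the quasiconvexity package (\Cref{th:DT_2}) to confine them to a fixed thick part near the orbit, and diagonalise to an infinite folding ray.

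Two points deserve tightening. The minor one: the standard geodesic from $R$ to $g_iR$ is not a pure folding path but a rescaling segment followed by a folding path (\cite[Prop.~2.11]{BFhyp}, \cite[Thm.~5.6]{FMout}); your diagonal limit is therefore a folding ray issuing from an accumulation point of the rescaled initial graphs rather than from $R$ itself. This is harmless for conclusion~(1), but it should be said, and it is one reason the statement allows $\gamma$ to have an unspecified domain $\I$. The more substantial one is the freeness of $T$. Your appeal to ``exactly the argument used in \cite{DT1}'' is doing unacknowledged work: \cite{DT1} does not record a statement that limit trees of purely atoroidal convex cocompact subgroups are free, and ruling out the surface case of \Cref{th:char_arational} is not a formality. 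The mechanism one actually needs (and which is what \cite{DKT} deploys) is a flaring argument: if $\ell_T(\alpha)=0$ for some nontrivial $\alpha$, then $\ell(\alpha\vert g_iR)=e^t\langle G_t,\eta_\alpha\rangle$ fails to grow at the definite exponential rate guaranteed along the orbit ray by the flaring estimates underlying the proof of \Cref{th:DT1}, so it remains bounded; hence $g_i^{-1}\cdot\alpha$ takes finitely many values, yielding infinitely many elements $g_jg_i^{-1}\in\Gamma$ of unbounded $\Gamma$--length fixing $\alpha$, at least one of which is of infinite order since $\Gamma$ is hyperbolic, contradicting pure atoroidality. You should either spell out this chain of implications or cite \cite{DKT} directly for the freeness, rather than deferring it to \cite{DT1}.
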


Given \Cref{lem:folding_rays}, we show that the orbit map from $\Gamma$ to the co-surface graph is metrically proper.

\begin{proposition}\label{prop:proper_orbit}
Suppose that $\Gamma \le \Out(\free)$ is purely atoroidal and qi-embeds into the factor complex $\F$. Then for every $D \ge0$ there is an $N \ge 0$ so that 
\[ d_\CS(R,g\cdot R) \ge D\]
for all $g\in \Gamma$ with $|g| \ge N$.
\end{proposition}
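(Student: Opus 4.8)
The plan is to argue by contradiction using \Cref{lem:folding_rays} and the characterization of $\partial_\cs\fc$ from \Cref{th:boundary_of_cs} (equivalently \Cref{lem:unbounded}). Suppose the conclusion fails: then there is some $D\ge 0$ and a sequence $g_n\in\Gamma$ with $|g_n|\to\infty$ but $d_\cs(R,g_n\cdot R)\le D$ for all $n$. Since $\Gamma$ qi-embeds into $\fc$ (hence is undistorted, hence word-hyperbolic as a subgroup acted on properly on a hyperbolic space — or at least we may freely extract quasigeodesics), for each $n$ choose a $k$--quasigeodesic ray $(g^{(n)}_i)_{i\ge 0}$ in $\Gamma$ (with uniform constant $k$, coming from the geodesic combing of $\Gamma$) starting at the identity and passing uniformly close to $g_n$; concretely $g^{(n)}_{i_n}$ is within bounded distance of $g_n$ where $i_n\asymp|g_n|\to\infty$.

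Next I would apply \Cref{lem:folding_rays} to each ray $(g^{(n)}_i)_i$. This produces, with a uniform constant $K$, an infinite folding ray $\gamma_n\colon\I\to\X$ at unit speed whose image lies within symmetric Hausdorff distance $K$ of the orbit $\{g^{(n)}_i R\}$, and whose rescaled endpoint is a free arational tree $T_n\in\partial\cv$. In particular $\gamma_n(0)$ is uniformly close to $R$ and $\gamma_n(t)$ is uniformly close to $g_n\cdot R$ for some $t = t_n$ with $t_n\to\infty$. Now push forward under the alignment-preserving coarse projection $\csproj\colon\X\to\cs$: because $\gamma_n$ is a folding ray whose rescaling converges to a free arational tree $T_n$, \Cref{lem:unbounded} tells us precisely that $\csproj(\gamma_n(t))$ is \emph{unbounded} in $\cs$ as $t\to\infty$ — that is, $T_n\in\partial_\cs\fc$ in the language of \Cref{th:boundary_of_cs}. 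The issue with applying this directly is that \Cref{lem:unbounded} gives unboundedness along a \emph{fixed} ray, whereas here the ray changes with $n$; so the main work is to extract a \emph{uniform} lower bound.

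To get uniformity, I would argue as follows. Fix $D$ and suppose toward a contradiction that there are $g_n$ with $|g_n|\to\infty$ and $d_\cs(R,g_n R)\le D$. For each $n$ form $\gamma_n$ as above; then $\csproj(\gamma_n(t_n))$ lies within a uniformly bounded distance of $g_n\cdot\csproj(R)$, hence within a uniformly bounded distance $D'$ (depending only on $D$ and the Lipschitz/quasi-isometry constants relating $\csproj$, $\efc$, orbit maps, etc.) of $\csproj(\gamma_n(0))$. Thus along the folding ray $\gamma_n$ the $\cs$--projection travels distance at most $D'$ between parameters $0$ and $t_n$, with $t_n\to\infty$. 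Since \Cref{lem:unbounded} shows the $\cs$--projection along \emph{any} folding ray converging to a free arational tree is eventually unbounded, for each $n$ there is a parameter $t'_n$ with $\csproj(\gamma_n(t'_n))$ at distance $\ge D'+1$ from $\csproj(\gamma_n(0))$; by monotonicity considerations (using that $\csproj$ of a folding ray is a coarse quasigeodesic — a consequence of alignment preservation together with \Cref{th:boundaries}, or more elementarily of the fact that folding rays are directed geodesics in $\X$ whose $\cs$--image is an unbounded alignment-preserving image) we necessarily have $t'_n\le t_n$ once $t_n$ is large enough, which must hold for $n$ large since $t_n\to\infty$ — contradiction.

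The main obstacle is exactly this last uniformity step: making rigorous the passage from "each fixed folding ray to a free arational tree has unbounded $\cs$--projection" to "the $\cs$--projection grows along $\gamma_n$ before the bounded parameter $t_n$". The clean way to handle it is to invoke that $\csproj\colon\X\to\cs$ is alignment preserving (from \Cref{lem:generalized_alignment}, using \cite[Theorem 9.3]{BFhyp} and $\epl$ being alignment preserving via \Cref{th:cs_properties}), so that the image of the directed geodesic $\gamma_n$ under $\csproj$ is coarsely monotone: for $0\le s\le t$ the points $\csproj(\gamma_n(0)),\csproj(\gamma_n(s)),\csproj(\gamma_n(t))$ are uniformly aligned in $\cs$, whence $d_\cs(\csproj(\gamma_n(0)),\csproj(\gamma_n(s)))\le d_\cs(\csproj(\gamma_n(0)),\csproj(\gamma_n(t)))+\mathrm{const}$. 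This monotonicity, combined with the fact that $d_\cs(\csproj(\gamma_n(0)),\csproj(\gamma_n(t_n)))\le D'$ and $t_n\to\infty$, bounds the $\cs$--diameter of the entire initial segment $\gamma_n([0,t_n])$ by a constant independent of $n$; but \Cref{lem:unbounded} applied to the limiting free arational tree $T_n$ forces this diameter to tend to infinity, the desired contradiction. Finally, setting $N$ large enough that $|g|\ge N$ forces $|g_n|$ into the range where the contradiction bites yields the stated $N = N(D)$.
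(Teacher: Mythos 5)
Your overall strategy — contradict a bounded $\cs$--orbit by invoking \Cref{lem:folding_rays} and \Cref{lem:unbounded} for folding rays accumulating on a free arational tree — is the same as the paper's, and you correctly identify the danger that \Cref{lem:unbounded} is a statement about a \emph{single} ray. But the fix you propose does not close the gap. In your final step you produce, for each $n$, a parameter $t'_n$ where $\csproj(\gamma_n(t'_n))$ has moved distance at least $D'+1$ from $\csproj(\gamma_n(0))$, and then assert that $t'_n \le t_n$ for large $n$. That inequality is exactly what is missing and what \Cref{lem:unbounded} does not supply: as $n$ varies, both the ray $\gamma_n$ and its limiting tree $T_n$ change, so the parameter $t'_n$ at which the $\cs$--projection first escapes a fixed ball could \emph{a priori} grow arbitrarily faster than $t_n$. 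Coarse monotonicity of $\csproj\circ\gamma_n$ (via alignment preservation) gives you that the diameter of $\csproj(\gamma_n([0,t_n]))$ is uniformly bounded by roughly $D'$, but it does not force that diameter to tend to infinity, and \Cref{lem:unbounded} cannot be applied ``uniformly in $n$'' to make it do so.

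The paper avoids the family-of-rays problem entirely by first passing to a subsequence so that $h_i \to q \in \partial\Gamma$, and then replacing the sequence $(h_i)$ by a \emph{single} geodesic ray $(g_i)_{i\ge 0}$ in $\Gamma$ converging to $q$. The key observation (the ``Claim'' in the paper's proof) is that the orbit map $\Gamma\to\fc$ is a qi-embedding into a hyperbolic space, hence alignment preserving by \Cref{prop:stability_of_quasis}, and therefore $\Gamma\to\cs$ is alignment preserving by \Cref{lem:generalized_alignment}. Since each $g_i$ sits on a geodesic $[1,h_j]$ up to $2\delta$, alignment preservation in $\cs$ transports the bound $d_\cs(\alpha, h_j\alpha)\le D$ to a bound $d_\cs(\alpha, g_i\alpha)\le \overline D$ uniformly over $i$. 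Now one applies \Cref{lem:folding_rays} \emph{once}, to the single ray $(g_i)$, obtaining a single folding ray converging to a single free arational tree $T$; \Cref{lem:unbounded} then forces $d_\cs(R, g_i R)\to\infty$, contradicting $d_\cs(\alpha, g_i\alpha)\le\overline D$. So the essential idea you missed is to use compactness of $\Gamma\cup\partial\Gamma$ to collapse your family of rays to one ray before invoking \Cref{lem:unbounded}, rather than hoping for a uniform version of \Cref{lem:unbounded} that does not exist.
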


\begin{proof}
Suppose not. Then there is a $D\ge0$ and a sequence $h_i \in \Gamma$ with $d_\cs(R,h_i\cdot R)\le D$ and $\abs{h_i} \to \infty$ as $i\to \infty$. After passing to a subsequence, we may assume that $h_i \to q \in \partial \Gamma$. Since $\Gamma \to \F$ is a quasi-isometric embedding, there is a unique $\lambda \in \partial \F$ such that $h_i \cdot \alpha \to \lambda$ in $\F \cup \partial \F$. (Recall that $\alpha \in \pi_\F(R)$.)
\begin{claim}\label{cl:replace}
The sequence $(h_i)_{i\ge0}$ can be replaced by a geodesic $(g_i)_{ i\ge0}$ in $\Gamma$ such that $g_i \to q \in \partial \Gamma$ and $d_\CS(\alpha, g_i \cdot \alpha) \le \overline D$, where $\overline D \ge 0$ depends only on the constant $D$ and the orbit map $\Gamma \to \F$.

\end{claim}
\begin{proof}[Proof of claim]
Let $(g_i)_{i\ge 0}$ be any geodesic sequence in $\Gamma$ with $g_0 =1$ and  $g_i \to q$ in $\Gamma \cup \partial \Gamma$. Then for each $i\ge0$ there is a $j \ge0$ such that any geodesic $[0,h_j]$ passes within $2\delta$ from $g_i$; thus the triple $(1,g_i,h_j)$ is $4\delta$--aligned in $\Gamma$. Since $\Gamma\to\fc$ is a quasi-isometric embedding and $\fc$ is hyperbolic, the stability of quasigeodesics (\Cref{prop:stability_of_quasis}) implies that $\Gamma\to\fc$ is alignment preserving. Therefore $\Gamma\to\cs$ is alignment preserving by \Cref{lem:generalized_alignment} and so there is some $C\ge 0$ (depending only on the quasi-isometry constant of $\Gamma\to\fc$) so that
\[d_\cs(\alpha,g_i\cdot \alpha) \le d_\cs(\alpha, g_i \cdot \alpha) + d_\cs(g_i\cdot \alpha, h_j\cdot \alpha) \le d_\cs(\alpha, h_j\cdot \alpha) + C \le D + C \equalscolon \overline{D}.\qedhere\]
\end{proof}

Now let $(g_i)_{i\ge0}$ be as in \Cref{cl:replace} and apply \Cref{lem:folding_rays} to obtain a folding ray $\gamma \colon \I \to \X$. By \Cref{lem:folding_rays}, the graphs $g_i \cdot R$ and $\gamma(t)$ both limit to the same free arational tree $T \in \partial \X$. Since $g_i \cdot R$ converges to $T$ in $\overline \X$, \Cref{lem:unbounded} implies that $d_{\CS}(R, g_i \cdot R) \to \infty$, contradicting the construction of $(g_i)_{i\ge 0}$.
\end{proof}

\begin{theorem}[Qi-embedding into $\cs$]
\label{th:qi_into}\label{th:qi} 
Let $\Gamma$ be a finitely generated subgroup of $\Out(\free)$. Then $\Gamma$ qi-embeds into the co-surface graph $\cs$ if and only if $\Gamma$ is purely atoroidal and convex cocompact.
\end{theorem}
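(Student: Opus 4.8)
The statement is an equivalence, and the plan is to treat the two implications separately, with the reverse one carrying essentially all the weight. The key principle, isolated in \Cref{lem:CAP}, is that a map between geodesic spaces which is both alignment preserving and metrically proper is automatically a quasi-isometric embedding. So to prove that a purely atoroidal, convex cocompact $\Gamma$ qi-embeds into $\cs$, I would fix a rose $R\in\X$ and a primitive conjugacy class $\alpha\in\plproj(R)$ as in \S\ref{sec:qi_into_cosurf}, and show that the orbit map $\Gamma\to\cs$, $g\mapsto g\cdot\alpha$, is alignment preserving and metrically proper; by \Cref{lem:CAP} this suffices, and by isometry-invariance of orbit maps it proves the theorem for every orbit map.

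For alignment preservation I would factor the orbit map (coarsely) through $\fc$: up to bounded additive error it agrees with the composition of the orbit map $o_\fc\colon\Gamma\to\fc$, $g\mapsto g\cdot\alpha$ (viewing $\alpha$ as a rank-one free factor), with the electrification map $\efc\colon\fc\to\cs$. By hypothesis $\Gamma$ is convex cocompact, so $o_\fc$ is a quasi-isometric embedding into the hyperbolic space $\fc$; by stability of quasigeodesics (\Cref{prop:stability_of_quasis}) it carries geodesics of the Cayley graph of $\Gamma$ to uniformly aligned quasigeodesics of $\fc$, so $o_\fc$ is Lipschitz and alignment preserving (this is exactly the reasoning used in the proof of \Cref{cl:replace}). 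The map $\efc$ is Lipschitz and alignment preserving by \Cref{th:cs_properties}, and $\fc$ is $\delta$--hyperbolic, so \Cref{lem:generalized_alignment} shows the composition — hence the orbit map $\Gamma\to\cs$ — is alignment preserving. Metric properness of this orbit map is precisely the content of \Cref{prop:proper_orbit}, once one notes that $d_\cs(R,g\cdot R)$ and $d_\cs(\alpha,g\cdot\alpha)$ differ by a uniformly bounded amount (because $\csproj(R)=\epl(\plproj(R))$ has diameter at most one and contains $\alpha$). Invoking \Cref{lem:CAP} then completes this direction.

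For the forward implication, suppose $\Gamma$ qi-embeds into $\cs$. Since $\Out(\free)$ acts on $\fc$ by isometries the orbit map $\Gamma\to\fc$ is Lipschitz, and composing it with the Lipschitz map $\efc$ recovers (up to bounded error) the given qi-embedding $\Gamma\to\cs$, which forces a matching lower bound for $\Gamma\to\fc$; hence $\Gamma$ is convex cocompact. For pure atoroidality, let $\phi\in\Gamma$ have infinite order. Metric balls in $\Gamma$ are finite, so $|\phi^n|_\Gamma\to\infty$, and a quasi-isometric embedding is metrically proper, so $d_\cs(\alpha,\phi^n\cdot\alpha)\to\infty$; thus $\phi$ acts on $\cs$ with unbounded orbit. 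Now if $\phi$ were reducible (some positive power fixing the conjugacy class of a proper free factor $A$), it would fix $[A]\in\fc^0$ up to a finite period and so have bounded $\fc$--orbit, hence bounded $\cs$--orbit since $\efc$ is Lipschitz and $\Out(\free)$--equivariant; and if $\phi$ were fully irreducible but not atoroidal — hence induced by a pseudo-Anosov of a once-punctured (possibly nonorientable) surface $S$, as recalled in \S\ref{sec:intro} — it would permute the nonseparating simple closed curves of $S$, a subset of $\cs^0$ of diameter at most one, and again have bounded $\cs$--orbit. Since $\phi$ has unbounded orbit, neither case occurs, so $\phi$ is atoroidal; as $\phi$ was arbitrary, $\Gamma$ is purely atoroidal. (This implication is in any event already contained in \cite{kapovich2007geometric,MR2,Mann-thesis} via \Cref{prop:qi_graphs}, and also follows from the loxodromic classification \Cref{cor:lox_cs}.)

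The real difficulty lies in the inputs rather than the assembly. The substantive step is \Cref{prop:proper_orbit} — metric properness of the orbit map $\Gamma\to\cs$ — whose proof rests on the ``folding rays to infinity'' theorem \Cref{lem:folding_rays} of \cite{DKT} together with the unboundedness statement \Cref{lem:unbounded}, the latter in turn relying on the Coulbois--Hilion--Reynolds and Bestvina--Reynolds analysis of dual laminations of free arational trees. Granting \Cref{prop:proper_orbit}, together with the structural facts \Cref{th:cs_properties} and \Cref{th:boundary_of_cs}, the proof of the theorem itself is the short ``alignment preserving plus metrically proper'' argument sketched above.
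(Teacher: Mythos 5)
Your proposal takes essentially the same route as the paper: for the reverse direction you verify metric properness via Proposition~\ref{prop:proper_orbit} and alignment preservation via the factorization through~$\fc$, then invoke Lemma~\ref{lem:CAP}; and for the forward direction you read off convex cocompactness from the Lipschitz map $\efc$ and rule out both reducibility and non-atoroidal full irreducibility because each forces bounded $\cs$--orbits, exactly as in the paper's argument via Bestvina--Handel. The main differences are that you spell out the alignment-preservation factorization and the remark that $d_\cs(R,g\cdot R)$ and $d_\cs(\alpha,g\cdot\alpha)$ agree up to a uniform additive constant — points the paper's proof leaves implicit — but the logical content and the key inputs (Proposition~\ref{prop:proper_orbit}, Theorem~\ref{th:cs_properties}, Lemma~\ref{lem:CAP}) are identical.
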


\begin{proof}
First, if $\Gamma$ qi-embeds into $\CS$, then any orbit map $\Gamma \to \F$ is a quasi-isometric embedding since the $\Out(\free)$--equivariant map $\efc \colon \F \to \CS$ is Lipschitz. Let $g \in \Gamma$ be infinite order. Since no power of $g$ fixes a vertex of $\F$, $g$ is fully irreducible. If $g$ were not atoroidal, then Bestvina--Handel \cite{BH92} proved that $g$ is induced by a pseudo--Anosov homeomorphism on a once-punctured surface. However, if this were the case, then $g$ would have bounded orbits in $\CS$, a contradiction.

Now suppose that $\Gamma$ is purely atoroidal and that the orbit map $\Gamma\to\fc$ given by $g\mapsto g\cdot \alpha$, for some rank $1$ free factor $\alpha$, is a quasi-isometric embedding.
Since $\efc \colon \F \to \CS$ is alignment preserving, it follows that the map $g\mapsto g\cdot \efc(\alpha)\in \cs$ is alignment preserving. \Cref{prop:proper_orbit} shows that this orbit map $\Gamma\to \cs$ is also metrically proper. We thus conclude that $\Gamma\to \cs$ is a quasi-isometric embedding by \Cref{lem:CAP}
\end{proof}

\begin{corollary}\label{cor:lox_cs}
An element of $\Out(\free)$ acts loxodromically on $\cs$ if and only if it is fully irreducible and atoroidal.
\end{corollary}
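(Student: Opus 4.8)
The plan is to deduce the classification of loxodromic elements directly from \Cref{th:qi} by applying it to the cyclic subgroup generated by a single outer automorphism. First I would observe that for $\phi\in\Out(\free)$, the element $\phi$ acts loxodromically on the hyperbolic graph $\cs$ if and only if the orbit map of the cyclic group $\langle\phi\rangle$ into $\cs$ is a quasi-isometric embedding; this is the standard fact that a loxodromic isometry of a hyperbolic space has quasigeodesic orbits, together with the elementary observation that an element of finite order cannot act loxodromically. One must of course first know that $\langle\phi\rangle$ is infinite; if $\phi$ has finite order it is neither loxodromic nor (being toroidal, as a power fixes every conjugacy class) atoroidal, so the equivalence holds trivially in that case.

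Assuming $\phi$ has infinite order, I would apply \Cref{th:qi} to the finitely generated subgroup $\Gamma=\langle\phi\rangle\le\Out(\free)$: the orbit map $\langle\phi\rangle\to\cs$ is a quasi-isometric embedding if and only if $\langle\phi\rangle$ is purely atoroidal and convex cocompact. The remaining task is to translate these two conditions into the asserted ones. For pure atoroidality: since $\langle\phi\rangle$ is an infinite cyclic group, its only infinite-order elements are the nonzero powers $\phi^n$, and $\phi^n$ is atoroidal for all $n\neq 0$ precisely when $\phi$ itself is atoroidal (if $\psi^k$ fixes a nontrivial conjugacy class then so does $(\psi^n)^{k}$ up to taking a suitable multiple of the exponent). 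Thus $\langle\phi\rangle$ is purely atoroidal if and only if $\phi$ is atoroidal.

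For convex cocompactness, i.e.\ that the orbit map $\langle\phi\rangle\to\fc$ is a quasi-isometric embedding: by work of Bestvina--Feighn and others, a cyclic subgroup of $\Out(\free)$ has quasi-isometrically embedded orbit in $\fc$ if and only if $\phi$ acts loxodromically on $\fc$, which holds if and only if $\phi$ is fully irreducible. (Here I would cite the relevant statement from \cite{BFhyp}: an element acts loxodromically on the hyperbolic space $\fc$ iff it is fully irreducible, and a loxodromic isometry automatically has quasigeodesic---hence qi-embedded---cyclic orbit, while a non-fully-irreducible element fixes the conjugacy class of a proper free factor up to a power and so has bounded orbit.) Combining these translations, the orbit map $\langle\phi\rangle\to\cs$ is a quasi-isometric embedding $\iff$ $\phi$ is atoroidal and fully irreducible, which is exactly the claimed statement.

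The only mild obstacle is the bookkeeping at the start---passing between ``$\phi$ acts loxodromically on $\cs$'' and ``$\langle\phi\rangle\to\cs$ is a quasi-isometric embedding''---and ensuring the finite-order case is handled. Both are routine: a loxodromic isometry of a geodesic hyperbolic space has an orbit on which $n\mapsto g^n x_0$ is a quasigeodesic (see e.g.\ \cite[III.H]{BH}), and conversely any isometry with an unbounded orbit that is coarsely a quasigeodesic is loxodromic; finite-order elements have bounded orbits and are toroidal, so are excluded on both sides. With that in place the corollary is immediate from \Cref{th:qi} applied to $\langle\phi\rangle$.
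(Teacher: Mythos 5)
Your proof is correct and follows essentially the same route as the paper: reduce the loxodromic condition to the statement that $\langle\phi\rangle$ qi-embeds into $\cs$, invoke \Cref{th:qi} to translate this into purely atoroidal plus convex cocompact, and then use the Bestvina--Feighn classification of loxodromics on $\fc$ to identify convex cocompactness of a cyclic group with full irreducibility. The paper's version is simply a more compressed statement of the same chain of equivalences.
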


\begin{proof}
An element $g$ of $\Out(\free)$ acts loxodromically on $\cs$ if and only if $\langle g \rangle$ qi-embeds into $\cs$. By \Cref{th:qi}, this is the case if and only if $g$ is atoroidal and acts loxodromically on $\F$. Since the loxodromic elements for the action $\Out(\free) \curvearrowright \F$ are precisely the fully irreducible elements \cite[Theorem 9.3]{BFhyp}, the corollary follows. 
\end{proof}

\section{Lifting to covers}
In this section we show that orbits in the co-surface graph are well-behaved when passing to finite index subgroups of $\free$. Specifically, we show that if $\Gamma \le \Out(\free)$ qi-embeds into $\cs$, then for any finite index $H \le \free$, the induced subgroup $\Gamma^H$ of $\Out(H)$ qi-embeds into the co-surface graph of $H$. 
This proposition will be necessary in \S\ref{sec:undistortion_in_fibers}, but it may also be of independent interest.

Fix a finite index subgroup $H$ of $\free$ and let $\Gamma \le \Out(\free)$ be finitely generated. Denote by $\Gamma_H$ the subgroup of $\Gamma$ consisting of outer automorphisms which fix the conjugacy class of $H$, and let $\Gamma^H$ be the induced subgroup of $\Out(H)$. That is, $f \in \Gamma^H$ if there is an automorphism $\phi$ of $\free$ whose outer class is in $\Gamma$ such that $\phi|_H$ is in the outer class $f$. These groups fit into a short exact sequence:

\begin{lemma} \label{lem:easy}
For any $\Gamma \le \Out(\free)$ and $H \le \free$ finite index, there is a short exact sequence 
\[
1 \longrightarrow N(H)/H \longrightarrow \Gamma^H \longrightarrow \Gamma_H \longrightarrow 1,
\]
where $N(H)\le \free$ is the normalizer of $H$ and $N(H)/H$ is finite. 
\end{lemma}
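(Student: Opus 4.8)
\noindent\emph{Proof strategy.} First I would reduce from conjugacy classes to honest subgroups: if $[\phi]\in\Gamma_H$ then $\phi(H)$ is $\free$--conjugate to $H$, so after precomposing $\phi$ with a suitable inner automorphism we may assume $\phi(H)=H$ without altering $[\phi]$. Let $A\le\Aut(\free)$ be the stabilizer of $H$ and $A_\Gamma=\{\phi\in A:[\phi]\in\Gamma\}$. The observation above shows that $A_\Gamma$ surjects onto $\Gamma_H$ under $\Aut(\free)\to\Out(\free)$, and that $\Gamma^H$ equals the image of $A_\Gamma$ under the restriction-then-projection homomorphism $A\to\Aut(H)\to\Out(H)$, $\phi\mapsto[\phi|_H]$. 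The plan is then to realize the claimed sequence through the map $\rho\colon\Gamma^H\to\Gamma_H$ sending $[\phi|_H]$ back to $[\phi]\in\Out(\free)$ and the map $\iota\colon N(H)/H\to\Gamma^H$ sending a coset $gH$ to $[c_g|_H]$, where $c_g$ denotes conjugation by $g$; here $\iota$ is well defined on cosets because $[c_h|_H]$ is trivial in $\Out(H)$ for $h\in H$, and both $\rho$ and $\iota$ are readily seen to be homomorphisms.

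The crux of the argument --- and the step I expect to carry the real content --- is that $\rho$ is \emph{well defined}, i.e.\ that the restriction $\phi\mapsto\phi|_H$ detects the $\Out(\free)$--class of $\phi\in A_\Gamma$. So suppose $\phi,\psi\in A_\Gamma$ satisfy $\psi|_H=c_h\circ\phi|_H$ for some $h\in H$; I would set $\theta=\psi^{-1}\circ c_h\circ\phi$, which lies in $A$ and fixes $H$ pointwise by construction, and show $\theta=\mathrm{id}$. This is where finiteness of the index is used: for any $g\in\free$ some power $g^m$ with $m\ge1$ lies in $H$, since the cosets $\{Hg^i\}_{i\ge0}$ cannot all be distinct; hence $\theta(g)^m=\theta(g^m)=g^m$, and uniqueness of roots in the free group forces $\theta(g)=g$. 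Therefore $\psi=c_h\circ\phi$, so $[\psi]=[\phi]$ in $\Out(\free)$ and $\rho$ is a well-defined homomorphism. Surjectivity of $\rho$ onto $\Gamma_H$ is immediate from the normalization carried out in the first paragraph.

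It remains to identify $\ker\rho$. Any element of $\ker\rho$ has the form $[\phi|_H]$ with $\phi\in A$ and $[\phi]$ trivial in $\Out(\free)$, i.e.\ $\phi=c_g$ for some $g\in\free$; the condition $\phi\in A$ then says $gHg^{-1}=H$, that is $g\in N(H)$, so $\ker\rho=\{[c_g|_H]:g\in N(H)\}=\operatorname{im}\iota$. For injectivity of the induced map $N(H)/H\to\Gamma^H$, suppose $[c_g|_H]$ is trivial in $\Out(H)$, so that $c_g|_H=c_h|_H$ for some $h\in H$; then $h^{-1}g$ centralizes $H$, and since $H$ has rank $\ge2$ by Nielsen--Schreier while every centralizer of a nontrivial element of $\free$ is cyclic, the centralizer of $H$ in $\free$ is trivial, giving $g=h\in H$. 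Finally $N(H)/H$ is a finite \emph{group} since $H\trianglelefteq N(H)\le\free$ and $[\free:H]<\infty$. Combining these facts gives the exact sequence; the only genuinely nonformal inputs are the two standard facts about free groups used above (unique roots, and triviality of centralizers of finite-index subgroups).
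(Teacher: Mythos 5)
Your proof is correct and proceeds in essentially the same way as the paper's: both arguments reduce to showing that the restriction map from $\{\phi\in\Aut(\free):\phi(H)=H\}$ to $\Aut(H)$ is injective via the unique-roots trick (your $\theta=\psi^{-1}c_h\phi$ step is exactly the paper's injectivity of $\mathrm{res}\colon\Aut_H(\free)\to\Aut^\free(H)$), and then identify the kernel of the induced map to $\Out_H(\free)$ as $N(H)/H$. The only cosmetic difference is that you define the two maps $\rho$ and $\iota$ by hand and verify exactness directly — including an explicit check that $C_\free(H)=1$ to get injectivity of $\iota$ — whereas the paper reaches the same conclusion by chaining identifications in a commutative diagram of $\Aut$'s and $\Out$'s; the mathematical content is the same.
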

\begin{proof}
Let $\Aut_H(\free)$ be the subgroup of $\Aut(\free)$ fixing $H$, and let $\Aut^\free(H)$ be the image of $\Aut_H(\free)$ under the restriction map $\mathrm{res}\colon \Aut_H(\free)\to \Aut(H)$. Let $\Out_H(\free)$ and $\Out^\free(H)$ denote the images of these subgroups under the projection homomorphisms $p_\free\colon\Aut(\free)\to \Out(\free)$ and $p_H\colon \Aut(H)\to \Out(H)$, respectively. These fit into the following diagram:
\begin{center}
\begin{tikzpicture}[scale = 1.8]
\node (autFfixH) at (3,1) {$\Aut_H(\free)$};
\node (autHimageF) at (0,1) {$\Aut^\free(H)$};
\node (outFfixH) at (3,0) {$\Out_H(\free)$};
\node (outHimageF) at (0,0) {$\Out^\free(H)$};
\node (one) at (4,0) {$1$};
\node (kernel) at (-1.5,0) {$N(H)/H$};
\node (otherone) at (-2.5,0) {$1$};
\draw[thick,left hook->>] (autFfixH) -- node[above] {\small$\mathrm{res}$} (autHimageF);
\draw[thick,->>] (autFfixH) -- node[right] {\small$p_\free$} (outFfixH);
\draw[thick,->>] (autHimageF) -- node[right] {\small$p_H$} (outHimageF);
\draw[thick,->] (autHimageF) -- node[above,sloped] {\small$\hat\pi=p_\free\circ\mathrm{res}\inv$} (outFfixH);
\draw[thick,->] (outHimageF) -- node[above] {\small$\pi$} (outFfixH);
\draw[thick,->] (otherone) -- (kernel);
\draw[thick,->] (kernel) -- (outHimageF);
\draw[thick,->] (outFfixH) -- (one);
\end{tikzpicture}
\end{center}

The restriction map $\mathrm{res}\colon \Aut_H(\free)\to \Aut^\free(H)$ is surjective by definition of $\Aut^\free(H)$. It is also injectve. To see this, suppose $\phi_1,\phi_2\in \Aut_H(\free)$ are automorphisms fixing $H$ that have the same restriction to $H$, and let $a\in \free$ be arbitrary. We may choose $k > 0$ so that $a^k\in H$ and consequently $\phi_1(a)^k = \phi_2(a)^k$. As elements of $\free$ have unique roots, it follows that $\phi_1(a) = \phi_2(a)$, proving the claim.

Identify $\free$ with is image in $\Aut(\free)$ under the map sending each $a\in \free$ to the conjugation automorphism $g\mapsto aga\inv$. With respect to this identificaion, the normalizer of $H$ in $\free$ is exactly 
\[N(H) = \free\cap \Aut_H(\free) = \ker(p_\free\colon \Aut_H(\free)\to\Out_H(\free)).\]
 Thus the kernel of the composition $\hat\pi\colonequals p_\free\circ\mathrm{res}\inv \colon \Aut^\free(H)\to \Out_H(\free)$ may be naturally identified with $N(H)$. 
Observe that  every inner automorphism of $H$ lifts to an inner automophism of $\free$ via the map $\mathrm{res}\inv$. Since the resulting inner automorphism of $\free$ clearly normalizes $H$, we furthermore see that $H\cong \Inn(H) = \ker(p_H)$ is contained in $N(H) = \ker(\hat\pi)$. It follows that the homomorphism $\hat\pi$ descends to a well-defined epimorphism $\pi\colon \Out^\free(H)\to \Out_H(\free)$ with kernel $\ker(\pi) = \ker(\hat{\pi})/\ker(p_H) \cong N(H)/N$.
The lemma now follows by observing that for any subgroup $\Gamma \le \Out(\free)$, we have
$\Gamma_H = \Gamma \cap \Out_H(\free)$ and $\Gamma^H = \pi^{-1}(\Gamma_H)$. 
\end{proof}


The main result of this section is the following:
\begin{proposition} 
\label{prop:fi_lift}\label{prop:lifting_qi}
Let $H$ be a finite index subgroup of $\free$ and let $\Gamma^H$ denote the subgroup of $\Out(H)$ induced by elements of $\Gamma\le\Out(\free)$ that stabilize the conjugacy class of $H$. If $\Gamma$ is finitely generated and qi-embeds into $\cs$, then $\Gamma^H$ also qi-embeds into $\cs(H)$.
\end{proposition}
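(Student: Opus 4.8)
The plan is to reduce the statement to an application of \Cref{lem:CAP} for the orbit map $\Gamma^H \to \cs(H)$, exactly as in the proof of \Cref{th:qi_into}. That reduction in turn requires two things: first, that $\Gamma^H$ qi-embeds into the factor complex $\fc(H)$ of $H$ (so that the orbit map $\Gamma^H \to \cs(H)$ is alignment preserving, via \Cref{th:cs_properties} and \Cref{lem:generalized_alignment}); and second, that this orbit map is metrically proper. For the first point I would invoke the machinery already available: by hypothesis $\Gamma$ qi-embeds into $\cs$, hence (by the easy direction of \Cref{th:qi_into}) is purely atoroidal and convex cocompact. I then want to argue that $\Gamma^H$ is purely atoroidal and qi-embeds into $\fc(H)$. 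For atoroidality: an infinite-order element of $\Gamma^H$ restricts (a power of it) to an automorphism lying over an element of $\Gamma$, and a periodic conjugacy class for the restriction would produce one for a power in $\free$ (any conjugacy class in $H$ is, up to a root, a conjugacy class in $\free$), contradicting pure atoroidality of $\Gamma$; the finite kernel $N(H)/H$ from \Cref{lem:easy} is harmless since its elements have finite order. For convex cocompactness of $\Gamma^H$: since $N(H)/H$ is finite, $\Gamma^H$ and $\Gamma_H$ are quasi-isometric by \Cref{lem:easy}, and $\Gamma_H$ is a finite-index-over-its-image subgroup of $\Gamma$, hence itself convex cocompact in $\Out(\free)$; one then transports this to a qi-embedding into $\fc(H)$ using that the covering $H \hookrightarrow \free$ induces an $\Out_H(\free)$--coarsely-equivariant quasi-isometric-type comparison between an orbit in $\fc$ and the corresponding orbit in $\fc(H)$ (the relevant fact being that subgroup injections of free groups induce coarsely Lipschitz, proper maps of factor complexes on the relevant orbits — this is where I would cite the appropriate structural input, e.g.\ the behavior of the relative factor complex under finite covers).

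The heart of the argument, and the step I expect to be the main obstacle, is establishing metric properness of the orbit map $\Gamma^H \to \cs(H)$. The strategy mirrors \Cref{prop:proper_orbit}: arguing by contradiction, suppose there is a sequence $h_i \in \Gamma^H$ with $|h_i| \to \infty$ but $d_{\cs(H)}(R_H, h_i \cdot R_H)$ bounded, where $R_H \in \X(H)$ is a fixed rose for $H$. After passing to a subsequence $h_i \to q \in \partial \Gamma^H$, and using the quasi-isometry $\Gamma^H \sim \Gamma_H$ together with convex cocompactness to replace $(h_i)$ by a geodesic sequence in $\Gamma_H$ (as in \Cref{cl:replace}, via alignment-preservation of $\Gamma_H \to \cs(H)$), I would lift to $\Gamma \le \Out(\free)$: the geodesic in $\Gamma_H$ is a quasigeodesic ray in $\Gamma$, so \Cref{lem:folding_rays} produces a folding ray $\gamma \colon \I \to \X(\free)$ converging to a \emph{free} arational tree $T \in \partial \X(\free)$. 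The key new wrinkle is that I must transfer this to Outer space $\X(H)$: the $H$--invariant subtree $T|_H$ (equivalently, the tree $T$ viewed with the restricted $H$--action) is again free, and I claim it is arational \emph{as an $H$--tree}. This arationality-under-restriction is the crux — it should follow because an $H$--invariant subtree with dense orbits over a proper free factor $A \le H$ would, after taking the $\free$--orbit of $A$, produce a proper free factor system of $\free$ carrying a dense-orbit subtree, contradicting arationality of $T$; one also needs freeness of the $\free$--action to rule out surface-type degenerations, using Reynolds' \Cref{th:char_arational}. Granting this, the rescaled folding path in $\X(H)$ (obtained by pushing $\gamma(t)$ down to covers of $H$, i.e.\ taking the core of the $H$--cover of $\gamma(t)$) converges to the free arational $H$--tree $T|_H$, and the orbit $h_i \cdot R_H$ fellow-travels it; then \Cref{lem:unbounded} applied in $\cs(H)$ forces $d_{\cs(H)}(R_H, h_i \cdot R_H) \to \infty$, the desired contradiction.

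With metric properness in hand, the proof concludes formally: the orbit map $\Gamma^H \to \cs(H)$, $f \mapsto f \cdot \efc(\alpha_H)$ for a primitive $\alpha_H$ in a petal of $R_H$, is alignment preserving (by \Cref{th:cs_properties}, \Cref{lem:generalized_alignment}, and the qi-embedding $\Gamma^H \to \fc(H)$) and metrically proper, so \Cref{lem:CAP} gives that it is a quasi-isometric embedding, which is exactly the assertion that $\Gamma^H$ qi-embeds into $\cs(H)$. The main technical burdens to discharge carefully are thus: (i) the comparison of factor complexes under the finite cover, giving convex cocompactness of $\Gamma^H$ in the appropriate sense; and (ii) the preservation of freeness and arationality of trees under restriction to a finite-index subgroup, which is what lets \Cref{lem:unbounded} be applied downstairs. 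I expect (ii) to be the genuinely delicate point and would isolate it as a preliminary lemma.
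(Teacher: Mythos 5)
Your outline of the metric-properness argument is essentially the paper's: pass to a geodesic sequence in $\Gamma_H$, extract a limiting tree $T\in\partial\os$, show $T$ is free and arational, observe that $T^H$ is the limit of the lifted orbit in $\os(H)$, conclude $T^H$ is free and arational, and invoke \Cref{lem:unbounded} downstairs. You have also correctly identified the crux as point~(ii), the restriction of arationality to finite index.

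However, there are two genuine problems with the plan as written. First, your proposed argument for~(ii) is gapped. You want to say: a proper free factor $A\le H$ with a dense-orbit $A$-subtree of $T^H$ would, ``after taking the $\free$-orbit of $A$,'' contradict arationality of $T$. But a proper free factor of $H$ need not be carried by any proper free factor of $\free$ (nor does the normal closure or subgroup generated by the $\free$-translates of $A$ behave like a free factor system), so the contradiction you want does not obviously materialize. The paper instead routes through \emph{indecomposability}: since $T$ is free and arational it is indecomposable (\Cref{th:char_arational}); by Reynolds' result (\Cref{lem:ired_dec}) indecomposability passes to any finite-index restriction; and a free indecomposable tree is arational again by \Cref{th:char_arational}. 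This is recorded as \Cref{cor:arationals_lift} and does not require relating free factors of $H$ to free factors of $\free$.

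Second, your plan for alignment preservation is misrouted. You propose to first show ``$\Gamma^H$ qi-embeds into $\F(H)$'' and deduce alignment preservation of $\Gamma^H\to\cs(H)$ from \Cref{lem:generalized_alignment}. But since $\efc\colon\F(H)\to\cs(H)$ is Lipschitz, a qi-embedding into $\cs(H)$ through the Lipschitz orbit map to $\F(H)$ automatically yields a qi-embedding into $\F(H)$; so ``$\Gamma^H$ qi-embeds into $\F(H)$'' together with pure atoroidality of $\Gamma^H$ (which you correctly argue) is already equivalent to the conclusion via \Cref{th:qi_into} applied to $H$. Establishing that intermediate step directly is therefore not a preliminary reduction but the whole theorem, and the ``structural input'' you would need — that the cover induces a proper comparison of factor-complex orbits — is precisely what the remark following the proposition shows can fail (even when $\Gamma$ is convex cocompact): the orbit map to $\F$ is well behaved but the orbit map to $\F(H)$ need not be. The paper avoids $\F(H)$ entirely in this step: it shows alignment preservation of $\Gamma^H\to\cs(H)$ directly by pushing an aligned triple in $\Gamma^H$ down to $\Gamma_H\le\Gamma$, using \Cref{th:DT_2} to replace the corresponding $\os$-orbit points by a folding path, lifting that folding path isometrically via $i^*\colon\os\to\os(H)$ (using \Cref{pro:Outer_subgroup}, which asserts both that $i^*$ is an isometry and that it carries folding paths to folding paths — a fact your sketch uses implicitly but does not flag), and finally invoking alignment preservation of $\csproj\colon\os(H)\to\cs(H)$, which is general and needs no properties of $\Gamma^H$.
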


Let us briefly remark on the use of the co-surface graph in the statement of \Cref{prop:lifting_qi}. In particular, the corresponding statement for the factor graph $\F$ is false. For example, let $\phi \in \Out(\free)$ be an automorphism that  can be represented by a pseudo-Anosov on a once--punctured surface $S$. Let $H$ be a subgroup of $\free\cong \pi_1(S)$ corresponding to a cover $\tilde S\to S$ with at least $2$ punctures. The cyclic subgroup $\Gamma = \langle \phi \rangle$ then quasi-isometrically embeds into $\F(\free)$ since $\phi$ is fully irreducible. However, $\Gamma^H$ does not qi-embed into $\F(H)$. Indeed, $\Gamma^H$ is a virtually cyclic group whose infinite order elements are represented by lifts of powers of $\phi$; since each such lift permutes the punctures of $\tilde{S}$ and each puncture represents a rank $1$ free factor of $H$, $\Gamma^H$ has bounded orbits in $\F(H)$. This suggests that $\cs$ is a better tool for studying finite index subgroups of $\free$.

The proof of \Cref{prop:lifting_qi} requires the following result of Reynolds whose proof uses ideas of Guirardel.

\begin{lemma}[Reynolds {\cite[Lemma 4.2]{reynolds2011indecomposable}}]\label{lem:ired_dec}
Suppose that $G \curvearrowright T$ is an indecomposable action and that $H\le G$ is finitely generated and finite index. Then the action $H \curvearrowright T^H$ is indecomposable.
\end{lemma}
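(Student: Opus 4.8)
The plan is to work with Guirardel's characterization of indecomposability: a minimal isometric action of a group on an $\R$-tree is indecomposable exactly when it admits no nontrivial transverse covering, that is, no invariant family of closed subtrees which covers the tree, in which distinct members meet in at most a point, and for which every arc meets only finitely many members. Before running the main argument I would record some reductions. Since $[G:H]<\infty$, the subgroup $H$ cannot fix a point of $T$: otherwise, for coset representatives $g_1,\dots,g_k$, the orbit $Gx=\bigcup_i g_iHx$ would be finite, whereas an indecomposable action has dense orbits and $T$ is not a point. So $T^H$ is a genuine nondegenerate subtree. Next, using again that $G\curvearrowright T$ has dense orbits, pick $y\in T^H$; then $Gy=\bigcup_i H(g_iy)$ is dense, so $T=\overline{Gy}=\bigcup_i g_i\overline{T^H}$, and after replacing $T^H$ by its closure we may assume $T=g_1T^H\cup\dots\cup g_kT^H$ is a finite union of $G$-translates of the $H$-subtree $T^H$.

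The heart of the argument is to promote this to $T^H=T$ and, by the same mechanism, to rule out a nontrivial $H$-transverse covering. Form the graph whose vertices are the finitely many $G$-translates of $T^H$, with an edge joining two of them when their intersection contains a nondegenerate arc. The group $G$ acts transitively on the vertex set and preserves the edge relation (it acts by isometries), hence acts transitively on connected components. If this graph were disconnected, then the unions of its components would constitute a $G$-invariant family of proper nondegenerate subtrees covering $T$, any two meeting in at most a point (the intersection of two subtrees is connected, and a connected subset of an $\R$-tree with two points contains an arc), with every arc meeting only finitely many of them --- a nontrivial transverse covering of $T$, contradicting indecomposability of $G\curvearrowright T$. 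So the graph is connected. An entirely parallel argument would handle a hypothetical nontrivial transverse covering $\{Y_\lambda\}$ of $T^H$ by $H$-subtrees: the $G$-orbit of this family, after merging members along chains of nondegenerate overlaps, would be a $G$-invariant transverse covering of $T$; one must check it is still nontrivial, and then indecomposability of $G\curvearrowright T$ is contradicted. Thus $H\curvearrowright T^H$ would have no nontrivial transverse covering, i.e.\ would be indecomposable.

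The step I expect to be the main obstacle is exactly the ``merging'' bookkeeping flagged above: when the overlap graph of the $G$-translates is connected (resp.\ when $G$-translates of the pieces $Y_\lambda$ overlap non-transversally), one must argue that merging pieces along chains of nondegenerate overlaps does not collapse the family to the single piece $\{T\}$ --- equivalently, that the family really does witness $T^H=T$ (resp.\ a nontrivial transverse covering of $T$). This is where Guirardel's ideas enter: the relevant actions here are very small and finitely generated, so the Rips machine applies; one passes to a resolving band complex for $G\curvearrowright T$, observes that a finite-index subgroup corresponds to a finite cover of the underlying complex (which does not change the $\R$-tree, only the acting group), and reads off that the unique minimal component responsible for indecomposability survives. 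Equivalently, the whole proof can be phrased in terms of mixing rather than transverse coverings: given nondegenerate arcs $\tau,\tau'\subset T^H$, cover $\tau'$ by a $G$-chain of translates of $\tau$, split each group element over the $k$ cosets of $H$, and use a pigeonhole argument together with indecomposability of $G$ to rewrite it as an $H$-chain; the overlapping-chain condition needed for full indecomposability then follows formally from connectedness of $\tau'$. In every formulation the finiteness of $[G:H]$ is the essential input, and the technical crux is to preserve nontriviality --- equivalently, the mixing property --- through the combinatorics.
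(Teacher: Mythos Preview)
The paper does not supply its own proof of this lemma: it is quoted verbatim as \cite[Lemma 4.2]{reynolds2011indecomposable} and simply attributed to Reynolds, with the remark that Reynolds' proof ``uses ideas of Guirardel.'' So there is no in-paper argument to compare against; the authors treat it as a black box feeding into \Cref{cor:arationals_lift}.

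Your proposal correctly locates the proof in Guirardel's transverse-covering characterization of indecomposability, and your preliminary reductions (nondegeneracy of $T^H$, covering $T$ by finitely many $G$--translates of $\overline{T^H}$) are sound. However, what you have written is a roadmap rather than a proof, and you yourself flag the gap: after you establish that the overlap graph on the translates $g_iT^H$ is connected, you never actually extract a conclusion from this before pivoting to the ``parallel argument'' for a hypothetical $H$--transverse covering. Connectivity of that graph does not by itself give $T^H=T$, nor does it directly rule out a nontrivial transverse covering of $T^H$. The substantive step---promoting an $H$--invariant transverse family $\{Y_\lambda\}$ on $T^H$ to a $G$--invariant transverse family on $T$ and verifying that the merged family remains \emph{nontrivial}---is exactly where the work lies, and you defer it to ``Guirardel's ideas,'' the Rips machine, or an unspecified pigeonhole argument on coset chains. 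Any one of those sketches can be made to work (the mixing reformulation via coset pigeonholing is perhaps the most direct), but as written none is carried out. If you want a self-contained argument, I would recommend committing to the mixing definition: given arcs $\tau,\tau'\subset T^H$, use $G$--indecomposability to produce a $G$--chain $a_1\tau,\dots,a_n\tau$ covering $\tau'$ with nondegenerate consecutive overlaps, then argue carefully (this is the real content) that one can refine to an $H$--chain using translates of $\tau$ by the finitely many coset representatives, invoking indecomposability again on the subarcs $g_j^{-1}(\tau'\cap a_i\tau)$.
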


We will use the lemma in the following form. 
\begin{corollary}\label{cor:arationals_lift}
Suppose that $T \in \partial \X$ is free and arational and that $H \le \free$ is finite index. Then the minimal $H$--subtree $T^H$ is also free and arational. 
\end{corollary}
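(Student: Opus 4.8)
The plan is to derive arationality of $T^H$ from \emph{indecomposability}, which transfers to finite-index subgroups via \Cref{lem:ired_dec}, together with freeness, which is essentially automatic; the point is that both hypotheses of the ``free'' alternative in \Cref{th:char_arational} are inherited by $T^H$.

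First I would record that $T$, being a point of $\partial\X$, is a minimal very small $\free$--tree, and since it is arational, \Cref{th:char_arational} shows that the action $\free\curvearrowright T$ is indecomposable (this is consistent with the ``free'' case of that theorem, as $T$ is assumed free). Because $H$ has finite index in $\free$ it is finitely generated, and it acts freely---hence with no global fixed point---on $T$, so the minimal $H$--invariant subtree $T^H$ is well defined. Now \Cref{lem:ired_dec}, applied with $G=\free$, shows that the action $H\curvearrowright T^H$ is indecomposable.

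Next I would check that $T^H$ is free as an $H$--tree. For nontrivial $a\in H$ we have $\len_{T^H}(a)\ge\len_T(a)>0$: the inequality holds because the infimum defining translation length is taken over the smaller set $T^H\subseteq T$, and the strict inequality because $T$ is free. (In fact the axis of $a$ in $T$ is contained in every $a$--invariant subtree, so equality holds; either way $T^H$ is free, hence a minimal, very small $H$--tree.) Finally, $H$ is itself a free group of finite rank $\rank(H)=1+[\free:H](r-1)\ge 3$, so Reynolds' criterion \Cref{th:char_arational} applies to $H$--trees; applied to the minimal, very small, indecomposable, free tree $T^H$, its ``free'' clause yields that $T^H$ is arational, as desired.

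I do not anticipate a genuine obstacle. The only points needing attention are that \Cref{lem:ired_dec} applies verbatim---$H$ is finitely generated and of finite index in $G=\free$---and that \Cref{th:char_arational} is valid for trees over the finite-rank free group $H$, not only over $\free$. A direct approach to arationality of $T^H$---ruling out a proper free factor $A\le H$ admitting an $A$--invariant subtree on which $A$ acts with dense orbits---would be considerably more involved, so routing through indecomposability as above is the right move.
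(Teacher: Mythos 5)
Your argument is essentially identical to the paper's: both deduce indecomposability of $T$ from arationality via \Cref{th:char_arational}, transfer it to $T^H$ via \Cref{lem:ired_dec}, observe that $T^H$ is free because $T$ is, and invoke \Cref{th:char_arational} once more to conclude $T^H$ is arational. The only difference is that you spell out the translation-length argument for freeness and the rank check for $H$, which the paper leaves implicit.
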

\begin{proof}
Clearly, $T^H$ is free because $T$ is free. Since $T$ is arational, it is indecomposable (\Cref{th:char_arational}); thus $T^H$ is also indecomposable by \Cref{lem:ired_dec}. Using \Cref{th:char_arational} again, we conclude that $T^H$ is arational.
\end{proof}

\subsection{The Outer space of a subgroup}
Fix $H \le \free$ a subgroup of finite index $[\free:H] = n$. Then $H$ is a free group of rank $1-n(1-\rank(\free))$ and we denote its Outer space by $\X(H)$. Recalling that $\rose$ is our fixed $\rank(\free)$--petal rose used to mark graphs of $\X$, we let $H\vert\rose$ denote the corresponding $H$--cover and fix a homotopy equivalence between $H\vert\rose$ and a rose $\rose_H$. 

There is a natural inclusion $i^* \colon \X \to \X(H)$ defined by taking $H$--covers and lifting markings. In details, if $\phi \colon \rose \to G$ is a marked metric $\free$--graph, then the $H$--cover $H\vert G$ is a metric $H$--graph of volume $n$ and we may choose a lift $\phi_H\colon H\vert\rose\to H\vert G$. Any other lift $\tilde{\phi} \colon H\vert \rose \to H\vert G$ is then obtained by precomposing $\phi_H$ with an element of the deck group of $H|\rose \to \rose$, which is isomorphic to $N(H) /H$. Since for each such deck transformation $d \in N(H)/H$, there is a graph isometry $\rho_d$ of $H\vert G$ such that $\rho_d \circ \phi_H \sim \phi_H \circ d$, we see that the equivalence class of $(H|G, \phi_H)$ in $\X(H)$ is well-defined. Using the homotopy lifting property, we additionally see that this induces a well-defined map 
\[
i^* \colon \X \to \X(H)\]
given by $[G,\phi] \mapsto [H|G,\phi_H]$.
Also note that if $\tilde g \in \Gamma^H$ projects to $g \in \Gamma_H$, then $\tilde{g} \cdot i^*(G) = i^*(g\cdot G)$. Even better, this map is an isometry with respect to the Lipschitz metric:
\begin{proposition}\label{pro:Outer_subgroup}
Let $H \le \free$ be a finite index subgroup. Then the induced map $i^* \colon \X \to \X(H)$ is an isometry with respect to the Lipschitz metric. Moreover, $i^*$ maps folding paths in $\X$ to folding paths in $\X(H)$.
\end{proposition}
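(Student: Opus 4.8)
The plan is to verify the isometry claim directly from the length-function formula for the Lipschitz metric, and then to verify the statement about folding paths by unwinding the definition of folding in terms of derivative maps and illegal turn structures. For the first part, recall the formula $d_\os(G,H') = \log\sup_{\beta\in\free}\len(\beta\vert H')/\len(\beta\vert G)$ from \S\ref{sec:outer_space}; the analogous formula holds in $\X(H)$ with the supremum ranging over conjugacy classes of $H$. The key observation is a length-additivity relation under passing to covers: for a conjugacy class $\beta$ of $\free$ and a point $G\in\X$, the core graph $\beta\vert(H\vert G)$ is the disjoint union of the core graphs $(\gamma_j\vert G)$ over the $H$-conjugacy classes $\gamma_j$ of the elements $\beta^{k}$ that meet $H$ (equivalently, the cosets in $H\backslash\free/\langle\beta\rangle$), and the length of $\beta$ in $H\vert G$ equals $\sum_j \len(\gamma_j\vert G)$, which is simply $[\free:\langle H,\beta\rangle\text{-relevant index}]$ times something proportional to $\len(\beta\vert G)$; more cleanly, since $i^*$ just unwraps $G$ to its $H$-cover, for every conjugacy class $\gamma$ of $H$ the length $\len(\gamma\vert i^*(G))$ equals the translation length $\len_{\tilde G}(\gamma)$ of $\gamma$ on the universal cover $\tilde G$ (which is the same tree whether viewed as an $\free$-tree or restricted to $H$). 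Thus for $G,H'\in\X$ one has $\len(\gamma\vert i^*(H'))/\len(\gamma\vert i^*(G)) = \len_{\widetilde{H'}}(\gamma)/\len_{\tilde G}(\gamma)$, and taking the supremum over all nontrivial $\gamma\in H$ rather than all nontrivial $\beta\in\free$ can only decrease the supremum \emph{a priori}; equality follows because the optimal Lipschitz change-of-marking map $\widetilde\phi\colon\tilde G\to\widetilde{H'}$ is $H$-equivariant and realizes the sup, so the stretch it witnesses is already achieved on $H$-conjugacy classes (concretely, one lifts the optimal map $G\to H'$ to the covers, and the pull-back metrics are exactly the pull-backs along the immersions, so Lipschitz constants are preserved in both directions). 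Hence $d_{\X(H)}(i^*(G),i^*(H')) = d_\os(G,H')$.

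For the folding statement, I would argue as follows. Given a folding map $\phi\colon G\to H'$ between points of $\X$ realizing $d_\os(G,H')=\log\Lip(\phi)$, lift it to a map $\phi_H\colon H\vert G\to H\vert H'$ between the covers (choosing compatible basepoint lifts); this is the map $i^*(G)\to i^*(H')$ up to the isometry $\X\to\X(H)$ and the change of marking. Because $H\vert G\to G$ and $H\vert H'\to H'$ are local isometries (covering maps), $\phi_H$ is again a local $\Lip(\phi_H)$-homothety with $\Lip(\phi_H)=\Lip(\phi)$, and by the first part it realizes $d_{\X(H)}(i^*(G),i^*(H'))$. The derivative map $D_{\phi_H}$ at a point $\tilde p\in H\vert G$ is, under the covering identification of directions at $\tilde p$ with directions at its image $p\in G$, equal to $D_\phi$ at $p$; therefore two directions at $\tilde p$ lie in the same gate for $\phi_H$ if and only if the corresponding directions at $p$ lie in the same gate for $\phi$. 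In particular $\phi_H$ induces at least $2$ gates at every point, so $\phi_H$ is a folding map, and its illegal turn structure on $H\vert G$ is exactly the pull-back of the illegal turn structure of $\phi$ on $G$. Since the folding path of a folding map is defined by folding all illegal turns at unit speed (and then rescaling to unit volume, a process commuting with the rescaling already built into $i^*$), and since folding an illegal turn upstairs is precisely the lift of folding the corresponding illegal turn downstairs, the folding path $\{i^*(G_t)\}$ in $\X(H)$ generated by $\phi_H$ coincides (after the usual unit-speed reparametrization) with the image under $i^*$ of the folding path $\{G_t\}$ generated by $\phi$. This is exactly the assertion that $i^*$ carries folding paths to folding paths.

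The main obstacle I anticipate is purely bookkeeping rather than conceptual: one must be careful that the normalization of the metrics matches up, since $H\vert G$ has volume $n=[\free:H]$ while points of $\X(H)$ are required to have volume $1$, so the map $i^*$ as literally defined above lands in $\os(H)$ only after a global rescale by $1/n$; this rescaling is a homothety of $\X(H)$ and does not affect the Lipschitz metric (which is scale-invariant in the sense that rescaling source and target equally leaves $\log\Lip$ unchanged) nor the notion of a folding path (which is already defined only up to the rescaling step), so it is harmless, but it should be stated explicitly. A secondary point requiring care is the well-definedness already addressed in the paragraph preceding the proposition — that different choices of lift $\phi_H$ differ by a deck transformation and hence give isometric marked graphs — so that the lifted folding map and its folding path are well-defined as a point/path in $\X(H)$ independent of choices; this is where the finiteness of $N(H)/H$ and the existence of the isometries $\rho_d$ are used.
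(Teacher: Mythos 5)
Your strategy for the isometry half -- compare the two length-ratio suprema -- is the same as the paper's, and your proof of the folding-path half is essentially identical to the paper's (lift the folding maps, match the derivative/gate structure via $p_t\circ\tilde\phi_{st}=\phi_{st}\circ p_s$, invoke uniqueness of folding paths). The volume-normalization caveat you raise is a real but minor bookkeeping point.

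However, there is a genuine gap in the isometry argument, precisely at the inequality you yourself flag. You correctly establish $d_{\X(H)}(i^*G_1,i^*G_2)\le d_\X(G_1,G_2)$ by lifting the optimal map (the ``easy'' direction), and you correctly note that a priori $\sup_{\gamma\in H}\len(\gamma|G_2)/\len(\gamma|G_1)\le\sup_{\beta\in\free}\len(\beta|G_2)/\len(\beta|G_1)$. But your justification for equality -- ``the optimal Lipschitz change-of-marking map \ldots realizes the sup, so the stretch it witnesses is already achieved on $H$-conjugacy classes'' -- is an assertion, not an argument. The parenthetical ``concretely\ldots'' only re-establishes that lifted maps have the same Lipschitz constant, which is again the easy direction; it says nothing about why the best map \emph{upstairs} (which need not descend to $G_1\to G_2$) cannot beat the best map downstairs. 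What is missing is exactly the step the paper supplies: the $\free$-supremum is realized by some conjugacy class $a$, and choosing $k>0$ minimal with $a^k\in H$ gives $\len(a^k|G_2)/\len(a^k|G_1)=\len(a|G_2)/\len(a|G_1)$, exhibiting an $H$-conjugacy class that attains the $\free$-value. That one sentence closes the gap and is the crux of the direction you left unjustified.
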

\begin{proof}
Fix $G_1$ and $G_2$ in $\X$. If $f \colon G_1 \to G_2$ is an optimal change of marking, then choosing a lift $f_H \colon H|G_1 \to H|G_2$ we see that since $\mathrm{Lip}(f_H) = \mathrm{Lip}(f)$, 
\[
d_{\X(H)}(i^*(G_1),i^*( G_2)) = d_{\X(H)} (H|G_1,H|G_2) \le d_\X(G_1,G_2).
\] 
Further, 
\[
e^{d_{\X(H)} (H|G_1,H|G_2)}  = \sup_{h \in H}\frac{\ell(h\vert (H\vert G_2))}{\ell(h\vert (H\vert G_1))} = \sup_{h\in H}\frac{\ell(h|G_2)}{\ell(h|G_1)}.
\]
Let $a \in \free$ be such that $a$ is optimally stretched by $f\colon G_1 \to G_2$, i.e. $\ell(a|G_2) = \mathrm{Lip}(f)\ell(a|G_1)$, and let $k >0$ be the smallest positive integer such that $a^k \in H$. Then using our observation above
\[
e^{d_{\X} (G_1,G_2)}  = \frac{\ell(a|G_2)}{\ell(a|G_1)} = \frac{\ell(a^k|G_2)}{\ell(a^k|G_1)} \le \sup_{h\in H}\frac{\ell(h|G_2)}{\ell(h|G_1)} =  e^{d_{\X(H)} (H|G_1, H|G_2)} .
\]
Hence, we also have $d_\X(G_1,G_2) \le d_{\X(H)}(i^*(G_1),i^*( G_2))$ showing that $i^* \colon \X \to \X(H)$ is an isometry.

To show that $i^*$ maps folding paths to folding paths, let $\{G_t\}_{t\in [0,L]}$ be a folding path in $\os$ with corresponding folding maps $\{\phi_{st}\colon G_s\to G_t\}_{s<t}$ (see \S\ref{sec:folding} for the definitions of folding maps and folding paths). Set $\tilde{G}_t = H\vert G_t = i^*(G_t)$ with covering map $p_t\colon \tilde{G}_t\to G_t$. We claim that $\{\tilde{G}_t\}$ is a folding path in $\os(H)$ with corresponding lifted folding maps $\{\tilde{\phi}_{st}\colon H\vert G_s \to H\vert G_t\}_{s<t}$. Indeed, that each of these lifts is a local $\Lip(\tilde{\phi}_{st})$--homothety with $\Lip(\tilde{\phi}_{st})=d_{\os(H)}(\tilde{G}_s,\tilde{G}_t)$ follows from what we have already shown. What's more, the equality $p_t\circ\tilde{\phi}_{st}= \phi_{st}\circ p_s$ implies that the illegal turn structure that $\tilde{\phi}_{st}$ induces on $\tilde{G}_s$ is exactly the lift (via $p_s$) of the illegal turn structure that $\phi_{st}$ induces on $G_s$. From this it follows that each $\tilde{\phi}_{st}$ is a folding map and, by the uniqueness of folding paths \cite[\S2]{BFhyp}, that $\{\tilde{G}_t\}$ is the folding path determined by $\tilde{\phi}_{0L}\colon \tilde{G}_0\to \tilde{G}_L$.
\end{proof}

\subsection{Proof of \Cref{prop:lifting_qi}}
Combining our work in the previous sections, we now turn to the proof of \Cref{prop:lifting_qi}. Let $R \in \X$ be a rose with a petal representing $\alpha \in \CS^0$. Let $\tilde \alpha \in \CS^0(H)$ be primitive conjugacy class of $H$ represented by an embedded loop of $H|R$ covering this petal of $R$. Note that $\alpha \in \pi_\CS(R)$ and $\tilde \alpha \in \pi_{\CS}(H|R)$, where $\pi_\CS$ is the projection from outer space to the corresponding co-surface graph. 

Fix a finite generating set $\tilde S$ of $\Gamma^H$ with projection $S\subset \Gamma_H$. By abuse of notation, we identify $\Gamma^H$ and $\Gamma_H$ with the corresponding Cayley graphs $\cay{S}{\Gamma^H}$ and $\cay{\bar{S}}{\Gamma_H}$, which are geodesic metric spaces. The orbits maps $\tilde g\mapsto \tilde g\cdot \tilde \alpha$ and $\tilde g\mapsto \tilde g\cdot H\vert R$ then extend equivariantly to continuous maps $\psi\colon \Gamma^H\to \cs(H)$ and $\Psi\colon \Gamma^H\to \os(H)$, and similarly for $\Gamma_H$. Note also that the projection $\Gamma^H\to \Gamma_H$ is a quasi-isometry.

\begin{proof}[Proof of \Cref{prop:lifting_qi}]
By \Cref{lem:CAP}, it suffices to show that the orbit map $\psi\colon \Gamma^H \to \CS(H)$ is alignment preserving and metrically proper. The strategy is to relate the corresponding orbit in $\X$ with folding paths via \Cref{th:DT_2} and to use that these folding paths lift to $\X(H)$ by \Cref{pro:Outer_subgroup}. Note that since $\Gamma$ qi-embeds into $\CS$, $\Gamma$ is hyperbolic. Further, as $\Gamma_H$ is finite index in $\Gamma$, $\Gamma_H$, and hence $\Gamma^H$, is also hyperbolic.

In details, let $(\tilde g_-,\tilde g_0,\tilde g_+)$ be an aligned triple in $\Gamma^H$ and let $\tilde\gamma\colon \I\to \Gamma^H$ be a geodesic passing through $\tilde g_0$ with $\tilde\gamma(\I_\pm) = \tilde g_\pm$. The projection $\gamma\colon \I\to \Gamma_H\le \Gamma$ is then a uniform quasigeodesic and so maps to a uniform quasigeodesic in $\cs$ by assumption.  Hence \Cref{th:DT_2} provides a folding path $\{G_t\}_{t\in[0,L]}$ in $\X$ that has uniformly bounded symmetric Hausdorff distance from the image of the composition $\I\to \Gamma_H\to \os$. Since $i^*\colon \os\to \os(H)$ is a $\Gamma^H$--equivariant  isometric embedding (\Cref{pro:Outer_subgroup}), the same holds for the image of $\I\to \Gamma^H\to\os(H)$ and the folding path $i^*(G_t) = H\vert G_t$. In particular, the three points $\tilde g_-\cdot H\vert R$, $\tilde g_0\cdot H\vert R$, and $\tilde{g}_+\cdot H\vert R$ all lie within uniformly bounded symmetric Hausdorff distance of the geodesic $\{H\vert G_t\}$ in $\os(H)$. Since the projection $\csproj\colon \os(H)\to \cs(H)$ is alignment preserving, it follows that the triple $(\tilde{g}_-\cdot \tilde\alpha,\tilde{g}_0\cdot\tilde\alpha,\tilde{g}_+\cdot \tilde\alpha)$ is uniformly aligned in $\cs(H)$. Therefore the orbit map $\Gamma^H\to \cs(H)$ is alignment preserving.

It remains to show that $\Gamma^H \to \CS(H)$ is metrically proper. If this were not the case, then there is a sequence of $x_i \in \Gamma^H$ with $\abs{x_i} \to \infty$ but $d_{\CS(H)}(\tilde \alpha, x_i \cdot \tilde \alpha) \le D$ for some $D \ge 0$. By compactness of $\Gamma^H \cup \partial \Gamma^H$, after passing to a subsequence we may assume that $x_i \to q \in \partial \Gamma^H = \partial \Gamma_H = \partial \Gamma$. Since we have already shown that $\Gamma^H \to \CS(H)$ is coarsely alignment preserving, use \Cref{cl:replace} to obtain a geodesic $(\tilde g_i)_{i\ge0}$ in $\Gamma^H$ such that $\tilde g_i \to q$ in $\Gamma^H \cup \partial \Gamma^H$ but $d_{\CS(H)}(\tilde{\alpha}, \tilde{g}_i \cdot \tilde \alpha)$ is uniformly bounded. 

Now pass to a subsequence such that $g_i \cdot R$ converges to $T$ in $\overline \X$. Since $(g_i \cdot \alpha)_{i\ge0}$ is a quasigeodesic in $\CS$, we see combining \Cref{th:factor_complex_boundary} and \Cref{cor:boundary_cs} that the tree $T$ is free and arational and that $g_i \cdot \alpha$ converges to the equivalence class of $T$ in $\CS \cup \partial \CS$. Hence, $(H|g_i \cdot R = \tilde g_i \cdot H|R)_{i\ge0}$ converges in $\overline \X(H)$ to the tree $T^H$. By \Cref{cor:arationals_lift}, $T^H$ is free and arational. Hence, by \Cref{lem:unbounded}, $\pi_{\CS(H)}(\tilde g_i \cdot H|R) = \tilde g_i \cdot \tilde \alpha$
is unbounded in $\cs(H)$.
 In particular, $d_{\CS(H)}(\tilde{\alpha}, \tilde{g}_i \cdot \tilde \alpha) \to \infty$ contradicting the conclusion of the previous paragraph. This shows that $\Gamma^H \to \CS(H)$ is metrically proper and completes the proof.
 \end{proof}

\section{Flaring of simple conjugacy classes in Outer space orbits}
\label{sec:orbit_flaring}

In this section, we use the geometry of Outer space and the nature of folding paths to analyze how the lengths of conjugacy classes behave along the orbit $\Gamma\cdot R\subset \os$ of certain subgroups $\Gamma\le \Out(\free)$. When $\Gamma$ qi-embeds into the factor complex $\fc$, we will find that for simple conjugacy classes $\alpha\in \free$, the length $\len(\alpha\vert g\cdot R)$ grows roughly exponentially in the distance from a certain uniformly bounded-diameter subset $\minset{\Gamma}{R}(\alpha)\subset \Gamma$. Our analysis culminates in the rather technical \Cref{lem:flare_away_from_minimizer2} which establishes this exponential flaring not only for simple classes $\alpha$, but also for all conjugacy classes $\beta$ that are, in a sense, ``well-aligned'' with $\alpha$ at the points of $\minset{\Gamma}{R}(\alpha)\cdot R$ (see \Cref{def:almost_containment}). \Cref{lem:flare_away_from_minimizer2} moreover shows that this exponential growth is \emph{uniform} in all such $\alpha$ and $\beta$.

\subsection{Uniform bounded backtracking} 
\label{sec:bounded backtracking}

It is well known that any map $f\colon G\to H$ of metric core graphs has \define{bounded backtracking}, meaning that there is a constant $\mathrm{BBT}(f) \ge 0$ such that for any two points $p,q\in \tilde{G}$ in the universal cover and any lift $\tilde{f}\colon \tilde{G}\to \tilde{H}$ of $f$ one has that the path $\tilde{f}([p,q])$ is contained in the $\mathrm{BBT}(f)$--neighborhood of the geodesic segment $[\tilde{f}(p),\tilde{f}(q)]$; see, e.g., \cite{GJLLindex} or \cite{CHL2}. We will need a uniform bound on the constant $\mathrm{BBT}(f)$ over a broad family of graph maps. While bounds of this type are certainly well known to experts (see, e.g., \cite[Lemma 3.1]{BFHlam}), we include a short proof here for completeness.

\begin{lemma}[Backtracking bound]
\label{lem:backtracking bound}
For every $D > 0$ there exists a constant $C > 0$ so that if $G,H\in \os$ satisfy $\dsym(G,H)\le D$, then there exists a change of marking map $\phi\colon G\to H$ with $\mathrm{BBT}(\phi)\le C$.
\end{lemma}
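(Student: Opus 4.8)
The plan is to produce $\phi$ from the optimal change of marking map and to control $\mathrm{BBT}(\phi)$ by the classical bounded cancellation estimate, the point being that the hypothesis $\dsym(G,H)\le D$ keeps the relevant quantity uniformly small because every point of $\os$ has volume $1$.

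First I would set $\lambda:=e^{D}$ and take an optimal change of marking map $G\to H$; after possibly subdividing $G$ (which changes neither the point of $\os$ nor any backtracking constant, as the latter is really a statement about universal covers) this map may be chosen linear on each edge with slope at most $e^{d_\os(G,H)}\le\lambda$, and then homotoped rel the vertices of $G$ so that it restricts to an immersion on the interior of each edge; this last modification only shortens edge-images and does not increase the Lipschitz constant. Call the resulting change of marking map $\phi$. Then $\ell(\phi(e)\vert H)\le\lambda\,\ell(e\vert G)$ for every edge $e$ of $G$, and summing over the edges of $G$ bounds the total length of the edge-images:
\[
\textstyle\sum_{e}\ \ell(\phi(e)\vert H)\ \le\ \lambda\cdot\vol(G)\ =\ \lambda.
\]

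Next I would bound $\mathrm{BBT}(\phi)$ by this quantity using the Bounded Cancellation Lemma of Cooper (see \cite{GJLLindex,CHL2}, and \cite[Lemma 3.1]{BFHlam} for the uniform form): for a homotopy equivalence of core graphs that is an immersion on each edge, the bounded backtracking constant is at most a universal multiple of the total edge-image length $\sum_{e}\ell(\phi(e)\vert H)$. Together with the display this gives $\mathrm{BBT}(\phi)\le C_0\lambda=:C$ for a universal constant $C_0$, which is exactly the assertion. For completeness I would sketch the cancellation estimate: for a reduced path $\sigma$ in $\tilde G$ with endpoints $p,q$, if $\tilde\phi(\sigma)$ attains distance $E$ from the geodesic $[\tilde\phi(p),\tilde\phi(q)]$, then, since $\tilde H$ is a tree, the excursion responsible departs from and returns to a single point $z$, so it pulls back to a reduced subpath $[a,b]\subseteq\sigma$ with $\tilde\phi(a)=\tilde\phi(b)=z$ and $\ell([a,b])\ge 2E/\lambda$; decomposing the cancelled segment of $\tilde\phi(\sigma)$ into (portions of) edge-images of $\phi$, a contiguous cancelled run longer than $\sum_{e}\ell(\phi(e)\vert H)$ would have to reuse a directed edge of $G$, hence would fold an immersed, and therefore (as $\phi$ is a homotopy equivalence) $\pi_1$-nontrivial, loop of $G$ into the \emph{free} simplicial tree $\tilde H$ in a way that cannot occur; thus $E$ is bounded in terms of $\sum_{e}\ell(\phi(e)\vert H)\le\lambda$.

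The step I expect to be the main obstacle is precisely this last cancellation bound: ruling out that backtracking in $\tilde\phi(\sigma)$ ``propagates'' through arbitrarily long stretches of the path. Morally this reflects a compactness phenomenon --- on a thick part $\os_{\epsilon}$ the bound is immediate from cocompactness of $\os_\epsilon/\Out(\free)$ together with the $\Out(\free)$-invariance and finiteness of $\mathrm{BBT}$ --- but the estimate above is what handles the thin part, where $\os$ fails to be locally compact. Once the cancellation bound is available the lemma follows at once, with $C=C(D)$ as above.
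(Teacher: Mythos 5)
Your proof is correct and reaches the conclusion with a cleaner explicit constant than the paper does, but it takes a genuinely different route. The paper's argument is coarse-geometric: it uses the hypothesis $\dsym(G,H)\le D$ to produce a pair of $e^D$--Lipschitz quasi-inverse change-of-marking maps, adjusts one lift so that the composition $\tilde\varphi\circ\tilde\phi$ has uniformly bounded displacement, concludes that $\tilde\phi$ is a uniform quasi-isometry of trees, and then invokes stability of quasigeodesics (\Cref{prop:stability_of_quasis}) to bound the backtracking by the Morse constant $R_0(2e^{2D}+4,0)$. You instead invoke the sharp combinatorial tool, Cooper's Bounded Cancellation Lemma: after tightening, an optimal change-of-marking map $\phi$ has total edge-image length $\sum_e\ell(\phi(e)\vert H)\le \Lip(\phi)\cdot\vol(G)=\Lip(\phi)\le e^D$, and BBT is bounded by this quantity. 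The two approaches buy different things---the paper's route stays entirely inside the hyperbolic machinery already developed in \S\ref{sec:coarse_geom} and requires no additional imports, whereas yours exploits the volume-$1$ normalization to get the essentially optimal bound $\mathrm{BBT}(\phi)\le e^D$ via a standard cancellation estimate (for the morphism one obtains after pulling back the target metric, the Stallings-fold decomposition gives $\mathrm{BBT}\le\vol$ directly, and reparametrizing $G$ turns this into $\sum_e\ell(\phi(e)\vert H)$). Both are legitimate.

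One caveat worth flagging: your parenthetical sketch of the cancellation bound is not airtight as stated. If the cancelled run is long enough to force the reduced subpath $[a,b]\subset\sigma$ to reuse a directed edge of $G$, what you obtain is a reduced subloop of $G$ whose \emph{lift} is a segment from some $x$ to $gx$ with $g\in\free$ nontrivial; this segment is not a loop in $\widetilde{G}$ and $\tilde\phi$ maps it to a segment of positive length $\ge\len_{\tilde H}(g)$, so it is not literally ``folding a $\pi_1$-nontrivial loop into $\tilde H$,'' and no immediate contradiction results. The correct derivation of the $\mathrm{BBT}\le\sum_e\ell(\phi(e)\vert H)$ bound goes through the Stallings-fold factorization of the associated morphism and the additivity of $\mathrm{BBT}$ across nonexpanding folds, not through an edge-reuse argument. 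Since you cite the relevant literature for the uniform form of the lemma and only offer the inner argument ``for completeness,'' this does not undermine the proof, but the sketch as given should not be taken at face value.
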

\begin{proof}
The hypothesis ensures there are maps $\phi\colon G\to H$ and $\varphi\colon H\to G$ so that $\phi\circ \varphi\simeq \mathrm{Id}_H$, $\varphi\circ \phi\simeq \mathrm{Id}_G$, and $\Lip(\phi),\Lip(\varphi)\le e^D$. Set $K = e^D$. 
Choose any lifts $\tilde{\phi}\colon \tilde{G}\to\tilde{H}$ and $\tilde{\varphi}_0\colon \tilde{H}\to\tilde{G}$ of these maps to the universal covers, and note that $\tilde{\phi}$ and $\tilde{\varphi}_0$ are both equivariant with respect to the $\free$ actions on the trees $\tilde{G}$ and $\tilde{H}$. Note also that $\Lip(\tilde{\phi}),\Lip(\tilde{\varphi}_0)\le K$.

Choose a basepoint $v\in \tilde{G}$ and let $w=\tilde{\phi}(v)\in \tilde{H}$. Since the orbit $\free\cdot v$ is $1$--dense in $\tilde{G}$, we may choose $a \in \free$ so that $d_{\tilde{G}}(v,a \cdot\tilde{\varphi}_0(w))\le 1$. Define a new map $\tilde{\varphi}\colon\tilde{H}\to\tilde{G}$ by the rule $\tilde{\varphi}(q) = a \cdot\tilde{\varphi}_0(q)$ and note that we still have $\Lip(\tilde{\varphi})\le K$.
We now have that $d_{\tilde{G}}(v,\tilde{\varphi}\circ\tilde{\phi}(v)) \le 1$. By equivariance and the fact that $\tilde\varphi$ and $\tilde\phi$ are $K$--Lipschitz, it easily follows that $\tilde{\varphi}\circ\tilde{\phi}$ moves points at most distance $K^2+2$. 
From this, and the inequalities $\Lip(\tilde\varphi),\Lip(\tilde\phi)\le K$, one may conclude that $\tilde\phi$ is a $(2K^2+4)$--quasi-isometry.
Thus for any geodesic segment $[p,q]\subset\tilde{G}$, the image $\tilde{\phi}([p,q])$ is a $(2K^2+4)$--quasigeodesic in $\tilde{H}$. It now follows from \Cref{prop:stability_of_quasis} that $\tilde{\phi}([p,q])$ is contained in the $R_0(2e^{2D}+4,0)$--neighborhood of $[\tilde{\phi}(p),\tilde{\phi}(q)]$, as required.
\end{proof}

If $\varphi\colon T_0\to T$ is an $\free$--equivariant map between free simplicial trees, we also write $\mathrm{BBT}(\varphi)$ for the bounded backtracking constant of the induced map $T_0/\free\to T/\free$ of the quotient graphs.

\subsection{Bestvina--Feighn folding}
\label{sec:more_folding}
Here, we recall some additional facts about folding paths that we will need in the proof of \Cref{lem:flare_away_from_minimizer2}. Recalling the notation from \S\ref{sec:folding}, we see that the folding maps $\{\phi_{st}\colon G_s\to G_t\}_{s<t}$ associated to a folding path $\gamma\colon\I\to \os$ give rise to a well-defined illegal turn structure on each graph $G_s = \gamma(s)$ in the image (except for the right endpoint $\gamma(\Ipl)$ when $\Ipl < \infty$). We then say that an immersed path (i.e., segment) in $G_s$ is \define{legal} if it only takes legal turns;  notice that for any legal path $\beta\colon \J\to G_s$, the composition $\phi_{st}\circ\beta$ is a legal path in $G_t$. An immersed path in $G_s$ will be called \define{illegal} if it does not contain a legal subpath of length $3$.

If $G_t\in \os$ is equipped with an illegal turn structure, then for every conjugacy class $\alpha$ of $\free$ the immersed loop $\alpha\vert G_t\to G_t$ breaks into maximal legal segments separated by illegal turns.
Following our convention from \cite[\S6]{DT1}, the \define{legal length} of $\alpha\vert G_t$ is defined to be the sum $\leg(\alpha\vert G_t)$ of the lengths of those maximal legal segments that have length at least $3$. The following basic fact appears as Lemma 6.10 of \cite{DT1} and follows directly from work in \cite{BFhyp}:

\begin{lemma}[\cite{DT1}]
\label{lem:legal_flare}
For any folding path $G_t$, $t\in [a,b]$, every nontrivial conjugacy class $\alpha\in \free$ satisfies
\[\leg(\alpha\vert G_b) \ge \leg(\alpha\vert G_a) \left(\frac{1}{3}\right) e^{b-a}.\]
\end{lemma}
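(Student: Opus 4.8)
The plan is to follow the maximal legal segments of the immersed loop $\alpha\vert G_t$ as $t$ runs from $a$ to $b$, using two structural features of folding maps recalled in \S\ref{sec:more_folding}: for each $s<t$ the folding map $\phi_{st}\colon G_s\to G_t$ carries legal paths to legal paths, and---since the folding path is a directed geodesic parameterized at unit speed in $\os$---the map $\phi_{ab}$ is a folding map with $d_\os(G_a,G_b)=\log\Lip(\phi_{ab})=b-a$, hence a local $e^{b-a}$--homothety. In particular $\phi_{ab}$ stretches the interior of every edge by the factor $e^{b-a}$, and it is locally injective along any legal path except possibly where that path meets the illegal turns bounding it. So if $\sigma^\circ$ is a legal segment of $G_a$ on which $\phi_{ab}$ is locally injective, then $\phi_{ab}(\sigma^\circ)$ is a legal segment of $G_b$ of length exactly $e^{b-a}\ell(\sigma^\circ)$.

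First I would fix a maximal legal segment $\sigma$ of $\alpha\vert G_a$ with $\ell(\sigma)\ge 3$ and pass to the sub-segment $\sigma^\circ$ obtained by trimming away the end-collars of $\sigma$ that get folded by $\phi_{ab}$ at the two illegal turns at its endpoints; on $\sigma^\circ$ the map $\phi_{ab}$ is a genuine $e^{b-a}$--homothetic embedding. The input that does the real work here is the Bestvina--Feighn folding estimate \cite[\S2]{BFhyp}: it bounds, uniformly in $\alpha$, how far this folding propagates into $\sigma$ from each end (no more than a collar of length $1$, measured in $G_b$). Feeding that bound in and invoking the elementary inequality $\ell(\sigma)-2\ge\tfrac13\ell(\sigma)$, which holds precisely because $\ell(\sigma)\ge 3$, gives
\[
\ell\bigl(\phi_{ab}(\sigma^\circ)\bigr)\ \ge\ e^{b-a}\ell(\sigma)-2\ \ge\ e^{b-a}\bigl(\ell(\sigma)-2\bigr)\ \ge\ \tfrac13\,e^{b-a}\,\ell(\sigma),
\]
so $\phi_{ab}(\sigma^\circ)$ is a legal sub-arc of $\alpha\vert G_b$ of length at least $\tfrac13 e^{b-a}\ell(\sigma)$, sitting inside a single maximal legal segment of $\alpha\vert G_b$ (one checks this maximal segment clears the length-$3$ threshold, so that it is counted by $\leg$).

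Finally I would sum over the maximal legal segments $\sigma_1,\dots,\sigma_m$ of $\alpha\vert G_a$ with $\ell(\sigma_i)\ge 3$. These occupy pairwise disjoint sub-arcs of the cyclic loop $\alpha\vert G_a$, and since $\phi_{ab}$ is locally injective on legal paths the trimmed images $\phi_{ab}(\sigma_i^\circ)$ occupy pairwise disjoint legal sub-arcs of $\alpha\vert G_b$; hence they contribute disjointly to the legal length, yielding
\[
\leg(\alpha\vert G_b)\ \ge\ \sum_{\ell(\sigma_i)\ge 3}\ell\bigl(\phi_{ab}(\sigma_i^\circ)\bigr)\ \ge\ \tfrac13\,e^{b-a}\!\!\sum_{\ell(\sigma_i)\ge 3}\!\!\ell(\sigma_i)\ =\ \tfrac13\,e^{b-a}\,\leg(\alpha\vert G_a),
\]
as claimed. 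The only step demanding care---and the place where the detailed structure of Bestvina--Feighn folding paths is genuinely used---is the per-segment estimate: controlling, uniformly and independently of $\alpha$, how much of a legal segment is consumed by folding at its bounding illegal turns (and confirming that the resulting images land in maximal legal segments long enough to be counted). Everything else is formal bookkeeping, and the uniformity of that folding bound is exactly what makes the final flaring estimate uniform over all conjugacy classes $\alpha$.
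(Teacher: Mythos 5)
The paper itself does not prove this lemma; it records it as Lemma~6.10 of~\cite{DT1} and attributes the underlying estimate to~\cite{BFhyp}. So the question is whether your sketch correctly recreates the standard Bestvina--Feighn argument. In broad outline it does: decompose $\alpha\vert G_a$ into maximal legal segments, use that $\phi_{ab}$ is a local $e^{b-a}$--homothety carrying legal paths to legal paths, bound the loss at each end of each segment to the folding near the adjacent illegal turns, and sum---disjointness coming from local injectivity on legal paths. This is indeed the structure of the BFhyp/DT1 proof.

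However, the quantitative claim you hang the argument on is misstated in a way that matters. You assert the folding propagates into $\sigma$ by ``no more than a collar of length $1$, measured in $G_b$,'' giving the first displayed inequality $\ell(\phi_{ab}(\sigma^\circ))\ge e^{b-a}\ell(\sigma)-2$. This is false once $b-a>\log 2$: the collars consumed by folding do grow with the stretch factor, so bounding them by an absolute constant in the \emph{target} metric is not available. What Bestvina--Feighn actually prove (in the later sections of \cite{BFhyp} on folding and legality, not \S2 which only sets up the construction) is that the consumed collar is at most $1$ per end measured in the \emph{domain} graph $G_a$. This gives directly $\ell(\phi_{ab}(\sigma^\circ))\ge e^{b-a}(\ell(\sigma)-2)$, which is exactly your second inequality. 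So your chain happens to land on a true bound only because you subsequently weaken the (false) first estimate to the correct one. There is also a genuine gap in the parenthetical ``one checks this maximal segment clears the length-$3$ threshold'': with only $\ell(\sigma_b)\ge e^{b-a}(\ell(\sigma)-2)$ and $\ell(\sigma)=3$, the right side is $e^{b-a}$, which is below $3$ for $b-a<\log 3$. Getting the maximal legal segment of $\alpha\vert G_b$ above the threshold requires a slightly sharper tracking of the collar (e.g.\ that the consumed collar in $G_a$ is at most $1-e^{-(b-a)}$, yielding $\ell(\sigma_b)\ge e^{b-a}(\ell(\sigma)-2)+2\ge 3$) or an accounting of merging segments, and is not free in your sketch.
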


We also have the following technical result of Bestvina--Feighn; for the statement, recall that a nontrivial element $a \in \free$ (or the conjugacy class $\alpha$ thereof) is \define{simple} if $a$ is contained in a proper free factor of $\free$.

\begin{lemma}[{Bestvina--Feighn \cite[Lemma 5.8]{BFhyp}}]
\label{lem:illegal_flare}
There exists a constant $B_1$ depending only on $\rank(\free)$ with the following property. If $G_t$, $t\in [a,b]$ is a folding path and $\alpha$ is simple with $\alpha\vert G_t$ illegal for all $t\in [a,b]$, then either $\len(\alpha\vert G_a) > 2\len(\alpha\vert G_b)$ or else $d_\fc(G_a,G_b)\le B_1$.
\end{lemma}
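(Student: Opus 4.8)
The plan is to prove the contrapositive: assuming $\len(\alpha\vert G_a)\le 2\len(\alpha\vert G_b)$, I will bound $d_\fc(G_a,G_b)$ by a constant depending only on $\rank(\free)$. Since $d_\fc(G_a,G_b)\le 80\,d_\os(G_a,G_b)+80$ by \Cref{lem:fproj_lipschitz}, the only nontrivial regime is when the folding parameter $b-a$ is large, so the whole point is to show that the hypotheses — that $\alpha$ is \emph{simple} and that $\alpha\vert G_t$ remains \emph{illegal} throughout — prevent $\fproj$ from escaping to infinity even though the folding path itself may be long. The rough idea is that an illegal loop which is not being compressed forces the folding to be ``internal'': the identifications carried out by the fold maps are confined to a bounded-complexity piece of $G_t$ associated to a fixed proper free factor, so $\fproj(G_t)$ cannot move far.

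To set this up I would first record the combinatorial meaning of illegality: since $\alpha\vert G_t$ contains no legal subpath of length $3$, consecutive illegal turns along $\alpha\vert G_t$ occur at distance $<3$, so the number of illegal turns traversed by $\alpha$ in $G_t$ is comparable to $\len(\alpha\vert G_t)$. As the folding path folds \emph{all} illegal turns simultaneously at unit speed, while $G_t$ has at most $3\rank(\free)-3$ edges of total length $1$, a counting argument shows that over any fixed amount of folding time the fold map must do one of two things to $\alpha$: genuinely reduce its volume-normalized length by a definite fraction, or perform only self-identifications supported on the core $A\vert G_t$ of the cover corresponding to $A\colonequals$ the free-factor support of $\alpha$ (which is a \emph{proper} free factor precisely because $\alpha$ is simple). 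Using bounded backtracking (\Cref{lem:backtracking bound}) to keep the constants uniform, this should yield the key dichotomy: for each $t\in[a,b]$, either $\len(\alpha\vert G_{t'})<\tfrac12\len(\alpha\vert G_t)$ for some $t'\in[t,b]$, or $G_t$ admits a proper connected noncontractible subgraph whose marked fundamental group is a free factor lying within bounded $\fc$--distance of $A$.

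Granting the dichotomy, the proof finishes quickly: the hypothesis $\len(\alpha\vert G_a)\le 2\len(\alpha\vert G_b)$ forces the length of $\alpha$ never to drop by a factor of $2$ across $[a,b]$, so the second alternative holds at both endpoints; hence both $\fproj(G_a)$ and $\fproj(G_b)$ lie within bounded $\fc$--distance of the single point $A$, whence $d_\fc(G_a,G_b)\le B_1$ for a suitable $B_1=B_1(\rank(\free))$. The main obstacle is proving the dichotomy — showing that illegal folding of a non-compressed simple class is topologically invisible to $\fc$. (Note that $\leg(\alpha\vert G_t)=0$ throughout, by the very definition of illegality, so \Cref{lem:legal_flare} supplies no information here; the entire content is that the length drop and the ``internal folding'' alternative exhaust the possibilities.) This is where the real work lies: one must track how the immersed loop $\alpha\vert G_t$ sits inside $G_t$, which edges and turns the simultaneous folds actually touch, and which new subgraphs arise as the densely spaced illegal turns of $\alpha$ collapse, all while keeping every estimate uniform in $\rank(\free)$; the bound on the number of edges, the density of the illegal turns of $\alpha$ forced by illegality, and \Cref{lem:backtracking bound} are the tools I would use to carry this out.
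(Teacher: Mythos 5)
This lemma is not proved in the paper at all: it is quoted directly from Bestvina--Feighn \cite[Lemma 5.8]{BFhyp}, so there is no in-paper argument to compare your proposal against. Evaluating your write-up on its own merits, it is not yet a proof. The load-bearing step --- your ``key dichotomy'' that over a fixed amount of folding time the fold either shrinks $\alpha$ by a definite fraction or else acts internally to a bounded-complexity subgraph carrying the free-factor support of $\alpha$ --- is precisely the content of the Bestvina--Feighn lemma, and you explicitly leave it open (``The main obstacle is proving the dichotomy\dots\ This is where the real work lies''). A plan that names the claim it needs and then defers it has not engaged with the mathematics; you have reduced the lemma to a harder-sounding version of itself.

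There is also a gap in the ``quick finish'' even if the dichotomy were granted. You argue that $\len(\alpha\vert G_a)\le 2\len(\alpha\vert G_b)$ ``forces the length of $\alpha$ never to drop by a factor of $2$ across $[a,b]$,'' and use this to rule out the first alternative at $t=a$. But the hypothesis only controls the \emph{net} change from $a$ to $b$, not the drop over every subinterval $[a,t']$; volume-normalized lengths along a folding path are not monotone in general. To make this deduction you would need to establish some (near-)monotonicity of $t\mapsto \len(\alpha\vert G_t)$ for an illegal loop --- plausible, since folding cannot create new illegal turns on the image of $\alpha$ and an illegal loop's length is comparable to its number of illegal turns, but this is an assertion that must be stated and proved, not assumed. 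As written, your argument does not exclude the possibility that $\len(\alpha\vert G_{t'})$ dips well below $\tfrac12\len(\alpha\vert G_a)$ at some intermediate $t'$ and then returns to roughly $\tfrac12\len(\alpha\vert G_a)$ at $t=b$.
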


For any subgroup $A\le \free$ (or conjugacy class $\alpha$) the illegal turn structure on $G_t$ pulls back to give an illegal turn structure on $A\vert G_t$ (or $\alpha\vert G_t$). This gives a notion of legal and illegal paths in $A\vert G_t$: an immersed path in $A\vert  G_t$ is (il)legal if and only if it is mapped to an (il)legal path in $G_t$. Define now the \define{illegality constant}
\[\illegality \colonequals (2\rank(\free)-1)(18\breve{m}(3\rank(\free)-3)+6),\]
where $\breve{m}$ is the maximal possible number of illegal turns in any illegal turn structure on any graph $G\in \os$ (so $\breve{m}$ is linear in $\rank(\free)$). In \cite{BFhyp}, Bestvina and Feighn introduced the following projections to folding paths.

\begin{definition}
\label{def:project_to_fold_path}
Given a folding path $\gamma\colon \I\to \os$ and a free factor $A$ of $\free$, set
\begin{align*}
\mathrm{left}_\gamma(A) =& \inf \{t\in \I : A\vert \gamma(t)\text{ has an immersed legal segment of length }3\}\quad\text{and} \\
\mathrm{right}_\gamma(A) =& \sup\{t\in \I : A\vert \gamma(t)\text{ has an immersed illegal segment of length }\illegality\}.
\end{align*}
We similarly define $\mathrm{left}_\gamma(\alpha)$ and $\mathrm{right}_\gamma(\alpha)$ for every nontrivial conjugacy class $\alpha$ of $\free$. Note that these definitions agree when $\alpha$ is a primitive conjugacy class.
\end{definition}

The following technical result played a key roll in Bestvina and Feighn's proof of \Cref{thm:fc_hyperbolic}.

\begin{proposition}[{Bestvina--Feighn \cite[Proposition 6.10]{BFhyp}}]
\label{prop:BF_strong_contract}
There exists a universal constant $B_2$ depending only on $\rank(\free)$ such that for every folding path $\gamma\colon \I\to \os$ and free factor $A$ of $\free$ we have
\[d_\fc\big(\gamma(\mathrm{left}_\gamma(A)),\gamma(\mathrm{right}_\gamma(A))\big)\le B_2.\]
\end{proposition}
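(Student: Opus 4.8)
Since this proposition is quoted verbatim as \cite[Proposition 6.10]{BFhyp}, we ultimately just cite it; but let us indicate the shape of the argument and where its difficulty lies.

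Write $\ell = \mathrm{left}_\gamma(A)$ and $\rho = \mathrm{right}_\gamma(A)$, and recall that $A$ may be taken to be a proper free factor, so every conjugacy class contained in $A$ is simple. The first point is that $\ell \le \rho$: because $A$ is nontrivial, $A\vert\gamma(t)$ is a genuine core graph, hence contains an immersed segment of length $\illegality$, and that segment either has a legal subpath of length $3$ (so $t \ge \ell$) or is itself illegal of length $\illegality$ (so $t \le \rho$). Thus $[\ell,\rho]$ is precisely the set of times at which $A\vert\gamma(t)$ simultaneously carries a legal segment of length $3$ and an illegal segment of length $\illegality$, and the assertion is that $\gamma$ sends this ``transition interval'' to a subset of $\fc$ of uniformly bounded diameter.

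The plan is to pass from the free factor $A$ down to simple conjugacy classes contained in it and then play the two flaring estimates \Cref{lem:legal_flare} and \Cref{lem:illegal_flare} against one another. By the very choice of the constant $\illegality$ --- a pigeonhole count on the finitely many ``(directed edge, illegal--turn--state)'' configurations in a graph of rank at most $\rank(\free)$ --- whenever $A\vert\gamma(t)$ carries an illegal segment of length $\illegality$ one can close that segment up into an immersed \emph{illegal loop}, producing a simple conjugacy class $\alpha_t$ contained in $A$ with $\alpha_t\vert\gamma(t)$ illegal and of bounded length. Since legal subpaths push forward along the folding path, illegality is inherited backward, so $\alpha_t\vert\gamma(s)$ stays illegal for all $s \le t$. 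Feeding $\alpha_t$ into the Bestvina--Feighn dichotomy \Cref{lem:illegal_flare} on subintervals of $[\ell,t]$ then shows that, outside a piece of bounded $\fc$--diameter (controlled via \Cref{lem:fproj_lipschitz}), any $\fc$--progress $\gamma$ makes on $[\ell,\rho]$ forces a definite halving of the length of the ``illegal core'' of $A$. One then bounds the number of such halvings: $A$ enters the transition interval with illegal core of controlled length, since it has been folding up with no legal subpath of length $3$ until time $\ell$, while by \Cref{lem:legal_flare} the legal part of $A\vert\gamma(t)$ flares exponentially for $t > \ell$, so the illegal core cannot persist at length $\ge\illegality$ over an interval that is long in $\fc$. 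Bounded backtracking (\Cref{lem:backtracking bound}) and stability of quasigeodesics (\Cref{prop:stability_of_quasis}) convert between the various coarse quantities along the way, and the triangle inequality assembles everything into $d_\fc(\gamma(\ell),\gamma(\rho)) \le B_2$.

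The hard part is exactly this counting step --- controlling how many times the illegal core of $A$ can halve inside the transition interval while $\gamma$ still moves appreciably in $\fc$ --- which is the delicate interplay between the precise value of $\illegality$, the legal and illegal flaring lemmas, and the coarse geometry of $\fc$ that Bestvina and Feighn carry out in detail and that we do not reproduce here.
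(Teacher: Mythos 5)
The paper does not prove this proposition; it is imported directly from Bestvina--Feighn \cite[Proposition~6.10]{BFhyp}, which is exactly the approach you take. Your supplementary sketch of the underlying BF argument --- the observation that $\mathrm{left}_\gamma(A)\le\mathrm{right}_\gamma(A)$, pigeonholing via the choice of $\illegality$ to extract a bounded-length illegal loop carried by $A$, backward persistence of illegality along the folding path, and playing \Cref{lem:legal_flare} against \Cref{lem:illegal_flare} to bound the number of halvings --- is broadly consistent with the shape of that proof, but since the present paper supplies only the citation there is no internal argument to compare it against, and I have not independently checked your outline against Bestvina--Feighn's actual text. For the purposes of this paper, stopping at the citation, as you do, is the intended proof.
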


We henceforth write $\fcbound$ for the universal constant $\max\{B_1,B_2\}$, where $B_1,B_2$ are the constants provided by \Cref{lem:illegal_flare} and \Cref{prop:BF_strong_contract}.

\subsection{Flaring and almost containment}
\label{sec:almost containment}

The following notion will help us relate distances in the metric graph bundle $\bund$ (see \S\ref{sec:metric_bundle}) to conjugacy lengths along folding paths.

\begin{definition}
\label{def:almost_containment}
Suppose that $\alpha,\beta$ are nontrivial conjugacy classes of $\free$ and that $G\in \os$ is a point in Outer space with corresponding $\R$--tree $\tilde{G}$. We say that \define{$\beta$ is $k$--almost contained in $\alpha$ at $G$}, for $k\ge 0$, if there exists an axis $\mathcal{A}\subset \tilde{G}$ for (an element of) $\alpha$ and a fundamental domain $B$ of an axis for (an element of) $\beta$ in $\tilde{G}$ so that $B\setminus \mathcal{A}$ is a (possibly degenerate) connected segment of length at most $k$. This is to say that $B$ is contained in $\mathcal{A}$ except for a subsegment of length at most $k$.
\end{definition}

Equivalently, $\beta$ is $k$--almost contained in $\alpha$ at $G$ if and only if there exist axes $\mathcal{A},\mathcal{B}\subset \tilde{G}$ for elements of the classes $\alpha$ and $\beta$, respectively, so that $\len(\mathcal{A}\cap\mathcal{B}) \ge \len(\beta\vert G) - k$. We also observe that $\alpha$ is always $0$--contained in itself.

\begin{lemma}
\label{lem:nearby containment}
For any $k\ge 0$ and $D\ge 0$ there exists a constant $k'\ge 0$ so that if $\beta$ is $k$--almost contained in $\alpha$ at $G\in \os$, then $\beta$ is $k'$--almost contained in $\alpha$ at $H$ for any $H\in \os$ with $\dsym(G,H)\le D$.
\end{lemma}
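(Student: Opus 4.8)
The plan is to transport the almost-containment hypothesis from $\tilde G$ to $\tilde H$ along a change of marking map with controlled bounded backtracking, using the elementary fact that an $\free$--equivariant map of $\R$--trees displaces translation axes only a bounded amount. Concretely: let $a\in\alpha$ and $b\in\beta$ be the representatives whose axes $\mathcal{A},\mathcal{B}\subset\tilde G$ witness the hypothesis, and (via the equivalent formulation recorded after \Cref{def:almost_containment}, noting that $B\cap\mathcal{A}$ is connected and $B\setminus\mathcal{A}$ is connected, so $B\cap\mathcal{A}$ is an end-segment of $B$) choose a fundamental domain $B=[x,b\cdot x]\subset\mathcal{B}$ of $b$ meeting $\mathcal{A}$ in an initial sub-segment $[x,z]$ with $d_{\tilde G}(z,b\cdot x)\le k$. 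By \Cref{lem:backtracking bound} there is a change of marking $\phi\colon G\to H$ with $\mathrm{BBT}(\phi)\le C$ and, inspecting its proof, $\Lip(\phi)\le e^D$, where $C=C(D)$; fix an $\free$--equivariant lift $\tilde\phi\colon\tilde G\to\tilde H$, so $\mathrm{BBT}(\tilde\phi)\le C$ and $\Lip(\tilde\phi)\le e^D$. Write $\mathcal{A}',\mathcal{B}'\subset\tilde H$ for the axes of $a$ and $b$.

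The first step is to observe that $\tilde\phi\big(\mathrm{Axis}_{\tilde G}(c)\big)\subset\nbhd{C}{\mathrm{Axis}_{\tilde H}(c)}$ for every nontrivial $c\in\free$: applying bounded backtracking to the segment $[c^{-n}\cdot p,c^n\cdot p]\subset\mathrm{Axis}_{\tilde G}(c)$, whose image lies within $C$ of $[c^{-n}\cdot\tilde\phi(p),c^n\cdot\tilde\phi(p)]$ by equivariance, and letting $n\to\infty$ forces $\tilde\phi(p)$ to lie within $C$ of $\mathrm{Axis}_{\tilde H}(c)$. In particular $\tilde\phi([x,z])\subset\nbhd{C}{\mathcal{A}'}$; since $\tilde\phi([z,b\cdot x])$ has diameter at most $e^D k$ and contains a point of $\nbhd{C}{\mathcal{A}'}$, the entire image $\tilde\phi([x,b\cdot x])$ lies in $\nbhd{C+e^D k}{\mathcal{A}'}$. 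A second application of bounded backtracking then shows that the \emph{geodesic} $\sigma:=[\tilde\phi(x),\tilde\phi(b\cdot x)]=[\tilde\phi(x),b\cdot\tilde\phi(x)]$ lies in $\nbhd{M}{\mathcal{A}'}$, with $M:=2C+e^D k$. The same axis-displacement fact applied to $b$ also gives $d_{\tilde H}(\tilde\phi(x),\mathcal{B}')\le C$.

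The second step is a purely $\R$--tree computation: if a geodesic segment $\sigma$ in an $\R$--tree lies in the $M$--neighborhood of a subtree $\mathcal{T}$, then $\sigma\cap\mathcal{T}$ is obtained from $\sigma$ by deleting from each endpoint the geodesic to its nearest-point projection onto $\mathcal{T}$, a sub-segment of length at most $M$. Applied with $\mathcal{T}=\mathcal{B}'$ this identifies $\sigma\cap\mathcal{B}'$ with a fundamental domain $[x',b\cdot x']$ of $b$ in $\mathcal{B}'$, of length $\len_{\tilde H}(b)=\len(\beta\vert H)$, obtained by deleting at most length $C$ from each end of $\sigma$; applied with $\mathcal{T}=\mathcal{A}'$ it shows $\sigma\cap\mathcal{A}'$ is obtained by deleting at most length $M$ from each end of $\sigma$. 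Intersecting these two sub-segments of $\sigma$, the fundamental domain $[x',b\cdot x']$ meets $\mathcal{A}'$ along a sub-segment of length at least $\ell(\sigma)-2M\ge\len(\beta\vert H)-2M$. Since $\mathcal{A}'\cap\mathcal{B}'$ is connected and contains this sub-segment, $\ell(\mathcal{A}'\cap\mathcal{B}')\ge\len(\beta\vert H)-2M$, which by the equivalent formulation of \Cref{def:almost_containment} is precisely the assertion that $\beta$ is $k'$--almost contained in $\alpha$ at $H$ with $k':=2M=4C+2e^D k$.

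The only points requiring genuine care are making the ``$n\to\infty$'' limit in the axis-displacement step precise (the standard picture of $[c^{-n}\cdot p,c^n\cdot p]$ as a long segment of $\mathrm{Axis}_{\tilde G}(c)$ capped by two fixed-length ``hairs'' of length $d(p,\mathrm{Axis}_{\tilde G}(c))$ makes this routine) and the bookkeeping of which end-segments of $\sigma$ get deleted upon intersecting with $\mathcal{A}'$ versus $\mathcal{B}'$; I do not expect either to be a real obstacle, as everything else is an immediate consequence of bounded backtracking and convexity in $\R$--trees.
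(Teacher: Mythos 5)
Your proof is correct and follows essentially the same strategy as the paper's: push the almost-containment hypothesis from $\tilde G$ to $\tilde H$ along a Lipschitz change of marking with a uniform bounded-backtracking constant supplied by \Cref{lem:backtracking bound}. The differences are organizational rather than substantive. The paper works with the endpoints $p,q$ of the full intersection $\mathcal{A}\cap\mathcal{B}$ and sets $x=b\cdot p$, then runs a case split on whether $x\in\mathcal{A}\cap\mathcal{B}$ (when at least one full translation length of $b$ sits inside the overlap) or $d_{\tilde G}(x,q)\le k$; in each case it trims a $C$--length collar from the images of $p,q$ and gets $k'=2C+e^Dk$. You instead fix a single fundamental domain $B=[x,b\cdot x]$ with $B\cap\mathcal{A}$ an initial segment, push it forward, and read off the overlap of $\mathcal{A}'$ and $\mathcal{B}'$ by the collar-deletion fact for nearest-point projection onto a subtree; that avoids the case split at the cost of a somewhat larger constant $k'=4C+2e^Dk$ (and in fact you could tighten $M$ to $C+e^Dk$ by noting $\sigma\subset\tilde\phi([x,b\cdot x])$, since continuous images of connected sets are connected and the tree geodesic between two points is contained in any connected set joining them). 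Two minor points worth a sentence in a final write-up: (i) your "end-segment" reduction silently assumes $B\cap\mathcal{A}\neq\emptyset$; if it is empty then $\len(\beta\vert G)\le k$, so $\len(\beta\vert H)\le e^Dk\le k'$ and the conclusion is automatic. (ii) Your tree fact about deleting end-collars is stated as if both projections of the endpoints of $\sigma$ onto $\mathcal{T}$ were distinct; when $\mathcal{T}=\mathcal{B}'$ this holds because $b$ acts hyperbolically, and when $\mathcal{T}=\mathcal{A}'$ the degenerate case $\pi(p)=\pi(q)$ forces $\ell(\sigma)\le 2M$ and the asserted inequality holds vacuously. Neither caveat affects the correctness of the argument; both proofs rely on the identical input (\Cref{lem:backtracking bound} plus axis-displacement) and differ only in bookkeeping.
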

\begin{proof}
By assumption, we may may choose elements $a\in \alpha$ and $b\in \beta$ whose respective axes $\mathcal{A},\mathcal{B}\subset \tilde{G}$ satisfy $\len(\mathcal{A}\cap \mathcal{B})\ge \len(\beta\vert G) - k$. Orient $\mathcal{B}$ so that $b$ translates in the forward direction, and let $p$ and $q$ be the initial and terminal endpoints of the segment $\mathcal{A}\cap\mathcal{B}\subset\mathcal{B}$. Also let $x = b\cdot p\in \mathcal{B}$. By $k$--almost containment, we have that either $d_{\tilde{G}}(x,q)\le k$ or else $x\in \mathcal{A}\cap \mathcal{B}$. 

By \Cref{lem:backtracking bound}, there is an $e^D$--Lipschitz change of marking map $\varphi\colon G\to H$ with $\mathrm{BBT}(\varphi)\le C$ for some $C> 0$ depending only on $D$. Choose a lift $\tilde{\varphi}\colon\tilde{G}\to\tilde{H}$ and let $\mu$ be the (possibly empty) geodesic segment obtained by removing the length--$C$ initial and terminal segments of $[\tilde{\varphi}(p),\tilde{\varphi}(q)]$. Let $\mathcal{A}',\mathcal{B}'\subset \tilde{H}$ be the axes for $a\curvearrowright \tilde{H}$ and $b\curvearrowright \tilde{H}$. By definition of bounded backtracking, $\tilde{\varphi}(\mathcal{A})$ is contained in the $C$--neighborhood of $\mathcal{A}'$; in particular $\tilde{\varphi}(p)$ and $\tilde{\varphi}(q)$ are both within $C$ of $\mathcal{A'}$. It follows that $\mu\subset\mathcal{A}'$. Similarly $\mu\subset\mathcal{B}'$. Thus 
\[\len(\mathcal{A}'\cap\mathcal{B}')\ge \len(\mu) \ge d_{\tilde{H}}(\tilde{\varphi}(p),\tilde{\varphi}(q)) - 2C.\]
Now if $x= b\cdot p\in\mathcal{A}\cap\mathcal{B}$, then the same reasoning gives
\[2C+\len(\mathcal{A}'\cap\mathcal{B}') \ge d_{\tilde{H}}(\tilde{\varphi}(p),\tilde{\varphi}(x)) = d_{\tilde{H}}(\tilde{\varphi}(p),b\cdot \tilde{\varphi}(p)) \ge \len(\beta\vert H).\]
Otherwise we have $d_{\tilde{G}}(q,x)\le k$, so that
\begin{align*}
\len(\beta\vert H) &\le d_{\tilde{H}}(\tilde{\varphi}(p),b\cdot \tilde{\varphi}(p))
\le d_{\tilde{H}}(\tilde{\varphi}(p),\tilde{\varphi}(q)) +d_{\tilde{H}}(\tilde{\varphi}(q),\tilde{\varphi}(x))\\
&\le \big(\len(\mathcal{A}'\cap\mathcal{B}') + 2C\big) + e^Dk,
\end{align*}
since $\tilde{\varphi}$ is $e^D$--Lipschitz. Thus $\beta$ is $(2C+e^Dk)$--almost contained in $\alpha$ at $H$.
\end{proof}

We now make a simple observation. Recall from \Cref{def:project_to_fold_path} that $\mathrm{right}_\gamma(\alpha)$ denotes the supremum of times along a folding path $\gamma$ for which $\alpha\vert \gamma(t)$ contains an immersed illegal segment of length $\illegality$.
\begin{lemma}
\label{lem:legal_containment_flare}
Let $\gamma\colon \I\to\os$ be a folding path and suppose that $t\in \I$ satisfies $t\ge \mathrm{right}_\gamma(\alpha)$ for some conjugacy class $\alpha$ of $\free$. If $\beta$ is $k$--almost contained in $\alpha$ at $G_t=\gamma(t)$ and $\len(\beta\vert G_t)\ge 3k+3\illegality$, then
\[\leg(\beta\vert G_t) \ge \frac{2}{\illegality}\len(\beta\vert G_t).\]
\end{lemma}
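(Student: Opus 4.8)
The plan is to carry out the whole estimate inside the universal cover $\tilde G_t$ of $G_t=\gamma(t)$, using the defining property of $\mathrm{right}_\gamma(\alpha)$ (\Cref{def:project_to_fold_path}): since $t\ge\mathrm{right}_\gamma(\alpha)$, no subsegment of an axis of $\alpha$ in $\tilde G_t$ of length $\illegality$ is illegal, i.e.\ every such subsegment contains a legal subpath of length $3$. (One routine point to verify first is the standard path-lifting equivalence between ``$\alpha\vert G_t$ has an immersed illegal segment of length $\illegality$'' and ``some subsegment of an axis of $\alpha$ in $\tilde G_t$ of length $\illegality$ is illegal,'' so that the hypothesis on $t$ can be used in this form.) Using \Cref{def:almost_containment} I would then fix an axis $\mathcal A\subset\tilde G_t$ for $\alpha$, an axis $\mathcal B\subset\tilde G_t$ for $\beta$, and a fundamental domain $B\subset\mathcal B$ of $\beta$ with $\len(B\setminus\mathcal A)\le k$, and set $\sigma\colonequals B\cap\mathcal A$, a segment with $\ell\colonequals\len(\sigma)\ge\len(\beta\vert G_t)-k$. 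From $\len(\beta\vert G_t)\ge 3k+3\illegality$ one gets $k\le\tfrac13\len(\beta\vert G_t)-\illegality$, hence $\ell\ge\tfrac23\len(\beta\vert G_t)\ge 2\illegality$; the same hypothesis supplies the numerical inequality $3\len(\beta\vert G_t)-3k-3\illegality\ge 2\len(\beta\vert G_t)$ needed at the very end.

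The key step is to bound $\leg(\beta\vert G_t)$ below by the legal content of $\sigma$. Observe that the illegal turn structure along $\sigma$ is intrinsic to $G_t$, so it agrees whether $\sigma$ is viewed as a subsegment of $\mathcal A$ or of $\mathcal B$. Let $\mathcal M\subset\mathcal B$ be the union of the maximal legal segments of $\mathcal B$ of length at least $3$; since $\mathcal M$ is $\beta$--invariant with $\len(\mathcal M\cap B)=\leg(\beta\vert G_t)$ (independently of the choice of fundamental domain), the set $\mathcal S\colonequals\mathcal M\cap\sigma\subset B$ satisfies $\len(\mathcal S)\le\leg(\beta\vert G_t)$. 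The one observation needed is that every legal subpath of $\sigma$ of length $3$ lies in $\mathcal S$: such a subpath is contained in a single maximal legal segment of $\mathcal B$, which therefore has length at least $3$ and so lies in $\mathcal M$, while the subpath also lies in $\sigma$. (If $\beta\vert G_t$ has no illegal turn at all then $\leg(\beta\vert G_t)=\len(\beta\vert G_t)$ and the lemma is trivial, so I may assume otherwise and work with honest finite maximal legal segments.)

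Next I would partition $\sigma$ into consecutive subsegments $\tau_1,\dots,\tau_N$ with $N=\lceil\ell/\illegality\rceil\ge 2$, where $\len(\tau_i)=\illegality$ for $i<N$ and $\len(\tau_N)=\ell-(N-1)\illegality\in(0,\illegality]$. For each $i<N$ the segment $\tau_i\subset\sigma\subset\mathcal A$ has length $\illegality$, so it contains a legal subpath of length $3$, which by the observation above lies in $\mathcal S$; hence $\len(\tau_i\setminus\mathcal S)\le\illegality-3$. Summing over $i$ gives
\[\ell-\len(\mathcal S)=\len(\sigma\setminus\mathcal S)\le (N-1)(\illegality-3)+\len(\tau_N)=\ell-3(N-1),\]
so $\len(\mathcal S)\ge 3(N-1)\ge 3\bigl(\tfrac{\ell}{\illegality}-1\bigr)=\tfrac{3\ell}{\illegality}-3$. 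Combining everything,
\[\leg(\beta\vert G_t)\ge\len(\mathcal S)\ge\frac{3\ell}{\illegality}-3\ge\frac{3(\len(\beta\vert G_t)-k)-3\illegality}{\illegality}\ge\frac{2}{\illegality}\len(\beta\vert G_t),\]
the last step using $3k+3\illegality\le\len(\beta\vert G_t)$.

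I expect the main obstacle to be the quantitative efficiency of the partition step, not any conceptual difficulty. A naive version—bounding the ``short'' part of $\sigma$ by observing that each gap between consecutive long legal segments has length less than $\illegality$—only yields a bound of the form $\leg(\beta\vert G_t)\ge\tfrac{2}{\illegality+3}\len(\beta\vert G_t)$ (up to lower-order terms), which falls just short of the required constant $\tfrac{2}{\illegality}$. It is precisely the use of the ceiling $N=\lceil\ell/\illegality\rceil$ together with correctly absorbing the final short window $\tau_N$ that upgrades the estimate from $\len(\mathcal S)\ge\tfrac{3\ell}{\illegality}-\illegality$ to the sharper $\len(\mathcal S)\ge\tfrac{3\ell}{\illegality}-3$, and this sharpening is exactly what the hypothesis $\len(\beta\vert G_t)\ge 3k+3\illegality$ is calibrated to absorb.
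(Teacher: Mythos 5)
Your proof is correct and follows essentially the same strategy as the paper: you identify the segment along which $\beta$'s axis overlaps $\alpha$'s axis, use $t\ge\mathrm{right}_\gamma(\alpha)$ to furnish a legal length-$3$ subpath in every length-$\illegality$ window of that overlap, and then tile the overlap by such windows to obtain a lower bound of the form $\tfrac{3}{\illegality}\cdot(\text{overlap length})-3$, which the hypothesis $\len(\beta\vert G_t)\ge 3k+3\illegality$ is calibrated to absorb. The only real differences are cosmetic: the paper works directly with the immersed loop $\beta\vert G_t$, subdividing it into four arcs $\beta_0 p \beta_1 q$ at illegal turns so that $\beta_1$ is the overlap with $\alpha\vert G_t$, and tiles $\beta_1$ into $\floor{\len(\beta_1)/\illegality}$ equal-length windows, whereas you lift to the universal cover, set $\sigma=B\cap\mathcal A$, and tile with $\lceil\ell/\illegality\rceil$ windows of fixed length $\illegality$ (plus one remainder). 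Your universal-cover bookkeeping neatly sidesteps the endpoint arcs $p,q$ altogether and the observation that legal length-$3$ subpaths of $\sigma$ necessarily land in $\mathcal M$ is exactly the point that makes the count work; both routes arrive at the same $\tfrac{3\ell}{\illegality}-3$ estimate and then the identical final algebra.
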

\begin{proof}
The loop $\beta\vert G_t$ subdivides into two immersed subsegments $\beta_0'$ and $\beta_1'$, where $\beta_0'$ has length at most $k$ and $\beta_1'$ is an immersed path into $\alpha\vert G_t$. By choosing the maximal length legal subsegments containing the two endpoints of these segments, we may alternately subdivide $\beta\vert G_t$ into $4$ subsegments $\beta_0p\beta_1q$ separated at illegal turns, where $\beta_1$ is an immersed path into $\alpha\vert G_t$, $\beta_0$ has length at most $k$, and $p$ and $q$ are both legal. (Up to $3$ of these segments may be degenerate, as happens in the case that $\beta\vert G_t$ is itself legal).

Since the endpoints of $\beta_1$ are at illegal turns, we may unambiguously talk about the legal length $\leg(\beta_1)$ of $\beta_1$ (defined in the same way as for conjugacy classes). Since $\beta_1$ is an immersed subpath in $\alpha\vert G_t$ and $t\ge \mathrm{right}_\gamma(\alpha)$ we see that every subpath of $\beta_1$ of length at least $\illegality$ contains a legal segment of length at least $3$. Therefore, if we subdivide $\beta_1$ into $n = \floor{\len(\beta_1)/\illegality}$ subsegments of equal length $\len(\beta_1)/n \ge \illegality$, we see that each subsegment contains a legal segment of length $3$. Therefore
\[\leg(\beta_1) \ge 3n \ge 3\left(\tfrac{\len(\beta_1)}{\illegality}-1\right) = \tfrac{3}{\illegality}\len(\beta_1)-3.\]
Hence it follows that 
\begin{align*}
\leg(\beta\vert G_t) &\ge \leg(\beta_1) + \len(p)+\len(q) \ge \tfrac{3}{\illegality}\len(\beta_1) + \len(p)+\len(q)-3\\
&\ge \tfrac{3}{\illegality}(\len(\beta_1) + \len(p)+\len(q)) - 3 \ge \tfrac{3}{\illegality}(\len(\beta\vert G_t) -k) - 3.
\end{align*}
Since $\len(\beta\vert G_t) \ge 3k + 3\illegality$ by hypothesis, we may conclude
\[\leg(\beta\vert G_t) \ge \tfrac{3}{\illegality}\len(\beta\vert G_t) - (\tfrac{3k}{\illegality}+3) \ge \tfrac{3}{\illegality}\len(\beta\vert G_t) - \tfrac{1}{\illegality}\len(\beta\vert G_t) = \tfrac{2}{\illegality}\len(\beta\vert G_t).\qedhere\]
\end{proof}

\begin{corollary}
\label{cor:contained_guys_flare}
Let $\gamma\colon\I\to \os$ be a folding path, and suppose that $\beta$ is $k$--almost contained in $\alpha$ at $G_s = \gamma(s)$. If $\len(\beta)\ge 3k+3\illegality$ and $s\ge \mathrm{right}_\gamma(\alpha)$, then for all $t\in \I$ with $t\ge s$ we have
\[\len(\beta\vert G_t) \ge \frac{2}{3\illegality}\len(\beta\vert G_s)e^{t-s}.\]
\end{corollary}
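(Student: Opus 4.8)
The plan is to deduce the corollary by combining the legal-length flaring of \Cref{lem:legal_flare} with the legal-length lower bound of \Cref{lem:legal_containment_flare}, using only the trivial estimate $\leg(\beta\vert G_t)\le\len(\beta\vert G_t)$ to pass back to actual conjugacy lengths. Here I read the hypothesis ``$\len(\beta)\ge 3k+3\illegality$'' as $\len(\beta\vert G_s)\ge 3k+3\illegality$, matching the form of \Cref{lem:legal_containment_flare}.

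First I would apply \Cref{lem:legal_containment_flare} at the single time $s$: since $s\ge\mathrm{right}_\gamma(\alpha)$, since $\beta$ is $k$--almost contained in $\alpha$ at $G_s$, and since $\len(\beta\vert G_s)\ge 3k+3\illegality$, that lemma gives $\leg(\beta\vert G_s)\ge\tfrac{2}{\illegality}\len(\beta\vert G_s)$. Next, since $[s,t]\subset\I$ and the restriction of $\gamma$ to $[s,t]$ is again a folding path, \Cref{lem:legal_flare} yields $\leg(\beta\vert G_t)\ge\tfrac13 e^{t-s}\leg(\beta\vert G_s)$. Combining these with $\len(\beta\vert G_t)\ge\leg(\beta\vert G_t)$ gives
\[
\len(\beta\vert G_t)\ \ge\ \leg(\beta\vert G_t)\ \ge\ \tfrac13 e^{t-s}\leg(\beta\vert G_s)\ \ge\ \tfrac{2}{3\illegality} e^{t-s}\len(\beta\vert G_s),
\]
which is exactly the asserted bound. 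Note that the almost-containment hypothesis is used only at time $s$; the propagation to later times is entirely carried by \Cref{lem:legal_flare}, so \Cref{lem:nearby containment} plays no role here.

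The one technical point worth a sentence is the edge case $t=\Ipl<\infty$, where the illegal turn structure (and hence $\leg(\beta\vert G_t)$) on the right endpoint is not defined. I would handle this by proving the displayed inequality for all $t'\in[s,\Ipl)$ and then letting $t'\nearrow\Ipl$, using the continuity of $t\mapsto\len(\beta\vert\gamma(t))$ along the folding path. Beyond this, the argument is bookkeeping: all the substance sits in the cited results of Bestvina--Feighn and in \Cref{lem:legal_containment_flare}, so I do not expect a genuine obstacle.
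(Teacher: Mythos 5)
Your argument is exactly the paper's proof: chain $\len(\beta\vert G_t)\ge\leg(\beta\vert G_t)$, \Cref{lem:legal_flare}, and \Cref{lem:legal_containment_flare} (applied at time $s$), with the hypothesis read as $\len(\beta\vert G_s)\ge 3k+3\illegality$. The endpoint remark is a reasonable extra care not spelled out in the paper, but otherwise the approaches coincide.
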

\begin{proof}
By \Cref{lem:legal_flare} and \Cref{lem:legal_containment_flare} we immediately see that
\begin{align*}
\len(\beta\vert G_t) &\ge \leg(\beta\vert G_t) \ge \frac{1}{3} \leg(\beta\vert G_s)e^{t-s} \ge \frac{2}{3\illegality}\len(\beta\vert G_s) e^{t-s}.\qedhere
\end{align*}
\end{proof}

\subsection{Flaring away from length minimizers}
\label{sec:length_minimizers}

Given a subgroup $\Gamma\le \Out(\free)$, a point $R\in \os$, and a conjugacy class $\alpha$ of $\free$, we write
\[\minlen{\Gamma}{R}(\alpha)\colonequals \inf\{\len(\alpha\vert g\cdot R) : g\in \Gamma\}\]
for the infimal length of the conjugacy class $\alpha$ on the orbit $\Gamma \cdot R$. Observe that $\minlen{\Gamma}{R}(\alpha)$ is positive since it is bounded below by the injectivity radius of $R$. While this infimal length in principle need not be attained at any orbit point, we may nevertheless be assured that the set
\[\minset{\Gamma}{R}(\alpha) \colonequals \{g\in \Gamma : \len(\alpha\vert g\cdot R) \le 2\minlen{\Gamma}{R}(\alpha)\}\]
is nonempty.

We now come to the main technical lemma of \S\ref{sec:orbit_flaring}, showing that if $\Gamma$ is convex cocompact and $\beta$ is long and almost contained in $\alpha$ at a point of $\minset{\Gamma}{R}(\alpha)$, then the length of $\beta$ in the orbit $\Gamma\cdot R$ grows uniformly exponentially with the distance from $\minset{\Gamma}{R}(\alpha)$. In fact we show something slightly stronger than this:

\begin{lemma}
\label{lem:flare_away_from_minimizer2}
Suppose that $\Gamma\le\Out(\free)$ is finitely generated with word metric $d_\Gamma$ and that $\Gamma$ qi-embeds into $\fc$. For every $R\in \os$ there exist constants  $\lambda > 1$ and $C>0$ such that for every $k > 0$ there is some $L_0 > 0$ with the following property: 
Let $\alpha$ be a simple conjugacy class of $\free$ and let $g_0\in \Gamma$ lie in $\minset{\Gamma}{R}(\alpha)$. Suppose further that $g \in \Gamma$ lies on a geodesic from $g_0$ to $h \in \Gamma$, i.e. $d_\Gamma (g_0,h) = d_\Gamma(g_0,g) + d_\Gamma(g,h)$.
If $\beta\in \free$ is $k$--almost contained in $\alpha$ at $g\cdot R$ and $\len(\beta \vert g\cdot R)\ge L_0$, then  
\[\len(\beta \vert h\cdot R) \ge C\lambda^{d_\Gamma(g,h)}\len(\beta\vert g\cdot R) .\]
\end{lemma}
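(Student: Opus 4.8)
The plan is to reduce the statement to the flaring estimates already available for folding paths (Corollary~\ref{cor:contained_guys_flare}), using convex cocompactness of $\Gamma$ to replace the word-geodesic $[g_0,h]$ by an actual folding path in $\os$ along which the relevant conjugacy classes can be controlled. First I would invoke the hypothesis that $\Gamma$ qi-embeds into $\fc$: the orbit map $\Gamma\to\os$, $g\mapsto g\cdot R$, is then a uniform quasigeodesic whose $\fc$-projection is a uniform quasigeodesic, so by \Cref{th:DT_2} any geodesic $[g_0,h]$ in $\Gamma$ maps to within uniformly bounded symmetric Hausdorff distance of a folding path $\gamma\colon\I\to\os$ with the same endpoints, all of which lies in a fixed thick part $\os_\epsilon$; let $G_{s}$ and $G_{t}$ be points on $\gamma$ uniformly close to $g\cdot R$ and $h\cdot R$ respectively, with $s\le t$ and $t-s$ coarsely equal to $d_\Gamma(g,h)$ (up to the multiplicative/additive constants of the orbit quasi-isometry).

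The key geometric input is that $g_0\in\minset{\Gamma}{R}(\alpha)$ forces $\mathrm{right}_\gamma(\alpha)$ to be small—specifically, at most a bounded distance past the initial endpoint of $\gamma$. Indeed, if $\alpha\vert\gamma(r)$ contained a long immersed illegal segment for some $r$ far along $\gamma$, then \Cref{lem:illegal_flare} applied to the subpath from a bounded initial time to $r$ (together with the fact that $d_\fc$ grows linearly along $\gamma$, since $\pi_\fc\circ\gamma$ is a quasigeodesic) would give $\len(\alpha\vert\gamma(0))> 2^{m}\len(\alpha\vert\gamma(r))$ for large $m$; but $\gamma(0)$ is uniformly close to $g_0\cdot R$, where $\len(\alpha\vert\cdot)$ is within a factor of $2$ of its infimum over the whole orbit, so $\len(\alpha\vert\gamma(0))$ cannot be arbitrarily large relative to $\len(\alpha\vert\gamma(r))$. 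This bounds $\mathrm{right}_\gamma(\alpha)$ uniformly, hence—after moving a bounded distance past it if necessary—we may assume $s\ge \mathrm{right}_\gamma(\alpha)$, at the cost of replacing $g\cdot R$ by a uniformly close point $G_s$ on $\gamma$.

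Now I would transport the almost-containment hypothesis from $g\cdot R$ to $G_s$ via \Cref{lem:nearby containment}: since $\dsym(g\cdot R,G_s)\le D$ for a uniform $D$, $\beta$ is $k'$-almost contained in $\alpha$ at $G_s$ for a uniform $k'=k'(k,D)$; and the length distortion is bounded, so $\len(\beta\vert G_s)\ge e^{-D}\len(\beta\vert g\cdot R)\ge e^{-D}L_0$. Choosing $L_0$ large enough (depending on $k$) that $\len(\beta\vert G_s)\ge 3k'+3\illegality$, \Cref{cor:contained_guys_flare} yields $\len(\beta\vert G_t)\ge \tfrac{2}{3\illegality}\len(\beta\vert G_s)e^{t-s}$. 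Finally I would translate back: $\len(\beta\vert h\cdot R)\ge e^{-D}\len(\beta\vert G_t)$, $\len(\beta\vert G_s)\ge e^{-D}\len(\beta\vert g\cdot R)$, and $t-s\ge \tfrac{1}{K}d_\Gamma(g,h)-K'$ for the orbit quasi-isometry constant $K$; combining these gives the claimed bound with $\lambda=e^{1/K}>1$ and $C$ a uniform constant, absorbing all the bounded multiplicative and additive errors.

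The main obstacle I anticipate is the bookkeeping in the second paragraph—making precise that $\mathrm{right}_\gamma(\alpha)$ is uniformly bounded using the minimality of $\len(\alpha\vert g_0\cdot R)$. This requires care because \Cref{lem:illegal_flare} has a dichotomy (either length halves or $d_\fc$ is small), so one must iterate it along a subdivision of $\gamma$ and play the exponential length decay against the \emph{linear} growth of $d_\fc$ along the folding path guaranteed by \Cref{th:DT_2}; one also has to handle the possibility that $\alpha\vert\gamma(t)$ is not illegal for all $t$, invoking \Cref{prop:BF_strong_contract} and the definition of $\mathrm{right}_\gamma$ to conclude. A secondary subtlety is ensuring all the "uniform" constants genuinely depend only on $R$ (through $\Gamma$'s qi-embedding constants and the injectivity radius of $R$) and on $k$, not on $\alpha$ or $\beta$—this is where the uniformity clause of \Cref{th:DT_2} and the $\rank(\free)$-only dependence of $B_1,B_2,\illegality$ are essential.
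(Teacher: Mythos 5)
Your plan follows the paper's proof essentially step for step: use \Cref{th:DT_2} to replace the word geodesic $[g_0,h]$ by a nearby folding path in a uniform thick part; exploit $g_0\in\minset{\Gamma}{R}(\alpha)$ together with \Cref{lem:illegal_flare}, the quasigeodesicity of $\fproj\circ\gamma$, and \Cref{prop:BF_strong_contract} to bound $\mathrm{right}_\gamma(\alpha)$ within a uniform distance of the initial endpoint; transport almost-containment to $\gamma$ via \Cref{lem:nearby containment} (possibly sliding a bounded amount past $\mathrm{right}_\gamma(\alpha)$); apply \Cref{cor:contained_guys_flare}; and convert $t-s$ back to $d_\Gamma(g,h)$ through the orbit quasi-isometry. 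This is precisely the argument given in the paper, including the anticipated subtleties about iterating the dichotomy in \Cref{lem:illegal_flare} and about uniformity of constants.
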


\begin{proof}
Choose a free factor $A$ in $\fproj(R)$. By assumption, the assignment $\varphi\mapsto \varphi\cdot A$ gives a quasi-isometric embedding $\Gamma\to \fc$. Since $\fproj$ is coarsely Lipschitz (\Cref{lem:fproj_lipschitz}), it follows from $\Gamma$--equivariance that there is some $K$ so that the assignment $\varphi\mapsto \varphi\cdot R$ gives a $K$--quasi-isometric embedding $\Gamma\to \os$. Recall this means that
\begin{equation}
\label{eqn:Gamma_qi_into_os}
\tfrac{1}{K}d_\Gamma(\varphi,\psi) - K \le d_\os(\varphi\cdot R,\psi\cdot R) \le K d_\Gamma(\varphi,\psi)+K
\end{equation}
for all $\varphi,\psi\in \Gamma$. Since $d_\Gamma(g_0,h) = d_\Gamma(g_0,g)+d_\Gamma(g,h)$, we may choose a geodesic $(\varphi_0,\dotsc,\varphi_N)$ in $(\Gamma,d_\Gamma)$ from $g_0 = \varphi_0$ to $h = \varphi_N$ with $g = \varphi_j$ for some $0 \le j \le N$. It then follows from \eqref{eqn:Gamma_qi_into_os} that the map $\gamma_0\colon[0,N]\to \os$ defined by $\gamma_0(s) = \varphi_i\cdot R$ for $s\in[i,i+1)\cap [0,N]$ is a (directed) $3K$--quasigeodesic. As described in \cite[Proposition 2.11]{BFhyp} and \cite[Theorem 5.6]{FMout}, we may build a geodesic $\hat\gamma\colon\hat\I\to \os$ from $g_0\cdot R$ to $h\cdot R$ which is a concatenation of a \emph{rescaling path} followed by a folding path. By \Cref{th:DT_2}, it follows that there exists constants $A,\epsilon > 0$ and $K'\ge 1$ depending only on $K$ and the injectivity radius of $R$ such that $\gamma_0([0,N])$ and $\hat\gamma(\hat\I)$ have Hausdorff distance at most $A$, that $\hat\gamma(\hat\I)\subset \os_\epsilon$, and that $\fproj\circ\hat\gamma\colon \hat\I\to \fc$ is a $K'$--quasigeodesic. Furthermore, the rescaling portion of $\hat\gamma$ has length at most $\log(2/\epsilon)$ by \cite[Lemma 2.6]{DT1}. If $\gamma\colon \I\to \os$ denotes the folding path portion of $\hat\gamma$, then after replacing $A$ by $A+\sym_\epsilon \log(2/\epsilon)$ (where $\sym_\epsilon$ is as in \Cref{lem:asymmetry}) we may conclude that $\dhaus\big(\gamma(\I),\{\varphi_0\cdot R,\dotsc, \varphi_N\cdot R\}\big) \le A$, that  $\dsym(\gamma(\Imin),g_0\cdot R) \le A$, that $\gamma(\Ipl) = h\cdot R$,  and that $\fproj\circ\gamma\colon \I\to \fc$ is a $K'$--quasigeodesic. In particular, since $g = \varphi_j$, there is some $\I_0\in \I$ for which $\dsym(\gamma(\I_0),g\cdot R) \le A$.

Conversely, for each $t\in \I$ there is some $i$ so that $\dsym(\gamma(t),\varphi_i\cdot R)\le A$ and consequently $\len(\alpha\vert \varphi_i\cdot R) \le e^A \len(\alpha\vert \gamma(t))$. Similarly $\len(\alpha\vert \gamma(\Imin)) \le e^A \len(\alpha \vert g_0 \cdot R)$.
Furthermore, since $g_0\in\minset{\Gamma}{R}(\alpha)$, the definition of $\minset{\Gamma}{R}(\alpha)$ gives $\len(\alpha\vert g_0\cdot R) \le 2\len(\alpha\vert \varphi_i\cdot R)$.  Hence, for every $t\in \I$ we find that
\begin{equation}
\label{eqn:left_endpoint_pretty_large} 
\len(\alpha\vert \gamma(\Imin)) \le 2e^{2A} \len(\alpha\vert \gamma(t)).
\end{equation}

Let us analyze the location of $\mathrm{right}_\gamma(\alpha)$ in $\I$. Set $D' = K'(\fcbound + K')$, where $\fcbound$ is the constant defined after \Cref{prop:BF_strong_contract}. Then all $s,t\in \I$ with $\abs{s-t} \ge D'$ satisfy $d_\fc(\gamma(s),\gamma(t))\ge \fcbound$. We claim that
\[\mathrm{left}_\gamma(\alpha) \le \Imin + D'\left(\tfrac{2A}{\log 2}+2\right).\]
Indeed, if this is false, then $\alpha\vert \gamma(t)$ is illegal for all $t\in [\Imin,\Imin + D'm]$, where $m = \ceil{\frac{2A}{\log 2} + 1} \le \frac{2A}{\log 2} + 2$. But then \Cref{lem:illegal_flare} would imply that $\len(\alpha\vert \gamma(\Imin)) > 2^m \len(\alpha\vert\gamma(\Imin+D'm))$, contradicting \eqref{eqn:left_endpoint_pretty_large} since $2^m \ge 2e^{2A}$. We also know that $\mathrm{right}_\gamma(\alpha) \le \mathrm{left}_\gamma(\alpha) + D'$ by \Cref{prop:BF_strong_contract} and our choice of $D'$. Therefore we conclude
\begin{equation}
\label{eqn:immediacy_of_right}
\mathrm{right}_\gamma(\alpha) \le \Imin + D'\left(\tfrac{2A}{\log 2} +3\right) \le \I_0 + D'\left(\tfrac{2A}{\log 2} +3\right).
\end{equation}

Set $r_0\colonequals\max\{\I_0,\mathrm{right}_\gamma(\alpha)\}$, so that $\mathrm{right}_\gamma(\alpha)\le r_0\le \I_+$. 
Thus $\dsym(\gamma(\I_0),\gamma(r_0))\le \sym_\epsilon D'(\frac{2A}{\log 2}+3)$.
Combining this with $\dsym(\gamma(\I_0),g\cdot R)\le A$ then gives $\dsym(\gamma(r_0),g\cdot R)\le E$, where $E\colonequals \sym_\epsilon D'(\tfrac{2A}{\log 2} +3) + A$. 

Suppose now that $\beta$ is $k$--almost contained in $\alpha$ at $g\cdot R$ for the given constant $k$. By \Cref{lem:nearby containment}, we know that $\beta$ is $k'$--almost contained $\alpha$ at $\gamma(r_0)$ for some constant $k'$ depending only on $k$ and $E$. Define $L_0 \colonequals (3k'+3\illegality)e^{E}$ so that the additional assumption $\len(\beta\vert g\cdot R)\ge L_0$ will moreover imply
\[\len(\beta\vert \gamma(r_0)) \ge \len(\beta\vert g\cdot R) e^{-E} \ge 3k' + 3\illegality.\]
Thus if $\beta$ is $k$--almost contained in $\alpha$ at $g\cdot R$ and $\len(\beta\vert g\cdot R) \ge L_0$, we may apply \Cref{cor:contained_guys_flare} to conclude
\begin{equation}\label{eqn:beta_flares}
\len(\beta\vert \gamma(t)) \ge \frac{2}{3\illegality}\len(\beta \vert \gamma(r_0)) e^{(t-r_0)} \ge \frac{2}{3\illegality e^{E}}\len(\beta\vert g\cdot R) e^{(t-r_0)}
\end{equation}
for all $t \ge r_0$ (this is valid because $r_0\ge \mathrm{right}_\gamma(\alpha)$).

We now use \eqref{eqn:beta_flares} to prove the proposition. Since $\Ipl \ge r_0$ by construction, \eqref{eqn:beta_flares} immediately gives
\[\len(\beta\vert h\cdot R) = \len(\beta\vert \gamma(\Ipl)) \ge \frac{2}{3\illegality e^{E}} \len(\beta\vert g\cdot R) e^{(\Ipl - r_0)}.\]
Since $\dsym(g\cdot R, \gamma(r_0))\le E$ and $\gamma$ is a directed geodesic, we have that $d_\os(g\cdot R,h\cdot R) \le E + (\Ipl-r_0)$. We also know $d_\os(g\cdot R,h\cdot R) \ge \tfrac{1}{K}d_\Gamma(g,h)-K$ by Equation \eqref{eqn:Gamma_qi_into_os}. Therefore $(\Ipl-r_0) \ge \tfrac{1}{K}d_\Gamma(g,h) - K -E$ and so 
\[\len(\beta\vert h\cdot R) \ge \frac{2}{3\illegality e^{2E+K}} \len(\beta\vert g\cdot R) \left(e^{1/K}\right)^{d_\Gamma(g,h)}\]
as desired. 
Thus the conclusion of the proposition holds with $\lambda = e^{1/K}$ and $C = \tfrac{2}{3\illegality e^{2E+K}}$.
\end{proof}

\section{Distortion within fibers of $E_\Gamma \to \Gamma$}
\label{sec:undistortion_in_fibers}

Fix a finitely generated subgroup $\Gamma \le \Out(\free)$ for which the orbit map $\Gamma \to \CS$ is a quasi-isometric embedding. Then by \Cref{th:DT1} and \Cref{th:qi_into}, the corresponding extension $E_\Gamma$ is hyperbolic. In this section, we establish \Cref{th:scott_swarup} which shows that if $H \le \free$ is finitely generated and of infinite index, then $H$ is quasiconvex (and hence undistorted) as a subgroup of $E_\Gamma$.  This will follow from the structural result \Cref{width}, which will be used in \S\ref{sec:width} to characterize which hyperbolic extensions of $\free$ induce convex cocompact subgroups of $\Out(\free)$.

\subsection{The Cayley graph bundle}\label{sec:metric_bundle}

To this end, we first recall some notation and results from \cite[\S\S7--8]{DT1} describing the structure of $E_\Gamma$. Fix a finite generating set $S = \{ s_1, \ldots, s_n \}$ of $\Gamma$ and a free basis $X = \{x_1, \ldots, x_r\}$ for $\free$. Recalling that $E_\Gamma$ is the preimage of $\Gamma$ under the quotient map $\Aut(\free)\to\Out(\free)$, we choose a lift $t_i \in \Aut(\free)$ of $s_i$ for each $1\le i \le n$. In general, we will use the notation $\tilde{g}\in \Aut(\free)$ to denote a lift of $g\in \Gamma$ to $E_\Gamma$. We also write $i_{x}\in \Aut(\free)$ for the inner automorphism given by conjugation by $x\in \free$, i.e.,  $i_x(a) = x a x^{-1}$ for $a \in \free$. Note that $\varphi i_x \varphi^{-1} = i_{\varphi(x)}$ for each $x\in\free$ and $\varphi\in \Aut(\free)$.

As a subgroup of $\Aut(\free)$, $E_{\Gamma}$ is thus generated by the set $W = \{i_{x_1}, \ldots i_{x_r},t_1, \ldots, t_n\}$. That is
\[E_{\Gamma} = \langle i_{x_1}, \ldots i_{x_r},t_1, \ldots, t_n \rangle  \le \Aut(\free).\]
For convenience, set $\hat{X} = \{ i_{x_1}, \ldots, i_{x_r} \}$ and $\hat\free = \langle \hat{X} \rangle$, so that $\hat \free$ is the isomorphic image of $\free$ in $\Aut(\free)$. Note that $\hat\free$ is also the kernel of the homomorphism $E_{\Gamma} \to \Gamma$. We also set $\tilde S = \{t_1,\dotsc,t_n\}$.

Let $T = \cay{X}{\free}$,  $\bund = \cay{W}{E_{\Gamma}}$, and $\base = \cay{S}{\Gamma}$, where $\cay{\cdot}{\cdot}$ denotes the Cayley graph with the specified generating set equipped with the path metric in which each edge has length one. We respectively view $\hat\free\cong\free$, $E_\Gamma$, and $\Gamma$ as the $0$--skeletons of the simplicial complexes $T$, $\bund$, and $\base$.
Set $\rose$ to be the standard rose on the generating set $X$ so that $\rose = T / \free$. There is then an obvious equivariant simplicial map
\[p\colon \bund \to \base \]
extending the surjective homomorphism $E_{\Gamma} \to \Gamma$;  note that $p$ sends edges of $\bund$ to either vertices or edges of $\base$ depending on whether the edge corresponds to a generator in $\hat X$ or $\tilde S$, respectively. For each $b\in \Gamma$, we see that the preimage $T_{b} = p^{-1}(b)$ is the simplicial tree (isomorphic to $T$) with vertices labeled by the coset $\tilde{b}\hat\free$ ($\tilde{b}$ any lift of $b$) and edges labeled by $\hat X$. We write $d_b$ for the induced path metric on the fiber $T_b$ over $b\in \Gamma$. 
By a \define{$k$--qi section} of $p\colon \bund\to\base$, we simply mean a $k$--quasi-isometric embedding $\sigma\colon \base\to\bund$ such that $(p\circ \sigma)(g) = g$ for every $g\in \Gamma$ (i.e., for every vertex of $\base$). By Mosher's ``quasi-isometric section lemma'' \cite{MosherExtensions}, there exists a constant $\qisec\ge 1$ (depending only on the bundle $p\colon \bund\to\base$) such that for every $b\in \Gamma$ and vertex $v\in T_b$ one may build a $\qisec$--qi section $\sigma\colon \base\to\bund$ with $\sigma(b) = v$.

As discussed in \cite[\S7]{DT1} (see also \cite[Example 1.8]{MjSardar}), $p\colon \bund\to\base$ is an example of the \emph{ metric graph bundle} construction developed by Mj and Sardar in \cite{MjSardar}. In particular, there is a \define{metric properness function} $f\colon \N\to \N$ such that $d_b(u,v)\le f(d_{\bund}(u,v))$ for all $b\in \Gamma$ and all vertices $u,v\in T_b$ \cite[Lemma 7.2]{DT1}. 
We moreover observe that if group elements $u,v\in E_\Gamma$ lie in the same fiber $T_b$, then $u\inv v\in \hat\free$ and the fiberwise distance satisfies $d_b(u,v) = \abs{u\inv v}_{\hat X}$ \cite[Lemma 7.1]{DT1}. Writing $u\inv v = i_a$ for the appropriate $a\in \free$ and defining $\bblip\ge 1$ to be the maximal bilipschitz constant of the automorphisms in $\tilde{S}$, we deduce the inequality
\begin{equation}
\label{eqn:bilipschitz_neighborhs}
d_{bs_i}(ut_i,vt_i) = \abs{t_i\inv u\inv v t_i}_{\hat X} = \abs{t_i\inv i_a t_i}_{\hat X} = \abs{i_{t_i\inv(a)}}_{\hat X} \le \bblip \abs{i_a}_{\hat X} = \bblip\, d_b(u,v)
\end{equation}
for every generator $s_i\in S$ with corresponding lift $t_i\in \tilde{S}$.

\subsection{The Width Theorem}
\label{sec:width_theorem}

Suppose that $a$ is an \emph{element} (rather than a conjugacy class) of $\free$. 
Then left multiplication by the inner automorphism $i_a\in \hat{\free}$ gives an isometry of $\bund$ that preserves each fiber $T_b$ of $p\colon \bund\to\base$. In particular, $i_a$ acts as a hyperbolic isometry of $(T_b,d_b)$ translating along a unique invariant axis. We write $\axis_b(a)$ for the axis of $i_a$ in the fiber $T_b$ and then define the \define{axis bundle of $a$} to be the union $\axis(a)\colonequals \cup_{b\in \Gamma}\axis_b(a)$.

Note that while $\axis_g(a)$ is a geodesic in the path metric space $(T_g,d_g)$, it will generally be far from being a geodesic in the whole space $\bund$. However, our next result shows that when $a$ is simple and $g\in \Gamma$ lies in the minimizing set $\minset{\Gamma}{R}([a])$ for the conjugacy class of $a$ (see \S\ref{sec:length_minimizers}), then $\axis_g(a)$ is a uniform quasigeodesic in $\bund$.

\begin{proposition}\label{lem:near_fiber}
Suppose that $\Gamma\le\Out(\free)$ qi-embeds into $\CS$ and let $p\colon \bund\to\base$ be as in \S\ref{sec:metric_bundle}. Then for every $R\in \os$ there exists a constant $Q\ge 1$ such that for any simple element $a\in \free$ and any $g\in \minset{\Gamma}{R}([a])$, the axis $\axis_g(a)$ (viewed as a map $\R\to T_g\subset \bund$) is a $Q$--quasigeodesic in $\bund$.
\end{proposition}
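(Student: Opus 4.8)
The plan is to bound $d_\bund(u,v)$ below by a uniform linear function of $d_g(u,v)$ for $u,v\in\axis_g(a)$; the matching upper bound $d_\bund(u,v)\le d_g(u,v)$ is immediate, since $T_g$ sits in $\bund$ as an isometrically embedded combinatorial subtree. Since $\hat\free$ acts on $\bund$ by fiber-preserving isometries, after left-translating I may assume $u$ is the identity vertex and $g=e$; equivariance of $\minset{\Gamma}{R}$ turns the hypothesis into $e\in\minset{\Gamma}{R}([a])$, and as $R$ lies a bounded $\dsym$-distance from the standard rose $\rose=T/\free$, the vertex $e$ approximately minimizes $b\mapsto\len([a]\vert b\cdot\rose)$ over $\Gamma$ up to a factor depending only on $R$. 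In these coordinates $i_a$ acts on $T_e=\cay{X}{\free}$ as left multiplication by $a$, and after conjugating $a$ within $\free$ I may take it cyclically reduced, so that $\axis_e(a)$ runs through $e$ with integer translation length $L:=\len([a]\vert\rose)$ and its vertices at arc-length $jL$ from $e$ are exactly the powers $i_a^j$.

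Next I would produce two qi-sections of $p\colon\bund\to\base$ that track the axis bundle. Let $\sigma$ pick out a point of $\axis_b(a)$ in each fiber $T_b$ compatibly; using \eqref{eqn:bilipschitz_neighborhs} and the observation that $\axis_b(a)\cdot t_i$ is an $\langle i_a\rangle$-invariant $\bblip$-quasigeodesic in $T_{bs_i}$ (hence boundedly Hausdorff-close to $\axis_{bs_i}(a)$), one checks that $\sigma$ is a uniform qi-section with $\sigma(e)=e$. Then $i_a^j\sigma$ is again a uniform qi-section and $d_b(\sigma(b),i_a^j\sigma(b))=j\,\len([a]\vert b\cdot\rose)$ for every $b$. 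Because $e$ approximately minimizes this length and $[a^j]$ is $0$-almost contained in the simple class $[a]$, \Cref{lem:flare_away_from_minimizer2} (invoked with $g_0=g=e$, so its conclusion holds toward every $h\in\Gamma$) supplies $\lambda>1$, $C>0$ and a threshold with
\[
d_b\big(\sigma(b),\,i_a^j\sigma(b)\big)\ \ge\ C\,\lambda^{d_\Gamma(e,b)}\,jL\qquad\text{for all }b\in\Gamma
\]
whenever $jL$ exceeds the threshold; the finitely many smaller scales are handled directly, since there $d_g(u,v)$ is already bounded.

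The main step is then to convert this exponential flaring of fiber metrics into the quasigeodesic bound; this is the free-group counterpart of the Kent--Leininger and Dowdall--Kent--Leininger width estimates, and it rests on the Bestvina--Feighn/Mosher principle that in a metric bundle with hyperbolic total space such flaring forces the section over the minimizing point to be quasigeodesic. Concretely, given a $\bund$-geodesic $\eta$ of length $n$ from $u=\sigma(e)$ to $i_a^k\sigma(e)$, let $c=p\circ\eta$ be the resulting loop at $e$ and $D=\diam_\Gamma(c)$. Projecting $\eta$ fiberwise back into $T_e$ through uniform qi-sections and applying the metric-properness function $f$ of the bundle yields $d_e(u,v)\le\sum_i f\!\big(2\qisec(\min(i,D)+1)+1\big)$, which is a uniform multiple of $n$ once $D$ is bounded; conversely, if $c$ strayed to a point $b$ with $d_\Gamma(e,b)=D$ large, then $\eta(t^*)$ and $i_a^k\eta(t^*)$ would lie in $T_b$ at $d_b$-distance $\ge C\lambda^{D}kL$ while joined in $\bund$ by a path of length $\le n\le kL$, and iterating this comparison over the levels traversed by $c$ bounds $D$ by a constant depending only on $\Gamma$ and $R$. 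Feeding the bound on $D$ back in gives $d_\bund(u,v)\ge d_g(u,v)/Q-Q$, first on the lattice $\{i_a^j\}$ and then, using $\ell_\bund(i_a)\ge L/Q$ together with the fact that sub-powers of a cyclically reduced simple element remain simple, for all points of $\axis_g(a)$. The genuine obstacle is precisely this last step: since the fiber $\free\lhd E_\Gamma$ is exponentially distorted, $f$ is itself roughly exponential, so applying $f$ at a single far fiber only yields $d_\bund\gtrsim\log d_g$; extracting the sharp linear estimate requires exploiting the flaring at \emph{every} intermediate fiber along $\eta$, and organizing the whole argument around the scale-invariant ratios $\len([a]\vert b\rose)/\len([a]\vert\rose)$ and the translation length $\ell_\bund(i_a)$ so that the constants remain uniform over all simple $a$ even though $L$ itself is unbounded.
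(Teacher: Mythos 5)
Your proposal correctly lines up the raw ingredients — Mosher's qi-section lemma, Mitra's fiberwise projection $\fiberwise_a$, qi-sections tracking $\axis(a)$, the invocation of the flaring \Cref{lem:flare_away_from_minimizer2} with $g_0=g$ (and the correct observation that $[a^j]$ is $0$--almost contained in $[a]$, with uniformity achieved by keeping track of $\len([a]\vert b\cdot\rose)/\len([a]\vert\rose)$) — but it hits a genuine gap at exactly the step you flag, and that step is the entire content of the proposition. The mechanism you sketch for converting flaring into a linear lower bound is: take a $\bund$--geodesic $\eta$ from $\sigma(g)$ to $i_a^k\sigma(g)$, find the farthest fiber $T_b$ visited, observe $d_b(\eta(t^*),i_a^k\eta(t^*))\ge C\lambda^{d_\Gamma(g,b)}kL$ while $d_\bund(\eta(t^*),i_a^k\eta(t^*))\le 2|\eta|$, and then apply the metric properness function $f$. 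As you note, $f$ is roughly exponential (the fiber is exponentially distorted in $E_\Gamma$), so this yields only $|\eta|\gtrsim\log(kL)$. Your remedy — ``exploiting the flaring at every intermediate fiber along $\eta$'' — is a statement of what would need to be done, not a proof; no quantitative argument for the claimed bound on $D=\diam_\Gamma p(\eta)$ is given, and iterating the single-fiber estimate over ``levels traversed by $c$'' is not a routine matter, since $\eta$ need not visit each level in a controlled way.

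The paper avoids this obstruction altogether by never taking an arbitrary $\bund$--geodesic apart. Instead it constructs a coarse Lipschitz \emph{retraction} $\Pi_a\colon\bund\to\axis_g(a)$: it builds an infinite family $\{\Sigma_i\}_{i\in\Z}$ of $\fiberlip\qisec$--qi-sections through $\axis(a)$ spaced $M$ apart over $g$, uses \Cref{lem:qi_sections_flare} (applied with the minimizing property of $g$) to guarantee $d_h(\Sigma_i(h),\Sigma_j(h))\ge DM>\separation+f(\fiberlip)$ in \emph{every} fiber, proves via \Cref{lem:separated_sections} that such well-separated sections stay consistently ordered along the axis from fiber to adjacent fiber, discretizes each fiber axis by the cells $[\Sigma_j(h),\Sigma_{j+1}(h))$, and defines $\Pi_a$ by sending a point to $\Sigma_j(g)$ when its fiberwise projection $\fiberwise_a$ lands in the $j$th cell. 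The Lipschitz property of $\Pi_a$ is then checked one edge at a time in $\bund$, so that $f$ is only ever applied at the fixed bounded scale $\fiberlip\qisec$; the flaring is used to prevent sections from merging, not to force a far excursion to contradict a length bound. A Lipschitz retraction onto $\axis_g(a)$ immediately gives the quasigeodesic conclusion. You should adopt this retraction strategy (or some equivalent device that localizes the application of $f$), since the global geodesic-excursion argument you outline does not close on its own.
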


The proof of \Cref{lem:near_fiber} is fairly technical and will be deferred to the next section. Meanwhile, we use it to uniformly bound the ``width'' of all simple conjugacy classes of $\free$. Suppose now that $\Gamma\le\Out(\free)$ qi-embeds into $\CS$ so that the corresponding bundle $\bund$ is a hyperbolic metric space. Every element $a\in \free$ then acts (via left multiplication by $i_a\in \hat\free)$ as a hyperbolic isometry of $\bund$, and we let $a^*$ denote a biinfinite geodesic of $\bund$ joining the two fixed points $a^\infty,a^{-\infty}$ of $a$ in $\partial \bund$. Define the \define{width} of $a \in \free$ (or its conjugacy class $\alpha$) by
\[
\width(\alpha) = \width(a) \colonequals \diam_\base p(a^*).
\]

\begin{theorem}[Width Theorem]\label{width}
Suppose that $\Gamma\le\Out(\free)$ qi-embeds into $\cs$ and consider the hyperbolic extension $p\colon E_\Gamma\to \Gamma$ of $\free$. 
Then the simple conjugacy classes of $\free$ have uniformly bounded width.  That is, 
\[
\sup _\alpha \diam_{\Gamma} p(\alpha^*) < \infty
\] 
where the supremum is over simple conjugacy classes of $\free$.
\end{theorem}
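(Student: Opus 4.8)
The plan is to reduce the Width Theorem to the quasigeodesic statement in \Cref{lem:near_fiber} together with the flaring analysis of \S\ref{sec:orbit_flaring}. Fix $R\in\os$ and let $\alpha$ be a simple conjugacy class with representative $a\in\free$. The idea is that the ``global'' axis $a^*\subset\bund$ is forced to stay close to a single fiber, namely the one over (a point of) $\minset{\Gamma}{R}(\alpha)$, because in every other fiber the translation length of $i_a$ has grown so large that $i_a$ cannot possibly act as the required hyperbolic isometry there with a fixed-point pair on $\partial\bund$.

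First I would set up the comparison. Choose $g_0\in\minset{\Gamma}{R}(\alpha)$ and a basepoint $v\in\axis_{g_0}(a)\subset T_{g_0}$. By Mosher's qi-section lemma there is a $\qisec$--qi section $\sigma\colon\base\to\bund$ with $\sigma(g_0)=v$. By \Cref{lem:near_fiber}, $\axis_{g_0}(a)\colon\R\to\bund$ is a $Q$--quasigeodesic in $\bund$, and its two ends converge to the two fixed points $a^{\pm\infty}\in\partial\bund$ (since $i_a$ translates along it). Hence by stability of quasigeodesics (\Cref{prop:stability_of_quasis}) in the $\delta$--hyperbolic space $\bund$, the geodesic $a^*$ lies within Hausdorff distance $R_0(Q,\delta)$ of $\axis_{g_0}(a)\subset T_{g_0}$. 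Since $p$ is $1$--Lipschitz and $p(T_{g_0})=\{g_0\}$, this immediately gives $\diam_\base p(a^*)\le 2R_0(Q,\delta)$, a bound independent of $\alpha$. Taking the supremum over all simple $\alpha$ completes the proof.

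The only subtlety is confirming that the ends of $\axis_{g_0}(a)$ really converge to $a^{\pm\infty}$ in $\partial\bund$ and not to some other point: but this is immediate, since $i_a$ is an isometry of $\bund$ preserving $\axis_{g_0}(a)$ and translating along it with positive translation length (in $d_{g_0}$, hence a fortiori a proper semi-infinite ray in $\bund$ in each direction, which is quasigeodesic by \Cref{lem:near_fiber}), so $\{i_a^n\cdot v\}$ is an admissible sequence converging to $a^\infty$, and likewise $\{i_a^{-n}\cdot v\}$ converges to $a^{-\infty}$. Thus both ends of the $Q$--quasigeodesic $\axis_{g_0}(a)$ are exactly the endpoints $a^{\pm\infty}$ of $a^*$.

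The main obstacle is entirely contained in \Cref{lem:near_fiber}, which we are permitted to assume here; in the self-contained write-up its proof is where the flaring lemma \Cref{lem:flare_away_from_minimizer2} enters. The mechanism is: a subpath of $\axis_{g_0}(a)$ of fiber-length $\ell$ corresponds to a segment well-aligned with $\alpha$ (in the sense of \Cref{def:almost_containment}, $k$--almost contained in $\alpha$), and if $\bund$ geodesics were to stray a large $\base$--distance $d$ away from $g_0$, then \Cref{lem:flare_away_from_minimizer2} would force the length of that well-aligned segment to grow like $\lambda^d$, while bounded backtracking (\Cref{lem:backtracking bound}) and the metric properness function $f$ of the bundle bound how much it can actually grow along a $\bund$--geodesic of the relevant length—yielding a contradiction for $d$ large, hence a uniform quasigeodesic constant $Q$. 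For the present theorem, though, that work is packaged into \Cref{lem:near_fiber}, and the remaining argument is the short stability-of-quasigeodesics deduction above.
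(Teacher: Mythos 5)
Your proof is correct and follows essentially the same route as the paper: pick $g_0$ in the minimizing set $\minset{\Gamma}{R}(\alpha)$, invoke \Cref{lem:near_fiber} to see that $\axis_{g_0}(a)$ is a uniform $Q$--quasigeodesic in $\bund$ with the same endpoints as $a^*$, apply stability of quasigeodesics (\Cref{prop:stability_of_quasis}) to bound the Hausdorff distance between $a^*$ and $\axis_{g_0}(a)\subset T_{g_0}$, and then observe that $p$ collapses $T_{g_0}$ to a point. The extra paragraph confirming that the ends of $\axis_{g_0}(a)$ converge to $a^{\pm\infty}$ is a detail the paper leaves implicit, and your justification via the $i_a$--orbit of a point on the axis is the right one.
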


\begin{proof}
Let $a$ be a simple element of $\free$ and $\alpha$ is conjugacy class. Suppose that the length of $\alpha$ is minimized over the fiber $T_g$ for $g \in  \Gamma$. Then by \Cref{lem:near_fiber}, the axis $\mathcal{A}_{g}(a)$ of $a$ in $T_g$ is a $Q$--quasigeodesic for $Q\ge 0$ not depending on $a$. As $E_\Gamma$ is hyperbolic, \Cref{prop:stability_of_quasis} provides a constant $R = R(Q)\ge 0$ so that the axis of $a$ in $E_\Gamma$ has Hausdorff distance at most $R$ from any $Q$--quasigeodesic joining its endpoints in $\partial E_\Gamma$. Hence, $d_{\mathrm{Haus}}(a^*,\mathcal{A}_{g}(a)) \le R$ and so the diameter of the image of $a^*$ in $\Gamma$ is at most $R$. Since this is independent of the conjugacy class $\alpha$, the theorem follows.
\end{proof}

In \S\ref{sec:width}, we will show that any hyperbolic extension of $\free$ in which simple elements have uniformly bounded width is an extension by a convex cocompact subgroup $\Gamma \le \Out(\free)$.

\subsection{Axis bundles and the proof of \Cref{lem:near_fiber}} \label{sec:near_fiber}
We now embark on the proof of \Cref{lem:near_fiber}. Our approach is modeled on that of Kent--Leininger in \cite{KentLein-geometric}, where they prove an analogous result in order to establish their width theorem for hyperbolic extensions of surface groups. The main idea is to use the axis bundle $\axis(a)$ to construct a Lipschitz retract from $\bund$ to $\axis_g(a)$. 

The first step of the construction utilizes the techniques that Mitra \cite{MitraCTmaps-general} has developed and used extensively to study hyperbolic group extensions.
Let $p\colon \bund\to\base$ be as in \S\ref{sec:metric_bundle} and let $a\in \free$ be a nontrivial element with corresponding axis bundle $\axis(a)\subset\bund$. Define $\fiberwise_a\colon E_\Gamma\to \axis(a)$ to be the fiber-wise closest point projection to $\axis(a)$, that is, for each vertex $x$ in the fiber $T_g$ we define $\fiberproj_a(x)$ to be the unique point in $\axis_g(a)$ minimizing the distance $d_g(x,\fiberproj_a(x))$. 

\begin{lemma}[Mitra {\cite[Lemma 3.2]{MitraCTmaps-general}}]
\label{lem:fiberwise_lipschitz}
There is a constant $\fiberlip\ge 1$ depending only on the bundle $\bund\to\base$ such that $\fiberwise_a\colon E_\Gamma\to \axis(a)$ is $\fiberlip$--Lipschitz for each element $a\in \free$. That is, for for all $u,v\in E_\Gamma$ we have
\[d_{\bund}(\fiberwise_a(u),\fiberwise_a(v)) \le \fiberlip d_\bund(u,v).\]
\end{lemma}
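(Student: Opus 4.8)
The statement asserts that the fiberwise closest-point projection $\fiberwise_a\colon E_\Gamma\to \axis(a)$ is coarsely Lipschitz with a constant depending only on the bundle $p\colon\bund\to\base$, uniformly over all nontrivial $a\in\free$. The plan is to reduce the general case to the case of adjacent vertices $u,v\in E_\Gamma$ by the triangle inequality, and then to split the adjacent case according to whether the edge $uv$ lies inside a single fiber or projects to an edge of $\base$.

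First I would observe that it suffices to prove the bound when $u,v$ are endpoints of an edge of $\bund$; the general inequality then follows by concatenating the per-edge estimates along a geodesic in $\bund$ from $u$ to $v$, which contributes the (additive part of the) constant $\fiberlip$ and costs nothing in the multiplicative constant. So fix an edge $e$ of $\bund$ with endpoints $u,v$.

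\emph{Case 1: $e$ corresponds to a generator in $\hat X$.} Then $u$ and $v$ lie in the same fiber $T_b$ and $d_b(u,v)=1$. Here $\fiberproj_a$ is literally the nearest-point projection in the $0$--hyperbolic (in fact simplicial $\R$--tree) fiber $(T_b,d_b)$ onto the axis $\axis_b(a)$, and nearest-point projection onto a geodesic in a tree is $1$--Lipschitz in the fiber metric; hence $d_b(\fiberproj_a(u),\fiberproj_a(v))\le 1$. Since $\axis_b(a)\subset T_b$ and the inclusion $(T_b,d_b)\hookrightarrow\bund$ is $1$--Lipschitz, we get $d_\bund(\fiberproj_a(u),\fiberproj_a(v))\le 1$.

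\emph{Case 2: $e$ corresponds to a generator $t_i\in\tilde S$.} Then $v=ut_i$, so $p(v)=p(u)s_i$ and $u,v$ lie in adjacent fibers $T_b$ and $T_{bs_i}$. Write $x=\fiberproj_a(u)\in\axis_b(a)$ and consider the vertex $xt_i\in T_{bs_i}$. Because $t_i$ is an automorphism of $\free$ realizing the generator $s_i$, right-multiplication by $t_i$ carries the axis $\axis_b(a)$ of $i_a$ to the axis $\axis_{bs_i}(t_i\inv(a)\text{-conjugate})$ — concretely, $t_i\inv i_a t_i=i_{t_i\inv(a)}$, so $t_i$ maps $\axis_b(a)$ onto $\axis_{bs_i}(a')$ where $a'$ generates the same cyclic subgroup fixed by $i_{t_i\inv(a)}$; in any case $xt_i$ lies on $\axis_{bs_i}(a)$ viewed appropriately, i.e.\ it lies on the same axis bundle $\axis(a)$ (this is exactly the point that makes $\axis(a)$ the right object). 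The fiberwise distance inequality \eqref{eqn:bilipschitz_neighborhs} gives
\[
d_{bs_i}(v,xt_i)=d_{bs_i}(ut_i,xt_i)\le \bblip\, d_b(u,x)=\bblip\,d_b(u,\fiberproj_a(u)).
\]
The trouble is that $d_b(u,\fiberproj_a(u))$ is a priori unbounded in terms of the edge length $d_\bund(u,v)=1$. This is where the metric properness function $f\colon\N\to\N$ of the bundle enters: I do \emph{not} need to control $d_b(u,\fiberproj_a(u))$ itself, only the \emph{ambient} distance $d_\bund(\fiberproj_a(u),\fiberproj_a(v))$. So instead I would argue directly in $\bund$: $\fiberproj_a(u)=x$ and $\fiberproj_a(v)$ is the fiberwise nearest point of $v$ on $\axis_{bs_i}(a)$, hence $d_{bs_i}(v,\fiberproj_a(v))\le d_{bs_i}(v,xt_i)$, and since $xt_i\in\axis_{bs_i}(a)$ as well, both $\fiberproj_a(v)$ and $xt_i$ lie on the \emph{same} fiberwise geodesic $\axis_{bs_i}(a)$; projecting $v$'s nearest point cannot overshoot past $xt_i$ by more than twice this — more carefully, the nearest-point projection of $v$ to the tree-geodesic $\axis_{bs_i}(a)$ satisfies $d_{bs_i}(\fiberproj_a(v),xt_i)\le d_{bs_i}(v,xt_i)$ once $xt_i$ itself is on the axis, by the tree-geometry fact $d(\mathrm{proj}(v),z)\le d(v,z)$ for $z$ on the geodesic. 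Combining, $d_{bs_i}(\fiberproj_a(v),xt_i)\le\bblip\cdot d_b(u,x)$ is still not directly what I want; the clean route is:
\[
d_\bund(\fiberproj_a(u),\fiberproj_a(v))\le d_\bund(x,xt_i)+d_\bund(xt_i,\fiberproj_a(v)),
\]
where $d_\bund(x,xt_i)=1$ (it is an edge), and $d_\bund(xt_i,\fiberproj_a(v))\le d_{bs_i}(xt_i,\fiberproj_a(v))$. For the last term, note both points are nearest-point projections onto $\axis_{bs_i}(a)$ of $v$ and of $x$'s image, and $d_{bs_i}(v,xt_i)$ is bounded — no it isn't. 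I would therefore organize Case 2 as: the two fiberwise projections $\fiberproj_a(u)$ and $\fiberproj_a(v)$ both lie on $\axis(a)$, which is a coarsely connected ``flat'' that is $\fiberlip'$--Lipschitz retracted fiberwise; the estimate $d_{bs_i}(\fiberproj_a(v),\fiberproj_a(u)t_i)\le \bblip\cdot 1$ should be obtained by applying \eqref{eqn:bilipschitz_neighborhs} to the \emph{projected} points rather than to $u,v$, i.e.\ using that right translation by $t_i$ conjugates $i_a$ to $i_{t_i\inv(a)}$ and hence is an \emph{isometry} between the axes $(\axis_b(a),d_b)$ and $(\axis_{bs_i}(a'),d_{bs_i})$, combined with the general inequality $d(\mathrm{proj}_\gamma(ut_i),(\mathrm{proj}_\gamma u)t_i)\le \bblip\, d(u,ut_i)$ that follows by comparing the two nearest-point projections via \eqref{eqn:bilipschitz_neighborhs}. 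This gives $d_{bs_i}(\fiberproj_a(v),\fiberproj_a(u)t_i)\le\bblip$ and therefore $d_\bund(\fiberproj_a(u),\fiberproj_a(v))\le 1+\bblip$.

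Putting the two cases together, each edge of $\bund$ changes $\fiberproj_a$ by at most $\max\{1,1+\bblip\}=1+\bblip$ in $d_\bund$, so for general $u,v$, running along a $d_\bund$--geodesic of length $d_\bund(u,v)$ yields $d_\bund(\fiberproj_a(u),\fiberproj_a(v))\le(1+\bblip)\,d_\bund(u,v)$. Thus the lemma holds with $\fiberlip=1+\bblip$, a constant depending only on the bundle $p\colon\bund\to\base$ and independent of $a$.

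\textbf{Main obstacle.} The one genuinely delicate point is Case 2: controlling how much the fiberwise nearest-point projection can move as one crosses from the fiber $T_b$ to the adjacent fiber $T_{bs_i}$, given that the distance \emph{within} a fiber from $u$ to $\fiberproj_a(u)$ is not bounded by the ambient distance $d_\bund(u,v)$. The resolution is to avoid estimating $d_b(u,\fiberproj_a(u))$ at all, and instead transport the already-projected point $\fiberproj_a(u)$ forward by $t_i$ — which, because $t_i$ conjugates $i_a$ to an inner automorphism with a parallel axis, lands on $\axis(a)$ in the next fiber — and then invoke \eqref{eqn:bilipschitz_neighborhs} together with the elementary $1$--Lipschitz property of nearest-point projection in a tree. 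Once one sees that $\axis(a)$ is preserved setwise under the relevant $t_i$--translations (up to the conjugation $i_a\mapsto i_{t_i\inv(a)}$), the bound $\fiberlip=1+\bblip$ drops out with no dependence on $a$.
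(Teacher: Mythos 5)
Your overall structure matches the paper's proof sketch exactly: reduce to adjacent vertices of $\bund$ by the triangle inequality, handle the same-fiber case via the $1$--Lipschitz property of nearest-point projection in a tree, and handle the adjacent-fiber case by transporting the already-projected point $\fiberwise_a(u)$ across fibers by right multiplication by $t_i$ and invoking~\eqref{eqn:bilipschitz_neighborhs}. The paper's proof of this lemma is likewise only a sketch, deferring Case 2 to Mitra's Lemma~3.6, so filling this in is the crux of your proposal.

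The problem is the key claim in your Case 2, namely that right translation by $t_i$ is an \emph{isometry} between the axes, so that $\fiberwise_a(u)t_i$ lands \emph{on} $\axis(a)$ in the fiber $T_{bs_i}$. This is false. The map $\rho\colon T_b\to T_{bs_i}$, $z\mapsto zt_i$, commutes with left multiplication by $i_a$ (so it conjugates $L_{i_a}|_{T_b}$ to $L_{i_a}|_{T_{bs_i}}$, not to $L_{i_{t_i^{-1}(a)}}$ as you wrote), but it is only a $\bblip$--bilipschitz bijection of the fibers --- under the natural identification of each fiber with the Cayley tree of $\hat\free$, $\rho$ corresponds to applying the automorphism $t_i^{-1}$, which does not preserve word length. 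Consequently the image $\rho(\axis_b(a))$ is an $L_{i_a}$--invariant $\bblip$--quasigeodesic line in $T_{bs_i}$, not an $L_{i_a}$--invariant geodesic: it lies within Hausdorff distance $R_0(\bblip,0)$ of the genuine axis $\axis_{bs_i}(a)$ but generically does \emph{not} equal it (the translation lengths $\lVert \tilde b^{-1}(a)\rVert_X$ and $\lVert t_i^{-1}\tilde b^{-1}(a)\rVert_X$ already differ). Hence $\fiberwise_a(u)t_i$ is only $R_0(\bblip,0)$--close to $\axis_{bs_i}(a)$ in $d_{bs_i}$, and the ``general inequality'' $d_{bs_i}(\fiberwise_a(v),\fiberwise_a(u)t_i)\le\bblip$ does not follow from the bilipschitz bound alone. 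You need an additional Morse/stability argument: use that in $T_b$ the geodesic $[u,\fiberwise_a(u)]$ meets $\axis_b(a)$ only at $\fiberwise_a(u)$, push forward by $\rho$ to a quasigeodesic meeting the quasi-axis $\rho(\axis_b(a))$ only at $\fiberwise_a(u)t_i$, and use stability to see the nearest-point projection of $v$ onto the genuine axis $\axis_{bs_i}(a)$ cannot be far from $\fiberwise_a(u)t_i$. This yields a constant that depends on $\bblip$ and $R_0(\bblip,0)$ (hence still only on the bundle), but your stated $\fiberlip=1+\bblip$ is not justified. The approach is correct and fixable in place, but the isometry claim is the step that fails.
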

\begin{proof}
Mitra's proof of \Cref{lem:fiberwise_lipschitz} follows from basic hyperbolic geometry; for completeness we give a brief sketch here: By the triangle inequality it suffices to suppose $d_\bund(u,v) = 1$. Then if $u,v$ lie in the same fiber $T_g$, we immediately have $d_\bund(\fiberwise_a(u),\fiberwise_a(v))\le d_g(\fiberwise_a(u),\fiberwise_a(v)) \le 1$ by the nature of closest-point-projection in the tree $T_g$. Otherwise $u$ and $v$ lie in neighboring fibers so that $v = ut_i$ for some $t_i\in \tilde{S}$. But then one may use the uniform bilipschitz equivalence of neighboring fibers (Equation \eqref{eqn:bilipschitz_neighborhs}) to prove that $\fiberwise_a(u)t_i$ is uniformly close to $\fiberwise_a(v)$ (which is the content of \cite[Lemma 3.6]{MitraCTmaps-general}).
\end{proof}

\Cref{lem:fiberwise_lipschitz} allows us to extend $\fiberwise_a$ to a map $\fiberwise_a\colon \bund\to \axis(a)$ that is coarsely $\fiberlip$--Lipschitz.
For $a\in \free$ nontrivial, let us use the terminology \define{$k$--qi section through $\axis(a)$} to mean a $k$--quasi-isometric embedding $\sigma\colon (\base,d_\base)\to(\bund,d_\bund)$ such that $\sigma(g)\in \axis_g(a)$ for all $g\in \Gamma$. 

\begin{corollary}
\label{cor:sections_through_axis}
For any nontrivial $a\in \free$ and any vertex $v\in \axis_g(a)$, there exists a $\fiberlip\qisec$--qi section $\sigma$ through $\axis(a)$ with $\sigma(g) = v$.
\end{corollary}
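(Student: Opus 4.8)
The plan is to build the desired section by postcomposing a quasi-isometric section of $p\colon\bund\to\base$ (supplied by Mosher's quasi-isometric section lemma, as recalled in \S\ref{sec:metric_bundle}) with the fiberwise closest-point projection $\fiberwise_a$. Concretely, given the vertex $v\in\axis_g(a)$, I would first invoke the quasi-isometric section lemma to obtain a $\qisec$--qi section $\sigma_0\colon\base\to\bund$ with $\sigma_0(g) = v$, and then set $\sigma\colonequals\fiberwise_a\circ\sigma_0$.

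There are three things to check about $\sigma$. First, since $\fiberwise_a$ carries each fiber $T_h$ into $\axis_h(a)\subseteq T_h$ and $\sigma_0(h)\in T_h$ (because $\sigma_0$ is a section), we get $\sigma(h)\in\axis_h(a)$ for every vertex $h$; in particular $\sigma$ takes values in $\axis(a)$, and because $\fiberwise_a$ preserves fibers we have $p\circ\fiberwise_a = p$ on vertices, so $p\circ\sigma = p\circ\sigma_0 = \mathrm{id}$ and $\sigma$ is again a section. Second, the restriction of $\fiberwise_a$ to the fiber $T_g$ is the closest-point projection onto $\axis_g(a)$, which fixes $\axis_g(a)$ pointwise; since $v\in\axis_g(a)$ this yields $\sigma(g) = \fiberwise_a(v) = v$. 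Third, for the quasi-isometry constants: $\sigma_0$ sends vertices of $\base$ to vertices of $\bund$, so by the (vertexwise, exact) $\fiberlip$--Lipschitz bound of \Cref{lem:fiberwise_lipschitz} together with the $\qisec$--qi estimate for $\sigma_0$ we obtain
\[
d_\bund(\sigma(g_1),\sigma(g_2)) \le \fiberlip\, d_\bund(\sigma_0(g_1),\sigma_0(g_2)) \le \fiberlip\qisec\, d_\base(g_1,g_2) + \fiberlip\qisec,
\]
while, since $p$ is simplicial (hence $1$--Lipschitz) and $p\circ\sigma = \mathrm{id}$,
\[
d_\base(g_1,g_2) = d_\base\big(p\sigma(g_1),p\sigma(g_2)\big) \le d_\bund(\sigma(g_1),\sigma(g_2)).
\]
As $\fiberlip\qisec\ge1$, these two inequalities together say exactly that $\sigma$ is a $\fiberlip\qisec$--quasi-isometric embedding, so $\sigma$ is a $\fiberlip\qisec$--qi section through $\axis(a)$ with $\sigma(g) = v$.

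I do not expect a serious obstacle here: the only subtlety is that postcomposing a quasi-isometric embedding with a Lipschitz map only automatically preserves the \emph{upper} quasi-isometry bound, so the \emph{lower} bound must be recovered by hand. This is precisely where the fiber-preservation identity $p\circ\fiberwise_a = p$ earns its keep — it forces $\sigma$ to remain a genuine section of the $1$--Lipschitz map $p$, and any section of a $1$--Lipschitz map is automatically expanding. This is the same mechanism underlying the analogous construction of Kent--Leininger \cite{KentLein-geometric} in the surface case.
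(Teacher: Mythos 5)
Your proposal is correct and is essentially the paper's own argument: compose Mosher's $\qisec$--qi section through $v$ with the $\fiberlip$--Lipschitz fiberwise projection $\fiberwise_a$. You spell out the lower quasi-isometry bound (via $p\circ\sigma=\mathrm{id}$ and $p$ being $1$--Lipschitz) more explicitly than the paper does, but the construction and reasoning coincide.
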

\begin{proof}
Let $\sigma_0\colon \base\to\bund$ be the $\qisec$--qi section with $\sigma_0(g) = v$ provided by Mosher \cite{MosherExtensions}. Composing $\sigma_0$ with $\fiberwise_a\colon \bund\to \axis(a)$ then gives the desired $\fiberlip\qisec$--quasi-isometric embedding $\fiberwise_a\circ\sigma_0\colon \base\to\bund$.
\end{proof}

We now make a basic observation about ``well-separated'' $k$--qi sections through axis bundles.

\begin{lemma}
\label{lem:separated_sections}
There exists a constant $\separation > 0$ depending only on $\bund\to\base$ with the following property. Suppose $a\in \free$ is nontrivial, that $\sigma_1,\sigma_2$ are $\fiberlip\qisec$--qi sections through $\axis(a)$, and that $g,h\in \Gamma$ satisfy $d_\base(g,h) \le 1$. If $u\in \axis_g(a)$ lies between $\sigma_1(g)$ and $\sigma_2(g)$ on $\axis_g(a)$ with $d_g(u,\sigma_i(g))\ge \separation$ for $i=1,2$ and $v\in \axis_h(a)$ is a vertex with $d_\bund(u,v)\le \fiberlip\qisec$, then $v$ also lies between $\sigma_1(h)$ and $\sigma_2(h)$ on $\axis_h(a)$.
\end{lemma}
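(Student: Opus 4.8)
The plan is to prove this by tracking the fiberwise position of the vertex $v$ relative to the two qi-sections, using the uniform bilipschitz equivalence of neighboring fibers together with the fact that a ``well-separated'' point on $\axis_g(a)$ must, after a bounded jump to the adjacent fiber, still lie deep inside the subsegment of $\axis_h(a)$ spanned by $\sigma_1(h)$ and $\sigma_2(h)$. Concretely, I would argue as follows.

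First I would set up coordinates on the axes. For a fixed nontrivial $a \in \free$, each fiber $T_b$ carries the isometry $i_a$ with invariant axis $\axis_b(a)$, and left-multiplication by the generator $t_i$ carries $T_g$ isometrically-up-to-$\bblip$ onto $T_{gs_i}$ while intertwining $i_a$ with $i_{t_i(a)}$ — but $t_i(a)$ and $a$ are not the same element, so one must instead use that $t_i$ maps $\axis_g(a)$ to within bounded fiberwise distance of $\axis_{gs_i}(a)$ (this is exactly the mechanism behind \Cref{lem:fiberwise_lipschitz} and \cite[Lemma 3.6]{MitraCTmaps-general}). So the key preliminary estimate is: there is a constant $\mathfrak{c}_0$, depending only on $\bund\to\base$, such that for adjacent $g,h$ and any vertex $w \in \axis_g(a)$, the vertex $wt_i \in T_h$ lies within $d_h(\cdot,\cdot) \le \mathfrak{c}_0$ of $\axis_h(a)$, and moreover the fiberwise closest-point projection $\fiberproj_a(wt_i)$ to $\axis_h(a)$ is ``$\bblip$-coarsely monotone'' in $w$ — i.e.\ if $w,w'$ are ordered along $\axis_g(a)$ with $d_g(w,w')$ large, then $\fiberproj_a(wt_i),\fiberproj_a(w't_i)$ are ordered the same way along $\axis_h(a)$ with comparable (up to multiplicative $\bblip$ and additive constants) distance. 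This monotonicity is really the heart of the matter.

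Next, with that in hand, I would choose $\separation$ to dominate all the accumulated error constants: something like $\separation > \bblip(\fiberlip\qisec + \mathfrak{c}_0) + (\text{error in } d_g(\sigma_i(g),\sigma_i(h)\text{-adjacent})) + C$, where $C$ absorbs the qi-section constants $\fiberlip\qisec$. Given $u \in \axis_g(a)$ between $\sigma_1(g)$ and $\sigma_2(g)$ with $d_g(u,\sigma_i(g)) \ge \separation$, and $v \in \axis_h(a)$ with $d_\bund(u,v) \le \fiberlip\qisec$, I first observe that $d_h(v, \fiberproj_a(ut_i))$ is bounded (by metric properness of the bundle plus the bilipschitz bound \eqref{eqn:bilipschitz_neighborhs}: $v$ and $ut_i$ both lie in $T_h$ a bounded $\bund$-distance apart, hence a bounded $d_h$-distance apart, and projecting to $\axis_h(a)$ only contracts). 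Then I use coarse monotonicity: since $u$ is sandwiched between $\sigma_1(g),\sigma_2(g)$ and far from both, its image $\fiberproj_a(ut_i)$ is sandwiched between $\fiberproj_a(\sigma_1(g)t_i)$ and $\fiberproj_a(\sigma_2(g)t_i)$. Finally, because $\sigma_i$ is a $\fiberlip\qisec$-qi section through $\axis(a)$, the point $\fiberproj_a(\sigma_i(g)t_i)$ is within $\fiberlip\qisec$-ish of $\sigma_i(h)$ on $\axis_h(a)$; combining these inclusions (all of which lose only the additive/multiplicative constants that $\separation$ was chosen to beat) places $v$ strictly between $\sigma_1(h)$ and $\sigma_2(h)$ on $\axis_h(a)$, as claimed.

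The main obstacle I anticipate is making the ``coarse monotonicity'' of the fiberwise projection precise and honest: moving from $T_g$ to the adjacent fiber $T_h$ replaces the axis of $i_a$ by the axis of $i_{t_i(a)}$, and while these two axes in $T_h$ fellow-travel on a bounded-length overlap (where $t_i(a)$ and $a$ share a common subword up to conjugacy and bounded cancellation), one must check that closest-point projection in the tree $T_h$ genuinely preserves the cyclic/linear order on the relevant stretch of $\axis_h(a)$ rather than folding it back. This is a tree-geometry argument — using that $T_h$ is $0$-hyperbolic so projections to a geodesic line are $1$-Lipschitz and order-preserving outside a bounded ``turnaround'' region near the endpoints of the overlap — but bookkeeping the constants (via $\bblip$ and the bundle's metric properness function $f$) so that a single $\separation$ works uniformly over all $a$ and all adjacent pairs $g,h$ requires care. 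Once the uniform constants are pinned down, everything else is a routine chase through the inequalities \eqref{eqn:bilipschitz_neighborhs}, \Cref{lem:fiberwise_lipschitz}, and \Cref{cor:sections_through_axis}.
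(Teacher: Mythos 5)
Your outline has the right shape and matches the paper's proof in essence: build a map $\Psi\colon T_g\to\axis_h(a)$ by transferring a vertex $x\in T_g$ to the adjacent fiber and projecting to $\axis_h(a)$, show that $\Psi$ restricted to $\axis_g(a)$ is a uniform quasi-isometry of lines (hence coarsely order-preserving), and then chain the resulting estimates against the qi-section constants and the metric properness function $f$. That is precisely what the paper does.

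However, there is a substantive misconception in the middle, and it is exactly the thing you flag as your ``main obstacle.'' You propose transferring fibers by \emph{left} multiplication by $t_i$ and worry about the resulting intertwining of $i_a$ with $i_{t_i(a)}$, anticipating that the two axes in $T_h$ only fellow-travel along a bounded overlap. Both halves of this are off. Left multiplication by $t_i$ sends $T_g$ to $T_{s_ig}$, not to the adjacent fiber $T_{gs_i}$; the correct transit is \emph{right} multiplication $x\mapsto xt_i$, which the paper uses implicitly in \eqref{eqn:bilipschitz_neighborhs}. Right multiplication commutes with every left multiplication, so $x\mapsto xt_i$ is equivariant for left $i_a$--action on both sides: there is no change of automorphism at all. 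Consequently the image of the biinfinite geodesic $\axis_g(a)$ under $x\mapsto xt_i$ is a biinfinite $\bblip$--quasigeodesic in the $0$--hyperbolic tree $T_h$ that is invariant under $i_a$, hence a quasi-axis for $i_a$ fellow-traveling $\axis_h(a)$ along its \emph{entire} length within $R_0(\bblip,0)$. There is no bounded-overlap problem and no ``turnaround region.'' With that corrected, $\Psi|_{\axis_g(a)}$ is a quasi-isometry between two geodesic lines with constants depending only on $\bblip$ and $R_0(\bblip,0)$, and coarse monotonicity is automatic; the remaining estimates against $\sigma_i(h)$ and $v$ via metric properness go through exactly as you describe. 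Worth noting: if your overlap worry \emph{were} correct, the lemma would fail, since $\separation$ would have to depend on $a$, so it is important to see why it is not.
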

\begin{proof}
Define $\separation\colonequals 4E^2$, where $E = \bblip+2R_0(\bblip,0) + f(2\fiberlip\qisec + 1+R_0(\bblip,0))$, and suppose that $g,h,u,v,\sigma_1,\sigma_2$ are as in the statement of the lemma. Here $R_0$ is as in \Cref{prop:stability_of_quasis}. If $d_\base(g,h) = 0$ then $v$ is a point on the geodesic $\axis_g(a)$ within $d_g$--distance $f(\fiberlip\qisec)$ of $u$. Thus the result is immediate  since $\separation > f(\fiberlip\qisec)$. Otherwise $d_\base(g,h) = 1$ so that $h = gs_i$ for some generator $s_i\in S$. Let $t_i$ be the chosen lift in $\tilde{S}$. Define a map $\Psi$ from $\tilde{g}\hat \free$ (the vertex set of $T_g$) to $\tilde{h}\hat\free$ (the vertex set of $T_h$) by declaring $ \Psi(x)$ to be the $d_h$--closest-point-projection of $xt_i\in T_h$ to $\axis_h(a)$. Since the assignment $x\mapsto xt_i$ is $\bblip$--bilipschitz by Equation (\ref{eqn:bilipschitz_neighborhs}) and $T_h$ is $0$--hyperbolic, it follows from \Cref{prop:stability_of_quasis} and the definition of $E > \bblip+2R_0(\bblip,0)$ that $\Psi$ restricts to an $E$--quasi-isometric embedding from (the vertices of) $\axis_g(a)$ to $\axis_h(a)$. Observe also that $d_\bund(\Psi(x),x)\le 1 + R_0(\bblip,0)$ for each vertex $x\in \axis_g(a)$. 

The hypotheses on $\sigma_1(g),u,\sigma_2(g)$ now imply that $\Psi(u)$ appears between $\Psi(\sigma_1(g))$ and $\Psi(\sigma_2(g))$ on $\axis_h(a)$  with $d_h\big(\Psi(u),\Psi(\sigma_i(g)\big) \ge 3E$ for $i=1,2$. Using the triangle inequality, the hypotheses on $\sigma_1,\sigma_2,v$ with the above observation about $\Psi\vert_{\axis_g(a)}$ together give
\[d_\bund\big(\Psi(\sigma_1(g)),\sigma_1(h)\big),\,d_\bund\big(\Psi(\sigma_2(g)),\sigma_2(h)\big),\, d_\bund\big(\Psi(u),v\big)\le 2\fiberlip\qisec + 1 + R_0(\bblip,0).\]
In particular, the first inequality follows from the estimate
\begin{align*}
d_\bund\big(\Psi(\sigma_1(g)),\sigma_1(h)\big) &\le d_\bund\big(\Psi(\sigma_1(g)),\sigma_1(g)\big) + d_\bund\big(\sigma_1(g),\sigma_1(h)\big) \\
&\le 1 + R_0(\bblip,0) + \fiberlip\qisec \cdot d_\base(g,h) +\fiberlip\qisec\\
&\le 2\fiberlip\qisec + 1 + R_0(\bblip,0),
\end{align*}
and the other two are similar.
By metric properness, we may thus conclude that
\[d_h\big(\Psi(\sigma_1(g)),\sigma_1(h)\big),\,d_h\big(\Psi(\sigma_2(g)),\sigma_2(h)\big),\, d_h\big(\Psi(u),v\big)\le f(2\fiberlip\qisec + 1 + R_0(\bblip,0)) \le E.\]
Therefore the triangle inequality shows that $v$ lies between $\sigma_1(h)$ and $\sigma_2(h)$ on $\axis_h(a)$, as desired.
\end{proof}

Next, when $a\in \free$ is simple the flaring property established in \S\ref{sec:length_minimizers} translates into the following estimate for well-separated qi-sections through $\axis(a)$. Let $\rose\in \os$ denotes the marked graph $\cay{X}{\free}/\free$ equipped with the metric in which each edge has length $\nicefrac{1}{\rank(\free)}$

\begin{proposition}
\label{lem:qi_sections_flare}
Suppose that $\Gamma\le\Out(\free)$ qi-embeds into $\fc$ and let $p\colon \bund\to\base$ be as in \S \ref{sec:metric_bundle}. For every $\bar K\ge 1$ there exist $D,D_0  > 0$ and $\eta > 1$ so that the following holds. Suppose that $a\in \free$ is simple and that $\sigma,\sigma'$ are $\bar K$--qi-sections through $\axis(a)$ with $d_{g}(\sigma(g),\sigma'(g))\ge D_0$ for some element $g\in \minset{\Gamma}{\rose}([a])$. Then for all $h\in \Gamma$ we have:
\[
d_h(\sigma(h), \sigma'(h)) \ge D \eta^{d_\Gamma(g,h)}\, d_{g}(\sigma(g),\sigma'(g)).
\]
\end{proposition}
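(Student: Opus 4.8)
The strategy, following the scheme of Kent--Leininger \cite{KentLein-geometric}, is to recognize the fiber distance $d_b(\sigma(b),\sigma'(b))$ as being comparable to the length of a power of $a$ along the Outer space orbit, and then feed this into the orbit-flaring machinery of \Cref{lem:flare_away_from_minimizer2}. First I would set up the translation dictionary. The inner automorphism $i_a$ acts on each fiber $T_b$ (a metric simplicial tree) as a hyperbolic isometry whose axis is $\axis_b(a)$ and whose translation length $\tau_b$ equals $\rank(\free)\cdot\len\big([a]\,\vert\, b\cdot\rose\big)$; since $g\in\minset{\Gamma}{\rose}([a])$ we have $\tau_b\ge\tfrac12\tau_g$ (and $\tau_b\ge$ the injectivity radius of $\rose$) for every $b\in\Gamma$. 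Two points of $\axis_b(a)$ at distance $\ell_b:=d_b(\sigma(b),\sigma'(b))$ are separated by $N_b:=\lfloor \ell_b/\tau_b\rfloor$ fundamental domains of $i_a$ up to an error $<\tau_b$, so $\ell_b \asymp \max(N_b,1)\,\tau_b$, and the latter is, up to the fixed multiplicative constant $\rank(\free)$, the length $\len\big([a^{\max(N_b,1)}]\,\vert\, b\cdot\rose\big)$. Note $[a^m]$ is $0$-almost contained in $[a]$ at every point of $\os$.

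Next I would extract the two flaring inputs, both by applying \Cref{lem:flare_away_from_minimizer2} with $R=\rose$, $\alpha=[a]$ and $g_0=g$. Taking $\beta=[a^m]$ with $m$ minimal so that $\len([a^m]\,\vert\,g\cdot\rose)\ge L_0$ gives, for all $b$, the \emph{length flaring} $\tau_b\ge C\lambda^{\,d_\Gamma(g,b)}\tau_g$ with $\lambda>1$. Taking $N:=N_g$ and $\beta:=[a^{\max(N,1)}]$ (which is $0$-almost contained in $[a]$ at $g\cdot\rose$ and has length $\ge L_0$ there once $D_0$ is chosen large enough, since $\len(\beta\,\vert\,g\cdot\rose)\gtrsim \ell_g/\rank(\free)\ge D_0/\rank(\free)$) gives $\len(\beta\,\vert\, h\cdot\rose)\ge C\lambda^{\,d_\Gamma(g,h)}\len(\beta\,\vert\,g\cdot\rose)\gtrsim \lambda^{\,d_\Gamma(g,h)}\ell_g$. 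Thus it only remains to prove the reverse estimate $\ell_h\gtrsim \rank(\free)\max(N,1)\len([a]\,\vert\,h\cdot\rose)$, i.e.\ that the number of fundamental domains of $\axis_h(a)$ between $\sigma(h)$ and $\sigma'(h)$ is still $\gtrsim\max(N,1)$.

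For this last point I would use the translated sections $\sigma_j:=i_{a^j}\circ\sigma$, which are again $\bar K$--qi sections through $\axis(a)$ (as $i_{a^j}$ is an isometry of $\bund$ preserving $\axis(a)$), with $d_b(\sigma(b),\sigma_j(b))=j\tau_b$ and $d_\bund(\sigma_j(b),\sigma_j(b'))=d_\bund(\sigma(b),\sigma(b'))$. Fixing $j_*=\lfloor N/2\rfloor$ and a geodesic $g=b_0,\dots,b_n=h$ in $\Gamma$, I would track the point $\sigma_{j_*}(b_i)$ across fibers using \Cref{lem:separated_sections} (applied to the sections $\sigma,\sigma'$, whose consecutive tracked points are a bounded $\bund$--distance apart), maintaining the invariant that $\sigma_{j_*}(b_i)$ lies $\separation$--deep between $\sigma(b_i)$ and $\sigma'(b_i)$ on $\axis_{b_i}(a)$; at $g$ this holds with room $\asymp\ell_g$ on each side once $D_0$ is large. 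Each step consumes a bounded additive amount of ``room'' on the $\sigma'$--side in the hypothesis of \Cref{lem:separated_sections}, but the available room is $\ell_{b_i}-j_*\tau_{b_i}$, and the geometric flaring $\tau_{b_i}\ge C\lambda^{i}\tau_g$ from the previous paragraph makes the cumulative loss $\sum_i O(\separation/\tau_{b_i})\le O(\separation/\tau_g)$ convergent and dominated by the initial budget $\asymp \ell_g/\tau_g\ge D_0/\tau_g$. Hence $\sigma_{j_*}(h)$ still lies between $\sigma(h)$ and $\sigma'(h)$, so $\ell_h\ge d_h(\sigma(h),\sigma_{j_*}(h))=j_*\tau_h\asymp\rank(\free)\max(N,1)\len([a]\,\vert\,h\cdot\rose)$. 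Combined with the flaring of $\len(\beta\,\vert\,h\cdot\rose)$ this yields $\ell_h\gtrsim\lambda^{\,d_\Gamma(g,h)}\ell_g$, which is the assertion with $\eta=\lambda$ and $D$ absorbing the constants; the degenerate case $N=0$ (where $\sigma(g),\sigma'(g)$ lie in one fundamental domain, so $\tau_g>\ell_g\ge D_0$ is large) is handled the same way using $[a]$ in place of a genuine power, since $\len([a]\,\vert\,g\cdot\rose)\ge L_0$ automatically. The hard part is the last paragraph: making precise the competition between the constant per-step loss in the separated-sections tracking and the exponential replenishment supplied by the flaring of the translation length, and dealing with the small-$N$ boundary cases.
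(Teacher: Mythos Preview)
Your strategy differs substantially from the paper's, and the ``budget/replenishment'' step you flag as the hard part is indeed a genuine gap. The paper never uses powers $[a^N]$ or the tracking via \Cref{lem:separated_sections}. Instead, it exploits the almost-containment hypothesis of \Cref{lem:flare_away_from_minimizer2} in a sharper way: the element $i_b \approx \sigma(g_i)^{-1}\sigma'(g_i)$ (adjusted by one letter to cyclically reduce) has conjugacy class $\beta$ that is $1$--almost contained in $\alpha$ at $g_i\cdot\rose$, and its length there equals $d_{g_i}(\sigma(g_i),\sigma'(g_i))$ up to one letter. Applying \Cref{lem:flare_away_from_minimizer2} to this $\beta$ gives flaring of $\len(\beta\vert g_j\cdot\rose)$ directly. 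To convert back, the paper translates $\sigma(g_i),\sigma'(g_i)$ forward by the $\tilde S$--generators along a \emph{bounded-length} piece $g_i,\dotsc,g_j$ (length $\le 2N$) to points $z_j,z_j'$ with $d_{g_j}(z_j,z_j')\ge \len(\beta\vert g_j\cdot\rose)$; then the $\bar K$--qi-section hypothesis bounds $d_\bund(z_j,\sigma(g_j))$, and metric properness of the bundle turns this into a bounded $d_{g_j}$--error. This yields the implication $d_{g_i}(\sigma_i,\sigma_i')\ge D_0 \Rightarrow d_{g_j}(\sigma_j,\sigma_j')\ge \tfrac{C}{4}\lambda^{j-i}d_{g_i}(\sigma_i,\sigma_i')$ for $j-i\le 2N$, and the full statement follows by subdividing the geodesic from $g$ to $h$ into pieces of length between $N$ and $2N$.

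Your argument, by contrast, tries to go from $g$ to $h$ in one sweep, and the step ``each step consumes a bounded additive amount of room on the $\sigma'$--side'' is not justified: \Cref{lem:separated_sections} concludes only that $\sigma_{j_*}(b_{i+1})$ lies between $\sigma(b_{i+1})$ and $\sigma'(b_{i+1})$, with no quantitative room at all. To continue the induction you must independently verify $d_{b_{i+1}}(\sigma_{j_*}(b_{i+1}),\sigma'(b_{i+1}))\ge\separation$, but that quantity is $\ell_{b_{i+1}}-j_*\tau_{b_{i+1}}$, and you have no lower bound on $\ell_{b_{i+1}}$---that is precisely what you are trying to prove. The heuristic ``$\sum O(\separation/\tau_{b_i})$'' presumes a per-step loss estimate that the lemma does not supply. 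The paper's choice of $\beta$ as the class of $\sigma^{-1}\sigma'$ rather than a power of $a$, together with the bounded-subsegment bootstrap, is exactly what sidesteps this difficulty.
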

\begin{proof}
Le $C$, $\lambda$, and $L_0$ be the constants obtained by applying \Cref{lem:flare_away_from_minimizer2} to the orbit $\Gamma\cdot \rose$ with $k = 1$, and let $f$ be the metric properness function for the graph bundle $\bund\to\base$. Fix $N\ge 1$ large enough so that $C\lambda^N > 4$ and define
\[D_0 \colonequals \max\left\{2,\; 2\rank(\free)L_0,\; 8C\inv f(6\bar KN)\right\}.\]

Choose $h\in \Gamma$ arbitrarily, and let $g = g_0,\dotsc,g_m=h$ be a geodesic from $g$ to $h$ in $\Gamma$. Let us write $\sigma_i \colonequals \sigma(g_i)\in T_{g_i}$ and $\sigma'_i\colonequals \sigma'(g_i)\in T_{g_i}$ for the value of the two $\bar K$--qi-sections in the fiber $T_{g_i}$ of $p\colon\bund\to\base$ over $g_i$.
Choose any vertices $g_i,g_j$ along our geodesic with $i\le j \le i+2N$, and suppose temporarily that $d_{g_i}(\sigma_i,\sigma_i')\ge D_0$. Recall from \S\ref{sec:metric_bundle} that $T_{g_i} = p^{-1}(g_i)$ is a simplicial tree whose edges are labeled by the free basis $\hat X$ of $\hat \free$. With respect to this basis, the element $i_c = \sigma_i\inv\sigma'_i \in \hat \free$ may not by cyclically reduced. However, there is some $x\in \hat X$ so that $i_b = \sigma_i\inv\sigma'_ix \in \hat \free$ is cyclically reduced. Set $z_i = \sigma_i$ and $z_i' = \sigma_i'x$, so that the geodesic edge path $[z_i,z_i']$ in $T_{g_i}$ is labeled by the cyclically reduced word $i_b$ of $\hat\free$ with the properties that $\norm{i_b}_{\hat X} = \abs{i_b}_{\hat X}$ and that $\abs{i_b}_{\hat X}$ differs from $d_{g_i}(\sigma_i,\sigma'_i) = \abs{\sigma_i\inv,\sigma'_i}_{\hat X}$ by at most $1$.

Choosing a lift $\tilde{g}_i\in \Aut(\free)$ of $g_i\in \Out(\free)$, the action $\tilde{g}_i$ on $\bund$ restricts to a simplicial automorphism from $T_1$ to $T_{g_i}$ that respects the edge labeling and thus gives an identification of $T_1 = \cay{\hat X}{\hat\free}$ with $T_{g_i}$. With respect to this identification, the element $\tilde{g}_i(b)\in \free$ acts on $T_{g_i}$ the same way that $i_b$ acts on $\cay{\hat X}{\hat \free}$. Therefore, since the labeled edge path $[z_i,z_i']$ is a fundamental domain of the axis of $i_b$ in $\cay{\hat X}{\hat \free}$, it follows that $[z_i,z_i']\subset T_{g_i}$ is a fundamental domain of the axis for $\tilde{g}_i(b)$ acting on $T_{g_i}$. 
Letting $\beta$ and $\alpha$ denote the conjugacy classes of $b$ and $a$, respectively, it follows that $\tilde{g}_i(\beta) = g_i(\beta)$ is $1$--almost contained in $\alpha$ at $g_i\cdot\rose$ (since $\sigma_i,\sigma'_i\in \axis_{g_i}(a)$ by construction and all edges in the universal cover of $g_i\cdot \rose$ have length $\nicefrac{1}{\rank(\free)}\le 1$). Since
\[\len(g_i(\beta)\vert g_i\cdot \rose) = \len(\beta\vert \rose) = \tfrac{1}{\rank(\free)}\norm{i_b}_{\hat X} \ge \tfrac{1}{\rank(\free)}\left(d_{g_i}(\sigma_i,\sigma'_i) - 1\right) \ge \tfrac{1}{2\rank(\free)}d_{g_i}(\sigma_i,\sigma'_i) \ge L_0\]
by the assumption $d_{g_i}(\sigma_i,\sigma'_i)\ge D_0$, we may apply \Cref{lem:flare_away_from_minimizer2} to conclude
\begin{equation*}
\label{eqn:conjugacy_lower_bound}
\len(g_i(\beta)\vert g_j\cdot \rose) \ge C\lambda^{j-i}\len(g_i(\beta)\vert g_i\cdot \rose) \ge \tfrac{C}{2\rank(\free)}\lambda^{j-i}d_{g_i}(\sigma_i,\sigma'_i).
\end{equation*}

For each $i < p \le j$, set $s_p = g_{p-1}\inv g_{p}\in S$ and let $t_p\in \tilde{S}$ be the chosen lift of $s_p$ in the generating set $W$ of $E_\Gamma$. For $i \le p \le j$ let us also define $z_p = z_i t_{i+1}\dotsb t_p$ and $z'_p = z'_i t_{i+1}\dotsb t_p$, both of which are points over $g_p = g_i s_{i+1}\dotsb s_p$. Since $g_i,g_{i+1},\dotsc, g_j$ is a geodesic in $\Gamma$, it follows that $z_i,\dotsc,z_j$ and $z'_i,\dotsc,z'_j$ are both geodesics in $E_\Gamma$ and thus that $d_{\bund}(z_i,z_j) = d_\bund(z'_i,z'_j) = j-i \le 2N$. Observe now that
\[z_j\inv z'_j = \left(z_i t_{i+1}\dotsb t_{j}\right)\inv\left(z'_i t_{i+1}\dotsb t_{j}\right) = \left(t_j\inv \dotsb t_{i+1}\inv\right)z_i\inv z'_i\left(t_{i+1}\dotsb t_{j}\right) = \varphi i_b \varphi\inv = i_{\varphi(b)}\in \hat\free,\]
where $\varphi$ is the specific lift $\varphi  =t_j\inv \dotsb t_{i+1}\inv\in \Aut(\free)$ of $g_j\inv g_i = s_j\inv \dotsb s_{i+1}\inv$. In particular, we see that the distance between $z_j$ and $z'_j$ in the fiber $T_{g_j}$ satisfies
\begin{equation}\label{eqn:dist_between_zs}
d_{g_j}(z_j,z'_j) = \abs{i_{\varphi(b)}}_{\hat X} \ge \norm{i_{\varphi(b)}}_{\hat X} = \rank(\free)\len(\varphi(\beta)\vert \rose) = \rank(\free)\len(g_i(\beta)\vert g_j\cdot \rose)  \ge \tfrac{C}{2}\lambda^{j-i} d_{g_i}(\sigma_i,\sigma'_i).
\end{equation}

Let us now compare this distance to $d_{g_j}(\sigma_j,\sigma_j')$. Since $\sigma,\sigma'$ are $\bar K$--qi-sections, the quantities $d_{\bund}(\sigma_i,\sigma_j)$ and $d_\bund(\sigma'_i,\sigma'_j)$ are bounded by $\bar K(j-i) + \bar K \le 2\bar K N+\bar K$. Since we also have $d_\bund(\sigma_i,z_i),d_\bund(\sigma'_i,z'_i)\le 1$ by construction, the triangle inequality thus gives
\begin{align*}
d_\bund(z_j,\sigma_j) &\le d_\bund(z_j,z_i) +d_\bund(z_i, \sigma_i)+ d_\bund(\sigma_i,\sigma_j)\\
&\le 2N + 1 + 2\bar KN + \bar K \le 6\bar K N,
\end{align*}
and similarly, $d_\bund(z_j',\sigma_j') \le 6\bar K N$.
By metric properness of the graph bundle $\bund\to\base$, it follows that 
\linebreak $d_{g_j}(z_j,\sigma_j), d_{g_j}(z'_j, \sigma'_j) \le f(6\bar KN)$. Combining with \eqref{eqn:dist_between_zs} and using $d_{g_i}(\sigma_i,\sigma'_i)\ge D_0 \ge 8C\inv f(6\bar KN)$ and $\lambda^{j-i}\ge 1$, we conclude that
\begin{align*}
d_{g_j}(\sigma_j,\sigma'_j) 
&\ge d_{g_j}(z_j,z'_j) - d_{g_j}(z_j,\sigma_j) - d_{g_j}(z'_j, \sigma'_j)  \\
&\ge \tfrac{C}{2} \lambda^{j-i} d_{g_i}(\sigma_i,\sigma'_i) - 2f(6\bar K N)\\
&\ge \left(\lambda^{j-i} - \tfrac{1}{2}\right)\tfrac{C}{2}d_{g_i}(\sigma_i,\sigma'_i)\ge \tfrac{C}{4}\lambda^{j-i} d_{g_i}(\sigma_i,\sigma'_i).
\end{align*}

To summarize, we have now shown that the implication
\begin{equation}
\label{eqn:implication}
d_{g_i}(\sigma_i,\sigma'_i) \ge D_0 \implies d_{g_j}(\sigma_j,\sigma_j') \ge \tfrac{C}{4}\lambda^{j-i}d_{g_i}(\sigma_i,\sigma'_i)
\end{equation}
holds for any pair of vertices $g_i,g_j$ on the geodesic $g=g_0,\dotsc,g_m=h$  with $i\le j\le i+2N$. Suppose now that $d_g(\sigma(g),\sigma'(g))\ge D_0$. If $d_\Gamma(g,h) >  N$, we may then break the geodesic $g_0,\dotsc,g_m$ into $\floor{d_\Gamma(g,h)/N}\ge \tfrac{1}{2N}d_\Gamma(g,h)$ pieces that each have length between $N$ and $2N$ and inductively apply the estimate \eqref{eqn:implication} to conclude
\[d_{h}(\sigma(h),\sigma'(h))\ge \left(\tfrac{C}{4}\lambda^N\right)^{\floor{d_\Gamma(g,h)/N}}d_g(\sigma(g),\sigma'(g)) \ge  \eta^{d_\Gamma(g,h)} d_{g}(\sigma(g),\sigma'(g)),\]
where $\eta \colonequals \left(\tfrac{C\lambda^N}{4}\right)^{\nicefrac{1}{2N}} > 1$. Otherwise, $d_\Gamma(g,h)\le N$ and \eqref{eqn:implication} immediately gives the desired bound
\[d_h(\sigma(h),\sigma'(h)) \ge \left(\tfrac{C}{4\eta^N}\right) \eta^{d_\Gamma(g,h)} d_g(\sigma(g),\sigma'(g)).\]
Setting $D = \tfrac{C}{4\eta^N}$ completes the proof.
\end{proof}

With these tools in hand, we are now prepared to give the

\begin{proof}[Proof of \Cref{lem:near_fiber}]
Let $\qisec$, $\fiberlip$, and $\separation$ be the constants provided by Mosher's quasi-isometric section lemma, \Cref{lem:fiberwise_lipschitz}, and \Cref{lem:separated_sections} (all which depend only on the bundle $\bund\to\base$). Let $D,D_0> 0$ be the constants obtained by applying \Cref{lem:qi_sections_flare} with $\bar K = \fiberlip\qisec$, and fix a constant $M >D_0 + (f(\fiberlip)+\separation)/ D$. 

Let $a\in \free$ and $g\in \minset{\Gamma}{\rose}([a])$ be as in the statement of the proposition. We may use \Cref{cor:sections_through_axis} to construct an infinite family $\{\Sigma_i\}_{i\in \Z}$ of $\fiberlip\qisec$--qi sections through $\axis(a)$ with the property that and $d_g(\Sigma_i(g),\Sigma_j(g)) = M\abs{i-j}$ for all $i,j\in \Z$. Notice that this forces the points $\dotsc,\Sigma_{-1}(g),\Sigma_0(g),\Sigma_1(g),\dotsc$ to be linearly ordered along the axis $\axis_g(a)$. Furthermore, for all $i\ne j$ we have $d_g(\Sigma_i(g),\Sigma_j(g))\ge D_0$ so that we may apply \Cref{lem:qi_sections_flare} to conclude $d_h(\Sigma_i(h),\Sigma_j(h))\ge DM  > \separation + f(\fiberlip)$ for all $h\in \Gamma$.

From this we claim that the sections $\{\Sigma_i\}$ are \emph{consistently ordered} in each fiber, meaning that if $\Sigma_j(h)$ appears between $\Sigma_i(h)$ and $\Sigma_k(h)$ in the axis $\axis_h(a)$ for some $h\in \Gamma$, then the same holds for every $h'\in \Gamma$. Indeed, if $\Sigma_j(h)$ appears between $\Sigma_i(h)$ and $\Sigma_k(h)$ in $\axis_h(a)$, then applying \Cref{lem:separated_sections} with $\sigma_1 = \Sigma_i$, $\sigma_2 = \Sigma_k$, $u= \Sigma_j(h)$ shows that $v = \Sigma_j(h')$ appears between $\Sigma_i(h')$ and $\Sigma_k(h)$ in any neighboring fiber $h'$; thus the consistently ordered conclusion follows by induction.

We now use the sections $\{\Sigma_i\}_{i\in \Z}$ to define a map $\mathfrak{q}_a\colon \axis(a)\to \axis_g(a)$, as follows. For each $h\in \Gamma$, the sections $\Sigma_i$ partition the geodesic $\axis_h(a)\cong \R$ into infinitely many, disjoint, half open intervals $Z_j^h\colonequals[\Sigma_{j}(h),\Sigma_{j+1}(h))$. Define the map $\mathfrak{q}_a$ by sending the interval $Z_j^h = [\Sigma_j(h),\Sigma_{j+1}(h))$ to the point $\Sigma_j(g)\in \axis_g(a)$ (so the image of $\mathfrak{q}_a$ is the set $\{\Sigma_j(g)\}_{j\in \Z}$). Next define $\Pi_a\colon E_\Gamma\to \axis_g(a)$ to be the composition $\Pi_a = \mathfrak{q}_a\circ\fiberwise_a$.

\begin{claim*}
The map $\Pi_a\colon E_\Gamma\to (\axis_g(a),d_g)$ is a coarse $3M$--Lipschitz retraction onto $\axis_g(a)$, meaning that for all $u,v\in E_\Gamma$ and each vertex $x\in \axis_g(a)$ we have
\[d_g(\Pi_a(u),\Pi_a(v))\le 3M d_\bund(u,v)\qquad\text{and}\qquad d_g(x,\Pi_a(x))\le 3M.\]
\end{claim*}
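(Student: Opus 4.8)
The plan is to treat the two asserted estimates separately: the coarse-retraction estimate is immediate, while the Lipschitz estimate is the substantive one and will be reduced, via a telescoping argument inside the single fiber $(T_g,d_g)$, to one application of \Cref{lem:separated_sections} for pairs of adjacent vertices of $\bund$.

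For the retraction estimate I would argue directly. If $x$ is a vertex of $\axis_g(a)\subset T_g$, then since $\fiberwise_a$ is the fiberwise nearest-point projection onto $\axis(a)$ and $x$ already lies on $\axis_g(a)$, we have $\fiberwise_a(x)=x$ and hence $\Pi_a(x)=\mathfrak{q}_a(x)$. As $x$ lies in exactly one of the half-open intervals $Z^g_j=[\Sigma_j(g),\Sigma_{j+1}(g))$ partitioning $\axis_g(a)$, we get $\Pi_a(x)=\Sigma_j(g)$, and therefore $d_g(x,\Pi_a(x))\le d_g(\Sigma_j(g),\Sigma_{j+1}(g))=M\le 3M$.

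For the Lipschitz estimate, first observe that every point $\Pi_a(w)$ lies in the single tree $(T_g,d_g)$, so the triangle inequality there, applied along a $\bund$--geodesic $u=w_0,w_1,\dots,w_n=v$ with $n=d_\bund(u,v)$, reduces the claim to showing $d_g(\Pi_a(w_i),\Pi_a(w_{i+1}))\le 3M$ for adjacent vertices of $\bund$. So fix adjacent $w_i,w_{i+1}$ in fibers $b_1,b_2$ and set $P=\fiberwise_a(w_i)\in\axis_{b_1}(a)$, $Q=\fiberwise_a(w_{i+1})\in\axis_{b_2}(a)$. I would then check the hypotheses of \Cref{lem:separated_sections}: by \Cref{lem:fiberwise_lipschitz}, $d_\bund(P,Q)\le\fiberlip\,d_\bund(w_i,w_{i+1})=\fiberlip\le\fiberlip\qisec$; since $w_i,w_{i+1}$ are adjacent in $\bund$ their fibers satisfy $d_\base(b_1,b_2)\le1$; and $Q$ is a \emph{vertex} of $T_{b_2}$ because the axis of $i_a$ in the simplicial tree $T_{b_2}$ is a union of edges, so the nearest-point image of the vertex $w_{i+1}$ is again a vertex. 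Letting $j,k$ be the indices with $P\in Z^{b_1}_j$ and $Q\in Z^{b_2}_k$, I would apply \Cref{lem:separated_sections} to the two $\fiberlip\qisec$--qi sections $\sigma_1=\Sigma_{j-1}$, $\sigma_2=\Sigma_{j+2}$: using the consistent ordering of the $\Sigma_i$ together with the bound $d_b(\Sigma_i(b),\Sigma_{i+1}(b))\ge DM>\separation$ valid in every fiber (established above), the point $P$ lies between $\Sigma_{j-1}(b_1)$ and $\Sigma_{j+2}(b_1)$ on $\axis_{b_1}(a)$ at $d_{b_1}$--distance at least $DM>\separation$ from each; the lemma then forces $Q$ to lie between $\Sigma_{j-1}(b_2)$ and $\Sigma_{j+2}(b_2)$ on $\axis_{b_2}(a)$. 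Consulting the consistent ordering of $\{\Sigma_i(b_2)\}$ once more, the only intervals $Z^{b_2}_k$ meeting the subarc from $\Sigma_{j-1}(b_2)$ to $\Sigma_{j+2}(b_2)$ are those with $|j-k|\le2$, whence $d_g(\Pi_a(w_i),\Pi_a(w_{i+1}))=M|j-k|\le 2M\le 3M$. Summing over $i$ yields $d_g(\Pi_a(u),\Pi_a(v))\le 3M\,d_\bund(u,v)$.

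The heavy lifting here has already been done in \Cref{lem:qi_sections_flare} and \Cref{lem:separated_sections}, so I expect the hard part to be purely bookkeeping: confirming that the three quantitative hypotheses of \Cref{lem:separated_sections}---base-distance at most $1$, $\bund$--distance at most $\fiberlip\qisec$, and separation at least $\separation$---can all be met simultaneously, which is exactly why $M$ was taken larger than $D_0+(f(\fiberlip)+\separation)/D$ and why the bound $DM>\separation+f(\fiberlip)$ was recorded, together with the minor observation that $\fiberwise_a$ sends vertices to vertices of the axis. I do not anticipate any genuine obstacle beyond carrying out these verifications carefully.
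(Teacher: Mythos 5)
Your proof is correct and follows essentially the same path as the paper: both establish the retraction bound by observing $\fiberwise_a$ fixes $\axis_g(a)$, then reduce the Lipschitz bound to adjacent vertices of $\bund$ and control the jump in section-index via \Cref{lem:separated_sections} applied to $\Sigma_{j-1},\Sigma_{j+2}$. The only cosmetic difference is that you route the same-fiber case through \Cref{lem:separated_sections} as well, whereas the paper handles it directly via metric properness and the bound $d_h(\Sigma_m(h),\Sigma_n(h))\ge\separation+f(\fiberlip)$; both are valid, and your explicit check that $\fiberwise_a$ sends vertices to vertices is a welcome clarification that the paper leaves implicit.
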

Indeed, for any vertex $x\in \axis_g(a)$ we have $\fiberwise_a(x) = x$, by definition, so that $\Pi_a(x) = \Sigma_j(g)$ where $j\in \Z$ is the unique integer such that $x\in Z_j^g=[\Sigma_j(g),\Sigma_{j+1}(g))$. Thus $d_g(x,\Pi_a(x))\le M$ since $d_g(\Sigma_j(g),\Sigma_{j+1}(g)) = M$ by construction of the family $\{\Sigma_i\}$. To complete the proof of the claim, it thus suffices to prove the bound $d_g(\Pi_a(u),\Pi_a(v))\le 3M$ for all $u,v\in E_\Gamma$ with $d_\bund(u,v)=1$. First suppose $u,v\in T_h$ for some $h\in \Gamma$. Then $\fiberwise_a(u),\fiberwise_a(v)\in \axis_h(a)$ satisfy
\[d_h(\fiberwise_a(u),\fiberwise_a(v)) \le f\big(d_\bund(\fiberwise_a(u),\fiberwise_a(v))\big) \le f(\fiberlip)\]
by \Cref{lem:fiberwise_lipschitz}. Since $d_h(\Sigma_m(h),\Sigma_n(h))\ge \separation +f(\fiberlip)$ for all $m\ne n$, it follows that if $\fiberwise_a(u)$ lies in $Z_i^h$ and $\fiberwise_a(v)$ lies in $Z_j^h$, then $\abs{i-j}\le 1$. Thus $d_g(\Pi_a(u),\Pi_a(v)) = d_g(\Sigma_i(g),\Sigma_j(g)) = M\abs{i-j} \le M$. Next suppose that $u$ and $v$ lie in different fibers. Then, since $d_\bund(u,v)=1$, we have $u\in T_h$ and $v\in T_{h'}$ for some $h,h'\in \Gamma$ with $d_\base(h,h') = 1$. Let $i,j\in \Z$ be such that $\fiberwise_a(u)\in Z_i^h$ and $\fiberwise_a(v)\in Z_j^{h'}$, and note that $d_\bund(\fiberwise_a(u),\fiberwise_a(v))\le \fiberlip$ by \Cref{lem:fiberwise_lipschitz}. Since $\fiberwise_a(u)\in [\Sigma_i(h),\Sigma_{i+1}(h))$, it follows that $\fiberwise_a(u)$ lies between $\Sigma_{i-1}(h)$ and $\Sigma_{i+2}(h)$ with $d_h(\fiberwise_a(u),\Sigma_n(h)) \ge \separation$ for $n \in\{i-1,i+2\}$. Therefore we may apply \Cref{lem:separated_sections} to conclude that $\fiberwise_a(v)$ lies between $\Sigma_{i-1}(h')$ and $\Sigma_{i+2}(h')$ in $\axis_{h'}(a)$. In particular, we must have $j\in \{i-1,i,i+1,i+2\}$ so that $d_g(\Pi_a(u),\Pi_a(v)) = d_g(\Sigma_i(g),\Sigma_j(g)) = M\abs{i-j} \le 3M$. This completes the proof of the claim.

We now prove the proposition. Let $x,y\in \axis_g(a)$ be arbitrary. Then clearly $d_\bund(x,y)\le d_g(x,y)$ by definition of the path metrics $d_\bund$ and $d_g$. Choosing vertices $x',y'\in \axis_a(g)$ with $d_g(x,x'),d_g(y,y')\le 1$, the claim and triangle inequality together imply that 
\[d_g(x,y) \le 2 + 6M + d_g(\Pi_a(x'),\Pi_a(y')) \le 2 + 6M + 3M d_\bund(x',y') \le 2 + 6M + 3M(d_\bund(x,y) + 2).\] 
Therefore the inclusion $(\axis_g(a),d_g)\to (\bund,d_\bund)$ is a $(12M+2)$--quasi-isometric embedding.
\end{proof}

\subsection{A Scott--Swarup theorem} \label{sec:ScSw}
In \cite{scott1990geometric}, Scott and Swarup proved that a finitely generated, infinite index subgroup of the fiber of a fibered hyperbolic $3$-manifold group is quasiconvex. This result was extended to arbitrary hyperbolic extensions of surface groups in \cite{dowdall2014pseudo} and to hyperbolic free-by-cyclic groups with fully irreducible monodromy in \cite{mitra1999theorem}. In this section, we generalize these results on the nondistortion of finitely generated, infinite index subgroups of fiber group to the case of hyperbolic extensions of free group by convex cocompact subgroups of $\Out(\free)$.  

We first show each free factor $A$ of $\free$ is undistorted in $E_\Gamma$. Our proof uses the following well-known fact about hyperbolic groups:

\begin{fact}\label{claim:hyperbolic_endpoints}
Suppose that $G$ is a hyperbolic group and let $a,b \in G$ be infinite order elements. Then $(a^nb^{-n})^\infty \to a^\infty$ and $(a^nb^{-n})^{-\infty} \to b^\infty$ in $\partial G$ as $n \to \infty$.
\end{fact}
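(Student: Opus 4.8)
\textbf{Proof plan for \Cref{claim:hyperbolic_endpoints}.}

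The plan is to give a direct hyperbolic-geometry argument using only thin triangles and the Gromov product, since $G$ acts on its Cayley graph $X$ as a proper geodesic $\delta$--hyperbolic space. Fix a basepoint $x_0 = 1$. For an infinite order element $g\in G$, the sequence $g^n x_0$ converges to $g^\infty\in\partial G$, and more importantly the bi-infinite sequence $(g^n x_0)_{n\in\Z}$ coarsely tracks a quasigeodesic axis for $g$ (this is standard: powers of $g$ form a quasigeodesic in a hyperbolic group). I will use this to control Gromov products. Set $w_n = a^n b^{-n}$; the goal is to show $(w_n)^\infty\to a^\infty$, the other statement being symmetric.

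First I would show that for fixed large $n$ the ray $x_0, w_n x_0, w_n^2 x_0,\dots$ initially fellow-travels the segment from $x_0$ to $a^n x_0$ for a length growing with $n$. Concretely, the word $w_n^k = a^n b^{-n} a^n b^{-n}\cdots$ begins with the prefix $a^n$, and since $\{a^m x_0\}$ is a $K$--quasigeodesic, the segment $[x_0, a^n x_0]$ has length at least $n/K - K$ and lies within bounded Hausdorff distance of $\{a^m x_0 : 0\le m\le n\}$. The key estimate is a lower bound on the Gromov product $(w_n^k x_0 \mid a^\infty)_{x_0}$: I would argue that any geodesic from $x_0$ to $w_n^k x_0$, and any geodesic ray from $x_0$ to $a^\infty$, both pass uniformly close to $a^{\lfloor n/2\rfloor} x_0$, whence $(w_n^k x_0\mid a^\infty)_{x_0}\ge n/(2K) - C$ for a constant $C=C(\delta,K)$ independent of $k$ and $n$. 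Taking $k\to\infty$ (using that the extended Gromov product to the boundary is a $\liminf$ up to bounded error, as recorded in \S\ref{sec:coarse_geom}) gives $((w_n)^\infty \mid a^\infty)_{x_0}\ge n/(2K) - C'$, which tends to infinity as $n\to\infty$. By the definition of the topology on $\partial G$ via the neighborhoods $N_a^{x_0}(r)$, this is exactly the statement $(w_n)^\infty\to a^\infty$. The argument for $(w_n)^{-\infty}\to b^\infty$ is identical after replacing $w_n$ by $w_n\inv = b^n a^{-n}$ and noting $(w_n)^{-\infty} = (w_n\inv)^\infty$.

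The one point requiring a little care — and the main obstacle — is justifying that both the finite geodesics $[x_0, w_n^k x_0]$ and the infinite ray toward $a^\infty$ pass within bounded distance of the point $a^{\lfloor n/2\rfloor} x_0$, uniformly in $k$. For the ray toward $a^\infty$ this is immediate from stability of quasigeodesics (\Cref{prop:stability_of_quasis}) applied to the quasigeodesic axis $\{a^m x_0\}$. For the finite geodesic $[x_0, w_n^k x_0]$, the subtlety is that after the initial $a^n$--block the path $w_n^k$ could in principle backtrack; but a short argument using thinness of triangles — comparing $[x_0, w_n^k x_0]$ with the concatenation of $[x_0, a^n x_0]$ and $[a^n x_0, w_n^k x_0]$, and using that $[x_0,a^n x_0]$ is itself a quasigeodesic of definite length — shows the geodesic must pass $O(\delta)$--close to the midpoint $a^{\lfloor n/2\rfloor} x_0$ of the first block as soon as $n$ is large enough that this block is longer than the unavoidable $O(\delta)$ fellow-traveling ambiguity. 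Assembling these estimates yields the claim.
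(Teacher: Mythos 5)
The paper states this as a bare Fact with no proof, so I can only judge your plan on its own terms. Your framework---showing $((a^nb^{-n})^\infty\mid a^\infty)_{x_0}\to\infty$ by arguing that geodesics from $x_0$ to $w_n^kx_0$ and to $a^\infty$ both pass near $a^{\lfloor n/2\rfloor}x_0$---has the right shape, and you correctly flag the step that needs care, but the ``short argument using thinness of triangles'' you invoke there does not close the gap. Thinness of the triangle on $x_0,a^nx_0,w_n^kx_0$ only puts $[x_0,w_n^kx_0]$ in a $\delta$--neighborhood of the other two sides; to force it near $a^{\lfloor n/2\rfloor}x_0$ you must show the geodesic does not cut the corner at $a^nx_0$ by much more than $n/(4K)$. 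The amount of corner-cutting is precisely $(x_0\mid w_n^kx_0)_{a^nx_0}$, and this is not an ``$O(\delta)$ fellow-traveling ambiguity''---it is not controlled by $\delta$ at all, and in a tree it can be as large as the entire first block. What you are really asserting is that the broken path through $x_0,a^nx_0,a^nb^{-n}x_0,a^nb^{-n}a^nx_0,\dotsc$ is globally quasigeodesic, and that requires a local-to-global argument whose local input is a uniform bound on the Gromov product at each corner.

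Those corner bounds are exactly where the geometry of $a$ relative to $b$ must enter, and an argument that never uses any relation between them cannot be correct. Translating by $a^{-n}$, the corner at $a^nx_0$ is $(a^{-n}x_0\mid b^{-n}x_0)_{x_0}$, which stays bounded as $n\to\infty$ if and only if $a^{-\infty}\neq b^{-\infty}$; the corner at $w_nx_0$ similarly needs $a^\infty\neq b^\infty$. And indeed the Fact fails without some such hypothesis: in $\free_2=\langle x,y\rangle$ take $a=x$ and $b=x^2$, so that $a^nb^{-n}=x^{-n}$ has $(a^nb^{-n})^\infty=a^{-\infty}\neq a^\infty$ for every $n$. (The statement in the paper is thus slightly imprecise; in both places the paper uses it, $a$ and $b$ lie in a common rank-$2$ free factor of $\free$, so their fixed-point pairs in the boundary are disjoint, which is what the argument actually needs.) Installing the hypothesis $a^\infty\neq b^\infty$, bounding the corner Gromov products as above, and invoking a local-to-global lemma for piecewise quasigeodesics is the missing ingredient; once that is in place your estimate on the Gromov product follows and the rest of the plan goes through.
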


\begin{proposition} \label{prop:factor_distortion}
Let $\Gamma \le \Out(\free)$ be a finitely generated group with quasi-isometric orbit map into $\CS$ and let $E_\Gamma$ be the associated hyperbolic extensions of $\free$. Then for any free factor $A$ of $\free$, $A$ is quasiconvex in $E_\Gamma$.
\end{proposition}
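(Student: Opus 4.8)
The plan is to reduce the quasiconvexity of a free factor $A\le\free$ in $E_\Gamma$ to the already-established Width Theorem (\Cref{width}). The key geometric input is that $\partial A\subset\partial\free$ sits inside $\partial E_\Gamma$ as the limit set of $A$, and that $E_\Gamma$ is hyperbolic; so $A$ is quasiconvex in $E_\Gamma$ if and only if the ``hull'' of $A$ in $E_\Gamma$ — a union of biinfinite geodesics with endpoints in $\partial A$ — lies in a bounded neighborhood of $A$ (equivalently of any fiber copy of $A$, using Mosher's qi-section lemma to move between fibers). First I would fix the standard fiber $T_1$ and the sub-forest $A\vert T_1$, which is a quasiconvex (indeed isometrically embedded) subtree, hence quasiconvex in $\bund = E_\Gamma$ once we know the inclusion $T_1\hookrightarrow\bund$ behaves well on $A$. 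The crux is therefore to bound $\diam_\base p(\ell)$ for biinfinite geodesics $\ell$ of $\bund$ with both endpoints in $\partial A$.

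The main step: such endpoints in $\partial A$ are limits of (attracting/repelling) fixed points of elements of $A$. Using \Cref{claim:hyperbolic_endpoints}, given two distinct points $\xi,\zeta\in\partial A\subset\partial E_\Gamma$, I would write them as limits $\xi = \lim(a_n)^\infty$ and $\zeta=\lim(b_n)^\infty$ where $a_n,b_n\in A$; better, for each $n$ take $c_n=a_n^{k}b_n^{-k}\in A$, so that $c_n^{\pm\infty}\to\xi,\zeta$ and $c_n^*$ (the biinfinite geodesic of $\bund$ through $c_n^{\pm\infty}$) converges to a biinfinite geodesic with endpoints $\xi,\zeta$ by stability of geodesics in the hyperbolic space $\bund$. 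Every $c_n\in A$ is a \emph{simple} element of $\free$ (it lies in the proper free factor $A$), so the Width Theorem gives $\diam_\base p(c_n^*)\le D$ for a uniform $D$ independent of $n$. Passing to the limit, $\diam_\base p(\ell)\le D$ for every biinfinite geodesic $\ell$ of $\bund$ with both endpoints in $\partial A$. In particular $p(\ell)$ stays in a ball of radius $D$ about $p$ of the fiber $T_1$, i.e.\ $\ell$ stays within finitely many fibers of $T_1$; combined with the metric properness function $f$ of the bundle and the fact that $A\vert T_1$ is quasiconvex in $T_1$, this shows $\ell$ lies in a uniformly bounded $d_\bund$--neighborhood of $A\vert T_1$, hence of the subgroup $A\le E_\Gamma$.

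To finish, I would invoke the standard criterion: in a hyperbolic group $G$, a subgroup $H$ is quasiconvex iff its limit set $\Lambda H\subset\partial G$ is nonempty (for $H$ infinite) and the union of biinfinite geodesics with endpoints in $\Lambda H$ lies in a bounded neighborhood of $H$. Here $\Lambda A = \partial A$ (since $\free\le E_\Gamma$ is undistorted on finitely generated \emph{infinite-index} subgroups isn't yet known — but $\partial A$ is closed, $A$-invariant, and equals the closure of $\{a^{\pm\infty}:a\in A\setminus 1\}$, which is enough), and the geodesic-hull bound is exactly what the previous paragraph establishes. One technical point I'd need to handle carefully: the case $\rank(A)=1$, where $\partial A$ is two points and $A$ is cyclic — then $A$ is generated by a simple element and quasiconvexity is immediate from \Cref{width} and \Cref{prop:stability_of_quasis} applied directly to $a^*$, with no limiting argument needed.

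The step I expect to be the main obstacle is the passage to the limit $c_n^*\to\ell$: one must ensure that an arbitrary biinfinite geodesic $\ell$ of $\bund$ with endpoints in $\partial A$ is genuinely approximated (in the Hausdorff, or at least ``coarse Hausdorff on compacta,'' sense) by the $c_n^*$, so that the uniform width bound survives. This uses properness of $\bund$ (closed balls compact), the convergence $c_n^{\pm\infty}\to\xi,\zeta$ from \Cref{claim:hyperbolic_endpoints}, and stability of quasigeodesics (\Cref{prop:stability_of_quasis}) to get uniform fellow-traveling on bounded regions; since $p$ is Lipschitz and continuous, $\diam_\base p(\ell\cap B)\le \liminf_n \diam_\base p(c_n^*) + (\text{error})\le D+(\text{error})$ for every ball $B$, giving the bound for all of $\ell$. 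Making the choice of exponents $k=k(n)$ and the diagonal extraction clean is routine but needs care, and one should double-check that $c_n$ can be taken of infinite order (true as long as $\xi\ne\zeta$ and $A$ is nonabelian, and handled separately when $A$ is cyclic).
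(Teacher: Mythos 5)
Your plan reuses the same two tools the paper does---\Cref{width} (the Width Theorem) and \Cref{claim:hyperbolic_endpoints}---but there is a genuine gap at the localization step. After arguing that $\diam_\base p(\ell)\le D$ for biinfinite geodesics $\ell$ with endpoints in $\Lambda A$, you write that ``$p(\ell)$ stays in a ball of radius $D$ about $p$ of the fiber $T_1$,'' but this does not follow: the Width Theorem controls only the \emph{diameter} of $p(a^*)$, not its \emph{location} in $\Gamma$. For a simple $a\in\free$ whose minimizing fiber sits at some $g\in\Gamma$ far from the identity, the axis $a^*$ fellow-travels $\axis_g(a)\subset T_g$ (by \Cref{lem:near_fiber} and stability), so $p(a^*)$ is bunched near $g$, which can be arbitrarily far from $1$. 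In particular your $c_n^*$, and therefore the limit $\ell$, could in principle drift off to fibers far from $T_1$, and the argument that $\ell$ lies in a bounded neighborhood of $A$ collapses.

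What is missing is exactly the extra claim the paper proves with its ``$5R+4\delta$'' estimate: for \emph{any} $a,b\in A$, the minimizing sets $\minset{\Gamma}{\rose}([a])$ and $\minset{\Gamma}{\rose}([b])$ are uniformly close in $\Gamma$. The paper obtains this by applying \Cref{claim:hyperbolic_endpoints} to $a^Nb^{-N}$ to build a width-bounded geodesic $(a^Nb^{-N})^*$ that bridges $a^*$ and $b^*$, and then invoking \Cref{lem:near_fiber} to translate the resulting bound on $\diam_\Gamma\big(p(a^*)\cup p(b^*)\big)$ into a bound on the distance between minimizing fibers. Fixing one reference element of $A$ then pins all minimizers---and hence all the sets $p(c_n^*)$ in your argument---inside a fixed ball about $1\in\Gamma$. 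Without this step the Width Theorem alone is too weak, and your limit-set criterion has nothing to bite on. (A secondary issue: even granting the localization, you assert that bounded base--projection plus metric properness implies proximity to $A\vert T_1$; that needs the further observation that $c_n^*$ is close to $\axis_{g_n}(c_n)\subset A\vert T_{g_n}$ and that $A\vert T_{g_n}$ is uniformly close to $A\vert T_1$ for $g_n$ near $1$. The paper sidesteps both issues by directly exhibiting, through any pair of vertices of $T_1^A$, an axis $\axis_1(x)$ that it proves is a quasigeodesic in $\bund$.)
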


\begin{proof}
Let $\delta$ be the hyperbolicity constant of $E_\Gamma$.
By \Cref{lem:near_fiber} and \Cref{width}, there are constants $R,Q>1$ so that $\diam_\Gamma(p(x^*)) \le R$ for each simple element $x$ of $\free$ and that $x^*$ and the $Q$--quasigeodesic $\axis_g(x)$ have Hausdorff distance at most $R$ whenever $x$ is minimized in the fiber over $g$, i.e. whenever $g\in \minset{\Gamma}{\rose}([x])$.

Now let $a,b \in A$ be arbitrary. We claim that $d_\Gamma(g,h)\le 5R+4\delta$ for any $g\in \minset{\Gamma}{\rose}([a])$ and $h\in \minset{\Gamma}{\rose}([b])$.
Since $a^nb^{-n} \in A$, these elements are simple. Moreover, since $(a^nb^{-n})^\infty \to a^\infty$ and $(a^nb^{-n})^{-\infty} \to b^\infty$ in $\partial E_\Gamma$ by \Cref{claim:hyperbolic_endpoints}, there is an $N\ge0$ such that $(a^Nb^{-N})^*$ meets a $2\delta$--neighborhood of $a^*$ and $b^*$ in $E_\Gamma$. Then
\begin{align} \label{gets_close}
\diam (p(a^*) \cup p(b^*))  \le \diam p(a^*) + 2\delta+ \diam p((a^Nb^{-N})^*) +2\delta +  \diam p(b^*)  \le 3R+4\delta.
\end{align}
Therefore $d_\Gamma(g,h)\le 5R+4\delta$ as claimed. Setting $D = d_\Gamma(1,g)+5R+4\delta$, this moreover shows that $d_\Gamma(1,h)\le D$ whenever $h$ lies in the minimizing set $\minset{\Gamma}{\rose}([b])$ for any $b\in A$.

We can now directly verify that $A$ is quasiconvex in $E_\Gamma$. Identify $A$ with the vertices of the tree $T_1^A$ in $\bund$. For any two vertices $a,b$ of $T_1^A$ there is an $x \in A$ whose axis $\mathcal{A}_{1}(x)$ in $T_1$ passes through the vertices $a$ and $b$. If $x$ is minimized in the fiber over $h \in \Gamma$, then  $d_\Gamma(1,h) \le D$. By \Cref{eqn:bilipschitz_neighborhs}, there exists an $\free$--equivariant $\bblip^D$--bilipschitz map $T_1\to T_h$ (obtained by writing $h$ as a geodesic $s_1\dotsb s_n$ in $\Gamma$ and lifting the $s_i\in S$ to generators $t_i\in \tilde{S}$). This and the fact that $\axis_h(x)$ is a $Q$--quasigeodesic together imply that $\axis_1(x)$ is a $Q'$--quasigeodesic in $\bund$ for some constant $Q'\ge 1$ depending only on $Q$, $\bblip$, and $D$.
Hence, by \Cref{prop:stability_of_quasis}, any geodesic joining $a$ and $b$ in $E_\Gamma = \bund^0$ stays within the $R_0(Q',\delta)$--neighborhood of the geodesic $\axis_1(x)$ in $T^A_1$ joining $a$ and $b$. Therefore $A$ is quasiconvex in $E_\Gamma$.
\end{proof}

We can now combine \Cref{prop:factor_distortion} with \Cref{prop:lifting_qi} to prove the main result of this section. 

\begin{theorem}[Nondistortion in fibers]
 \label{th:main_ss}\label{th:scott_swarup}
Suppose that $\Gamma\le\Out(\free)$ quasi-isometrically embeds into $\cs$, and let $L$ be a finitely generated subgroup of the fiber $\free\lhd E_\Gamma$. Then $L$ is quasiconvex, and hence undistorted, in the hyperbolic extension $E_\Gamma$ if and only if $L$ has infinite index in $\free$.
\end{theorem}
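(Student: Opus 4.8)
The plan is to treat the two implications separately; the backward implication---that finitely generated infinite-index subgroups are quasiconvex---is where essentially all of the work lies.

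\emph{Finite index implies distorted.} Suppose first that $L$ has finite index in $\free$. As recalled in the introduction, the fiber $\free$, being infinite, normal, and of infinite index in the hyperbolic group $E_\Gamma$, is exponentially distorted in $E_\Gamma$. Since the inclusion $L\hookrightarrow\free$ of a finite-index subgroup is a quasi-isometry, $L$ inherits this exponential distortion in $E_\Gamma$ and so cannot be quasiconvex. This establishes the contrapositive of the implication ``$L$ quasiconvex $\Rightarrow$ $L$ has infinite index''.

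\emph{Infinite index implies quasiconvex: reduction to a cover.} Now assume $L\le\free$ is finitely generated of infinite index. The first step is to invoke M.~Hall's theorem: there is a finite-index subgroup $H\le\free$ of which $L$ is a free factor, and since $[H:L]=[\free:L]=\infty$ this free factor is proper. Note $\rank(H)=1+[\free:H](\rank(\free)-1)\ge 3$, so the theory developed here applies to $H$ verbatim. Let $\Gamma^H\le\Out(H)$ be the induced subgroup; it is finitely generated, since by \Cref{lem:easy} it is an extension of $\Gamma_H$ (a finite-index subgroup of $\Gamma$, as noted in the next paragraph) by the finite group $N(H)/H$. By \Cref{prop:fi_lift} the group $\Gamma^H$ quasi-isometrically embeds into $\cs(H)$, so by \Cref{th:qi_into} it is purely atoroidal and convex cocompact, and by \Cref{th:DT1} the associated extension $1\to H\to E_{\Gamma^H}\to\Gamma^H\to 1$ is hyperbolic. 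Since $L$ is a proper free factor of $H$, \Cref{prop:factor_distortion} applied to $E_{\Gamma^H}$ shows that $L$ is quasiconvex in $E_{\Gamma^H}$.

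\emph{Comparing $E_{\Gamma^H}$ with $E_\Gamma$.} The crucial observation is that $\Gamma_H$, the subgroup of $\Gamma$ fixing the conjugacy class of $H$, has \emph{finite index} in $\Gamma$: the group $\Out(\free)$ permutes the finitely many conjugacy classes of index-$[\free:H]$ subgroups of $\free$, so $\mathrm{Stab}_{\Out(\free)}([H])$ has finite index, and $\Gamma_H=\Gamma\cap\mathrm{Stab}_{\Out(\free)}([H])$. Consequently the preimage $p^{-1}(\Gamma_H)\le E_\Gamma$ has finite index, and so does its subgroup $E'\colonequals\{\phi\in E_\Gamma:\phi(H)=H\}$, which lies inside $p^{-1}(\Gamma_H)$ with index $[\free:N(H)]<\infty$. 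Exactly as in the proof of \Cref{lem:easy} (using that elements of a free group have unique roots), the restriction homomorphism $\phi\mapsto\phi|_H$ is an \emph{isomorphism} $E'\xrightarrow{\ \sim\ }E_{\Gamma^H}$; one then checks that this identification carries the fiber $H=\Inn(H)$ of $E_{\Gamma^H}$ onto $\{i_h:h\in H\}\le\hat\free\lhd E_\Gamma$, hence carries the free factor $L\le H$ precisely onto the original subgroup $L\le\free$.

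\emph{Conclusion, and the main obstacle.} Putting this together: $L$ is quasiconvex in $E_{\Gamma^H}\cong E'$, which is a finite-index---hence hyperbolic and quasiconvex---subgroup of $E_\Gamma$; since quasiconvexity is transitive among quasiconvex subgroups of hyperbolic groups, $L$ is quasiconvex, and therefore undistorted, in $E_\Gamma$. This completes the plan. The substantive content is packaged into the two inputs \Cref{prop:fi_lift} and \Cref{prop:factor_distortion} (the latter resting on the Width Theorem \Cref{width} and on \Cref{lem:near_fiber}); the step I expect to require the most care is the passage to the cover in the last two paragraphs---in particular, combining M.~Hall's theorem, the lift of \Cref{prop:fi_lift}, and the finite-index observation so as to realize $E_{\Gamma^H}$ as a finite-index subgroup of $E_\Gamma$ in which the abstract free factor $L$ of $H$ is genuinely the subgroup $L\le\free$ we started with, so that quasiconvexity transfers without loss.
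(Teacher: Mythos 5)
Your proposal is correct and follows essentially the same route as the paper's proof: Hall's theorem to realize $L$ as a (proper) free factor of a finite-index subgroup $H$, then \Cref{prop:fi_lift} and \Cref{prop:factor_distortion} to get quasiconvexity of $L$ in $E_{\Gamma^H}$, and finally commensurability of $E_{\Gamma^H}$ with $E_\Gamma$ to transfer quasiconvexity. The paper simply asserts the commensurability and transfer in one line, whereas you unpack it (identifying $E_{\Gamma^H}$ with the finite-index subgroup $E'=\{\phi\in E_\Gamma:\phi(H)=H\}$ via the restriction isomorphism so that the abstract free factor $L\le H$ matches the original $L\le\free$); that additional care is sound and worth stating, but the underlying argument is the same.
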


\begin{proof}
Suppose that $L$ is a finitely generated, infinite index subgroup of $\free$. By Marshall Hall's theorem, $L$ is a free factor of $H$ for some finite index subgroup $H \le \free$. By 
\Cref{prop:lifting_qi}, the group $\Gamma^H$ qi-embeds into $\CS(H)$, and hence the corresponding $H$--extension $E_{\Gamma^H}$ fitting into the sequence
\[
1 \longrightarrow H \longrightarrow E_{\Gamma^H} \longrightarrow \Gamma^H \longrightarrow 1
\]
is hyperbolic by \Cref{th:DT1} and \Cref{th:qi_into}. Since $L$ is a free factor of $H$, \Cref{prop:factor_distortion} implies that $L$ is quasiconvex in $E_{\Gamma^H}$. Finally, since $H$ has finite index in $\free$, $E_\Gamma$ and $E_{\Gamma^H}$ are commensurable, and we conclude that $L$ is quasiconvex in $E_\Gamma$. Conversely, if $L$ has finite index in $\free$, then $L$ is quasi-isometric to $\free\lhd E_\Gamma$ which itself is exponentially distorted in $E_\Gamma$ by virtue of being infinite and normal.
\end{proof}

\begin{remark}
We note that the above theorem does not necessarily hold for hyperbolic extensions of $\free$ by groups that do not admit quasi-isometric orbit maps into $\CS$. For example, if $\phi$ is an automorphism of $\free$ which is atoroidal but fixes the conjugacy class of a free factor $A$, then the $\free$--extension $\free \rtimes \langle \phi \rangle$ is hyperbolic by \cite{Brink}, but the subgroup $A$ is not quasiconvex.
\end{remark}

\section{Hyperbolicity of $E_\Gamma$ and convex cocompactness of $\Gamma$}
\label{sec:width}
In the previous section, we learned that if $\Gamma$ is convex cocompact and purely atoroidal then not only is the extension $E_\Gamma$ hyperbolic, but the projection $E_\Gamma \to \Gamma$ has controlled geometry over the axes of simple elements. In this section, we develop a converse to the main theorem of \cite{DT1}, which established hyperbolicity of $E_\Gamma$. That is, we impose additional structural properties of $E_\Gamma$ that imply the induced orbit map $\Gamma \to \F$ is a quasi-isometric embedding. These properties turn out to characterize convex cocompact subgroups of $\Out(\free)$ among the class of subgroups inducing hyperbolic extensions of $\free$.

Suppose henceforth that $1\to \free\to E\stackrel{p}{\to} Q\to 1$ is a hyperbolic extension of $\free$. This short exact sequence induces an outer action of $Q$ on $\free$ given by the homomorphism $Q\to \Out(\free)$ sending $q\in Q$ to the class of the automorphism that conjugates $\free\lhd E$ by any lift $\tilde{q}\in E$ of $q$. We then have the commutative diagram
\begin{equation}\label{eq:extension_diagram}
		\begin{tikzpicture}[>= to, line width = .075em, 
			baseline=(current bounding box.center)]
		\matrix (m) [matrix of math nodes, column sep=1.5em, row sep = 1.5em, 		text height=1.5ex, text depth=0.25ex]
		{
			1 & \free  & E & Q & 1 \\
			1 & \free  & E_\Gamma & \Gamma & 1,\\
		};
		\path[->,font=\scriptsize]
		(m-1-1) edge 					(m-1-2)
		(m-1-2) edge 					(m-1-3)
		(m-1-3) edge node[above] {$p$}	        	(m-1-4)
		(m-1-4) edge 					(m-1-5)
		
		(m-2-1) edge 					(m-2-2)
		(m-2-2) edge    	                        (m-2-3)
		(m-2-3) edge 	                                (m-2-4)
		(m-2-4) edge 					(m-2-5)
		
		(m-1-3) edge					(m-2-3)
		(m-1-4) edge (m-2-4)
		;
		\draw[double distance = .15em,font=\scriptsize]
		(m-2-2) -- 					(m-1-2)
		;
		\end{tikzpicture}
\end{equation}
where  $\Gamma$ is the image of $Q \to \Out(\free)$.
Fixing finite generating sets for $E$ and $Q$, for each element $a\in \free$ we continue to write $a^*$ for a geodesic in $E$ joining $a^{-\infty}\in\partial E$ to $a^{\infty}\in \partial E$. The image $p(a^*)$ in $Q$ then depends only on the $\free$--conjugacy class $\alpha$ of $a$. Hence, as in \S\ref{sec:width_theorem}, we may define the \define{width} of $a\in \free$ (or $\alpha$) to be
\[
\mathrm{width}_Q(a) = \mathrm{width}_Q(\alpha) \colonequals \diam_Q p(\alpha^*).
\]

\begin{theorem}[Convex cocompactness]
\label{th:intro_width}\label{th:cc}
Suppose that $1 \to \free \to E \to Q \to 1$ is a hyperbolic extension of $\free$. Then $Q$ has convex cocompact image in $\Out(\free)$ (and hence admits a quasi-isometric embedding orbit map into $\cs$) if and only there exists $D\ge 0$ so that $\width_Q(a)\le D$ for each simple element $a\in \free$.
\end{theorem}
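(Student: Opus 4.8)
The plan is to prove the two implications separately. Throughout, I record two consequences of the hypothesis that $E$ is hyperbolic. First, the image $\Gamma$ of $Q\to\Out(\free)$ is purely atoroidal, since a periodic conjugacy class for an infinite order element of $\Gamma$ yields a $\Z\oplus\Z$ subgroup of $E$. Second, $\ker(Q\to\Gamma)$ is finite: it is the image in $Q$ of the centralizer $C_E(\free)$, a normal subgroup of $E$ that meets the infinite normal subgroup $\free$ trivially (as $\free$ is centerless), and since $[N_1,N_2]\subseteq N_1\cap N_2$ for normal subgroups, two infinite normal subgroups of a hyperbolic group cannot have trivial intersection, forcing $C_E(\free)$ to be finite. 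Hence $Q\to\Gamma$ and the induced surjection $E\to E_\Gamma$ (with kernel $C_E(\free)$) are quasi-isometries, and this quasi-isometry carries each geodesic $a^*\subset E$ to a uniform quasigeodesic of $E_\Gamma$ with the same ideal endpoints, hence (by \Cref{prop:stability_of_quasis}) to within bounded Hausdorff distance of the corresponding $E_\Gamma$--geodesic; so $\width_Q(\alpha)$ in $E$ and $\width_\Gamma(\alpha)$ in $E_\Gamma$ are comparable for every conjugacy class $\alpha$. With this, the forward implication is immediate: if $\Gamma$ is convex cocompact then it is purely atoroidal and, by \Cref{th:qi_into}, qi-embeds into $\cs$, so the Width Theorem (\Cref{width}) bounds $\width_\Gamma$ on simple classes, hence $\width_Q$ as well.

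For the reverse implication, assume $\width_Q(a)\le D$ for every simple $a\in\free$. By the reductions above and \Cref{th:qi_into}, it suffices to show that for a fixed primitive conjugacy class $\alpha_0$ carried by a petal of a rose $R\in\os$, the orbit map $\psi\colon Q\to\cs$, $g\mapsto g\cdot\alpha_0$, is a quasi-isometric embedding. Since $\psi$ is Lipschitz, only the lower bound is at issue, and the plan is to construct a coarse left inverse $\mathfrak r\colon\cs^0\to Q$, sending a primitive (hence simple) class $\beta$ to a point of $p(\beta^*)$, and to establish: (A) there is $D_0$ with $d_Q\big(g,\,p((g\cdot\alpha_0)^*)\big)\le D_0$ for all $g\in Q$, so that $\mathfrak r\circ\psi$ coarsely equals $\mathrm{id}_Q$; and (B) that $\mathfrak r$ is coarsely Lipschitz, say $d_Q(p(\beta^*),p(\beta'^*))\le W\,d_\cs(\beta,\beta')+W$. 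Granting (A) and (B), for $g,h\in Q$ we obtain $d_Q(g,h)\le 2D_0+d_Q(\mathfrak r\psi(g),\mathfrak r\psi(h))\le 2D_0+W\,d_\cs(\psi(g),\psi(h))+W$, which is the desired lower bound.

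Statement (A) uses only that $\alpha_0$ is fixed and $E$ is hyperbolic. For $a_0\in\free\le E$ representing $\alpha_0$, the geodesic $a_0^*$ shares ideal endpoints with the hyperbolic isometry $i_{a_0}$ of $E$; the latter preserves the fiber $T_1=p^{-1}(1)$ and acts on $(T_1,d_1)$ as a hyperbolic isometry of positive translation length $\tau_0$ depending only on $\alpha_0$. Any vertex $x$ on its axis in $T_1$ then has $d_E(x,i_{a_0}x)\le d_1(x,i_{a_0}x)=\tau_0$, so elementary $\delta$--hyperbolic geometry puts $x$ within a bounded $D_0$ of $a_0^*$; as $p(x)=1$, this gives $1\in\nbhd{D_0}{p(a_0^*)}$. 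Choosing a lift $\tilde g$ of $g$, the element $\tilde g a_0\tilde g^{-1}\in\free$ represents $g\cdot\alpha_0$ with axis $\tilde g\cdot a_0^*$, and applying the isometry $\tilde g$ (covering left translation by $g$ on $Q$) yields $g\in\nbhd{D_0}{p((g\cdot\alpha_0)^*)}$, which is (A).

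Statement (B) is the crux, and is where essentially all the difficulty of the theorem resides and where the width hypothesis enters decisively. By the triangle inequality it reduces to bounding $d_Q(p(\beta^*),p(\beta'^*))$ when $d_\cs(\beta,\beta')\le 1$, i.e.\ when $\beta$ and $\beta'$ are represented by simple closed curves on a common once--punctured surface $S$ with $\pi_1(S)\cong\free$. The first input is that, under the width bound, \Cref{lem:near_fiber} forces the minimizing set $\minset{\Gamma}{R}(\alpha)$ to have uniformly bounded diameter and to lie within bounded Hausdorff distance of $p(\alpha^*)$ for every simple $\alpha$: for $g\in\minset{\Gamma}{R}(\alpha)$ the uniform quasigeodesic $\axis_g(\alpha)\subset T_g$ fellow--travels $\alpha^*$, whence $g\in\nbhd{}{p(\alpha^*)}$ and $\diam_Q\minset{\Gamma}{R}(\alpha)\le\width_Q(\alpha)+O(1)\le D+O(1)$. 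Thus (B) is equivalent to showing that $\minset{\Gamma}{R}(\beta)$ and $\minset{\Gamma}{R}(\beta')$ stay uniformly close whenever $\beta,\beta'$ are simple closed curves on a common $S$. The plan for this is to interpolate using band sums: a band sum $\gamma$ of two disjoint simple closed curves on $S$ is again a simple closed curve, and at a point of $\os$ where one summand is short, $\gamma$ is boundedly almost contained (in the sense of \Cref{def:almost_containment}) in the other; feeding this into the flaring estimate \Cref{lem:flare_away_from_minimizer2} for simple and almost--contained classes, together with the bounded combinatorics of $S$ (whose complexity is bounded in terms of $\rank\free$), should pin down the minimizing sets along a bounded chain of curves joining $\beta$ to $\beta'$. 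The main obstacle is carrying out this interpolation uniformly --- one must control the overlap between the relevant axes so that the almost--containment relations genuinely force the minimizing sets together --- and this is the free group analogue, built on the machinery of \S\ref{sec:orbit_flaring} rather than on Teichm\"uller geometry, of the curve--complex--to--base comparison underlying Kent and Leininger's width theorem for surface bundles.
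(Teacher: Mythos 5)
Your forward implication matches the paper essentially verbatim (finite kernel of $Q\to\Out(\free)$, comparability of $\width_Q$ with $\width_\Gamma$, then the Width Theorem~\Cref{width}), and your retraction strategy for the converse, together with statement~(A), is the paper's skeleton as well. The gap is entirely in~(B), and it is twofold. First, your argument for~(B) is circular: you invoke \Cref{lem:near_fiber} to place $\minset{\Gamma}{R}(\alpha)$ near $p(\alpha^*)$ and \Cref{lem:flare_away_from_minimizer2} to propagate almost--containment, but both of those results are proven under the hypothesis that $\Gamma$ qi-embeds into $\cs$ (resp.\ into $\fc$) --- the very conclusion you are trying to establish. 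In the paper that axis-bundle/flaring machinery of \S\ref{sec:orbit_flaring}--\ref{sec:undistortion_in_fibers} is used only for the \emph{forward} implication (via \Cref{width}), not for the converse. Second, your choice of target graph makes~(B) harder than necessary: $d_\cs(\beta,\beta')\le 1$ permits $\beta$ and $\beta'$ to fill a common once--punctured surface, so they need not span a rank--$2$ free factor, and a band sum connects only disjoint curves. The ``bounded chain of band sums'' you envision to bridge filling curves has no visible uniform length bound, and you acknowledge this is unresolved.

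The paper sidesteps both problems simultaneously by retracting to the primitive loop graph $\pl\simeq\fc$ rather than to $\cs$. If $d_\pl(\beta,\gamma)\le 1$ there are representatives $b,c$ with $\langle b,c\rangle$ a proper rank--$2$ free factor; then each $b^nc^n$ is a simple element, so $\width_Q(b^nc^n)\le D$ by hypothesis, and by \Cref{claim:hyperbolic_endpoints} the geodesic $(b^nc^n)^*$ is forward asymptotic to $b^*$ and backward asymptotic to $c^*$ for large $n$, giving $\diam_Q\bigl(p(b^*)\cup p(c^*)\bigr)\le 3D+4\delta$ directly. This yields the Lipschitz bound for $\sigma\colon \pl\to Q$ with no appeal whatsoever to the orbit-flaring or axis-bundle results; and since $\Gamma$ is purely atoroidal (because $E$ is hyperbolic) and qi-embeds into $\pl\simeq\fc$, convex cocompactness --- and then the qi-embedding into $\cs$ --- follow from \Cref{th:qi_into}. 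The moral is that the converse direction should live entirely in elementary hyperbolic geometry of $E$ plus the width hypothesis for simple elements, with $\pl$ as the correct interpolating graph.
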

\begin{proof}
Since $E$ is hyperbolic, the induced homomorphism $\chi \colon Q\to \Out(\free)$ must have finite kernel, otherwise $E$ contains a subgroup isomorphic to $\free \times \mathrm{ker}(\chi)$  (see \cite[\S 2.5]{DT1}). 
Thus $Q$ is quasi-isometric to its image $\Gamma$ in $\Out(\free)$ and, further, each vertical arrow in  \eqref{eq:extension_diagram} has finite kernel.
From this we see that $E$ is $\free$--equivariantly quasi-isometric to $E_\Gamma$ and moreover that $\mathrm{width}_Q(a)$ coarsely agrees with the width $\width(a)$ in $\Gamma$ as defined in \S\ref{sec:width_theorem}. Therefore if $\Gamma$ is convex cocompact, \Cref{width} shows that supremum $\sup_a\mathrm{width}_Q(a)$ over all simple $a\in \free$ is bounded. 

For the converse, suppose that $\mathrm{width}_Q(\beta) \le D$ for each simple conjugacy class $\beta$ of $\free$. Since $E_\Gamma$ is $\delta$--hyperbolic, $\Gamma$ is purely atoroidal and so it suffices to show that $\Gamma$, or equivalently $Q$, qi-embeds into $\F$ by \Cref{th:qi_into}. As it is more natural for our argument, we instead work with the quasi-isometric primitive loop graph $\pl$ defined in \S\ref{sec:co-surface graph}. 
Fix $\alpha \in \pl^0$ and consider the orbit map $Q \to \pl$ given by $g \mapsto g\cdot\alpha$, where $Q$ acts on $\pl$ via $Q\to \Out(\free)$. We define a coarse map $\sigma\colon \pl \to Q$ which we show is a coarse Lipschitz retraction for the orbit map $Q \to \pl$. Since the orbit map is necessarily Lipschitz, this will show that $Q \to \pl$ is a quasi-isometric embedding and establish the theorem.
Set 
\[
\sigma(\beta) = p(\beta^*),
\]
which is by assumption a subset of $Q$ of diameter at most $D$.  This map is equivariant since for each $g\in Q$ and any lift $\tilde{g} \in E$,
\[
\sigma(g\cdot\beta) = \sigma(\tilde{g}b) = p(\tilde g b^*) = gp(\beta^*),
\]
where $b$ is any representative of the conjugacy class of $\beta$. Hence, if we set $D_0 = \diam_Q(\{1\}\cup \sigma(\alpha))$, then $\sigma \colon \pl \to \Gamma$ is indeed a $D_0$--coarse retraction and so it only remains to show that it is Lipschitz.

Let $\beta$ and $\gamma$ be adjacent conjugacy classes in $\pl$ and choose representatives $b$ and $c$ such that $\langle b,c \rangle$ is a rank $2$ free factor of $\free$. Then, for each $n\in \Z$, $b^nc^n$ is a simple element of $\free$, and by \Cref{claim:hyperbolic_endpoints} $(b^nc^n)^{\infty}$ approaches $b^\infty$ as $n\to \infty$ and $(b^nc^n)^{-\infty}$ approaches $c^{-\infty}$ as $n \to \infty$. Hence, the axis $(b^nc^n)^*$ in $E$ becomes forward asymptotic to $b^*$ and backward asymptotic to $c^*$. Then, just as in \Cref{gets_close}, for all sufficiently large $n$ we have that
\[
\diam_Q (p(b^*) \cup p(c^*)) \le \diam_Q p(b^*) + 2\delta +  \diam_Q p((b^nc^n)^*) + 2\delta + \diam_Q p(c^*)\le 3D + 4\delta.
\]
This demonstrates that $\sigma\colon \pl \to Q$ is a Lipschitz retract and completes the proof.
\end{proof}

\bibliographystyle{alphanum}
\bibliography{geometric_properties}

\bigskip

\noindent
\begin{minipage}{.55\linewidth}
Department of Mathematics\\
Vanderbilt University\\
1326 Stevenson Center\\
Nashville, TN 37240 , U.S.A\\
E-mail: {\tt spencer.dowdall@vanderbilt.edu}
\end{minipage}
\begin{minipage}{.45\linewidth}
Department of Mathematics\\ 
Yale University\\ 
10 Hillhouse Ave\\ 
New Haven, CT 06520, U.S.A\\
E-mail: {\tt s.taylor@yale.edu}
\end{minipage}

\end{document}